\numberwithin{figure}{section}
\numberwithin{table}{section}
\theoremstyle{plain}
\newtheorem{thm}{Theorem}[section]
\crefname{thm}{Theorem}{Theorems}
\newtheorem*{prop*}{Proposition}
\newtheorem*{thm*}{Theorem}
\newtheorem{prop}[thm]{Proposition}
\crefname{prop}{Proposition}{Propositions}
\newtheorem{lem}[thm]{Lemma}
\crefname{lem}{Lemma}{Lemmata}
\newtheorem{cor}[thm]{Corollary}
\crefname{cor}{Corollary}{Corollaries}
\theoremstyle{definition}
\newtheorem{dfn}[thm]{Definition}
\newtheorem*{dfn*}{Definition}
\theoremstyle{remark}
\newtheorem{rmk}[thm]{Remark}
\newtheoremstyle{maintheorem}{}{}{\itshape}{}{\bfseries}{}{.5em}{#1 \!\thmnote{#3}.}
\theoremstyle{maintheorem}
\newtheorem*{mainthm}{Theorem}
\let\c@figure\c@thm
\let\c@table\c@thm
\crefname{figure}{Figure}{Figures}
\crefname{table}{Table}{Tables}
\newcommand{\Aut}{\operatorname{Aut}}
\newcommand{\SAut}{\operatorname{SAut}}
\newcommand{\Out}{\operatorname{Out}}
\newcommand{\SOut}{\operatorname{SOut}}
\newcommand{\GL}{\operatorname{GL}}
\newcommand{\SL}{\operatorname{SL}}
\newcommand{\PGL}{\operatorname{PGL}}
\newcommand{\PSL}{\operatorname{PSL}}
\renewcommand{\L}{\operatorname{L}}
\newcommand{\im}{\operatorname{im}}
\newcommand{\Alt}{\operatorname{Alt}\nolimits}
\newcommand{\Sym}{\operatorname{Sym}\nolimits}
\newcommand{\arank}{$A$-rank}
\newcommand{\drank}{$D'$-rank}
\newcommand{\A}{\operatorname{\mathtt{A}}}
\newcommand{\Atwo}{\operatorname{{^2\hspace{-1 pt}}\mathtt{A}}}
\newcommand{\B}{\operatorname{\mathtt{B}}}
\newcommand{\Btwo}{\operatorname{{^2}\mathtt{B}}}
\newcommand{\typeC}{\operatorname{\mathtt{C}}}
\newcommand{\D}{\operatorname{\mathtt{D}}}
\newcommand{\Dtwo}{\operatorname{{^2}\mathtt{D}}}
\newcommand{\Dthree}{\operatorname{{^3}\mathtt{D}}}
\newcommand{\E}{\operatorname{\mathtt{E}}}
\newcommand{\Etwo}{\operatorname{{^2}\mathtt{E}}}
\newcommand{\typeF}{\operatorname{\mathtt{F}}}
\newcommand{\typeFtwo}{\operatorname{{^2}\mathtt{F}}}
\newcommand{\G}{\operatorname{\mathtt{G}}}
\newcommand{\Gtwo}{\operatorname{{^2}\mathtt{G}}}
\newcommand{\Fi}{\operatorname{Fi}}
\def\N{\mathbb{N}}
\def\Z{\mathbb{Z}}
\def\1{\mathbbm{1}}
\def\F{\mathbb{F}}
\def\Fbar{\overline{\mathbb{F}}}
\def\s-{\smallsetminus}
\def\into{\hookrightarrow}
\newcounter{dawidcomments}
\newcounter{emiliocomments}
\newcounter{barbaracomments}
\author{Barbara Baumeister, Dawid Kielak, and Emilio Pierro}
\title{On the smallest non-abelian quotient of $\Aut(F_n)$}
\date{\today}
\begin{document}

\begin{abstract}
We show that the smallest non-abelian quotient of $\Aut(F_n)$ is $\PSL_n(\Z/2\Z) = \L_n(2)$, thus confirming a conjecture of Mecchia--Zimmermann. In the course of the proof we give an exponential (in $n$) lower bound for the cardinality of a set on which $\SAut(F_n)$, the unique index $2$ subgroup of $\Aut(F_n)$, can act non-trivially. We also offer new results on the representation theory of $\SAut(F_n)$ in small dimensions over small, positive characteristics, and on rigidity of maps from $\SAut(F_n)$ to finite groups of Lie type and algebraic groups in characteristic $2$.
\end{abstract}

\maketitle

\begin{center}
\emph{Dedicated to the memory of Kay Magaard.}
\end{center}

\section{Introduction}
\label{sec: intro}

In \cite{Lyndon1987} Lyndon stated (denoting a free group by $F$):
\begin{quote}
 Problem 5. Determine the structure of $\Aut(F)$, of its subgroups, especially its finite subgroups, and its quotient groups, as well as the structure of individual automorphisms.
\end{quote}
This (admittedly very broad) question, and its variation for $\Out(F_n)$, the group of outer automorphisms of the free group $F_n$, can be seen as one of the driving forces behind much activity in Geometric Group Theory. The structure of finite subgroups has been understood completely thanks to the Nielsen realisation theorems for $\Aut(F_n)$ and $\Out(F_n)$ (proven independently by Culler~\cite{Culler1984}, Khramtsov~\cite{Khramtsov1985}, and Zimmermann~\cite{Zimmermann1981}). In addition,  there is a rich literature on both the structure of general subgroups and individual elements of $\Aut(F_n)$ and $\Out(F_n)$ -- see e.g. the series of papers of Handel--Mosher~\cite{HandelMosher2013a,HandelMosher2013b,HandelMosher2013c,HandelMosher2013d,HandelMosher2013e} and the work of Bestvina--Feighn--Handel~\cite{Bestvinaetal2000,Bestvinaetal2005} on the train track theory.

In this article we focus on quotients of $\Aut(F_n)$, particularly the finite ones.
It should be noted that such investigations, being closely related to the study of finite index subgroups of $\Aut(F_n)$, are connected to one of the most important open problems relating to automorphisms of free groups: for $n\geqslant 3$, does $\Aut(F_n)$ or $\Out(F_n)$ have Kazhdan's property (T)?
These questions have negative answers for $n=3$ (as shown by Grunewald--Lubotzky~\cite{GrunewaldLubotzky2006}, and positive answers for $n=5$ (as shown by Kaluba--Nowak--Ozawa~\cite{Kalubaetal2017}).

It is in fact open whether either of $\Aut(F_n)$ or $\Out(F_n)$ (for $n = 4$ or $n \geqslant 6$) contains a finite index subgroup which can map onto $F_2$, or indeed onto $\Z$. (The former property is \emph{largeness}, the latter is the negation of the property \emph{FAb}.) Positive answers to either of these questions  would lead to the negation of property (T).

More directly related to the current article is the question of whether $\Aut(F_n)$ satisfies the \emph{Congruence Subgroup Property} -- in this context, the property says that every finite quotient $\Aut(F_n) \to K$ should factor through a (finite) quotient $\Aut(F_n) \to \Aut(F_n/\chi)$, where $\chi$ is a finite index characteristic subgroup of $F_n$. The Congruence Subgroup Property was shown to hold for $n=2$ by Asada~\cite{Asada2001} (see also~\cite{Buxetal2011} for a translation of the proof into more group-theoretic terms); the corresponding statement for $\SL_n(\Z)$ is true for all $n>2$ (as shown by Mennicke~\cite{Mennicke1965}), while  the statement for mapping class groups remains  open.

\smallskip
Investigating finite quotients of outer automorphism groups of free groups, as well as mapping class groups, has a long history. The first fundamental result here is that groups in both classes are residually finite -- this is due to Grossman~\cite{Grossman1974}.
Once we know that the groups admit many finite quotients, we can start asking questions about the structure or size of such quotients. This is of course equivalent to studying normal subgroups of finite index in mapping class groups and $\Out(F_n)$.

When $n\geqslant 3$, the groups $\Out(F_n)$ and $\Aut(F_n)$ have unique subgroups of index $2$, denoted respectively by $\SOut(F_n)$ and $\SAut(F_n)$. Both of these subgroups are perfect, and
so the abelian quotients of $\Out(F_n)$ and $\Aut(F_n)$ are the two groups of order at most $2$. The situation for mapping class groups is very similar.

The simplest way of obtaining a non-abelian quotient of $\Out(F_n)$ or $\Aut(F_n)$ comes from observing that $\Out(F_n)$ acts on the abelianisation of $F_n$, that is $\Z^n$. In this way we obtain (surjective) maps
\[
\Aut(F_n) \to \Out(F_n) \to \GL_n(\Z)                                                                                                                                                                                                                                                                \]
The finite quotients of $\GL_n(\Z)$ are controlled by the Congruence Subgroup Property and are well understood. In particular, the smallest (in terms of cardinality) such quotient is $\PSL_n(\Z/2\Z) = \L_n(2)$, obtained by reducing $\Z$ modulo $2$. According to a conjecture of Mecchia--Zimmermann \cite{MecchiaZimmermann2010}, the group $\L_n(2)$ is the smallest non-abelian quotient of $\Out(F_n)$.

In~\cite{MecchiaZimmermann2010} Mecchia and Zimmermann confirmed their conjecture for $n \in \{3,4\}$. In this paper we prove it for all $n \geqslant 3$. In fact we prove more:

\begin{mainthm}[\ref{very main thm}]
 Let $n\geqslant 3$. Every non-trivial finite quotient of $\SAut(F_n)$ is either greater in cardinality than $\L_n(2)$, or isomorphic to $\L_n(2)$. Moreover,  if the quotient is $\L_n(2)$, then the quotient map is the natural map postcomposed with an automorphism of $\L_n(2)$.
\end{mainthm}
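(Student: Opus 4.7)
\bigskip

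The plan is to rely on the classification of finite simple groups (CFSG) together with the three technical inputs advertised in the abstract: the exponential lower bound on the cardinality of a set on which $\SAut(F_n)$ can act non-trivially, the small-dimensional modular representation theory of $\SAut(F_n)$ in small positive characteristic, and the rigidity of homomorphisms from $\SAut(F_n)$ into finite groups of Lie type and algebraic groups in characteristic $2$. Throughout, fix $n\geqslant 3$ and let $\phi\colon\SAut(F_n)\twoheadrightarrow Q$ be a non-trivial finite quotient with $|Q|\leqslant |\L_n(2)|$; the goal is to show $Q\cong \L_n(2)$ and that $\phi$ coincides with the natural map up to an automorphism of $\L_n(2)$.

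First I would reduce to the non-abelian simple case. Since $\SAut(F_n)$ is perfect, so is $Q$; choosing a maximal proper normal subgroup $M\trianglelefteq Q$ gives a non-abelian finite simple quotient $S=Q/M$ with $|S|\leqslant |Q|\leqslant |\L_n(2)|$. The task then becomes to prove that such an $S$ must be $\L_n(2)$, after which a cardinality comparison forces $M=1$ and $Q=S$. I would next perform a case analysis on $S$ using CFSG. If $S$ is an alternating group $\Alt_d$ or a sporadic group, then a non-trivial $\SAut(F_n)$-action on a set of minimal permutation degree of $S$ has exponentially many points in $n$, whereas $d!/2$ or the order of the sporadic group is bounded by $|\L_n(2)|\sim 2^{n^2}$; the two inequalities are incompatible for $n$ large, and the finitely many small $n$ are covered by the results of Mecchia--Zimmermann or by direct inspection. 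If $S$ is of Lie type in odd characteristic, the minimal permutation/representation degrees again combine with the order bound $|S|\leqslant 2^{n^2+O(n)}$ to leave only boundedly many possibilities in each family, which are then ruled out by the small-dimensional modular representation theory developed earlier.

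The core case is $S$ of Lie type in characteristic $2$. Here I would invoke the rigidity results: any homomorphism from $\SAut(F_n)$ to an algebraic group in characteristic $2$ is forced, after conjugation, to come from a specific family of natural representations, and the finite-Lie-type analogue then forces $S$ to sit inside $\L_n(\overline{\F_2})$ in a prescribed way. Combined with $|S|\leqslant |\L_n(2)|$, this pins $S$ down to $\L_n(2)$ and exhibits $\phi$ modulo $M$ as the composition $\SAut(F_n)\to\SL_n(\Z)\to\L_n(2)$ up to an outer automorphism of $\L_n(2)$. The structural statement then follows: $M$ is trivial because $|Q|\leqslant|\L_n(2)|=|S|$, so $Q=S=\L_n(2)$ and $\phi$ is exactly the natural map postcomposed with an element of $\Aut(\L_n(2))$.

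The main obstacle is the Lie-type sieve: for a naive order estimate alone, every group of Lie type of bounded rank over a field of size at most $\approx 2^{n^2/r}$ survives, and one genuinely needs the rigidity in characteristic $2$ and the representation-theoretic input to collapse all these to $\L_n(2)$. I expect the delicate points to be (i) handling quasi-simple overgroups and the outer automorphism group of $\L_n(2)$ when extracting the \emph{uniqueness} of $\phi$, and (ii) the small-$n$ base cases where the exponential lower bound is too weak and one has to lean on the rigidity and character-theoretic arguments directly.
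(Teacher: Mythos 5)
Your overall architecture coincides with the paper's: reduce to a non-abelian finite simple quotient using perfectness and the CFSG, eliminate alternating groups with the exponential lower bound on permutation degrees, handle odd-characteristic classical groups with the small-dimensional projective representation theory, and settle characteristic $2$ with the twisted-rank rigidity theorem together with a uniqueness statement for maps onto $\L_n(2)$; the small values of $n$ are treated separately in both. Your reduction to the simple case (pass to $S=Q/M$ and then use $|Q|\leqslant|\L_n(2)|=|S|$ to force $M=1$) is a harmless variant of the paper's observation that a smallest non-trivial quotient of a perfect group is simple.

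The step that genuinely fails as written is the sporadic case (and, for the same reason, the exceptional groups of Lie type, which your sketch does not really address). You propose to play ``the minimal permutation degree of $S$ is at least $k(n)$'' against ``$|S|\leqslant|\L_n(2)|$'' and assert that the two are incompatible for large $n$. They are not: a sporadic group has fixed order, so the order condition holds for \emph{every} $n$ beyond some $n_0(S)$, while the degree condition is only violated once $k(n)$ exceeds the minimal faithful permutation degree $d(S)$, which can be astronomically larger than $k(n)$ in the relevant range. For the Monster, $|S|\leqslant|\L_n(2)|$ already holds at $n=14$, where $k(14)=512$, whereas $d(S)\approx 9.7\times 10^{19}$; no contradiction arises until $n$ exceeds roughly $130$, and ``direct inspection'' of that range is not feasible. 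The paper closes this case with a structural obstruction instead: by \cref{killing Dn}, a quotient map onto a simple $K$ with $|K|\leqslant|\L_n(2)|$ and $K\not\cong\L_n(2)$ must embed $D_n'=2^{n-1}\rtimes A_n$ into $K$, and the \drank{} of every sporadic group (respectively, the \arank{} of every exceptional group of Lie type) is too small for this, with $\Fi_{22}$ requiring an extra centraliser argument. You need this, or a comparable subgroup-containment input, to complete those cases. A smaller omission of the same flavour: in odd characteristic the dimension-$(n+1)$ survivors over $\F_3$ are not killed by order comparison and require the separate Borel--Tits argument of \cref{char 3 small}.
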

The natural map $\SAut(F_n) \to \L_n(2)$ is obtained by acting on $\mathrm{H}_1(F_n;\Z/2\Z)$. Note that this result confirms the Congruence Subgroup Property for the minimal quotients of $\SAut(F_n)$.

Zimmermann~\cite{Zimmermann2012} also
formulated a corresponding conjecture for mapping class groups; this has now been solved by the second- and third-named authors in \cite{KielakPierro2017}.

Let us remark here that, even though the main result deals specifically with the smallest non-trivial quotient of $\SAut(F_n)$, the techniques developed in this paper can be used to study other finite quotients and so yield information on normal finite index subgroups of $\SAut(F_n)$ in general.

\smallskip

In order to determine the smallest non-trivial quotient of $\SAut(F_n)$  we can restrict our attention to the finite simple groups which, by the Classification of Finite Simple Groups (CFSG), fall into one of the following four families:\begin{enumerate}
\item the cyclic groups of prime order;
\item the alternating groups $A_n$, for $n \geqslant 5$;
\item the finite groups of Lie type, and;
\item the $26$ sporadic groups.
\end{enumerate}
For the full statement of the CFSG we refer the reader to \cite[Chapter 1]{atlas} and for a more detailed exposition of the non-abelian finite simple groups to \cite{raw}. For the purpose of this paper, we further divide the finite groups of Lie type into the following two families:
\begin{enumerate}
\item[($3$C)] the ``classical groups'': $\A_n$, $\Atwo_n$, $\B_n$, $\typeC_n$, $\D_n$ and $\Dtwo_n$, and;
\item[($3$E)] the ``exceptional groups'': $\Btwo_2$, $\Gtwo_2$, $\typeFtwo_4$, $\Dthree_4$, $\Etwo_6$, $\G_2$, $\typeF_4$, $\E_6$, $\E_7$ and $\E_8$.
\end{enumerate}

We turn first to the alternating groups and prove the following.

\begin{mainthm}[\ref{main thm actions}]
Let $n \geqslant 3$. Any action of $\SAut(F_n)$ on a set with fewer than $k(n)$ elements is trivial, where
\[
k(n) = \left\{ \begin{array}{ccl}
7 & & n=3 \\
8 & & n=4 \\
12 & \textrm{ if } & n=5 \\
14 &  & n=6 
\end{array} \right.
\]
and $k(n) = \max \limits_{r \leqslant \frac n 2 -3} \min \{ 2^{n-r-p(n)}, \binom n r \}$ for $n\geqslant 7$,
where $p(n)$ equals $0$ when $n$ is odd and $1$ when $n$ is even.
\end{mainthm}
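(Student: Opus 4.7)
The theorem splits into small-$n$ exceptions ($n \in \{3,4,5,6\}$) and a uniform bound for $n \geqslant 7$. The small cases I would treat separately, by direct case-by-case analysis: the bounds $7,8,12,14$ are almost certainly tight (they match small non-trivial actions of quotients related to $\L_n(2)$), and for each such $n$ one can combine explicit low-degree character computations for $\SAut(F_n)$-quotients with the finite subgroup structure to rule out smaller actions.

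For the generic range $n \geqslant 7$, fix a non-trivial homomorphism $\phi : \SAut(F_n) \to \Sym(X)$ and a free basis $\{x_1,\ldots,x_n\}$ of $F_n$. The plan is to exploit two kinds of finite subgroups of $\SAut(F_n)$: a copy of $\Alt_n$ acting by even permutations of the basis, and the elementary abelian 2-subgroup $E$ of those sign changes of the basis which lie in $\SAut(F_n)$ (its rank is $n - p(n)$ by the parity constraint). For each admissible $r$ and each $r$-subset $J \subseteq \{1,\ldots,n\}$, the sign changes supported on $\{1,\ldots,n\}\setminus J$ form a subgroup $E_J \leqslant E$ of rank $n-r-p(n)$, and together with the stabiliser of $J$ inside $\Alt_n$ (which contains a copy of $\Alt_{n-r}$) these generate a semidirect product $H_J = E_J \rtimes \Alt_n \leqslant \SAut(F_n)$, where the $\Alt_n$-action permutes the $n$ chosen conjugates $\{E_J : |J|=r\}$.

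The central claim is that if $\phi$ is non-trivial then, for some $r$ in the permitted range, the restriction $\phi|_{H_J}$ is non-trivial and forces $|X|\geqslant \min\{2^{n-r-p(n)},\binom{n}{r}\}$. Non-triviality of $\phi|_{H_J}$ should follow from normal generation of $\SAut(F_n)$: a single Nielsen transvection normally generates $\SAut(F_n)$, so a non-trivial $\phi$ must be non-trivial on a whole conjugacy class of Nielsen transvections, and the interaction of this class with signed permutations prevents $H_J$ from lying in $\ker\phi$. The lower bound on $|X|$ then proceeds by a dichotomy on $\phi|_{\Alt_n}$. If $\phi|_{\Alt_n}\neq 1$, then by simplicity of $\Alt_n$ (for $n\geqslant 7$) the restriction is faithful, and the $\Alt_n$-orbit structure on $X$, combined with the $E_J$-twist that $H_J$ puts on each orbit, forces an orbit of size at least $\binom{n}{r}$: indeed, the conjugates $\{J : |J|=r\}$ form an $\Alt_n$-set of size $\binom{n}{r}$ which must embed in the set of orbits of $E_J$ in $X$. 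If $\phi|_{\Alt_n}=1$, then $\phi|_{E_J}$ is an $\Alt_n$-equivariant permutation action whose kernel is a proper $\Alt_n$-submodule of $E_J$; for $r\leqslant n/2-3$ the relevant sign-change submodule over $\F_2$ is irreducible as an $\Alt_n$-module, so this kernel is trivial and the $\Alt_n$-symmetry of a faithful action of the elementary abelian 2-group $E_J$ forces at least $2^{n-r-p(n)}$ points.

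The main obstacle is the second branch of the dichotomy: pinning down the $\F_2[\Alt_n]$-module structure of $E_J$ and ruling out small $\Alt_n$-invariant permutation actions of an elementary abelian 2-group (faithful actions of $(\Z/2)^k$ can in general have degree as small as $2k$, but the $\Alt_n$-invariance together with the irreducibility range $r\leqslant n/2-3$ is what forces the exponential lower bound $2^{n-r-p(n)}$). This is also what determines the admissible range of $r$, and taking the maximum over $r$ gives $k(n)$.
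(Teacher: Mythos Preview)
Your proposal has a genuine structural gap. The semidirect product $H_J = E_J \rtimes \Alt_n$ is not well-defined: the full alternating group $\Alt_n$ does not normalise the sign-change subgroup $E_J$ supported on the complement of $J$; only the setwise stabiliser of $J$ in $\Alt_n$ does. More seriously, your dichotomy is miscalibrated. If $\phi|_{\Alt_n}$ is trivial then, since $\Alt_n$ sits inside $D'_n \smallsetminus 2^{n-1}$, the paper's \cref{killing Dn}(3) already forces $\phi$ to be trivial; there is nothing to analyse in that branch, and in particular no $E_J$-module argument is needed (or available --- with $\Alt_n$ in the kernel there is no $\Alt_n$-equivariance constraint left on how $E_J$ acts). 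So the entire content of the theorem lives in the case where $\phi|_{\Alt_n}$ is faithful, and there your proposed mechanism (``the $\binom{n}{r}$ conjugates of $J$ must embed in the set of $E_J$-orbits'') is not justified and, as stated, not correct: faithfulness of $\Alt_n$ on $X$ gives no automatic lower bound of $\binom{n}{r}$ on anything without a further structural input.

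The paper's route is genuinely different. It works with the larger alternating subgroup $A_{n+1}<\SAut(F_n)$ and invokes the Dixon--Mortimer classification of large-index subgroups of alternating groups to control the $A_{n+1}$-orbit types on $X$: this is where the $\binom{n}{r}$ appears, as the threshold below which every orbit is ``associated to'' a large standard $\Alt(J)$. The $2^{n-r-p(n)}$ term enters separately, via the fact that any action factoring through $\L_m(2)$ needs at least $2^{m-1}$ points (\cref{way too big}), which is used locally once one knows a point is fixed by some $\SAut(F(J))$. The bridge between these two inputs is a delicate point-by-point argument (\cref{actions main trick}): for each $x\in X$ one tracks the largest $I_x\subseteq N$ with $\langle\epsilon_i\epsilon_j : i,j\in I_x\rangle$ fixing $x$ and the largest $J_x$ with $\Alt(J_x)$ fixing $x$, shows these sets propagate well under transvections, and deduces that each $x$ is fixed by $\SAut(F(J_x))$; a bootstrapping step using the $(k{+}1)$-cage graph then forces $J_x=N$. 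None of this is visible in your outline, and I do not see how to repair your dichotomy into a proof without importing these ingredients.
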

The bound given above for $n\geqslant 7$ is somewhat mysterious; one can however easily see that (for large $n$) it is bounded below by $2^{\frac n 2}$.

Note that, so far, no such result was available for $\SAut(F_n)$ (one could extract a bound of $2n$ from the work of Bridson--Vogtmann~\cite{BridsonVogtmann2003}). Clearly, the bounds given above give precisely the same bounds for $\SOut(F_n)$. In this context the best bound known so far was $\frac 1 2 \binom {n+1} 2$ (for $n\geqslant 6$). It was obtained by the second-named author in~\cite[Corollary 2.24]{Kielak2016} by an argument of representation theoretic flavour. The proof contained in the current paper is more direct.

The question of the smallest set on which $\SAut(F_n)$ or $\SOut(F_n)$ can act non-trivially remains open, but we do answer the question on the growth of the size of such a set with $n$ -- it is exponential. Note that the corresponding question for mapping class groups has been answered by Berrick--Gebhardt--Paris~\cite{Berricketal2014}.

Let us remark here that $\Out(F_n)$ (and hence also $\SAut(F_n)$) has plenty of alternating quotients -- indeed, it was shown by Gilman~\cite{Gilman1977} that $\Out(F_n)$ is residually alternating.

We use the bounds above to improve on a previous result of the second-named author on rigidity of outer actions of $\Out(F_n)$ on free groups (see \cref{thm phd dawid} for details).

\smallskip
Following the alternating groups, we rule out the sporadic groups. It was observed by Bridson--Vogtman~\cite{BridsonVogtmann2003} that any quotient of $\SAut(F_n)$ which does not factor through $\SL_n(\Z)$ must contain a subgroup isomorphic to
\[(\Z / 2\Z)^{n-1} \rtimes A_n = 2^{n-1} \rtimes A_n\]
(one can easily see this subgroup inside of $\SAut(F_n)$, as it acts on the $n$-rose, that is the bouquet of $n$ circles).
Thus, for large enough $n$, sporadic groups are never quotients of $\SAut(F_n)$, and therefore our proof (asymptotically) is not sensitive to whether the list of sporadic groups is really complete.

\smallskip
Finally we turn to the finite groups of Lie type. Our strategy differs depending on whether we are dealing with the classical or exceptional groups. The exceptional groups are handled in a similar fashion to the sporadic groups
-- this time we use an alternating subgroup $A_{n+1}$ inside $\SAut(F_n)$, which rigidifies the group in a similar way as the subgroup $2^{n-1} \rtimes A_n$ did.
The degrees of the largest alternating subgroups of exceptional groups of Lie type are  determined in \cite{lieseitz};
in particular this degree is bounded above by 17 across all such groups.

\smallskip
The most involved part of the paper deals with the classical groups. In characteristic $2$ we use an inductive strategy, and prove
\begin{mainthm}[\ref{main thm char 2 lie type}]
Let $n \geqslant 3$. Let $K$ be a finite
group of Lie type in characteristic $2$ of twisted rank less than $n-1$, and let $\overline K$ be a reductive algebraic group over an algebraically closed field of characteristic $2$ of rank less than $n-1$. Then any homomorphism $\Aut(F_n) \to K$ or $\Aut(F_n) \to \overline K$ has abelian image, and any homomorphism $\SAut(F_{n+1}) \to K$ or $\SAut(F_{n+1}) \to \overline K$ is trivial.
\end{mainthm}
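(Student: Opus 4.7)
The plan is to induct on $n$, combining a reduction to the perfect subgroup $\SAut$ with a representation-theoretic lower bound in characteristic $2$. Since $\SAut(F_n)$ is perfect of index $2$ in $\Aut(F_n)$, a homomorphism from $\Aut(F_n)$ has abelian image \iff its restriction to $\SAut(F_n)$ is trivial, so both halves of the theorem reduce to statements of the same shape: any homomorphism $\SAut(F_m) \to K$ or $\SAut(F_m) \to \overline K$ is trivial, for $m \in \{n, n+1\}$.

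The base case $n = 3$ concerns targets of twisted rank at most $1$ in characteristic $2$, namely $\L_2(2^k)$, the unitary $\Atwo_2(2^k)$, and the Suzuki group $\Btwo_2(2^{2k+1})$ (while $\overline K$ reduces essentially to $\SL_2$). I would handle these by invoking the low-dimensional $\overline{\F}_2$-representation theory of $\SAut(F_3)$ and $\SAut(F_4)$ -- the representation-theoretic input announced in the abstract -- together with the explicit list of small subgroups of these rank-one groups.

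For the inductive step, suppose $\phi\colon \SAut(F_m) \to K$ is non-trivial, with $K$ of twisted rank less than $n-1$. Composition with the minimal faithful $\overline{\F}_2$-representation of $K$ yields a non-trivial $\overline{\F}_2$-representation of $\SAut(F_m)$ of dimension controlled by the rank of $K$. For classical $K$ of rank $r$ the dimension is at most $r+1$ (up to small exceptions) by Landazuri--Seitz-type bounds, and hence at most $n-1$; but the paper's representation-theoretic input shows that every non-trivial $\overline{\F}_2$-representation of $\SAut(F_m)$ has dimension at least $n$, a contradiction. The few exceptional $K$ of rank less than $n-1$ whose minimal representation may exceed $n$ are handled separately, mirroring the treatment of exceptional groups sketched in the introduction: one exploits the embedding $\Alt_{n+1} \leqslant \SAut(F_n)$ together with the known bound (at most $17$) on the degree of the largest alternating subgroup of an exceptional group of Lie type, treating the residual small cases individually. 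The algebraic case $\overline K$ reduces to the finite case by a standard finitary argument: after taking Zariski closure and specialising matrix entries, the finitely generated image of $\phi$ sits inside $\overline K(\F_{2^k})$ for some $k$.

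The main obstacle is the representation-theoretic lower bound itself: one needs to show that $\SAut(F_m)$ admits no non-trivial $\overline{\F}_2$-representation of dimension below $n$, \emph{without} appealing to the paper's main theorem (which is being prepared). I would attack this by restricting any hypothetical small module to the subgroup $(\Z/2\Z)^{n-1} \sdp \Alt_n$ inside $\SAut(F_n)$ noted by Bridson--Vogtmann, and to its analogue inside $\SAut(F_{n+1})$, extracting composition-factor information over $\overline{\F}_2$ for the $\Alt_n$-action and cross-referencing with the natural projection $\SAut(F_m) \to \L_m(2)$ to force either triviality or dimension at least $n$.
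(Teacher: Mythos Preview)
Your approach diverges from the paper's in a way that creates a real gap. The paper does \emph{not} argue via a minimal faithful $\overline{\F}_2$-representation of $K$ and a dimension lower bound for representations of $\SAut(F_m)$. Instead, it runs the induction through the Borel--Tits theorem directly: the centraliser of the involution $\epsilon_n$ in $\Aut(F_n)$ contains $\Aut(F_{n-1})$, and (when $\phi(\epsilon_n)\ne 1$) Borel--Tits forces the image of this centraliser into a proper parabolic $P$ of $K$. The Levi decomposition $P = U \rtimes L$ then gives a map from $\Aut(F_{n-1})$ to a central product of groups of Lie type in characteristic $2$ whose total twisted rank is strictly smaller than that of $K$; induction kills $\SAut(F_{n-1})$ modulo the nilpotent radical $U$, and perfection of $\SAut(F_{n-1})$ finishes. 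The algebraic case is proved in parallel using the algebraic Borel--Tits and Levi decomposition, not by specialisation.

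The gap in your plan is the dimension bookkeeping. Your claim that a classical $K$ of twisted rank $r$ has minimal defining-characteristic representation of dimension at most $r+1$ is false: $\Atwo_{2n-4}(q)$ has twisted rank $n-2$ but natural module of dimension $2n-3$; $\B_{n-2}, \typeC_{n-2}, \D_{n-2}$ have natural modules of dimension roughly $2n$. So to make your argument work you would need to exclude non-trivial $\overline{\F}_2$-representations of $\SAut(F_m)$ up to dimension about $2n$, not $n$. The paper's characteristic-$2$ representation theory (the uniqueness lemma for maps $D_n' \to \L_m(2)$ and the resulting uniqueness of $\SAut(F_n)\to\L_n(2)$) only reaches dimension $n$, and the $2n-4$ bound you may have in mind is proved only in characteristic $>2$. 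Your sketched attack on this ``main obstacle'' via restriction to $2^{n-1}\rtimes A_n$ is precisely what underlies the paper's dimension-$n$ result and does not obviously extend to dimension $2n$. The Borel--Tits route bypasses all of this: it decreases the twisted rank by one at each step without ever converting rank into representation dimension.
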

Recall that there are precisely two abelian quotients of $\Aut(F_n)$ (when $n \geqslant 3$), namely $\Z/2\Z=2$ and the trivial group.

In odd characteristic we need to investigate the representation theory of $\SAut(F_n)$.
We prove
\begin{mainthm}[\ref{main thm: representations}]
Let $n\geqslant 8$.
Every irreducible projective representation of $\SAut(F_n)$ of dimension less than $2n-4$ over a field of characteristic greater than $2$ which does not factor through the natural map $\SAut(F_n) \to \L_n(2)$ has dimension $n+1$.
\end{mainthm}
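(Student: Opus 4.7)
The plan is to analyse any candidate irreducible projective representation $\rho\colon\SAut(F_n)\to\PGL_d(K)$ with $d<2n-4$ and $p=\operatorname{char}K>2$ by restriction to the rose stabiliser $H=V\rtimes A_n\leqslant\SAut(F_n)$, where $V\cong 2^{n-1}$ is the elementary abelian group of sign-flips of the generators. Since $\SAut(F_n)$ is perfect for $n\geqslant 3$, I would first lift $\rho$ to a linear representation $\widetilde\rho$ of a central cover, well-defined up to twist by a one-dimensional character.

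The key step is a Clifford-theoretic analysis of $\widetilde\rho|_V$. As $p>2$, this restriction decomposes as a direct sum of linear characters. The character group $\widehat V$ is identified with $\F_2^n/\langle\mathbf 1\rangle$ with $A_n$ acting by coordinate permutation, and its non-trivial orbits -- indexed by subsets of $\{1,\ldots,n\}$ modulo complementation -- have sizes $n$, $\binom n2$, $\binom n3$, and so on. Since $\binom n2>2n-4$ for $n\geqslant 7$, the dimension bound confines the characters appearing in $\widetilde\rho|_V$ to the union of the trivial orbit and the orbit of size $n$. Writing $\widetilde\rho|_V = W_0\oplus W_1$ accordingly, and noting that the inertia subgroup of a size-$n$ orbit element is $V\rtimes A_{n-1}$ with $A_{n-1}$ perfect for $n\geqslant 6$, the Clifford summand $W_1$ is either zero or a unique irreducible $H$-module of dimension exactly $n$.

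If $W_1=0$, then $V$ acts trivially on $\widetilde\rho$ and the normal closure $N$ of $V$ in $\SAut(F_n)$ lies in $\ker\widetilde\rho$. Using commutation relations between elements of $V$ and Nielsen transvections, one can identify $N$ with the kernel of the mod-$2$ reduction $\SAut(F_n)\to\L_n(2)$, forcing $\rho$ to factor through $\L_n(2)$ and contradicting the hypothesis. Hence $\dim W_1=n$ and $\dim W_0=d-n<n-4$. As every non-trivial irreducible representation of $A_n$ in odd characteristic has dimension at least $n-2$ for $n\geqslant 8$, it follows that $W_0$ is a direct sum of trivial $A_n$-modules.

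The final step is to pin down $\dim W_0 = 1$, yielding $d=n+1$. I would do this by conjugating $V$ by a Nielsen generator $\sigma\not\in H$ to a distinct elementary abelian subgroup $V^\sigma$ and comparing the Clifford decompositions of $\widetilde\rho|_V$ and $\widetilde\rho|_{V^\sigma}$: the compatibility of the two size-$n$ summands, mediated by the embedding of $V\cap V^\sigma$ in both subgroups, forces exactly one extra trivial direction in $W_0$. This last compatibility calculation is the main obstacle, and it is precisely what distinguishes the genuinely new $(n+1)$-dimensional irreducible projective representations of $\SAut(F_n)$ from lower-dimensional candidates that would either factor through $\L_n(2)$ or fail to lift to a coherent action of the full group.
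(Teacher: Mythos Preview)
Your outline tracks the paper's closely: restrict to $D_n' = V \rtimes A_n$, decompose via $V$-eigenspaces into a trivial part $W_0$ and the standard $n$-dimensional part $W_1$, then use the transvections to control $W_0$. Two points, however, need more than you have given them.

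First, the lifting. You pass to a central cover and then assert that $\widetilde\rho|_V$ is a sum of linear characters of $V$. But the preimage of $V$ in the cover need not act through $V$: elementary abelian $2$-groups have large Schur multipliers, and genuine projective representations of $V$ exist even in odd characteristic (e.g.\ the Pauli matrices for $2^2$). The paper does not bypass this; it proves directly (Proposition~\ref{Barbara even} and Corollary~\ref{Barbara}) that under the dimension hypothesis the projective $D_n'$-representation itself lifts to a linear one, arguing via the central element $\delta$ when $n$ is even and bootstrapping from two copies of $D_{n-1}'$ when $n$ is odd.

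Second, and more seriously, your final step is where the real content lies, and it is only gestured at. You propose to compare the Clifford decompositions for $V$ and for a conjugate $V^\sigma$, and claim that ``compatibility\ldots forces exactly one extra trivial direction in $W_0$''. But the two decompositions are conjugate by $\widetilde\rho(\sigma)$, so their abstract shapes agree automatically; what is needed is control over how $\widetilde\rho(\sigma)$ moves $W_0$ relative to $W_1$, and comparing eigenspaces of $V\cap V^\sigma$ alone does not give this. The paper instead computes explicitly: after normalising the lifts $\overline{\rho_{ij}}$ so that they fix each $u_l$ with $l\notin\{i,j\}$, one checks that $\overline{\rho_{12}}$ preserves $Z = W_0\oplus\langle u_1,u_2\rangle$, and then a commutator calculation shows that every vector in $W' = \overline{\rho_{12}}^{-1}(W_0)\cap W_0$ is fixed by $\overline{\rho_{32}}$, hence (conjugating by $D_n'$) by every transvection. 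This produces a trivial $\SAut(F_n)$-submodule of codimension at most $2$ in $W_0$; a further argument with the involution $\overline{\rho_{12}\epsilon_2\epsilon_3}$ reduces the codimension to $1$, and irreducibility then forces $\dim W_0 = 1$. Your proposal does not contain a substitute for this computation, and you yourself flag it as ``the main obstacle''.
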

Note that over a field of characteristic greater than $n+1$, every linear representation of $\Out(F_n)$ of dimension less than $\binom {n+1} 2$ factors through the map $\Out(F_n) \to \GL_n(\Z)$ mentioned above (see~\cite[3.13]{Kielak2013}).
Representations of $\SAut(F_n)$ over characteristic other than $2$ have also been studied by Varghese~\cite{Varghese2015}.

\smallskip
The proof of the main result (\cref{very main thm}) for $n \geqslant 8$ is uniform; the small values of $n$ need special attention, and we deal with them at the end of the paper. We also need a number of computations comparing orders of various finite groups; these can be found in the appendix.

\subsection*{Acknowledgements.} The authors are grateful to Alastair Litterick,
Yuri Santos Rego and Stefan Witzel for many helpful conversations. The second- and third-named authors were supported by the SFB 701 of Bielefeld University.

\section{Preliminaries}

\subsection{Notation}
We recall a few conventions which we use frequently. When it is clear, the prime number $p$ will denote the cyclic group of that order. Furthermore, the elementary abelian group of order $p^n$ will be denoted as $p^n$. These conventions are standard in finite group theory.
We also follow Artin's convention, and use $\L_n(q)$ to denote $\PSL_n$ over the field of cardinality $q$.

We conjugate on the right, and use the following commutator convention
\[
 [g,h] = ghg^{-1}h^{-1}
\]

The abstract symmetric group of degree $n$ is denoted by $S_n$. Given a set $I$, we define $\Sym(I)$ to be its symmetric group.
We define $A_n$ and $\Alt(I)$ in the analogous manner for the alternating groups.

We fix $n$ and denote by $N$ the set $\{1, \dots, n\}$.

\subsection{Some subgroups and elements of \texorpdfstring{$\Aut(F_n)$}{Aut(Fn)}}

We pick a free generating set $a_1, \dots, a_n$ for the free group $F_n$. 
We will abuse notation by writing $F_n = F(N)$, and given a subset $I \subseteq N$ we will write $F(I)$ for the subgroup of $F(N)$ generated by the elements $a_i $ with $i \in I$.

For every $i,j \in N$, $i\neq j$, set
\begin{align*}
\rho_{ij}(a_k) &= \left\{ \begin{array}{lcl}
                            a_i a_j & \textrm{ if } & k = i \\
                            a_k\phantom{{a_i}^{-1}} & \textrm{ otherwise}
                           \end{array} \right. \\
\lambda_{ij}(a_k) &= \left\{ \begin{array}{lcl}
                            a_j a_i & \textrm{ if } & k = i \\
                            a_k\phantom{{a_i}^{-1}} & \textrm{ otherwise}
                           \end{array} \right. \\
\sigma_{ij}(a_k) &= \left\{ \begin{array}{lcl}
                            a_j\phantom{{a_i}^{-1}} & \phantom{\textrm{ otherwise}} & k = i \\
                            a_i & \textrm{ if } & k=j \\
                            a_k & \phantom{\textrm{ otherwise}} & k \not\in \{i,j\}
                           \end{array} \right.
                           \end{align*}
\begin{align*}
\sigma_{i (n+1)}(a_k) &= \left\{ \begin{array}{lcl}
                            {a_i}^{-1} & \textrm{ if } & k = i \\
                            a_k{a_i}^{-1} & \textrm{ otherwise}
                           \end{array} \right. \\
\epsilon_{i}(a_k) &= \left\{ \begin{array}{lcl}
                            {a_i}^{-1} & \textrm{ if } & k = i \\
                            a_k\phantom{{a_i}^{-1}} & \textrm{ otherwise}
                           \end{array} \right.
                           \\
\delta(a_k) &=  \hspace{6pt} \left. \begin{array}{lcl}
                            {a_k}^{-1}\phantom{a_i} & \hspace{2pt} \textrm{ for every} \hspace{1pt} & k
                            \end{array} \right.
\end{align*}
All of the endomorphisms of $F_n$ defined above are in fact elements of $\Aut(F_n)$. The elements $\rho_{ij}$ are the \emph{right transvections}, the elements $\lambda_{ij}$ are the \emph{left transvections}, and the set of all transvections generates $\SAut(F_n)$.

The involutions $\epsilon_i$ pairwise commute, and hence generate $2^n$ inside $\Aut(F_n)$. We have $2^{n-1} = 2^n \cap \SAut(F_n)$. When talking about $2^n$ or $2^{n-1}$ inside $\Aut(F_n)$, we will always mean these subgroups.

The elements $\sigma_{ij}$ with $i,j \in N$ generate a symmetric group $S_n$. Each of the sets
\[
 \{\epsilon_i \mid i \in N\}, \{\rho_{ij} \mid i,j \in N\} \textrm{ and } \{\lambda_{ij} \mid i,j \in N\}
\]
is preserved under conjugation by elements of $S_n$, and the left conjugation coincides with the natural action by $S_n$ on the indices.
We have \[A_n = S_n \cap \SAut(F_n)\]

The elements $\sigma_{ij}$ with $i,j \in N \cup \{n+1\}$  generate a symmetric group $S_{n+1}$. Again, we have $A_{n+1} = S_{n+1} \cap \SAut(F_n)$.
Again, when we talk about $A_n, S_n, A_{n+1}$ or $S_{n+1}$ inside $\Aut(F_n)$, we mean these subgroups.

Since the symmetric group $S_n$ acts on $2^n$ by permuting the indices of the elements $\epsilon_i$, we have $2^n \rtimes S_n < \Aut(F_n)$ (note that this is the Coxeter group of type $\B_{n}$). As usual, we will refer to this specific subgroup as $2^n \rtimes S_n$.
Clearly, $\SAut(F_n) \cap 2^n \rtimes S_n$ contains $2^{n-1} \rtimes A_n$. 
We will denote this subgroup as $D'_n$ (since it is isomorphic to the derived subgroup of the Coxeter group of type $\D_n$). Note that $2^{n-1}$ inside $D'_n$ is generated by the elements $\epsilon_i \epsilon_j$ with $i \neq j$.

\begin{lem}
\label{closure of An in Dn}
Let $n\geqslant 3$. Then the normal closure of $A_n$ in $D'_n$ is the whole of $D'_n$.
\end{lem}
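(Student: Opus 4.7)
The plan is the following. Since $D'_n = 2^{n-1} \rtimes A_n$ is generated by $A_n$ together with the elementary abelian subgroup $2^{n-1} = \langle \epsilon_i \epsilon_j \mid i\ne j\rangle$, in order to prove that the normal closure $H$ of $A_n$ in $D'_n$ equals $D'_n$ it suffices to show $2^{n-1} \subseteq H$.

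First I would observe that, because $H$ is normal and contains $A_n$, for every $a \in A_n$ and every $g \in 2^{n-1}$ the commutator $[a,g] = a g a^{-1} g^{-1}$ lies in $H$; moreover $a g a^{-1}$ lies in $2^{n-1}$ (as $2^{n-1}$ is normal in $D'_n$), so each such commutator is in $2^{n-1} \cap H$. I would then make a single explicit computation: take the $3$-cycle $a = \sigma_{12}\sigma_{23} = (1\,2\,3) \in A_n$ and $g = \epsilon_1 \epsilon_2 \in 2^{n-1}$. Then $a g a^{-1} = \epsilon_2 \epsilon_3$, and since the $\epsilon_i$ pairwise commute and are involutions,
\[
[a,g] = \epsilon_2 \epsilon_3 \epsilon_1 \epsilon_2 = \epsilon_1 \epsilon_3 \in H.
\]

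To finish, I would use that $A_n \subseteq H$ is closed under conjugation of the element $\epsilon_1\epsilon_3$ and that $A_n$ acts transitively on the set of unordered pairs of elements of $N$ for every $n \geqslant 3$ (for $n=3$ one checks directly that the $3$-cycle $(1\,2\,3)$ permutes the three pairs $\{1,2\}, \{2,3\}, \{1,3\}$ cyclically, and for $n \geqslant 4$ this is standard). Hence every $\epsilon_i \epsilon_j$ lies in $H$, so $H$ contains the generating set of $2^{n-1}$, which gives $H \supseteq \langle A_n, 2^{n-1}\rangle = D'_n$. No step is really an obstacle here; the only mildly delicate point is the separate verification that $A_3$ acts transitively on pairs in the base case $n=3$, which is immediate.
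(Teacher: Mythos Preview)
Your proof is correct and essentially identical to the paper's: both compute a single commutator between an element of $A_n$ and an element of $2^{n-1}$ to exhibit one $\epsilon_i\epsilon_j$ in the normal closure, then invoke transitivity of $A_n$ on unordered pairs. The paper uses $\epsilon_1\epsilon_2 = [\epsilon_1\epsilon_3,\sigma_{12}\sigma_{13}]$ whereas you use $\epsilon_1\epsilon_3 = [\sigma_{12}\sigma_{23},\epsilon_1\epsilon_2]$, but these are the same idea with different choices of elements; your version is slightly more explicit about why the commutator lies in the normal closure and about the $n=3$ base case.
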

\begin{proof}
 \[
  \epsilon_1 \epsilon_2 = [\epsilon_1 \epsilon_3, \sigma_{12}\sigma_{13}] \in \langle \! \langle A_n \rangle \! \rangle
 \]
and so every $\epsilon_i \epsilon_j$ lies in the normal closure of $A_n$ as well, since $A_n$ acts transitively on unordered pairs in $N$.
\end{proof}

The Nielsen Realisation theorem for free groups (proved independently by Culler \cite{Culler1984}, Khramtsov~\cite{Khramtsov1985} and Zimmermann~\cite{Zimmermann1981}) states that every finite subgroup of $\Aut(F_n)$ can be seen as a group of basepoint preserving automorphisms of a graph with fundamental group identified with $F_n$. From this point of view, the subgroup $2^n \rtimes S_n$ is the automorphism group of the $n$-rose (the bouquet of $n$ circles), and the subgroup $S_{n+1}$ is the basepoint preserving automorphism group of the  $(n+1)$-cage graph (a graph with two vertices and $n+1$ edges connecting them).

\begin{rmk}
 Throughout, we are going to make extensive use of the Steinberg commutator relations in $\Aut(F_n)$, that is
 \[
  {\rho_{ij}}^{-1} = [{\rho_{ik}}^{-1},{\rho_{kj}}^{-1}]
 \]
and
 \[
  {\lambda_{ij}}^{-1} = [{\lambda_{ik}}^{-1},{\lambda_{kj}}^{-1}]
 \]
 We will also use
 \[
 {\rho_{ij}}^{\delta} = \lambda_{ij}
 \]
\end{rmk}
Another two types of relations which we will frequently encounter are already present in the proof of the following lemma (based on observations of Bridson--Vogtmann~\cite{BridsonVogtmann2003}).
\begin{lem}
\label{conjugate transvections}
 For $n \geqslant 3$, all automorphisms ${\rho_{ij}}^{\pm 1}$ and ${\lambda_{ij}}^{\pm 1}$ (with $i \neq j$)
are conjugate inside $\SAut(F_n)$.
\end{lem}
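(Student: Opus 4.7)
The plan is to decompose the statement into two subproblems: (i) for a fixed pair $(i,j)$, show that the four elements $\rho_{ij}^{\pm 1}$ and $\lambda_{ij}^{\pm 1}$ lie in a single $\SAut(F_n)$-conjugacy class, and (ii) show that varying the pair $(i,j)$ does not leave that class. Conjugating the whole equivalence class from (i) to the one obtained by relabelling will then give the result.

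For (i), the most natural conjugator relating $\rho_{ij}$ and $\lambda_{ij}$ is $\delta$, but it lies in $\SAut(F_n)$ only when $n$ is even, so I would instead use elements of $2^{n-1}\leqslant\SAut(F_n)$. A direct computation on the generators $a_1,\dots,a_n$ should verify
\[
\rho_{ij}^{\epsilon_i\epsilon_j}=\lambda_{ij}, \qquad \rho_{ij}^{\epsilon_i\epsilon_k}=\lambda_{ij}^{-1} \text{ for any } k\notin\{i,j\};
\]
such a $k$ exists precisely because $n\geqslant 3$. The first identity is the analogue of $\rho_{ij}^{\delta}=\lambda_{ij}$ with $\delta$ replaced by a sign-change that happens in $\SAut(F_n)$; the second follows the same calculation, using that $\epsilon_k$ commutes with $\rho_{ij}$ (because $k\notin\{i,j\}$) while still contributing the inversion that flips the sign of the result. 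Inverting both identities and combining them shows that $\rho_{ij},\rho_{ij}^{-1},\lambda_{ij},\lambda_{ij}^{-1}$ form a single $\SAut(F_n)$-conjugacy class for each fixed $(i,j)$.

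For (ii), any $\sigma\in S_n\leqslant\Aut(F_n)$ conjugates $\rho_{ij}$ to $\rho_{\sigma(i)\sigma(j)}$ and $\lambda_{ij}$ to $\lambda_{\sigma(i)\sigma(j)}$. If $\sigma\in A_n$ then this conjugation already takes place inside $\SAut(F_n)$; otherwise I would pick an index $m\notin\{i,j\}$ (again using $n\geqslant 3$) and replace $\sigma$ by $\sigma\epsilon_m$. Since $\epsilon_m$ commutes with $\rho_{ij}$ this does not alter the effect of the conjugation, and since $\det\sigma=\det\epsilon_m=-1$ the product has determinant $+1$ and hence lies in $\SAut(F_n)$. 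Combining (i) with (ii) delivers the lemma.

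The only real obstacle is parity bookkeeping: several of the most natural conjugators—namely $\delta$, the individual $\epsilon_i$, and the individual transpositions $\sigma_{ij}$—do not lie in $\SAut(F_n)$, so each must be paired with a second sign-reversing element that centralises the transvection being moved. The existence of such a centralising element is exactly what would fail for $n<3$, which explains the hypothesis $n\geqslant 3$.
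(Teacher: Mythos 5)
Your argument is correct and follows essentially the same route as the paper: both establish ${\rho_{ij}}^{\epsilon_i\epsilon_j}=\lambda_{ij}$ together with a second conjugation by an element of $2^{n-1}$ that produces an inverse (the paper uses ${\rho_{ij}}^{\epsilon_j\epsilon_k}={\rho_{ij}}^{-1}$, you use ${\rho_{ij}}^{\epsilon_i\epsilon_k}={\lambda_{ij}}^{-1}$, which is equivalent in effect), and then move between index pairs using permutations. The only difference is that the paper invokes transitivity of $A_n$ on ordered pairs for $n\geqslant 4$ and treats $n=3$ separately via ${\rho_{12}}^{\sigma_{12}\epsilon_3}=\rho_{21}$, whereas your uniform trick of correcting an odd $\sigma$ by a commuting $\epsilon_m$ handles all $n\geqslant 3$ at once — just note that with the conjugator written as $\sigma\epsilon_m$ you should choose $m\notin\{\sigma(i),\sigma(j)\}$ (or write the product as $\epsilon_m\sigma$ with $m\notin\{i,j\}$), a purely cosmetic fix.
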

\begin{proof}
 Observe that
 \[ {\rho_{ij}}^{\epsilon_i \epsilon_j} = \lambda_{ij}\]
 and that
 \[ {\rho_{ij}}^{\epsilon_j \epsilon_k} = {\rho_{ij}}^{-1}\]
 where $k \not\in \{i,j\}$.

 When $n \geqslant 4$, the subgroup $A_n$ acts transitively on ordered pairs in $N$, and so we are done.

 Let us suppose that $n=3$. In this case, using the 3-cycle $\sigma_{12} \sigma_{23}$, we immediately see that $\rho_{12}, \rho_{23}$ and $\rho_{31}$ are all conjugate. We also have
 \[
  {\rho_{12}}^{\sigma_{12} \epsilon_3} = \rho_{21}
 \]
and thus all right transvections are conjugate, and we are done.
\end{proof}

We also note the following useful fact, following from Gersten's presentation of $\SAut(F_n)$~\cite{Gersten1984}.
\begin{prop}
\label{perfect}
 For every $n \geqslant 3$, the group $\SAut(F_n)$ is perfect.
\end{prop}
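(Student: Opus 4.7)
The plan is to use the Steinberg commutator relations recalled in the remark just above the statement. Since $\SAut(F_n)$ is generated by the transvections $\rho_{ij}^{\pm 1}$ and $\lambda_{ij}^{\pm 1}$, to prove perfectness it is enough to show that each such generator lies in the commutator subgroup $[\SAut(F_n), \SAut(F_n)]$.

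The hypothesis $n \geqslant 3$ is exactly what allows us to pick, for each ordered pair $(i,j)$ with $i \neq j$, a third index $k \in N \smallsetminus \{i,j\}$. With such a $k$ chosen, the Steinberg relation
\[
\rho_{ij}^{-1} = [\rho_{ik}^{-1}, \rho_{kj}^{-1}]
\]
exhibits $\rho_{ij}^{-1}$ (and hence $\rho_{ij}$) as a commutator in $\SAut(F_n)$. The analogous identity for left transvections gives the same for $\lambda_{ij}^{\pm 1}$. Since these exhaust a generating set, $\SAut(F_n) = [\SAut(F_n), \SAut(F_n)]$.

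The main (and only) subtle point is verifying that the cited presentation of Gersten really gives the Steinberg relations above as relators in $\SAut(F_n)$, so that one is allowed to use them without further justification; this is precisely the content of the remark preceding \cref{conjugate transvections}. No case analysis is needed, and there is no real obstacle beyond ensuring $n \geqslant 3$ so that a third index $k$ exists.
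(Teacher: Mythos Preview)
Your proof is correct and is exactly the argument the paper has in mind: the paper does not spell out a proof at all, merely remarking that the proposition follows from Gersten's presentation, and your use of the Steinberg commutator relations (already recorded in the remark preceding \cref{conjugate transvections}) together with the fact that transvections generate $\SAut(F_n)$ is precisely how one extracts perfectness from that presentation.
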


\subsection{Linear quotients}

Observe that abelianising the free group $F_n$ gives us a map
\[
\SAut(F_n) \to \SL_n(\Z)
\]
This homomorphism is in fact surjective, since each elementary matrix in $\SL_n(\Z)$ has a transvection in its preimage. We will refer to this map as the natural homomorphism $\SAut(F_n) \to \SL_n(\Z)$.

The finite quotients of $\SL_n(\Z)$ (for $n\geqslant 3$) are controlled by the Congruence Subgroup Property as proven by Mennicke~\cite{Mennicke1965}). In particular, noting that $\SAut(F_n)$ is perfect (\cref{perfect}), we conclude that the non-trivial simple quotients of $\SL_n(\Z)$ are the groups $\L_n(p)$ where $p$ ranges over all primes. The smallest one is clearly $\L_n(2)$.

We will refer to the compositions of the natural map  $\SAut(F_n) \to \SL_n(\Z)$ and the quotient maps $ \SL_n(\Z) \to \L_n(p)$ as natural maps as well.

We will find the following observations (due to Bridson--Vogtmann~\cite{BridsonVogtmann2003}) most useful.

\begin{lem}
\label{killing delta}
\label{killing epsilon}
\label{killing Dn}
 Let $n \geqslant 3$, and let $\phi$ be a homomorphism with domain $\SAut(F_n)$.
 \begin{enumerate}
  \item If $n$ is even, and $\phi(\delta)$ is central in $\im \phi$, then $\phi$ factors through the natural map $\SAut(F_n) \to \SL_n(\Z)$.
  \item For any $n$, if there exists $\xi \in 2^{n-1} \s- \{1,\delta\}$ such that $\phi(\xi)$ is central in $\im \phi$, then $\phi$ factors through the natural map $\SAut(F_n) \to \L_n(2)$.
  \item For any $n$, if there exists $\xi \in D_n' \s- 2^{n-1}$ such that $\phi(\xi)$ is central in $\im \phi$, then $\phi$ is trivial.
 \end{enumerate}
\end{lem}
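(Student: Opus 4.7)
Plan: The strategy is the same for all three parts: from $\phi(\xi)$ being central, extract commutator relations $\phi([g,\xi])=1$ for cleverly chosen $g\in\SAut(F_n)$, and harvest them as kernel elements until the desired factorisation is forced. I will rely throughout on Magnus's theorem that $\Torelli$ is generated by the conjugation IAs $C_{ij}\colon a_i\mapsto a_ja_ia_j^{-1}$ and the commutator IAs $K_{ijk}\colon a_i\mapsto a_i[a_j,a_k]$.

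For (1), a direct calculation gives $[\rho_{ij},\delta]=C_{ij}^{-1}$, and then $[C_{ij}^{-1},\rho_{ik}]=K_{ijk}$. Both families therefore lie in $\ker\phi$, so by Magnus $\Torelli\subseteq\ker\phi$ and $\phi$ factors through the natural map $\SAut(F_n)\to\SL_n(\Z)$.

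For (2), write $\xi=\epsilon_S$ with $\varnothing\neq S\subsetneq N$ and $|S|$ even. Since $A_n$ acts transitively on $|S|$-subsets of $N$ and $\phi(\xi)$ is central, $\phi(\epsilon_T)=\phi(\xi)$ for every $|S|$-subset $T$. Using $\xi^2=1$, this gives $\phi(\epsilon_T\epsilon_{T'})=1$ for any two such subsets; choosing $T,T'$ to differ by a single swap (possible because $1\leq|S|\leq n-1$) yields $\phi(\epsilon_i\epsilon_j)=1$ for every $i\neq j$. A case analysis of $[\rho_{kl},\epsilon_i\epsilon_j]$ based on the intersection $\{k,l\}\cap\{i,j\}$ then produces, for every $k\neq l$, the three relations
\[
\phi(\rho_{kl}^2)=\phi(\rho_{kl}\lambda_{kl})=\phi(\rho_{kl}\lambda_{kl}^{-1})=1.
\]
The third says $C_{kl}^{-1}=\rho_{kl}\lambda_{kl}^{-1}$ lies in $\ker\phi$, so running the part (1) argument (with $C_{kl}^{-1}$ in place of $[\rho_{ij},\delta]$) puts every $K_{ijk}$ and hence all of $\Torelli$ in $\ker\phi$. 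Together with the $\epsilon_i\epsilon_j$ and $\rho_{ij}^2$ relations, $\phi$ factors through the natural map $\SAut(F_n)\to\L_n(2)$.

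For (3), write $\xi=\eta\sigma$ with $\eta\in 2^{n-1}$ and $\sigma\in A_n\setminus\{e\}$. Using that $2^{n-1}$ is abelian and that $\sigma$ normalises it via the permutation action, a short calculation gives $[\epsilon_{T'},\xi]=\epsilon_{T'\triangle\sigma(T')}$ for every $\epsilon_{T'}\in 2^{n-1}$. A cycle-type analysis of $\sigma$ shows that, except when $n=4$ and $\sigma$ is a fixed-point-free involution, the image of the $\F_2$-linear map $T'\mapsto T'\triangle\sigma(T')$ on $2^{n-1}$ contains an element of $2^{n-1}\setminus\{1,\delta\}$. In this generic case part (2) applies, so $\phi$ factors through $\SAut(F_n)\to\L_n(2)$; since $\L_n(2)$ is simple non-abelian with trivial centre, centrality forces $\phi(\xi)=1$, but $\xi$ maps to the non-trivial permutation matrix of $\sigma$ in $\L_n(2)$, so $\phi$ itself must be trivial. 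In the exceptional $n=4$ case only $\phi(\delta)=1$ is produced directly, so part (1) applies and $\phi$ factors through $\SL_4(\Z)$; the further relations $[\rho_{ij},\xi]\in\ker\phi$ then supply non-central elements of $\SL_4(\Z)$ in the kernel, whose normal closure --- by the Congruence Subgroup Property --- must be the whole of $\SL_4(\Z)$, so $\phi$ is again trivial. The main obstacle is this exceptional case: it is the only place the CSP for $\SL_n(\Z)$ (rather than just Magnus's theorem and direct commutator calculations) is essential.
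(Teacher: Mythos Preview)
Your arguments for (1) and (2) are correct and only cosmetically different from the paper's: where you invoke Magnus's generating set for $\Torelli$, the paper uses that adding the relations $\rho_{ij}=\lambda_{ij}$ to Gersten's presentation yields Steinberg's presentation of $\SL_n(\Z)$; both routes amount to showing $\phi(\rho_{ij})=\phi(\lambda_{ij})$. Your derivation of $\phi(\epsilon_i\epsilon_j)=1$ in (2) via transitivity of $A_n$ on $|S|$-subsets is arguably cleaner than the paper's explicit commutator $[\xi,\tau]=\epsilon_i\epsilon_j$. (Note that your final step ``$\rho_{ij}^2\in\ker\phi$ forces factoring through $\L_n(2)$'' is Mennicke's theorem, which you should cite; the paper does so explicitly.)

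For (3), your approach is genuinely different. You reduce to (2) via $[\epsilon_{T'},\xi]=\epsilon_{T'\triangle\sigma(T')}$ and a cycle-type analysis; this works and is pleasant. The paper instead trivialises a transvection directly: if $\sigma$ moves $1\mapsto 2\mapsto 3$ then $\rho_{12}^\xi\in\{\rho_{23}^{\pm1},\lambda_{23}^{\pm1}\}$ and $\phi(\rho_{13}^{-1})=\phi([\rho_{12}^{-1},\rho_{23}^{-1}])=[\phi(x_{23})^{\mp1},\phi(\rho_{23})^{-1}]=1$; if $\sigma$ is a product of disjoint transpositions (say swapping $1\leftrightarrow2$, $3\leftrightarrow4$) then $\rho_{12}^\xi\in\{\rho_{34}^{\pm1},\lambda_{34}^{\pm1}\}$ and $\phi(\rho_{14}^{-1})=[\phi(x_{34})^{\mp1},\phi(\rho_{24})^{-1}]=1$. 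This is uniform and needs no case split between $n=4$ and $n>4$.

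Your exceptional case ($n=4$, $\sigma$ fixed-point-free) has a genuine gap. The assertion ``non-central elements in the kernel, whose normal closure --- by the CSP --- must be the whole of $\SL_4(\Z)$'' is false as stated: $E_{12}^2$ is non-central yet its normal closure is $\Gamma(2)\neq\SL_4(\Z)$. What you actually need is that \emph{these particular} commutators normally generate everything. This is true, but not by CSP --- it follows from a one-line computation you omitted: with $\sigma=(1\,2)(3\,4)$ one has $[\rho_{13},\xi]\in\ker\phi$, which maps to $E_{13}E_{24}^{\mp1}$ in $\SL_4(\Z)$; since $E_{24}$ commutes with $E_{34}$,
\[
[E_{13}E_{24}^{\mp1},\,E_{34}]=[E_{13},E_{34}]=E_{14}^{\pm1},
\]
so $E_{14}\in\ker\psi$ and $\psi$ is trivial. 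Note that this is exactly the paper's second commutator computation, done one level down in $\SL_4(\Z)$. So your detour through (1) and CSP is not wrong in spirit, but the step you flagged as ``the main obstacle'' is in fact resolved by the same direct Steinberg-relation trick the paper uses throughout --- you should replace the CSP appeal with it.
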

\begin{proof}
\noindent {(1)}
We have
\[
 \delta \rho_{ij} \delta = \lambda_{ij}
\]
for every $i,j$. Thus, $\phi$ factors through the group obtained by augmenting Gersten's presentation~\cite{Gersten1984} of $\SAut(F_n)$ by the additional relations $\rho_{ij} = \lambda_{ij}$. But this is equivalent to Steinberg's presentation of $\SL_n(\Z)$.

\medskip
\noindent {(2)}
We claim that there exists $\tau \in A_n$ such that
\[
 [\xi , \tau] = \epsilon_i \epsilon_j
\]
We have $\xi = \prod_{i \in I} \epsilon_i$ for some $I \subset N$. Take $i \in I$ and $j \not\in I$. Suppose first that there exist distinct $\alpha, \beta$ either in $I \s- \{i\}$ or in $N \s- (I \cup \{j\})$. Then $\tau = \sigma_{ij} \sigma_{\alpha \beta}$ is as claimed.

If no such $\alpha$ and $\beta$ exist, then $n\leqslant 4$ and $\xi = \epsilon_i \epsilon_{i'}$ for some $i' \neq i$. Thus we may take $\tau = \sigma_{i i'} \sigma_{j i}$. This proves the claim.

\smallskip
Now $\phi([\xi , \tau])=1$ since $\phi(\xi)$ is central.
Using the action of $A_n$ on $2^{n-1}$ we immediately conclude that $2^{n-1} \leqslant \ker \phi$. Thus we have
 \[
  \phi(\rho_{ij}) = \phi(\rho_{ij})^{\phi(\epsilon_i \epsilon_j)} =  \phi({\rho_{ij}}^{\epsilon_i \epsilon_j}) =  \phi(\lambda_{ij})
 \]
Now Gersten's presentation tells us that $\phi$ factors through the natural map \[\SAut(F_n) \to \SL_n(\Z)\]
Moreover,
 \[
  \phi(\rho_{ij}) = \phi(\rho_{ij})^{\phi(\epsilon_j \epsilon_k)} =  \phi({\rho_{ij}}^{\epsilon_j \epsilon_k}) =  \phi({\rho_{ij}}^{-1})
 \]
where $k \not\in\{i,j\}$. The result of Mennicke~\cite{Mennicke1965} tells us that in this case $\phi$ factors further through $\SL_n(\Z) \to \L_n(2)$.

\medskip
\noindent {(3)}
We write $\xi = \xi' \tau$, where $\xi' \in 2^{n-1}$ and $\tau \in A_n$.

Suppose first that $\tau$ is not a product of commuting transpositions. Then, without loss of generality, we have
\[
 {\rho_{12}}^\xi = {x_{23}}^{\pm 1}
\]
where $x \in \{ \rho , \lambda\}$. Now
\[
 \phi({\rho_{13}}^{-1}) = \phi([{\rho_{12}}^{-1},{\rho_{23}}^{-1}])= [\phi({x_{23}})^{\mp 1},\phi({\rho_{23}})^{-1}] = 1
\]
as ${x_{23}}^{\pm 1}$ commutes with ${\rho_{23}}$. This trivialises $\phi$, since $\SAut(F_n)$ is generated by transvections, and every two transvections are conjugate.

Now suppose that $\tau$ is a product of commuting transpositions. Then $n \geqslant 4$, and without loss of generality
\[
 {\rho_{12}}^\xi = {x_{34}}^{\pm 1}
\]
where $x \in \{ \rho , \lambda\}$. Now
\[
 \phi({\rho_{14}}^{-1}) = \phi([{\rho_{12}}^{-1},{\rho_{24}}^{-1}])= [\phi({x_{34}})^{\mp 1},\phi({\rho_{24}})^{-1}] = 1
\]
as ${x_{34}}^{\pm 1}$ commutes with ${\rho_{24}}$. This trivialises $\phi$ as before.
\end{proof}

\begin{rmk}
\label{killing epsilon rem}
We can draw the same conclusion as in (2) if
\[\phi(\rho_{ij}) = \phi(\lambda_{ij}) = \phi(\rho_{ij})^{-1}\]
\end{rmk}

\begin{cor}
\label{smallest linear quotient}
Let $\phi \colon \SAut(F_n) \to K$ be a homomorphism. If $K$ is finite and $\phi\vert_{D_n'}$ is not injective, then \begin{enumerate}
\item $\phi$ is trivial; or
\item $\vert K \vert > \vert \L_n(2) \vert$; or
\item $K \cong \L_n(2)$ and $\phi$ is the natural map up to postcomposition with an automorphism of $\L_n(2)$.
\end{enumerate}\end{cor}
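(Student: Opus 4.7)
The plan is to analyse $N := \ker(\phi|_{D_n'})$, a nontrivial normal subgroup of $D_n'$ by hypothesis, and apply the trichotomy of \cref{killing Dn}. The position of $N$ relative to the subgroup $2^{n-1} \leqslant D_n'$ and to the element $\delta$ will dictate which of the three conclusions holds.

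First I would dispose of the case $N \not\subseteq 2^{n-1}$: picking any $\xi \in N \smallsetminus 2^{n-1}$, we have $\phi(\xi) = 1$, which is of course central in $\im\phi$, so \cref{killing Dn}(3) yields $\phi = 1$, which is conclusion (1). Henceforth I may assume $N \subseteq 2^{n-1}$.

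Next, if $N$ contains some $\xi \in 2^{n-1} \smallsetminus \{1, \delta\}$, then \cref{killing Dn}(2) factorises $\phi$ as $\psi \circ \nu$, where $\nu \colon \SAut(F_n) \to \L_n(2)$ is the natural map. Since $\L_n(2)$ is simple for $n \geqslant 3$, either $\psi$ is trivial (giving (1)) or $\psi$ is injective; in the latter case $|K| \geqslant |\L_n(2)|$, with strict inequality giving conclusion (2) and equality forcing $\psi \colon \L_n(2) \to K$ to be an isomorphism, which after identifying $K$ with $\L_n(2)$ becomes an automorphism, i.e. conclusion (3).

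The remaining case is $N = \{1, \delta\}$, which forces $n$ to be even so that $\delta \in 2^{n-1}$. Here $\phi(\delta) = 1$ is central, so \cref{killing Dn}(1) factorises $\phi$ as $\theta \circ \mu$, with $\mu \colon \SAut(F_n) \to \SL_n(\Z)$ the natural map and $\theta \colon \SL_n(\Z) \to K$ some homomorphism. I would then invoke Mennicke's congruence subgroup theorem (cited in \cref{sec: intro}) together with perfectness of $\SL_n(\Z)$ to conclude that every nontrivial finite quotient of $\SL_n(\Z)$ has cardinality at least $|\L_n(2)|$, with equality occurring only for $\L_n(2)$ itself, via the mod-$2$ reduction map, uniquely up to $\Aut(\L_n(2))$. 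Applying this to $\im\theta$ reproduces conclusions (1), (2) and (3) as in the previous case. The main obstacle I anticipate is this last step: although the introduction already states that $\L_n(2)$ is the smallest simple quotient of $\SL_n(\Z)$, some care is required to exclude a non-simple finite quotient of cardinality at most $|\L_n(2)|$, which needs a slightly more careful appeal to the classification of the congruence quotients $\SL_n(\Z/m\Z)$ and not merely their simple composition factors.
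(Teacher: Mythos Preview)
The paper gives no explicit proof of this corollary, treating it as an immediate consequence of \cref{killing Dn} together with the facts about quotients of $\SL_n(\Z)$ stated just before it; your proposal correctly spells out that implicit argument, and the case split on whether $N \subseteq 2^{n-1}$ and whether $N = \{1,\delta\}$ is exactly the trichotomy of \cref{killing Dn}.

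Your flagged obstacle in the $N=\{1,\delta\}$ case is easily resolved without a detailed classification of congruence quotients. Since $\SL_n(\Z)$ is perfect for $n\geqslant 3$, any nontrivial finite quotient $Q$ is perfect and hence has a nonabelian simple quotient, which by CSP must be some $\L_n(p)$; thus $|Q| \geqslant |\L_n(p)| \geqslant |\L_n(2)|$. Equality forces $Q$ to coincide with its simple quotient $\L_n(2)$, and the same argument (factor through $\SL_n(\Z/m\Z)$, project to the $2$-part, reduce modulo $2$) shows any surjection $\SL_n(\Z)\twoheadrightarrow \L_n(2)$ factors through the natural one.
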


Many parts of the current paper are inductive in nature, and they are all based on the following observation.
\begin{lem}
 \label{centralisers}
 For any $k\leqslant n+1$, the group $\SAut(F_n)$ contains an element $\xi$ of order $k$ whose centraliser contains $\SAut(F_{n-k})$. When $k$ is odd then the centraliser contains $\SAut(F_{n-k+1})$. Moreover, when $k \geqslant 5$ the element $\xi$ can be chosen in such a way that the normal closure of $\xi$ is the whole of $\SAut(F_n)$.
\end{lem}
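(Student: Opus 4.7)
The plan is to realise $\xi$ explicitly as a basepoint-preserving graph automorphism via the Nielsen Realisation theorem. Consider the graph $\Gamma$ on two vertices $u,v$, with $k$ parallel edges $e_1,\ldots,e_k$ between $u$ and $v$ and $n-k+1$ loops $\ell_1,\ldots,\ell_{n-k+1}$ at $v$; an Euler characteristic count gives $\pi_1(\Gamma,v)\cong F_n$. Define $\xi$ as the graph automorphism that cyclically permutes $e_1,\ldots,e_k$, with the additional twist of inverting the loop $\ell_{n-k+1}$ when $k$ is even. This is a graph automorphism of $\Gamma$ of order $k$, hence determines $\xi\in\Aut(F_n)$ of the same order.

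To check $\xi\in\SAut(F_n)$, compute the determinant of the induced map on $F_n^{\mathrm{ab}}$ after fixing a spanning tree (say $e_k$), with free basis $b_i=\overline{e_k}\,e_i$ for $1\leqslant i\leqslant k-1$ together with the $\ell_j$. A direct calculation from $\xi(b_i)=\overline{b_1}\,b_{i+1}$ for $i<k-1$ and $\xi(b_{k-1})=\overline{b_1}$ shows that the cyclic rotation contributes determinant $(-1)^{k-1}$ on the $b$-block, while the inversion of $\ell_{n-k+1}$ (when $k$ is even) supplies the missing $-1$. The centralizer claim is then immediate from the construction: the standard subgroup of $\SAut(F_n)$ generated by the transvections among the loops $\ell_j$ untouched by $\xi$---of rank $n-k+1$ when $k$ is odd and $n-k$ when $k$ is even---commutes with $\xi$ since its elements are supported on a free factor on which $\xi$ acts as the identity.

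The main obstacle is the moreover statement. Suppose $k\geqslant 5$ and let $N$ denote the normal closure of $\xi$ in $\SAut(F_n)$; write $\phi\colon\SAut(F_n)\to\SAut(F_n)/N$. My plan is to exhibit a short chain of conjugates of $\xi$ whose product is a non-trivial element of $D'_n\cap N$, so that $\phi|_{D'_n}$ fails to be injective. Concretely, for $g$ a graph automorphism of $\Gamma$ transposing two of the $e_i$'s (with sign corrections on a loop to land in $\SAut(F_n)$), the element $\xi^{-1}\cdot g\xi g^{-1}$ is supported on the $e_i$'s and a single loop, and the slack afforded by $k\geqslant 5$ lets one arrange for this product to be a non-trivial element of $D'_n$.

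By \cref{smallest linear quotient}, the non-injectivity of $\phi|_{D'_n}$ forces $\phi$ either to be trivial, or to factor (up to an automorphism of the target) through the natural map $\SAut(F_n)\to\L_n(2)$. To rule out the second possibility it suffices to check that the image of $\xi$ in $\L_n(2)$ is non-trivial, which can be read off the matrix of $\xi$ modulo $2$: its non-identity block has characteristic polynomial $X^{k-1}+X^{k-2}+\cdots+1$ over $\F_2$ (non-constant for $k\geqslant 2$), so the reduction is not the identity. Hence $\phi$ must be trivial and $N=\SAut(F_n)$, completing the argument.
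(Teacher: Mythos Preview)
Your construction of $\xi$ and the verification of the centraliser claim coincide with the paper's; the explicit determinant check is a useful addition.

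The ``moreover'' part has two gaps. First, your commutator $\xi^{-1}\cdot g\xi g^{-1}$ is an element of the copy of $A_k$ permuting the edges $e_1,\ldots,e_k$, but this is \emph{not} the standard $A_n$ sitting inside $D'_n$: the latter permutes the chosen free basis of $F_n$. The bridge you need is that the stabiliser of $e_k$ inside this edge-$A_k$ (a copy of $A_{k-1}$) permutes the basis elements $b_i=\overline{e_k}\,e_i$ and therefore does lie in the standard $A_n$. With $k\geqslant 5$ one can indeed choose the transposition so that the resulting commutator fixes $e_k$ and is non-trivial, but without spelling out this identification your claim that the product lands in $D'_n$ is unjustified.

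Second, once you have a non-trivial element of the standard $A_n$ in $\ker\phi$, you should invoke \cref{killing Dn}(3) directly: it has no finiteness hypothesis and gives triviality of $\phi$ immediately, since your element lies in $D'_n\smallsetminus 2^{n-1}$. Your appeal to \cref{smallest linear quotient} is illegitimate because that corollary assumes the target is finite, which $\SAut(F_n)/N$ need not be a priori. The subsequent detour through $\L_n(2)$ is therefore unnecessary, and your characteristic-polynomial argument there is in any case shaky: for $k$ a power of $2$ one has $X^{k-1}+\cdots+1=(X+1)^{k-1}$ over $\F_2$, so the characteristic polynomial alone does not distinguish the reduction from a unipotent matrix.

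The paper's route to the ``moreover'' is shorter: for $k\geqslant 5$ the group $A_k$ acting on the edges is simple, so a single non-trivial commutator $[\xi,\tau]$ with a $3$-cycle $\tau$ in that $A_k$ forces $A_k\leqslant N$; the subgroup $A_{k-1}\leqslant A_k$ fixing $e_k$ then lies in the standard $A_n$, and \cref{killing Dn}(3) finishes.
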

\begin{proof}
Let $\Gamma$ be a graph with vertex set $\{u, v\}$ and edge set consisting of $k$ distinct edges connecting $u$ to $v$ and $n-k+1$ distinct edges running from $v$ to itself (see \cref{pic1}). Let $K$ denote the set of the former $k$ edges. We identify the fundamental group of $\Gamma$ with $F_n$. Elements of $\SAut(F_n)$ then correspond to based homotopy equivalences of $\Gamma$.

Let $\xi$ denote the automorphism of $\Gamma$ which cyclically permutes the edges of $K$. When $k$ is odd the action of $\xi$ on the remaining edges is trivial; when $k$ is even, then $\xi$ acts on $n-k$ of the remaining edges trivially, but it flips the last edge. It is clear that $\xi$ defines an element of $\SAut(F_n)$ of order $k$. Also, $\xi$ fixes pointwise a free factor of $F_n$ of rank $n-k$ when $k$ is even or $n-k+1$ when $k$ is odd.

Observe that we have a copy of the alternating group $A_k$ permuting the edges in $K$.
Suppose that $k\geqslant 5$, and let $\tau$ denote some $3$-cycle in $A_k$. It is obvious that $[\xi,\tau]$ is a non-trivial element of $A_k$; it is also clear that $A_k$ is simple. These two facts imply that $A_k$ lies in the normal closure of $\xi$. But this  $A_k$ contains an $A_{k-1}$ contained in the standard $A_n < \SAut(F_n)$, and the result follows by \cref{killing Dn}(3).
\end{proof}

\begin{figure}[h]
 \includegraphics{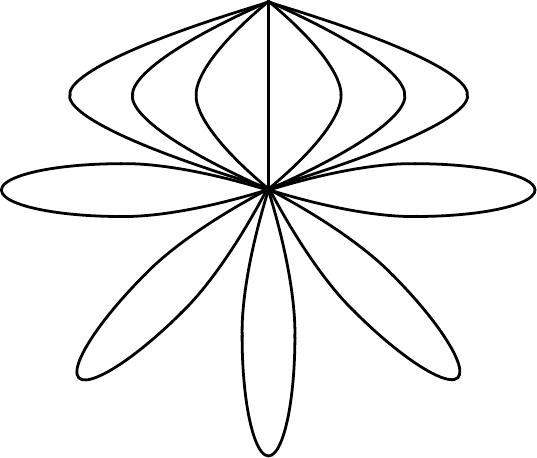}
 \caption{The graph $\Gamma$ with $(n,k) = (11,7)$}
 \label{pic1}
\end{figure}

One can easily give an algebraic description of the element $\xi$ above; in fact we will do this for $k=3$ when we deal with classical groups in characteristic 3 in \cref{sec: char 3}.

\section{Alternating groups}

In this section we will give lower bounds on the cardinality of a set on which the groups $\SAut(F_n)$ can act non-trivially.
The cases $n \in \{3, \dots, 8\}$ are done in a somewhat ad-hoc manner, and we begin with these, developing the necessary tools along the way. We will conclude the section with a general result for $n \geqslant 9$.

As a corollary, we obtain that alternating groups are never the smallest quotients of $\SAut(F_n)$ (for $n \geqslant 3$), with a curious exception for $n=4$, since in this case the (a fortiori smallest) quotient $\L_4(2)$ is isomorphic to the alternating group $A_8$.

\begin{lem}[$n=3$]
\label{A_5 not quotient of F_3}
 Any action of $\SAut(F_3)$ on a set $X$ with fewer than $7$ elements is trivial.
\end{lem}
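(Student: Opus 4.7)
My plan is to combine perfectness of $\SAut(F_3)$ with \cref{smallest linear quotient} and the conjugation relations from \cref{conjugate transvections}, and then perform a case analysis on the order of $\phi(\rho_{12})$ in the image.

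Let $\phi \colon \SAut(F_3) \to \Sym(X)$ with $|X| \leq 6$. Since $\SAut(F_3)$ is perfect (\cref{perfect}), the image lies in $\Alt(X)$. For $|X| \leq 4$ the group $\Alt(X)$ is solvable, hence $\phi$ is trivial. For $|X| \in \{5, 6\}$, the non-trivial perfect subgroups of $\Alt(X)$ are $A_5$ (for both $|X| = 5, 6$) and $A_6$ (only for $|X| = 6$), so it remains to rule out $A_5$ and $A_6$ as possible images.

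I then argue that $\phi|_{D_3'}$ must be injective in each remaining case. For image $A_5$ this is immediate from \cref{smallest linear quotient}, since $|A_5| = 60 < 168 = |\L_3(2)|$ and $A_5 \not\cong \L_3(2)$. For image $A_6$, note that $\L_3(2)$ is not a subgroup of $A_6$ (since $168 \nmid 360$); moreover, $\delta \notin \SAut(F_3)$ when $n = 3$ is odd. Therefore if $\phi|_{D_3'}$ were not injective, \cref{killing Dn} would force $\phi$ to be trivial or to factor through $\L_3(2)$, neither of which is compatible with image $A_6$.

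With $\phi|_{D_3'}$ injective, set $x := \phi(\rho_{12})$ and $y := \phi(\lambda_{12})$. These elements commute (since $\rho_{12}$ and $\lambda_{12}$ do), are conjugate in the image (by \cref{conjugate transvections}), and the identities $\rho_{12}^{\epsilon_1 \epsilon_2} = \lambda_{12}$ and $\rho_{12}^{\epsilon_2 \epsilon_3} = \rho_{12}^{-1}$ appearing in the proof of \cref{conjugate transvections} yield
\[
x^{\phi(\epsilon_1 \epsilon_2)} = y, \qquad x^{\phi(\epsilon_2 \epsilon_3)} = x^{-1}.
\]
I then perform a case analysis on the order of $x$, which ranges over $\{2, 3, 5\}$ when the image is $A_5$ and over $\{2, 3, 4, 5\}$ when the image is $A_6$. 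In each case the combined constraints that $x, y$ commute and are conjugate in such a small alternating group force $y$ to lie in a very small subset of elements, in most cases $\{x, x^{-1}\}$. When $y = x$ we obtain $\phi(\rho_{12}) = \phi(\lambda_{12})$; combined with $x = x^{-1}$ (when the order is $2$) this invokes \cref{killing epsilon rem}, while for higher orders the same identity together with Gersten's presentation forces $\phi$ to factor through $\SL_3(\Z)$ and hence through some simple quotient $\L_3(p)$, none of which is isomorphic to $A_5$ or $A_6$. When $y = x^{-1}$, both $\phi(\epsilon_1\epsilon_2)$ and $\phi(\epsilon_2\epsilon_3)$ invert $x$, so their product $\phi(\epsilon_1\epsilon_3)$ centralises $x$; a short computation shows that in $A_5$ and $A_6$ the centraliser of $x$ meets the Klein four-group $\phi(2^{n-1})$ only in elements of $\langle x\rangle$, which are incompatible with being non-trivial involutions, yielding $\phi(\epsilon_1\epsilon_3) = 1$ and contradicting injectivity of $\phi|_{D_3'}$.

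The main obstacle is the bookkeeping for image $A_6$: since $A_6$ has more elements and a richer subgroup structure than $A_5$, one has to verify carefully in each case that the centraliser computations actually exclude the problematic configurations, in particular the possibility that $y$ lies in a cyclic subgroup disjoint from $\langle x \rangle$ in the centraliser of $x$.
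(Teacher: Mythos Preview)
Your approach is genuinely different from the paper's, and the overall architecture (perfectness, reducing to image $A_5$ or $A_6$, injectivity on $D_3'$, then exploiting $x=\phi(\rho_{12})$ and $y=\phi(\lambda_{12})$) is sound. However, the case analysis you outline does not close, and the gap is exactly where you flag it.

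Concretely, there are two problems. First, for $x$ of order $2$ in $A_6$ the centraliser $C_{A_6}(x)$ is dihedral of order $8$ and contains \emph{four} involutions besides $x$ itself, so $y$ need not lie in $\{x,x^{-1}\}=\{x\}$; the same phenomenon occurs for $x$ a $3$-cycle, where $C_{A_6}((123))=\langle(123)\rangle\times\langle(456)\rangle$ and $y=(456)^{\pm1}$ is a live option. You call this ``the main obstacle'' but do not actually dispatch it. Second, your treatment of the case $y=x^{-1}$ is incorrect for order $4$: when $x=(1234)(56)\in A_6$ one has $C_{A_6}(x)=\langle x\rangle$, so $\phi(\epsilon_1\epsilon_3)\in C_{A_6}(x)\cap\phi(2^2)$ forces $\phi(\epsilon_1\epsilon_3)=x^2$, which \emph{is} a non-trivial involution --- so your claimed contradiction (``incompatible with being non-trivial involutions'') fails, and one must argue further (e.g.\ using the $A_3$-part of $D_3'$ or additional transvections).

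The paper sidesteps all of this with a counting argument on the full set $T$ of $24$ transvections: if the fibres of $\phi|_T$ have size at most $2$, then the $10$ elements of $T$ commuting with $\rho_{12}$ must map to at least $5$ distinct elements of a single conjugacy class in $A_6$, all commuting with $\phi(\rho_{12})$; only the class of double transpositions survives this, and then one observes that $8$ pairwise-commuting transvections would have to land in a set whose largest pairwise-commuting subset has size $3$. This global pigeonhole is what lets the paper avoid the element-by-element centraliser analysis you are attempting.
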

This result can be easily verified using GAP. For this reason, we offer only a sketch proof.
\begin{proof}[Sketch of proof]
The action gives us a homomorphism $\phi \colon \SAut(F_3) \to S_6$. Since $\SAut(F_3)$ is perfect, the image lies in $A_6$.

Consider the set of transvections
\[
 T = \{ {\rho_{ij}}^{\pm 1}, {\lambda_{ij}}^{\pm 1} \}
\]
By \cref{conjugate transvections} all elements in $T$ are conjugate in $\SAut(F_3)$, and hence also in the image of $\phi$.

Consider the equivalence relation on $T$ where elements $x,y \in T$ are equivalent if $\phi(x)=\phi(y)$.
It is clear that the equivalence classes are equal in cardinality.
We first show that if any (hence each) of these equivalence classes has cardinality at least $3$, then $\phi$ is trivial.
Let $\Phi$ be such a class. Without loss of generality we assume that $\rho_{12} \in \Phi$.

Suppose that
\[\vert \Phi \cap \{ {\rho_{12}}^{\pm1}, {\lambda_{12}}^{\pm1} \} \vert \geqslant 3 \]
Independently of which $3$ of the $4$ elements lie in $\Phi$, their image under $\phi$ is centralised by $\phi(\epsilon_1 \epsilon_2)$, and so in fact all four of these elements lie in $\Phi$.

Now, we have
\[
 \phi(\rho_{12}) = \phi({\rho_{12}}^{-1}) = \phi(\lambda_{12}) = \phi({\lambda_{12}}^{-1})
\]
In this case we conclude from \cref{killing epsilon rem} that $\phi$ factors through $\L_3(2)$. But $\L_3(2)$ is a
simple group containing an element of order 7, and so
$\phi$ is trivial.

If $\vert \Phi \cap \{ {\rho_{12}}^{\pm1}, {\lambda_{12}}^{\pm1} \} \vert < 3$ then
there exists an element ${x_{ij}}^{\pm1} \in \Phi$ with $x$ being either $\rho$ or $\lambda$, and with $(i,j) \neq (1,2)$.
If $(i,j) = (1,3)$ then
\[
 \phi({\rho_{13}}^{-1}) = \phi([{\rho_{12}}^{-1},{\rho_{23}}^{-1}]) = [\phi(x_{13})^{\mp 1},\phi(\rho_{23})^{-1}] = 1
\]
which implies that $\phi$ is trivial. We proceed in a similar fashion for all other values of $(i,j)$.
This way we verify our claim that if $\phi$ is non-trivial then $\vert \Phi \vert \leqslant 2$.

\smallskip

There are exactly 10 elements in $T$ which commute with $\rho_{12}$, namely
\[
 C_T(\rho_{12}) = \{{\rho_{12}}^{\pm1}, {\lambda_{12}}^{\pm1}, {\lambda_{13}}^{\pm1},  {\rho_{32}}^{\pm1}, {\lambda_{32}}^{\pm1}\}
\]
Using what we have learned above about the cardinality of $\Phi$, we see that $\phi$ is not trivial only if
the conjugacy class of $\phi(\rho_{12})$ in $A_6$ contains at least 5 elements commuting with $\phi(\rho_{12})$. We also know that this conjugacy class has to contain $\phi(\rho_{12})^{-1}$. By inspection we see that the only such conjugacy class in $A_6$ is that of $\tau = (12)(34)$.

The conjugacy class of $\tau$ has exactly 5 elements, say $\{\tau, \tau_1, \tau_2, \tau_1', \tau_2' \}$, and the elements $\tau_i$ and $\tau_j'$ do not commute for any $i,j \in \{ 1,2\}$. Hence the maximal subset of the conjugacy class of $\tau$ in which all elements pairwise commute is of cardinality 3. But in $C_T(\rho_{12})$ we have 8 such elements, namely
\[
 \{{\rho_{12}}^{\pm1}, {\lambda_{12}}^{\pm1}, {\rho_{32}}^{\pm1}, {\lambda_{32}}^{\pm1}\}
\]
This implies that $\vert \Phi \vert >2$, which forces $\phi$ to be trivial.
\end{proof}

Note that $\L_3(2)$ acts non-trivially on the set of non-zero vectors in $2^3$ (thought of as a vector space) which has cardinality $7$, and so $\SAut(F_3)$ has a non-trivial action on a set of $7$ elements. Thus the result above is sharp.

\begin{lem}
 \label{way too big}
 Let $n \geqslant 4$.
Suppose that $\SAut(F_n)$ acts non-trivially on a set $X$ so that $\epsilon_1 \epsilon_2$ acts trivially. Then $\vert X \vert \geqslant 2^{n-1}$.
\end{lem}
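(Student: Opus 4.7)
The plan is to reduce immediately to an action of $\L_n(2)$ via \cref{killing epsilon}(2), and then to read off the conclusion from the minimum permutation degree of this simple group.

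Let $\phi \colon \SAut(F_n) \to \Sym(X)$ be the homomorphism giving the action. The hypothesis says $\phi(\epsilon_1 \epsilon_2) = 1$, which is of course central in $\im\phi$. Since $n \geqslant 4$ we have $\epsilon_1 \epsilon_2 \neq \delta = \epsilon_1\cdots\epsilon_n$, so $\epsilon_1\epsilon_2 \in 2^{n-1}\s-\{1,\delta\}$. Part~(2) of \cref{killing epsilon} then gives that $\phi$ factors as the natural surjection $\SAut(F_n) \to \L_n(2)$ followed by a homomorphism $\bar\phi\colon\L_n(2) \to \Sym(X)$. Since $\phi$ is non-trivial, so is $\bar\phi$, and simplicity of $\L_n(2)$ (which holds for $n \geqslant 3$) forces $\bar\phi$ to be injective. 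Hence $|X|$ is at least the minimum degree of a faithful permutation representation of $\L_n(2)$.

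It remains to check that this minimum degree is at least $2^{n-1}$ for every $n \geqslant 4$. For $n \geqslant 5$ the minimum is the classical value $2^n-1$, realised by the natural action on the points of $\mathbb{P}^{n-1}(\F_2)$ (and established by checking that every maximal subgroup of $\L_n(2)$ has index at least this large); certainly $2^n - 1 \geqslant 2^{n-1}$. The subtle case is $n=4$: here the exceptional isomorphism $\L_4(2) \cong A_8$ gives the sharper minimum degree $8 = 2^{n-1}$, since a simple group of order $20160$ cannot embed into $S_m$ for $m \leqslant 7$. It is precisely this sporadic small-degree action that prevents us from claiming the stronger bound $2^n-1$ uniformly, and that dictates the shape of the statement; the minimum-degree step is thus the only non-routine ingredient, while the reduction to $\L_n(2)$ is handled entirely by the earlier lemma.
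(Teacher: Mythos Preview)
Your proof is correct and follows essentially the same route as the paper: invoke \cref{killing epsilon}(2) to factor the action through $\L_n(2)$, then use simplicity and the known minimum permutation degree of $\L_n(2)$. The paper compresses the last step into a single citation of \cite[Theorem 5.2.2]{KleidmanLiebeck1990}, whereas you spell out the exceptional case $n=4$ via $\L_4(2)\cong A_8$; this is a difference only of presentation.
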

\begin{proof}
 When $\epsilon_1 \epsilon_2$ acts trivially then the action factors through the natural map $\SAut(F_n) \to \L_n(2)$ by \cref{killing epsilon}. Now, $\L_n(2)$ is simple and cannot act on a set smaller than $2^{n-1}$ non-trivially, by \cite[Theorem 5.2.2]{KleidmanLiebeck1990}.
\end{proof}


\begin{lem}[$n=4$]
\label{actions n=4}
 Any action of $\SAut(F_4)$ on a set $X$ with fewer than $8$ elements is trivial.
\end{lem}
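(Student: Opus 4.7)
The plan is to combine \cref{smallest linear quotient} with a Lagrange-type order obstruction. Let $\phi \colon \SAut(F_4) \to \Sym(X)$ be the homomorphism determined by the action, with $|X| \leqslant 7$. Since $\SAut(F_4)$ is perfect by \cref{perfect} and $\Sym(X)/\Alt(X)$ is abelian, $\im \phi$ lies in $\Alt(X)$; combined with the bound on $|X|$, this gives $|\im \phi| \leqslant |A_7| = 2520$.

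Next I would apply \cref{smallest linear quotient} to $\phi$. Conclusion (2) of that corollary demands $|\im \phi| > |\L_4(2)| = 20160$, and conclusion (3) demands $\im \phi \cong \L_4(2)$, which by the exceptional isomorphism $\L_4(2) \cong A_8$ also forces $|\im \phi| = 20160$. Both options are incompatible with the bound $|\im \phi| \leqslant 2520$, so assuming $\phi$ is non-trivial we are forced into the situation where $\phi|_{D'_4}$ is injective.

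To close the argument, I would invoke Lagrange's theorem. The group $D'_4 = 2^3 \rtimes A_4$ has order $96 = 2^5 \cdot 3$, whereas $|A_7| = 2520 = 2^3 \cdot 3^2 \cdot 5 \cdot 7$, and $96 \nmid 2520$. Hence no embedding $D'_4 \hookrightarrow A_7$ (or into any $A_k$ with $k \leqslant 7$) can exist, contradicting the previous step. Therefore $\phi$ must be trivial.

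The main obstacle is conceptual rather than computational: one needs to recognise that the exceptional isomorphism $\L_4(2) \cong A_8$ allows conclusion (3) of \cref{smallest linear quotient} to be eliminated by the very same cardinality comparison that handles conclusion (2). Once this observation is in place, the remainder is a one-line divisibility check, and no ad-hoc analysis of particular conjugacy classes (as was needed for $n=3$) is required.
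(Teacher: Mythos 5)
Your proof is correct. It shares its skeleton with the paper's argument — split on whether $\phi\vert_{D'_4}$ is injective, and use the \cref{killing Dn}/\cref{smallest linear quotient} machinery together with $\L_4(2)\cong A_8$ to dispose of the non-injective case — but the two halves are executed differently. The paper handles the injective case by asserting that $D'_4$ has no faithful permutation representation on fewer than $8$ points, and separately that $D'_4/\langle\delta\rangle$ has none on fewer than $12$ points (both left as ``easily checked''). Your replacement of the first fact by the divisibility observation $\vert D'_4\vert = 96 = 2^5\cdot 3$, which does not divide $\vert \Alt(X)\vert$ for $\vert X\vert\leqslant 7$, is a genuine simplification: it needs only Lagrange and the perfectness of $\SAut(F_4)$, not a computation of minimal faithful degrees. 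You also sidestep the second fact entirely, since once $\phi\vert_{D'_4}$ fails to be injective you eliminate conclusions (2) and (3) of \cref{smallest linear quotient} purely by comparing $2520$ with $20160$. The only cosmetic caveat is that conclusion (2) of that corollary bounds the order of the codomain $K$ rather than of $\im\phi$; this is harmless here, as one may take $K=\im\phi\leqslant\Alt(X)$.
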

\begin{proof}
The group $D'_4$ cannot act faithfully on fewer than $8$ points, and the group $D_4'/\langle \delta \rangle$ cannot act faithfully on fewer than $12$ points (both of these facts can be easily checked). Thus we are done by \cref{killing delta}, since $\L_4(2) \cong A_8$ cannot act on fewer than $8$ points.
\end{proof}

Note that in this case the result is also sharp, since we have the epimorphism \[\SAut(F_4) \to \L_4(2) \cong A_8 \]

\begin{lem}\label{fixed by A_n}
Let $n \geqslant 1$.
 Let $D'_n$ act on a set of cardinality less than $2^{n-1-p(n)}$ where $p(n)=0$ for $n$ odd, and $p(n)=1$ for $n$ even. Then $2^{n-1}$ acts trivially on every point fixed by $A_n$.
\end{lem}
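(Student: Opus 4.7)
The plan is to reduce the lemma to a classification of $A_n$-submodules of $2^{n-1}$. Let $x \in X$ be fixed by $A_n$ and write $H$ for its stabiliser in $D'_n$. Since $A_n$ fixes $x$ we have $A_n \leqslant H$; and since, for any $a \in A_n$ and $h \in H$, the conjugate $aha^{-1}$ still fixes $x$, $A_n$ also normalises $H$. Therefore $K := H \cap 2^{n-1}$ is closed under $A_n$-conjugation, i.e.\ it is an $\F_2[A_n]$-submodule of $2^{n-1}$. Writing any $h \in H$ as $va$ with $v \in 2^{n-1}$ and $a \in A_n \leqslant H$ using the splitting $D'_n = 2^{n-1} \rtimes A_n$, one sees $v = ha^{-1} \in K$; hence $H = KA_n$ and the orbit of $x$ has size $[D'_n:H] = [2^{n-1}:K]$.

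The technical core is then the following claim: the only proper $A_n$-submodules of $2^{n-1}$ are $\{1\}$ and --- only when $n$ is even --- $\langle \delta \rangle$. Writing $e_I := \prod_{i\in I}\epsilon_i$ for $I \subseteq N$ of even cardinality (so that the $e_I$ exhaust $2^{n-1}$), I would take a non-trivial $e_I \in K$ of minimal weight $|I|$ and argue by cases. If $|I| = 2$, then since $A_n$ is transitive on $2$-subsets of $N$ for $n \geqslant 3$, $K$ contains every $e_{\{i,j\}}$, and these generate $2^{n-1}$, so $K = 2^{n-1}$. If $4 \leqslant |I| \leqslant n-1$, then picking $i_0 \in I$ and $j \in N \smallsetminus I$ and using transitivity of $A_n$ on $|I|$-subsets, we get $e_{I'} \in K$ where $I' = (I \smallsetminus \{i_0\}) \cup \{j\}$; then $e_I \cdot e_{I'} = e_{\{i_0,j\}}$ is a weight-$2$ element of $K$, reducing to the previous case. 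The only remaining possibility is $|I| = n$, which forces $n$ even and $e_I = \delta$; in that case minimality of weight forces $K = \langle \delta \rangle$.

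Given the classification, a proper $K$ satisfies $[2^{n-1}:K] \in \{2^{n-1}, 2^{n-2}\}$, the latter only when $n$ is even, so in all cases $[D'_n:H] \geqslant 2^{n-1-p(n)}$. But this is the cardinality of the orbit of $x$ inside $X$, contradicting $|X| < 2^{n-1-p(n)}$. Hence $K = 2^{n-1} \leqslant H$, so $2^{n-1}$ fixes $x$, as required. The main thing to get right is the submodule classification --- in particular being careful about when $\delta$ actually lies in $2^{n-1}$ (which happens precisely when $n$ is even, producing the correction $p(n)$); the rest of the argument is bookkeeping, and the cases $n \in \{1,2\}$ are vacuous since the bound $2^{n-1-p(n)} \leqslant 1$ forces $X$ to be empty.
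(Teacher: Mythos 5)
Your proposal is correct and follows essentially the same route as the paper: identify the stabiliser of an $A_n$-fixed point intersected with $2^{n-1}$ as an $A_n$-invariant subgroup, classify such subgroups as $\{1\}$, $\langle\delta\rangle$ (for $n$ even), or all of $2^{n-1}$, and rule out the first two by the orbit-size bound. The paper leaves the submodule classification as "easy to see"; your minimal-weight argument supplies exactly that missing detail.
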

\begin{proof}
Let $x$ be a point fixed by $A_n$, and consider the $2^{n-1}$-orbit thereof. Such an orbit corresponds to a subgroup of $2^{n-1}$ normalised by $A_n$. It cannot correspond to the trivial subgroup, as then the orbit would be too large. For the same reason it cannot correspond to the subgroup generated by $\delta$ (which is a subgroup when $n$ is even). It is easy to see that the only remaining subgroup is the whole of $2^{n-1}$, and so the action on $x$ is trivial.
\end{proof}
\begin{lem}
\label{actions single orbit}
Let $n\geqslant 4$.
Suppose that $\SAut(F_n)$ acts on a set $X$ of cardinality less than $2^{n-1-p(n)}$, where $p(n)$ is as above, in such a way that $A_n$ has at most  one non-trivial orbit. Then $\SAut(F_n)$ acts trivially on $X$.
\end{lem}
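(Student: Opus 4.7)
The plan is to show that $2^{n-1} \subseteq \ker \phi$, where $\phi \colon \SAut(F_n) \to \Sym(X)$ is the given action, and then to apply \cref{killing epsilon} (i.e.\ part~(2) of \cref{killing Dn}) to force $\phi$ through the natural map $\SAut(F_n) \to \L_n(2)$. Since the minimum degree of a non-trivial permutation representation of $\L_n(2)$ is $2^{n-1}$ by \cite[Theorem 5.2.2]{KleidmanLiebeck1990}, and $|X| < 2^{n-1-p(n)} \leqslant 2^{n-1}$, this quotient would have to act trivially, and so $\phi$ is trivial. The case $n = 4$ is already subsumed by \cref{actions n=4} (there $2^{n-1-p(n)} = 4 \leqslant 8$), so I assume throughout that $n \geqslant 5$, and in particular that $A_n$ is simple.

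Let $F = X^{A_n}$ and $Y = X \smallsetminus F$; by hypothesis $Y$ is either empty or a single non-trivial $A_n$-orbit. \cref{fixed by A_n} gives that $2^{n-1}$ acts trivially on $F$. If $Y = \emptyset$ then $A_n$, and hence $D'_n$, acts trivially on $X$, and \cref{killing Dn}(3) applied to any $3$-cycle in $A_n$ finishes the proof. Otherwise I first observe that $Y$ is $D'_n$-invariant: if $\epsilon y \in F$ for some $\epsilon \in 2^{n-1}$ and $y \in Y$, then $\epsilon^{-1}$ would also fix $\epsilon y$ by \cref{fixed by A_n}, forcing $y = \epsilon y \in F$, a contradiction. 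The task therefore reduces to proving that $2^{n-1}$ acts trivially on $Y$.

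To this end, let $K$ be the kernel of this action. Because $2^{n-1}$ is normal in $D'_n$ and $D'_n$ preserves $Y$, the subgroup $K$ is $A_n$-invariant; for $n \geqslant 5$ the $A_n$-invariant subgroups of the deleted permutation module $2^{n-1}$ are precisely $\{1\}$, $\langle \delta \rangle$ (only when $n$ is even), and $2^{n-1}$. Fix a $2^{n-1}$-orbit $\mathcal{O} \subseteq Y$ and let $M = \operatorname{Stab}_{2^{n-1}}(\mathcal{O})$ (well-defined as $2^{n-1}$ is abelian). By $A_n$-transitivity all orbits share the common size $s = [2^{n-1} : M]$, and the $A_n$-conjugates of $M$ form a single orbit of subgroups with intersection $K$. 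If $M$ is itself $A_n$-invariant, then $M$ is one of $\{1\}$, $\langle \delta \rangle$, $2^{n-1}$: the case $M = 2^{n-1}$ yields $K = 2^{n-1}$ and the desired triviality, while the other two give $s \in \{2^{n-1}, 2^{n-2}\}$, both of which contradict $s \leqslant |Y| < 2^{n-1-p(n)}$.

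The main obstacle will be the remaining sub-case in which $M$ is not $A_n$-invariant. Then $N_{A_n}(M) \subsetneq A_n$, and since the minimal proper subgroup index of $A_n$ is $n$ (for $n \geqslant 5$), the number $r$ of $2^{n-1}$-orbits on $Y$ is at least $n$, so only $|Y| \geqslant 2n$; this alone need not contradict $|Y| < 2^{n-1-p(n)}$. My plan to close this case is to exploit that the setwise orbit-stabiliser $\bar L \subseteq N_{A_n}(M)$ has the same small index $r$ in $A_n$, and to classify the $\bar L$-invariant proper subgroups of $2^{n-1}$ as a submodule of the $\F_2$-permutation module of $A_n$. This list is short: essentially the even-weight subspace and $\langle \delta \rangle$, appearing when $\bar L$ is conjugate to $A_{n-1}$, and vanishing whenever $\bar L$ contains an element acting irreducibly on $2^{n-1}$ (for example a $5$-cycle when $n = 5$). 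Each remaining candidate configuration is then ruled out by checking the images $\phi(\rho_{ij})$ and $\phi(\lambda_{ij})$ against the Steinberg commutator relations, or by observing that the resulting quotient of $\SAut(F_n)$ is incompatible with the permutation-degree lower bounds already established for its natural quotients.
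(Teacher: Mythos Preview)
Your argument is incomplete: the case in which the $2^{n-1}$-point-stabiliser $M$ is not $A_n$-invariant is only sketched, and the sketch does not go through. The bound $r\geqslant n$ gives only $|Y|\geqslant 2n$, far short of a contradiction, and your proposed classification of $\bar L$-invariant proper subgroups of $2^{n-1}$ is too optimistic. Concretely, for $n=7$ let $\bar L\leqslant A_7$ be the setwise stabiliser of $\{6,7\}$ (so $\bar L\cong S_5$, of index $21$); then
\[
M=\Big\{\,\prod_{i\in I}\epsilon_i : |I\cap\{6,7\}|\ \text{is even}\,\Big\}
\]
is a $\bar L$-invariant hyperplane of $2^6$, and the kernel $S$ of the homomorphism $2^6\rtimes\bar L\to\Z/2\Z$ sending $(\epsilon,\sigma)$ to $f(\epsilon)+\operatorname{sgn}(\sigma)$ (where $f$ has kernel $M$) is a core-free subgroup of $D'_7$ of index $42$ with $S\cap 2^6=M$ and $S\cap A_7\cong A_5$. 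Hence $D'_7$ acts faithfully and $A_7$-transitively on $42<64=2^{n-1-p(n)}$ points with $2^6$ acting non-trivially. So no argument that uses only the $D'_n$-action can show that $2^{n-1}$ fixes $Y$ pointwise; one must genuinely bring in the rest of $\SAut(F_n)$, and your ``Steinberg commutator'' plan for doing so is not carried out.

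For comparison, the paper's own proof is a single sentence asserting that once every $A_n$-orbit is a $D'_n$-orbit, ``every point in $X$ is fixed by $2^{n-1}$''. The example above shows that this inference is unjustified at the level of $D'_n$-actions, so you have in fact correctly located a genuine difficulty that the paper's argument glosses over. In the paper's actual applications of this lemma (for $n\in\{5,6,7\}$) the unique non-trivial $A_n$-orbit has size at most $15$; there a short direct check along your lines---combining $s\mid\gcd(2^{n-1},|Y|)$ with the minimal index of a proper subgroup of $A_n$, and with the nonexistence of order-$6$ subgroups in $A_4$ for the one residual case $n=5$, $|Y|=10$---does force $s=1$. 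But the lemma as stated, for arbitrary $|Y|<2^{n-1-p(n)}$, is not established by either argument.
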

\begin{proof}
By \cref{fixed by A_n}, every point fixed by $A_n$ is also fixed by $2^{n-1}$. This implies that every $A_n$-orbit is already a $D'_n$-orbit, and so every point in $X$ is  fixed by $2^{n-1}$, and thus the action of $\SAut(F_n)$ on $X$ is trivial by \cref{way too big}.
\end{proof}

For higher values of $n$ we will use the fact that actions of alternating groups on small sets are well-understood.
Recall that $A_n = \Alt(N)$ denotes the  group of even permutations of the set $N = \{1,\dots,n\}$.

\begin{dfn}
We say that a transitive action of $A_n = \Alt(N)$ on a set $X$ is \emph{associated to $k$} (with $k \in \N$) if for every (or equivalently any) $x \in X$ the stabiliser of $x$ in $A_n$ contains $\Alt(J)$ with
$J \subseteq N$ and $\vert J \vert = k$, and does not contain $\Alt(J')$ for any $J'\subseteq N$ of larger cardinality. Notice that $k \geqslant 2$.
\end{dfn}

In the following lemma we write $A_{n-1}$ for the subgroup $\Alt(N\s-\{n\})$ of $A_n = \Alt(N)$.

\begin{lem}
 \label{associated}
 Let $n\geqslant 6$.
Suppose that we are given a transitive action $\pi$ of $A_n$ on a set $X$ which is associated to $k$. Then
\begin{enumerate}
 \item the action of $A_{n-1}$ on each of its orbits is associated to $k$ or $k-1$;
 \item if $k > \frac n 2$ then there is exactly one orbit of $A_{n-1}$ of the latter kind;
  \item if $k > \frac n 2 $ then any other transitive action $\pi'$ of $A_n$ on a set $X'$ isomorphic to $\pi\vert_{A_{n-1}}$ when restricted to $A_{n-1}$ is isomorphic to $\pi$.
\end{enumerate}
\end{lem}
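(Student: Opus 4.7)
The plan for (1) is direct: fix $x \in X$ with $A_n$-stabiliser $H$ containing some $\Alt(J)$ of maximal size $k$. Restricting to $A_{n-1}$, the stabiliser of $x$ becomes $H \cap A_{n-1}$, which contains $\Alt(J)$ if $n \notin J$ and $\Alt(J \setminus \{n\})$ if $n \in J$, yielding the lower bounds $k$ and $k-1$ respectively on the association of the $A_{n-1}$-orbit. The upper bound of $k$ is automatic since any alternating subgroup of $H \cap A_{n-1}$ also lies in $H$ and hence has size at most $k$.

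For (2), the crucial observation is that when $k > n/2$ the subset $J$ with $\Alt(J) \subseteq H$ is unique. Indeed, if $\Alt(J_1), \Alt(J_2) \subseteq H$ with $J_1 \neq J_2$ and $|J_1|=|J_2|=k$, then pigeonhole forces $J_1 \cap J_2 \neq \emptyset$, and the standard fact that two alternating groups on intersecting subsets of size at least $3$ generate the alternating group of their union produces $\Alt(J_1 \cup J_2) \subseteq H$, contradicting the maximality of $k$. Uniqueness yields an $A_n$-equivariant map $\phi \colon X \to \binom{N}{k}$, $x \mapsto J(x)$; the case-B $A_{n-1}$-orbits are precisely the preimages under $\phi$ of $\{J : n \in J\}$, a single $A_{n-1}$-orbit. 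Fixing $J_* \ni n$ and $x_0 \in \phi^{-1}(J_*)$, the fibre $\phi^{-1}(J_*)$ coincides with the $A_{n,J_*}$-orbit of $x_0$ (by uniqueness, any $g\cdot x_0$ in the fibre must have $g \in N_{A_n}(\Alt(J_*)) = A_{n,J_*}$). So the number of case-B orbits equals the number of $A_{n-1,J_*}$-orbits on $A_{n,J_*}/H$; since $H \supseteq \Alt(J_*)$, this reduces to checking $A_{n-1,J_*} \cdot \Alt(J_*) = A_{n,J_*}$, a quick order computation.

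For (3), given an $A_{n-1}$-equivariant bijection $f \colon X \to X'$, a simple matching of the multisets of $A_{n-1}$-orbit associations shows that the $A_n$-association of $\pi'$ is also $k$. Applying (2) on both sides produces unique case-B orbits $\mathcal{O}, \mathcal{O}'$, and $f(\mathcal{O}) = \mathcal{O}'$. Pick $x_0 \in \mathcal{O}$ and $x_0' = f(x_0)$; let $H, H'$ be their $A_n$-stabilisers, with unique $\Alt(J_1) \subseteq H$ and $\Alt(J_2) \subseteq H'$ of size $k$, where $J_1, J_2 \ni n$. The equivariance of $f$ gives $H \cap A_{n-1} = H' \cap A_{n-1}$, so this common subgroup contains both $\Alt(J_1 \setminus \{n\})$ and $\Alt(J_2 \setminus \{n\})$. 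Since $\mathcal{O}$ has exact $A_{n-1}$-association $k-1$ (again by uniqueness of the size-$k$ subset in $H$), the generation lemma of (2) forces $J_1 = J_2 = J_*$ except in the single configuration with $n$ odd, $k = (n+1)/2$, and $J_1 \cap J_2 = \{n\}$. In the generic case $J_1 = J_2 = J_*$, both $H, H' \leq N_{A_n}(\Alt(J_*)) = A_{n,J_*}$, and in the quotient $A_{n,J_*}/\Alt(J_*) \cong S_{n-k}$ the images of $H$ and of $H'$ both coincide with the image of the common $H \cap A_{n-1}$, giving $H = H'$ and hence $X \cong X'$.

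The sole obstacle is the pathological configuration above, where the uniqueness argument for the $(k-1)$-subset inside the $A_{n-1}$-orbit fails. Here one verifies by direct analysis that only two $A_n$-conjugacy classes of candidate stabilisers survive --- the set-wise stabiliser $A_{n,J}$ and the direct product $\Alt(J) \times \Alt(N \setminus J)$ --- and that these produce $A_n$-orbits of distinct cardinalities. The hypothesis $|X| = |X'|$ (a consequence of $\pi\vert_{A_{n-1}} \cong \pi'\vert_{A_{n-1}}$) then forces $H$ and $H'$ into the same class, hence $A_n$-conjugate, completing the proof.
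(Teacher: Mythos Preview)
Your proof is correct and follows essentially the same skeleton as the paper's: part~(1) is identical, part~(2) hinges on the same uniqueness-of-$J$ observation (you package the orbit count via the equivariant map $X\to\binom{N}{k}$ and a coset computation, while the paper does a direct element chase, but the content is the same), and part~(3) proceeds as in the paper by first showing $\pi'$ has association $k$, then $J_1=J_2$, then $H=H'$.

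The one substantive difference is in~(3): you correctly isolate the boundary configuration $n$ odd, $k=(n+1)/2$, $J_1\cap J_2=\{n\}$, where $J_1\setminus\{n\}$ and $J_2$ can be disjoint and the generation lemma does not directly force $J_1=J_2$. The paper's sentence ``since $k>n/2$, the subsets $J$ and $J'$ intersect, and so $\Alt(J\cup J')\leqslant S'$'' is imprecise at exactly this point, because $S'$ is only known to contain $\Alt(J\setminus\{n\})$ and $\Alt(J')$, not $\Alt(J)$. Your patch --- observing that in this configuration both $H$ and $H'$ are squeezed between $\Alt_k\times\Alt_{k-1}$ and $(\Sym_k\times\Sym_{k-1})\cap A_n$, and then using $|X|=|X'|$ (or equivalently $|H\cap A_{n-1}|=|H'\cap A_{n-1}|$) to force them into the same conjugacy class --- is a genuine addition. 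So your treatment of~(3) is in fact more complete than the paper's.

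One small point in the other direction: your line ``a simple matching of the multisets of $A_{n-1}$-orbit associations shows that the $A_n$-association of $\pi'$ is also $k$'' deserves the two-line case split the paper gives (distinguishing $k-1>n/2$, where part~(2) applied to $\pi'$ rules out $k'=k-1$, from $k-1\leqslant n/2$, where $k<n$ guarantees an $A_{n-1}$-orbit of $\pi$ associated to $k$). As stated it is a little too terse to stand on its own.
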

\begin{proof}
Before we start, let us make an observation: let $I$ and $J$ be two subsets of $N$ of cardinality at least $3$ each, and such that $I \cap J \neq \emptyset$. Then
\[\langle \Alt(I), \Alt(J) \rangle = \Alt(I\cup J)\]
There are at least two quick ways of seeing it: the subgroup $\langle \Alt(I), \Alt(J) \rangle$ clearly contains all $3$-cycles; the subgroup $\langle \Alt(I), \Alt(J) \rangle$ acts $2$-transitively, and so primitively, on $I \cup J$, and contains a $3$-cycle, which allows us to use Jordan's theorem.

\medskip
\noindent (1)
Let $x$ be an element in an $A_{n-1}$-orbit $O$. If the stabiliser $S$ of $x$ in $A_n$ contains $\Alt(J)$ with $|J| = k$ and $n \not\in J$, then $\Alt(J) \subseteq A_{n-1}$ and  so the action of $A_{n-1}$ on $O$ is associated to at least $k$.
It is clear that the action cannot be associated to any integer greater than $k$.

If $n$ is in $J$ for every $J \subseteq N$ of size $k$ with $\Alt(J) \subseteq S$, then $\Alt(J\s-\{n\})$ is contained in $S \cap A_{n-1}$ and the action of $A_{n-1}$ on $O$ is associated with at least $k-1$.
It is clear that this action cannot be associated to any integer greater than $k-1$.

\medskip
\noindent (2)
Clearly, there exists $x \in X$ such that its stabiliser $S$ in $A_n$ contains $\Alt(J)$ with $\vert J \vert = k$ and $n\in  J$.
Note that $J$ is unique -- if there were another subset $I \subseteq N$ with $\vert I \vert = k$ and $\Alt(I)\leqslant S$, then $I\cap J$ would need to intersect non-trivially (as $k> \frac n 2$), and so we would have $\Alt(I \cup J) \leqslant S$.
Hence we may conclude from the proof of (1) that the action of $A_{n-1}$ on $O$, its orbit of $x$, is associated to $k-1$.

Now suppose that there exists a point $x' \in X$ with $A_{n-1}$-orbit $O'$, stabiliser $S'$ in $A_n$, and subset $J' \subseteq N$ of cardinality $k$ with $n \in J'$ and $\Alt(J')\leqslant S'$. There exists $\tau \in A_n$ such that
\[
x = \tau.x'
\]
and so $S' = S^\tau$. Thus $\tau(J') = J$, and therefore there exists $\sigma \in \Alt(J)$ such that $\sigma \tau(n) = n$. But then also $\sigma^{-1}.x = x$ and so $x = \sigma \tau.x'$. But $\sigma \tau \in A_{n-1}$, and therefore $O = O'$.

\medskip
\noindent (3)
Let us start by looking at $\pi'$. By assumption, this action is associated to at least $k-1$, since it is when restricted to $A_{n-1}$. It also cannot be associated to any integer larger than $k$, since then (2) would forbid the existence of an $A_{n-1}$-orbit in $X'$ associated to $k-1$, and we know that such an orbit exists.

If $k-1 > \frac n 2 $ then (2) implies that
$\pi'\vert_{A_{n-1}}$ has an orbit that is associated to $k-1$, but clearly none associated to $k-2$. Thus, by (2),  $\pi'$ is associated to $k$.

If $k-1\leqslant \frac n 2$ then
in particular $k \neq n$, and so there is an $A_{n-1}$-orbit in $X$ associated to $k$, and therefore $\pi'$ cannot be associated to $k-1$. We conclude that $\pi'$ is associated to $k$.

Pick an $x \in X$ so that
the $A_{n-1}$-action on the $A_{n-1}$ orbit $O$ of $x$ is associated to $k-1$. Let $\theta \colon X \to X'$ be a $A_{n-1}$-equivariant bijection (which exists by assumption). Let $x' = \theta(x)$. Let $S$ denote the stabiliser of $x$ in $A_n$, and $S'$ the stabiliser of $x'$ in $A_n$.

We have $\Alt(J) \times G = H \leqslant S$, where $\vert J \vert = k$, the index $\vert S:H\vert$ is at most $2$, and $G \leqslant \Alt(N \s- J)$ -- we need to observe that $J$ is unique, as proven above.
Since the $A_{n-1}$-orbit of $x$ is associated to $k-1$, we see that $n \in J$.

Now the stabiliser of $x'$ in $A_{n-1}$ is equal to $S \cap A_{n-1}$, and $S'$ contains $S \cap A_{n-1}$ and some $\Alt(J')$ with $\vert J' \vert = k$.
Since $k > \frac n 2$, the subsets $J$ and $J'$ intersect, and so $\Alt(J\cup J') \leqslant S'$ as before. But the $A_{n-1}$-action on its orbit of $x'$ is associated to $k-1$, and so we must have $J = J'$.
This implies that $S'=S$, since the index of $H$ in $S$ is equal to the index of $H \cap A_{n-1}$ in $S \cap A_{n-1}$.
\end{proof}

\begin{thm}[{Dixon--Mortimer~\cite[Theorem 5.2A]{DixonMortimer1996}}]
\label{dixonmortimer}
 Let $n \geqslant 5$, and let $r \leqslant n/2$ be an integer. Let $H < A_n = \Alt(N)$ be a proper subgroup of index less than $\binom n r$. Then one of the following holds:
 \begin{enumerate}
  \item The subgroup $H$ contains a subgroup $A_{n-r+1}$ of $A_n$ fixing $r-1$ points in $N$.
  \item We have $n=2m$ and $\vert A_n : H \vert = \frac 1 2 \binom n m$. Moreover, $H$ contains the product $A_m \times A_m$.
  \item The pair $(n,\vert A_n : H \vert)$ is one of the six exceptional cases:
  \[
   (5,6), (6,6), (6,15), (7,15), (8,15), (9,120)
  \]
 \end{enumerate}
\end{thm}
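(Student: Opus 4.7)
The plan is to analyse $H$ through its natural action on $N = \{1,\ldots,n\}$ as a subgroup of $A_n = \Alt(N)$, dividing into cases according to whether this action is intransitive, transitive imprimitive, or primitive. In each case, the strategy is to lower bound $[A_n : H]$ via the index of whatever structural overgroup $H$ is forced to lie in, and compare against the hypothesis $[A_n : H] < \binom{n}{r}$.

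First, in the intransitive case, let $A \subseteq N$ be an $H$-orbit of smallest size $s \geqslant 1$. Then $H$ stabilises $A$ setwise, so $H \leqslant (A_n)_{\{A\}}$, whose index in $A_n$ is $\binom{n}{s}$. The hypothesis $[A_n : H] < \binom{n}{r}$ immediately forces $s \leqslant r - 1$. Since $n - s \geqslant n - r + 1$, conclusion (1) reduces to showing that $H \geqslant \Alt(N \setminus A)$. I would argue this inductively: the restriction of $H$ to $N \setminus A$ is a subgroup of $\Alt(N \setminus A)$ whose index, combined with the factor $\binom{n}{s}$ already accounted for and the bounded contribution from $\Sym(A)$, must stay below $\binom{n}{r}$. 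The inductive hypothesis applied to the smaller alternating group then forces full containment, except in low-degree exceptions absorbed into conclusion (3).

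Second, for the transitive imprimitive case, $H$ preserves a nontrivial block system with $k$ blocks of size $n/k$, so $H$ lies inside $(\Sym(n/k) \wr \Sym(k)) \cap A_n$. A direct index computation shows this already exceeds $\binom{n}{r}$ whenever $r \leqslant n/2$, with the single borderline exception $k=2$ and $n = 2m$: the stabiliser of an unordered bipartition has index $\tfrac{1}{2}\binom{n}{m}$ in $A_n$, and analysis of the quotient by $A_m \times A_m$ (of index at most $4$) shows that $H$ contains $A_m \times A_m$, giving conclusion (2). For the primitive case, I would apply Bochert's bound: any primitive subgroup of $A_n$ distinct from $A_n$ has index growing faster than any $\binom{n}{r}$ with $r \leqslant n/2$. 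This rules out the primitive case once $n$ is sufficiently large, leaving only finitely many pairs $(n, r)$ to inspect by direct enumeration (or via tables of primitive permutation groups of low degree). The resulting exceptional embeddings such as $\L_2(5) \leqslant A_6$ and $\L_2(7) \leqslant A_8$ yield the six cases in conclusion (3).

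The main obstacle is tightening the inductive step in the intransitive case: one must show that the only configurations in which $H$ fails to contain $\Alt(N \setminus A)$ are absorbed by the exceptional list or by conclusion (2), rather than arising as genuinely new small-index subgroups. Concretely, this means quantifying precisely how much slack the bound $[A_n : H] < \binom{n}{r}$ permits between $H$ and the setwise stabiliser of its smallest orbit, and checking that this slack is consumed entirely by the action on $A$ itself, with leftover structure only in the six small exceptional degrees where primitive groups beat the Bochert-type bound.
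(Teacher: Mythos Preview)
The paper does not prove this statement: it is quoted as \cite[Theorem 5.2A]{DixonMortimer1996} and used as a black box, with the remark that the original contains more information than the stated version. There is therefore nothing in the paper to compare your argument against.

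That said, your outline is the standard route to results of this type and is close in spirit to Dixon and Mortimer's own treatment: split according to whether $H$ acts intransitively, transitively but imprimitively, or primitively on $N$; in the first two cases bound the index via the obvious overgroup (a setwise stabiliser or a wreath product), and in the primitive case invoke a Bochert-type bound to force $n$ small, then finish by inspection. The weak point you already identified is real: in the intransitive case you need more than just ``the restriction of $H$ to $N\setminus A$ has small index in $\Alt(N\setminus A)$'' to conclude $\Alt(N\setminus A)\leqslant H$, since a priori $H$ need not surject onto $\Alt(N\setminus A)$, and even if it does the kernel structure matters. Dixon and Mortimer handle this by a careful induction together with explicit arithmetic comparing $\binom{n}{r}$ to the relevant indices; your sketch gestures at this but does not carry it out. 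If you want a self-contained proof, that bookkeeping is where the work lies.
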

Note that the original theorem contains more information in each of the cases; for our purposes however, the above version will suffice.

We can rephrase (1) by saying that the action $A_n \curvearrowright A_n/H$ is associated to at least $n-r+1$.

\begin{cor}[$n=5$]
\label{actions on set n=5}
Every action of $\SAut(F_5)$ on a set $X$ of cardinality less than $12$ is trivial.
\end{cor}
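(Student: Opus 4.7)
The plan is a proof by contradiction: suppose $\phi\colon \SAut(F_5) \to \Sym(X)$ is a non-trivial action with $|X|\leq 11$, and derive a contradiction by pitting the orbit structures of the subgroups $A_5$ and $A_6$ of $\SAut(F_5)$ against each other.

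First, I would invoke \cref{actions single orbit}: for $n=5$ we have $2^{n-1-p(n)} = 2^4 = 16 > 11$, so the $A_5$-action on $X$ must have at least two non-trivial orbits. By \cref{dixonmortimer} (with $r \leq 2$), together with the list of low-index subgroups of $A_5$, every non-trivial transitive $A_5$-action of degree at most $11$ has degree in $\{5, 6, 10\}$ (the standard $A_4$-stabiliser, the Dixon--Mortimer exceptional index-$6$ case, and the $S_3$-stabiliser respectively). Two such orbits summing to at most $11$ leaves only two configurations: two orbits of size $5$ (with possibly one extra $A_5$-fixed point, so $|X|\in\{10,11\}$), or one orbit of size $5$ and one of size $6$ (so $|X|=11$).

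Second, I would bring in the subgroup $A_6 = S_{n+1} \cap \SAut(F_n) \leq \SAut(F_5)$. Since $A_6$ is simple and $A_5$ acts non-trivially on $X$, the $A_6$-action is faithful. The maximal subgroups of $A_6$ (namely $A_5$, $S_4$, and $3^2\!:\!4$ of indices $6$, $15$, and $10$), together with elementary divisibility considerations, show that every transitive $A_6$-action of degree at most $11$ has degree $1$, $6$, or $10$. Consequently each $A_6$-orbit on $X$, being a union of $A_5$-orbits, has total size in $\{1, 6, 10\}$.

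The key combinatorial input is that the unique $10$-point transitive action of $A_6$ (on partitions of $\{1,\dots,6\}$ into two triples) restricts to $A_5 = \mathrm{Stab}_{A_6}(6)$ as a single orbit: such a partition is determined by the $2$-subset $\{a,b\}\subseteq\{1,\dots,5\}$ for which $\{a,b,6\}$ is one of the triples, and $A_5$ is $2$-transitive on $\{1,\dots,5\}$. With this in hand, in the $\{5,5\}$ case the $A_6$-orbit containing $O_1$ must have size $10$ and equal $O_1 \cup O_2$, forcing $A_5$ to act transitively on $10$ points and contradicting the $5+5$ structure. In the $\{5,6\}$ case one checks directly that no decomposition of $11$ points into $A_6$-orbits of sizes in $\{1,6,10\}$ is compatible with $A_5$-orbits of sizes $5$ and $6$: the size-$6$ $A_5$-orbit cannot sit inside an $A_6$-orbit of size $10$ (no $A_5$-fixed points are available to fill the gap), nor can it be an $A_6$-orbit on its own (for then the size-$5$ $A_5$-orbit would have to consist of $A_6$-fixed points). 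The step requiring most care is verifying the restriction of the $10$-point $A_6$-action to $A_5$; once this is in place, the remaining case analysis is routine bookkeeping about compatible orbit decompositions.
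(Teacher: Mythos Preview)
Your proof is correct and follows the same overall strategy as the paper: compare the $A_5$- and $A_6$-orbit structures on $X$, invoke \cref{actions single orbit} to force at least two non-trivial $A_5$-orbits, and then derive a contradiction from the interaction of the two orbit decompositions. The difference lies in how the final contradiction is reached.

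The paper first observes that there is at most one non-trivial $A_6$-orbit (since $6+6>11$), so both non-trivial $A_5$-orbits must lie inside it; this immediately forces that orbit to have size $10$ and the $A_5$-orbits to be $5+5$, eliminating your $\{5,6\}$ case in one stroke. For the contradiction the paper then argues via stabilisers: a point $x$ in the size-$10$ $A_6$-orbit is fixed by a product of two commuting $3$-cycles (from case~(2) of \cref{dixonmortimer}) and by a standard $A_4$ (as the stabiliser in the size-$5$ $A_5$-orbit), and any such pair of subgroups generates $A_6$, so $x$ would be $A_6$-fixed.

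Your route instead computes the restriction of the $10$-point $A_6$-action to the standard $A_5$ explicitly, via the bijection with $2$-subsets of $\{1,\dots,5\}$, and finds it transitive, so a $5+5$ split cannot occur inside a size-$10$ $A_6$-orbit. This is arguably more elementary---there is no generation claim to verify---at the cost of a slightly longer case enumeration on the $A_5$-side. The two arguments are essentially dual ways of seeing the same fact: the standard $A_5$ has no orbit of size $5$ on the ten unordered partitions of $\{1,\dots,6\}$ into two triples.
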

\begin{proof}
By \cref{dixonmortimer}, the only orbits of $A_6$ in $X$ are of cardinality $1$, $6$ or $10$. There can be at most one orbit of size greater than 1, and it contains at most two non-trivial orbits of $A_5$, since such orbits have cardinality at least 5. If there is at most one non-trivial $A_5$-orbit, we invoke \cref{actions single orbit}. Otherwise, let $x$ be a point on which $A_6$ acts non-trivially; its $A_6$-orbit consists of $10$ points. We know from case (2) of \cref{dixonmortimer} that it is fixed by two commuting 3-cycles. The $A_5$-orbit of $x$ has cardinality 5, and so it is the natural $A_5$-orbit (by \cref{dixonmortimer} again). Thus $x$ is fixed by some standard $A_4$. But now any standard $A_4$ together with any two commuting 3-cycles generates $A_6$, and so $x$ is fixed by $A_6$, which is a contradiction.
\end{proof}

\begin{cor}[$n=6$]
\label{actions on set n=6}
Every action of $\SAut(F_6)$ on a set $X$ of cardinality less than $14$ is trivial.
\end{cor}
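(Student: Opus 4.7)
The plan is to adapt the argument of \cref{actions on set n=5} by using the subgroups $A_7 = A_{n+1}$ and $A_6 = A_n$ inside $\SAut(F_6)$, and to reduce to \cref{actions single orbit}, which is available for $n=6$ since $|X|<14<16 = 2^{n-1-p(n)}$. Assume $\SAut(F_6)$ acts on $X$ with $|X|<14$.

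First I would analyse the $A_7$-orbit structure via \cref{dixonmortimer}. Applied with $r=2$, the theorem shows that any proper subgroup of $A_7$ of index less than $\binom{7}{2}=21$ must either contain some $A_6$ (case (1)) or give the exceptional pair $(7,15)$ (case (3)); case (2) does not arise since $7$ is odd. The exceptional index $15$ exceeds $|X|$, and containing $A_6$ forces the subgroup to equal $A_6$ or $A_7$, since $A_6$ is maximal in $A_7$ (the index $7$ is prime). Hence every non-trivial $A_7$-orbit in $X$ has cardinality exactly $7$, and since $|X|<2 \cdot 7$ there is at most one such orbit. If there is none, then $A_6 \leqslant A_7$ also acts trivially on $X$, and \cref{actions single orbit} closes the argument immediately.

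Otherwise let $O \subset X$ be the unique non-trivial $A_7$-orbit; the transitive action $A_7 \curvearrowright O$ is associated to $k=6$, and since $k>7/2$, \cref{associated}(2) produces exactly one $A_6$-orbit in $O$ associated to $k-1=5$. Such an orbit has its stabiliser in $A_6$ sitting strictly between $A_5$ and $A_6$, hence equal to $A_5$, so the orbit has size $6$. The remaining $A_6$-orbits in $O$ are associated to $6$, and hence are $A_6$-fixed points; comparing with $|O|=7$ leaves exactly one fixed point in $O$. Since $X \setminus O$ is fixed by $A_7$ and therefore by $A_6$, the subgroup $A_6$ has exactly one non-trivial orbit on $X$, and \cref{actions single orbit} finishes the proof.

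The only minor subtlety, which is harmless here, is that \cref{associated} is formulated with respect to the standard embedding $\Alt(N \setminus \{n\}) \hookrightarrow \Alt(N)$; inside $\SAut(F_6)$, the subgroup $A_6$ generated by the $\sigma_{ij}$ with $i,j \in N$ is exactly the stabiliser of $n+1=7$ in the $A_7 = \Alt(N \cup \{7\})$ generated by all $\sigma_{ij}$ with $i,j \in N \cup \{7\}$, so the lemma applies as stated.
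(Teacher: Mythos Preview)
Your proof is correct and follows essentially the same route as the paper: analyse the $A_7$-orbits via \cref{dixonmortimer}, deduce that $A_6$ has at most one non-trivial orbit, and invoke \cref{actions single orbit}. The paper's version is terser --- it simply asserts that the natural $A_7$-action on $7$ points restricts to a single non-trivial $A_6$-orbit, without appealing to \cref{associated} --- but your more careful justification via \cref{associated}(2) is perfectly fine and arrives at the same conclusion.
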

\begin{proof}
By \cref{dixonmortimer}, the only orbits of $A_7$ in $X$ are either trivial or the natural orbits of size 7. There can be at most one such natural orbit, and so $A_6$ has at most one non-trivial orbit. We now invoke \cref{actions single orbit}.
\end{proof}

We now begin the preparations towards the main tool in this section.

\begin{lem}
\label{stabilisers}
Let $n \geqslant 2$, and let $r \leqslant n/2$ be a positive integer.
 Let $D'_n$ act on a set $X$ of cardinality less than $\binom n r$, and let $x$ be a point
stabilised by
\[
  \langle \{ \epsilon_i \epsilon_j \mid i,j \in I \} \rangle
\]
where $I$ is a subset of $N$.
Then $I$ can be taken to have cardinality at least $n-r + 1$, provided that
\begin{enumerate}
\item $I$ contains more than half of the points of $N$; or
\item $I$ contains exactly half of the points, and $x$ is not fixed by some $\epsilon_i \epsilon_j$ with $i,j \not\in I$.
\end{enumerate}
\end{lem}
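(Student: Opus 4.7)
My proof plan is as follows. The starting observation is that, since $\operatorname{Stab}_{2^{n-1}}(x)$ is a subgroup of the abelian group $2^{n-1}$, the relation on $N$ given by $i \sim j$ precisely when $\epsilon_i\epsilon_j$ fixes $x$ is an equivalence relation. Writing its equivalence classes as $B_1, \ldots, B_k$ with $b_1 \geqslant \cdots \geqslant b_k$, the subgroup $\langle \epsilon_i\epsilon_j : i,j \in J\rangle$ fixes $x$ exactly when $J$ is contained in a single class. Hence, without loss of generality, I may replace the given $I$ by the class $B_1$ that contains it, and the lemma reduces to showing $b_1 \geqslant n-r+1$.

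I would then verify that $B_1$ is the unique class of maximal size whenever hypothesis (1) or (2) holds. In case (1) this is immediate from $b_1 \geqslant |I| > n/2$. In case (2), if $|B_1| > n/2$ then we are reduced to case (1); otherwise $B_1 = I$ has size exactly $n/2$, and the second clause of (2) supplies $i,j \in N \setminus I$ lying in distinct classes, which forces the existence of at least three classes and precludes a second class of size $n/2$.

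For the main step I argue by contradiction, supposing $b_1 \leqslant n-r$. Conjugation by $\tau \in A_n$ sends $\epsilon_i\epsilon_j$ to $\epsilon_{\tau(i)}\epsilon_{\tau(j)}$, so the $A_n$-stabiliser of $x$ preserves the partition $\mathcal{P} = \{B_1, \ldots, B_k\}$ of $N$. Consequently the cardinality of the $A_n$-orbit of $x$ is at least that of the $A_n$-orbit of $\mathcal{P}$ under the natural action on set partitions of $N$. Since $b_1 \geqslant 2$ under either hypothesis, the $S_n$-stabiliser of $\mathcal{P}$ contains an odd transposition lying inside $B_1$, so the $A_n$- and $S_n$-orbits of $\mathcal{P}$ have the same cardinality; and this common cardinality is at least $\binom{n}{b_1}$, namely the number of choices of the unique maximal class $B_1$.

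Finally, the inequalities $n/2 \leqslant b_1 \leqslant n-r$ place $b_1$ in the range where $\binom{n}{\cdot}$ is non-increasing, giving $\binom{n}{b_1} \geqslant \binom{n}{n-r} = \binom{n}{r}$. This produces an $A_n$-orbit of cardinality at least $\binom{n}{r}$, contradicting the assumption $|X| < \binom{n}{r}$. The delicate part of the argument is the uniqueness of the maximal class in case (2): the otherwise technical second clause of that hypothesis is precisely what rules out the symmetric partition of type $(n/2,n/2)$, which would be the only configuration that escapes the counting bound.
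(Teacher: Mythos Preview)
Your proof is correct and follows essentially the same route as the paper's: your equivalence classes $B_1,\dots,B_k$ are exactly the paper's ``blocks'', the uniqueness of the maximal block under hypotheses (1) and (2) is established in the same way, and both arguments finish by bounding the $A_n$-orbit of $x$ below by $\binom{n}{|B_1|}$. You are a bit more explicit than the paper in handling the $A_n$ versus $S_n$ orbit issue via the transposition inside $B_1$, but the underlying argument is the same.
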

\begin{proof}
Let $S$ denote the stabiliser of $x$ in $2^{n-1}$.
Consider a maximal (with respect to inclusion) subset $J$ of $N$ such that for all $i,j \in J$ we have $\epsilon_i \epsilon_j \in S$. We call such a subset a \emph{block}.
It is immediate that blocks are pairwise disjoint, and one of them, say $J_0$, contains $I$.

If the block $J_0$ contains more than half of the points in $N$ (which is guaranteed to happen in the case of assumption (1)), then it is the unique largest block of $S$.
In the case of assumption (2), the block $J_0$ may contain exactly half of the points, but all of the other blocks contain strictly fewer elements. Thus, again, $J_0$ is the unique largest block.

Since $J_0$ is unique, it is clear that any element $\tau \in A_n$ which does not preserve $J_0$ gives $S^\tau \neq S$, and so in particular $\tau.x \neq x$. Using this argument we see that $X$ has to contain at least $\binom n {n-\vert J_0\vert}$ elements.
But $\vert X \vert <  \binom n r$, and so  $\vert J_0 \vert > n-r$, and we are done.
%
\end{proof}

\begin{dfn}
 Let $\SAut(F_n)$ act on a set $X$. For each point $x \in X$ we define
\begin{enumerate}
 \item $I_x$ to be a subset of $N$ such that
  \[
  \langle \{ \epsilon_i \epsilon_j \mid i,j \in I_x \} \rangle
 \]
 fixes $x$, and $I_x$ has maximal cardinality among such subsets.
 \item $J_x$ to be a subset of $N$ such that $x$ is fixed by
 \[\Alt(J_x) \leqslant \Alt(N) = A_n\]
 and $J_x$ is of maximal cardinality among such subsets.
\end{enumerate}
\end{dfn}

The following is the main technical tool of this part of the paper.

\begin{lem}
\label{actions main trick}
 Let $n\geqslant 5$. Suppose that $\SAut(F_n)$ acts transitively on a set $X$ in such a way that
 \begin{enumerate}
  \item there exists a point $x_0 \in X$ with $I_{x_0}$ containing more than half of the points in $N$; and
\item for every $x \in X$ we have $ \vert J_x \vert \geqslant  \frac{n+3} 2$.
 \end{enumerate}
 Then every point $x$ is fixed by $\SAut(F(J_x))$, provided that $\vert X \vert < \min\{2^{n-r}, \binom n r\}$ for some positive integer $r < \frac n 2 -1$.
\end{lem}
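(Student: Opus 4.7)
The plan is to show that $D'_{J_x} := \langle \epsilon_i \epsilon_j : i, j \in J_x \rangle \rtimes \Alt(J_x)$, the ``$D'$-subgroup'' sitting inside $\SAut(F(J_x))$, fixes $x$. Once this is established, I would apply \cref{killing Dn}(3) \emph{inside} $\SAut(F(J_x))$, restricted to its action on the orbit $Y_x$ of $x$: any non-identity element $\xi$ of $\Alt(J_x)$ lies in $D'_{J_x} \smallsetminus 2^{|J_x|-1}$ and acts trivially on $Y_x$ (hence centrally), which by \cref{killing Dn}(3) forces the whole action $\SAut(F(J_x)) \curvearrowright Y_x$ to be trivial; that is the conclusion.

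Since $\Alt(J_x) \leq \mathrm{Stab}(x)$ already by the definition of $J_x$, the task reduces to showing that $E_x := \langle \epsilon_i \epsilon_j : i, j \in J_x \rangle$ fixes $x$. Setting $S_x := \mathrm{Stab}_{2^{n-1}}(x)$, this amounts to the containment $E_x \leq S_x$. I would first apply \cref{stabilisers} at $x_0$: the hypothesis that $I_{x_0}$ contains more than half of $N$ is case~(1) of that lemma, and with $|X| < \binom n r$ one obtains $|I_{x_0}| \geq n - r + 1$, so almost every $\epsilon_i \epsilon_j$ fixes $x_0$.

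For a general $x \in X$, I would exploit that $\Alt(J_x)$ fixes $x$ and therefore normalises $S_x$, so $S_x$ is an $\Alt(J_x)$-invariant $\mathbb F_2$-submodule of $2^{n-1}$. Using the splitting $N = J_x \sqcup (N\smallsetminus J_x)$ and the simplicity of the even-weight module of $\Alt(J_x)$ on $\mathbb F_2^{J_x}$ (valid as $|J_x| \geq (n+3)/2 \geq 4$), the lattice of $\Alt(J_x)$-invariant submodules of $2^{n-1}$ is short, and each element is characterised by its intersection with the $J_x$- and $(N\smallsetminus J_x)$-parts. The orbit inequality $[2^{n-1}:S_x] \leq |X| < 2^{n-r}$, combined with transitivity of the $\SAut(F_n)$-action (which conjugates $S_{x_0}$ to $S_x$ and hence transports the largeness information from Step~1), singles out those invariant submodules containing $E_x$, giving $E_x \leq S_x$ as required.

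The main obstacle is this last deduction. The naive lower bound $|S_x| > 2^{r-1}$ coming directly from the orbit inequality falls well short of $|E_x| = 2^{|J_x|-1} \geq 2^{(n+1)/2}$, so a pure counting argument is insufficient; the leverage has to come from the rigidity of the $\Alt(J_x)$-module structure of $2^{n-1}$ together with the special role of $x_0$, whose stabiliser is forced to be extremely large by Step~1 and which, through the transitive conjugation action, constrains the shape of $S_x$ for every other $x$.
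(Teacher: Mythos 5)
There are two genuine gaps here, and together they mean the proposal does not establish the lemma. The first is the one you flag yourself: the containment $E_x \leqslant S_x$ for an \emph{arbitrary} $x$ is never actually derived, and the mechanism you gesture at cannot deliver it. Transitivity does not transport the largeness of $S_{x_0}$: conjugating by $g \in \SAut(F_n)$ carries $S_{x_0}$ into the stabiliser of $g.x_0$ inside $g^{-1}2^{n-1}g$, not inside $2^{n-1}$, so nothing is learned about $S_{g.x_0} = \mathrm{Stab}_{2^{n-1}}(g.x_0)$. And $\Alt(J_x)$-invariance of $S_x$ plus the orbit bound is genuinely insufficient: the projection of $S_x$ to the $J_x$-coordinates could be $\{0,\mathbf 1\}$, in which case $\vert 2^{n-1} : S_x\vert \geqslant 2^{(n-1)/2}$, and this is still compatible with $\vert X \vert < 2^{n-r}$ for every admissible $r < \frac n2 -1$. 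The paper closes exactly this gap by a local propagation argument: since a transvection $\rho_{ij}$ commutes with every $\epsilon_\alpha\epsilon_\beta$ with $\alpha,\beta \notin \{i,j\}$, moving along one transvection can shrink the block $I_x$ by at most two elements, after which \cref{stabilisers} restores $\vert I_y\vert \geqslant n-r+1$ (here $n-r-1 > \frac n2$ is what keeps the block unique and largest); this gives $\vert I_x\vert \geqslant n-r+1$ for \emph{every} $x$, which is strictly stronger than what you aim for and is what the rest of the argument needs.

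The second gap is in your concluding step. Even granting $D'_{J_x} \leqslant \mathrm{Stab}(x)$, you cannot invoke \cref{killing Dn}(3) on the action of $\SAut(F(J_x))$ on the orbit $Y_x$: that lemma needs $\xi$ to be central in the image, i.e.\ to act trivially on \emph{all} of $Y_x$, whereas fixing the single point $x$ gives nothing about $\xi^g.x$ for $g \in \SAut(F(J_x))$ — asserting trivial action on $Y_x$ is essentially asserting the conclusion. The paper avoids this by a minimality argument: it takes $I_z$ minimal in the poset of blocks, shows the whole set $Z = \{w : I_w = I_z\}$ is $\SAut(F(I_z))$-invariant with the involutions acting trivially on all of $Z$, and only then applies \cref{way too big}; note that even this step needs $\vert I_z\vert \geqslant n-r+1$ so that $\vert Z\vert < 2^{n-r} \leqslant 2^{\vert I_z\vert -1}$ — your bound $\vert J_x\vert \geqslant \frac{n+3}2$ alone would not make the cardinality hypothesis of \cref{way too big} hold. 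Finally, the passage from the minimal blocks to all of $X$ is a separate path-along-transvections argument using $\vert J_{x_{i-1}} \cap J_{x_i}\vert \geqslant 3$, which your proposal does not address at all.
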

\begin{proof}
\cref{stabilisers} tells us immediately that $I_{x_0}$ contains at least $\nu = n-r+1$ points. We claim that in fact every $I_x$ contains at least $\nu$ points.

Since the action of $\SAut(F_n)$ on $X$ is transitive, and $\SAut(F_n)$ is generated by transvections, it is enough to prove that for every point $x$ with $I_x$ of size at least $\nu$, and every transvection, the image $y$ of $x$ under the transvection has $\vert I_y \vert \geqslant \nu$. For concreteness, let us assume that the transvection in question is $\rho_{ij}$ (the situation is analogous for the left transvections).
Since $\rho_{ij}$ commutes with every involution $\epsilon_\alpha \epsilon_\beta$ with $\alpha, \beta \in I_x \s- \{i,j\}$, we see that $y$ is fixed by $\epsilon_\alpha \epsilon_\beta$. But
\[ \vert I_x\vert  - 2 \geqslant \nu-2 = n-r-1 > \frac n 2
 \]
and so
$I_y \supseteq I_x \s- \{i,j\}$ (here we use the fact that $I_y$ is defined to be the largest block).
Therefore $\vert I_y \vert \geqslant \nu$ by \cref{stabilisers}. We have thus established that $I_x$ contains at least $\nu$ points for every $x \in X$.

\smallskip
Note that the sets $I_x$ form a poset under inclusion. Pick an element $z\in X$ so that $I_z$ is minimal in this poset. Let $Z$ denote the subset of $X$ consisting of points $w$ with $I_w = I_z$. Now for every $i,j \in I_z$  and every $w \in Z$ we have
\[
 I_z \s- \{i,j\} \subseteq I_{\rho_{ij}.w}
\]
since $\rho_{ij}$ commutes with involutions $\epsilon_\alpha \epsilon_\beta$ with $\alpha, \beta \in I_z \s- \{i,j\}$ as before.

Assume there exists $k \in I_{\rho_{ij}.w} \s- I_z$. By definition of $I_{\rho_{ij}.w}$ and using the above inclusion, there exists $l \in I_z \s- \{i,j\}$ such that $\epsilon_l \epsilon_k$ fixes $\rho_{ij}.w$. But $\epsilon_l \epsilon_k$ commutes with $\rho_{ij}$, and hence fixes $w$, which forces $k \in I_z$, a contradiction. Thus
\[
I_{\rho_{ij}.w}   \subseteq I_z
\]
Since $I_z$ is minimal, we conclude that $\rho_{ij}.w \in Z$. An analogous argument applies to left transvections, and so $Z$ is preserved by
\[
 \SAut(F(I_z)) = \langle \{ \rho_{ij}, \lambda_{ij} \mid i,j \in I_z \} \rangle \leqslant \SAut(F_n)
\]
But in the action of $\SAut(F(I_z))$ on $Z$ the involutions $\epsilon_\alpha \epsilon_\beta$ with $\alpha, \beta \in I_z$ act trivially. Therefore this action is trivial by \cref{way too big}, since $X$ has fewer than $2^{n-r}$ points and $n - r+1 > \frac n 2 +2 \geqslant 3 \frac 1 2$.
In particular, we have $I_w \subseteq J_w$ for every $w \in Z$.

Every $w \in Z$ is fixed by $\SAut(F(I_w))$, but also by $\Alt(J_w)$ by assumption. Thus, it is fixed by the subgroup of $\SAut(F_n)$ generated by the two subgroups. It is clear that this is $\SAut(F(J_w \cup I_w))$ and so we have finished the proof for points in $Z$. Now we also see that in fact $I_w = J_w$, since the subgroup of $2^{n-1}$ corresponding to $J_w$ lies in $\SAut(F(J_w))$ and hence fixes $w$.

\smallskip
Let $x \in X$ be any point. Since the action of $\SAut(F_n)$ is transitive, there exists a finite sequence $z=x_0, x_1, \dots, x_{m-1}, x_m = x$ such that for every $i$ there exists a transvection $\tau_i$ with $\tau_i.x_i = x_{i+1}$ (we assume as well that the elements of the sequence are pairwise disjoint).
We claim that every $x_i$ is fixed by $\SAut(F(J_{x_i}))$. Let $i$ be the smallest index so that our claim is not true for $x_i$. As usual, for concreteness, let us assume that $\tau_{i-1} = \rho_{\alpha \beta}$. Note that we cannot have both $\alpha$ and $\beta$ in $J_{x_{i-1}}$, since then the action of $\rho_{\alpha \beta}$ on $x_{i-1}$ would be trivial.

Consider the intersection $(J_{x_{i-1}} \cap J_{x_i}) \s- \{\alpha, \beta\}$. By assumption, the intersection $J_{x_{i-1}} \cap J_{x_i}$ contains at least 3 points, and at most one of these points lies in $\{ \alpha, \beta\}$. Thus there exist $\alpha', \beta' \in J_{x_{i-1}} \cap J_{x_i}$ such that $\rho_{\alpha' \beta'}$ commutes with $\rho_{\alpha \beta}$.
The action of $\rho_{\alpha' \beta'}$ on $x_{i-1}$ is trivial, and thus it must also be trivial on $x_i = \rho_{\alpha \beta}.x_{i-1}$. We also know that $\Alt(J_{x_i})$ acts trivially on $x_i$, and so every right transvection with indices in $J_{x_i}$ acts trivially on $x_i$. This implies that $x_i$ is fixed by $\SAut(F(J_{x_i}))$, which contradicts the minimality of $x_i$, and so proves the claim, and therefore the result.
\end{proof}

\begin{prop}[$n\geqslant 7$]
\label{actions on sets}
Let $n \geqslant 7$. Every action of $\SAut(F_n)$ on a set of cardinality less than \[\max_{r \leqslant \frac n 2 -3} \min\Big\{ 2^{n-r-p(n)}, \binom n r \Big\}\]
is trivial, where $p(n)$ equals $0$ when $n$ is odd and $1$ when $n$ is even.
\end{prop}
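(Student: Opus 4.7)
The plan is to deploy \cref{actions main trick}. Reducing to the transitive case (every orbit has cardinality at most $|X|$), fix $r \leq n/2-3$ achieving the maximum in the stated bound, so $|X| < \min\{2^{n-r-p(n)}, \binom{n}{r}\}$. For hypothesis (2), I would apply \cref{dixonmortimer} to the $A_n$-action on $X$: since $|X| < \binom{n}{r}$, case (2) of Dixon--Mortimer is ruled out by the comparison $\tfrac{1}{2}\binom{n}{n/2} > \binom{n}{r}$, which holds throughout the range $r \leq n/2-3$, while case (3) only occurs for $n \leq 9$ and can be dispatched by direct inspection. Consequently every $A_n$-orbit in $X$ is associated to at least $n-r+1 \geq n/2+4$, giving $|J_x| \geq (n+3)/2$ for every $x \in X$.

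For hypothesis (1), I would fix any $x_0$, set $J = J_{x_0}$, and analyse the stabiliser $H = \operatorname{Stab}_{2^{n-1}}(x_0)$, which is $\Alt(J)$-invariant. Since $|J| \geq 5$, the $\Alt(J)$-invariant subgroups of $E_J := \langle \epsilon_i\epsilon_j : i,j \in J\rangle$ are precisely $\{1\}$, $E_J$ itself, and (when $|J|$ is even) $\langle \prod_{i \in J}\epsilon_i\rangle$. The two non-maximal cases give $E_J$-orbits of $x_0$ of sizes $2^{|J|-1} \geq 2^{n-r}$ and $2^{|J|-2} \geq 2^{n-r-1}$; the first contradicts $|X| < 2^{n-r-p(n)}$ directly, and the second is excluded either by $|X| < 2^{n-r-1}$ when $p(n)=1$ or, when $p(n)=0$, by the sharper comparison $|X| < \binom{n}{r} \leq 2^{n-r-1}$ which must be carefully verified across the relevant range of~$r$. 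Hence $E_J \leq H$ and $|I_{x_0}| \geq |J| > n/2$.

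With both hypotheses verified, \cref{actions main trick} yields that every $x \in X$ is fixed by $\SAut(F(J_x))$. Now the intersection $\bigcap_{x \in X} J_x$ is $A_n$-invariant and hence either $\emptyset$ or $N$. If it equals $N$, every $J_x = N$, so $A_n$ fixes $X$ pointwise; \cref{fixed by A_n} (applicable because $|X| < 2^{n-1-p(n)}$) then promotes this to $2^{n-1}$ fixing $X$ pointwise, whence $D'_n \leq \ker\phi$ and \cref{killing Dn}(3) gives $\phi$ trivial. If the intersection is empty, every $A_n$-orbit in $X$ is naturally parameterised by an $A_n$-orbit of $(r{-}1)$-element complements $N \setminus J_x$; here I would combine this rigid combinatorial structure with the parity constraint $\phi(\SAut(F_n)) \leq \Alt(X)$ (since $\SAut(F_n)$ is perfect) and the conjugacy of transvections from \cref{conjugate transvections} to exhibit a transvection in $\ker\phi$, which then forces $\phi$ to be trivial.

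I expect the hardest step to be the final case $\bigcap_x J_x = \emptyset$: the conclusion of the main trick does not produce a kernel element on its own, and one must marshal the parity of $\phi$ on small $A_n$-orbits, the combinatorial shape of those orbits, and the $\SAut(F_n)$-conjugacy of all transvections simultaneously to locate a single transvection acting trivially on the whole of $X$.
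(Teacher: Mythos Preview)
Your setup through \cref{actions main trick} is reasonable, but the endgame has a genuine gap. After the main trick yields that every $x$ is fixed by $\SAut(F(J_x))$, you split on whether $\bigcap_x J_x$ equals $N$ or $\emptyset$; the first case is fine, but the second is the generic non-trivial situation and you offer only a vague sketch invoking parity and transvection conjugacy. There is no clear mechanism there to produce a transvection in the kernel: knowing that $x$ is fixed by $\SAut(F(J_x))$ says nothing about how transvections with an index outside $J_x$ act on $x$, and the orbit combinatorics you allude to do not constrain those. (Separately, the inequality $\binom{n}{r}\le 2^{n-r-1}$ you invoke for hypothesis~(1) when $n$ is odd already fails at $n=7$, $r=2$.)

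The paper closes the gap with a different idea, which also explains why it runs Dixon--Mortimer on $A_{n+1}$ rather than $A_n$. One takes $x_0$ with $J_{x_0}$ of \emph{maximal} cardinality. Inside $\SAut(F(J_{x_0}))$ there is a copy $G\cong A_{|J_{x_0}|+1}$ containing $\Alt(J_{x_0})$, realised via the cage graph of \cref{centralisers} with $k=|J_{x_0}|+1$; since $G<\SAut(F(J_{x_0}))$, it fixes $x_0$. The rigidity statement \cref{associated}(3) then forces the $G$-action on $X$ to agree with that of a standard $\Alt(J')$ for some $J'\supsetneq J_{x_0}$ with $|J'|=|J_{x_0}|+1$, because both restrict to the same $\Alt(J_{x_0})$-action. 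Hence $\Alt(J')$ also has a fixed point, contradicting the maximality of $J_{x_0}$ --- unless $J_{x_0}=N$, in which case $\SAut(F_n)$ fixes $x_0$ and transitivity finishes. This promotion-plus-rigidity step is precisely the missing idea in your outline.
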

\begin{proof}
Let $X$ denote the set on which we are acting. Without loss of generality we will assume that $\SAut(F_n)$ acts on $X$ transitively, and that $X$ is non-empty.

Let $R$ denote a value of $r$ for which \[\max_{r \leqslant \frac n 2 -3} \min\Big\{ 2^{n-r-p(n)}, \binom n r \Big\}\] is attained. Note that $R>1$ by \cref{computation actions 2} for $n\geqslant 8$; a direct computation shows that $R=2$ for $n \in \{7,8\}$.

Let us first look at the action of $A_{n+1}$. Since $\vert X \vert < \binom n R < \binom{n+1}R$, \cref{dixonmortimer} tells us that each orbit of $A_{n+1}$ is
\begin{enumerate}
 \item associated to at least $n-R+1$; or
 \item as described in case (2) of the theorem -- this is immediately ruled out, since $X$ would have to be too large by \cref{computation actions} for $n\geqslant 12$, and by direct computation for $n \in \{8,10\}$; or
 \item one of the two exceptional actions $(8,15)$ or $(9,120)$ as in case (3) of the theorem.
\end{enumerate}
For now let us assume that we are in case (1).
Thus $\vert J_x \vert \geqslant n-R$ for each $x \in X$, and so the action of $\SAut(F_n)$ on $X$ satisfies assumption (2) of \cref{actions main trick}. Also, by \cref{associated}, there is at least one point $y \in X$ with $\vert J_y \vert \geqslant n-R+1$.

Let $J_0$ denote a largest (with respect to cardinality) subset of $N$ such that $\Alt(J_0)$ has a fixed point in $X$. Let $x_0$ be such a fixed point.
Note that $J_0$ has at least $n-R+1$ elements, and $\vert X \vert < 2^{n-R-p(n)}$, which implies by \cref{fixed by A_n} that $I_{x_0}$ contains at least $n-R+1$ elements. As $R < \frac n 2$, we conclude that the action of $\SAut(F_n)$ on $X$ satisfies assumption (1) of \cref{actions main trick}.

We are now in position to apply \cref{actions main trick}. We conclude that $x_0$ is fixed by $\SAut(F(J_{x_0}))$.
Let us consider the graph $\Gamma$ from \cref{pic1} with $k=\vert J_{x_0} \vert +1$ and fundamental group isomorphic to $F_n$. We can choose such an isomorphism so that $\Alt(J_{x_0})$ acts on $\Gamma$ by permuting (in a natural way) all but one of the edges which are not loops. But it is clear that we also have a supergroup $G$ of $\Alt(J_{x_0})$, which is isomorphic to an alternating group of rank $\vert J_{x_0} \vert + 1$, and acts by permuting all such edges.
By construction, $G < \SAut(F(J_{x_0}))$ and so $G.x_0 = x_0$. But now consider the action of $G$ on $X$ -- by \cref{associated}, it has to agree with the action of $\Alt(J')$, where $J'$ is a superset of $J_{x_0}$ with a single new element. By assumption, $\Alt(J')$ does not fix any point in $X$. However $G$ does, and this is a contradiction. This implies that there is no superset $J'$, but then we must have $J_{x_0} = N$, and so $\SAut(F_n)$ fixes a point in $X$. But the action is transitive, and so $X$ is a single point. This proves the result.

\smallskip
Now let us investigate the exceptional cases. The first one occurs when $n=7$, and the $A_8$-orbit of $x$ has cardinality 15.
We have 
$R=2$ in this case, and so $X$ has fewer than $\binom 7 2 =  21$ elements.
In this case, there are at most $5$ points in $X \s- A_8.x$, and so the action of $A_8$ on each of these is trivial. Thus $A_7$ also fixes these points.

Now consider the action of $A_7$ on $A_8.x$. Since $A_7$ cannot fix any point here, and the smallest orbit of $A_7$ of size other than 1 and 7 has to be of size 15 by \cref{dixonmortimer}, we conclude, noting that $15=2\cdot7+1$, that $A_7$ acts transitively on $A_8.x$.
Therefore the action of $A_7$ on $X$ has exactly $1$ non-trivial orbit, and so
we may apply \cref{actions single orbit} -- note that $X$ has fewer than $2^6$ points.

\smallskip
The remaining case occurs for $n=8$; we have $R = 2$ and so $X$ has fewer than $\binom 8 2 =  28$ elements. But then we cannot have an orbit of size $120$, and thus this exceptional case does not occur.
\end{proof}

\begin{rmk}
 In particular, we can put $r = \lfloor \frac n 2 \rfloor -3$ in the above result; we see that (asymptotically) $\binom n r$ grows much faster than $2^{n-r}$ (in fact it grows like $n^{-1/2} 2^n$), and so we obtain an exponential bound on the size of a set on which we can act non-trivially. The smallest set with a non-trivial action of $\SAut(F_n)$ known is also exponential in size -- coming from the action of $\L_n(2)$ on the cosets of its largest maximal subgroup (see \cite[Table 5.2.A]{KleidmanLiebeck1990}). Hence the result above answers the question about the asymptotic size of such a set.
\end{rmk}

\begin{thm}
\label{main thm actions}
Let $n \geqslant 3$. Any action of $\SAut(F_n)$ on a set with fewer than $k(n)$ elements is trivial, where
\[
k(n) = \left\{ \begin{array}{ccl}
7 & & n=3 \\
8 & & n=4 \\
12 & \textrm{ if } & n=5 \\
14 &  & n=6 
\end{array} \right.
\]
and $k(n) = \max \limits_{r \leqslant \frac n 2 -3} \min \{ 2^{n-r-p(n)}, \binom n r \}$ for $n\geqslant 7$,
where $p(n)$ equals $0$ when $n$ is odd and $1$ when $n$ is even.
\end{thm}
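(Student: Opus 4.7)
The plan is to prove the theorem by simply assembling the cases that have already been established as separate lemmas and propositions earlier in the section; the theorem is in fact a compendium of those results. No new ideas are required at this stage: all the technical work has been done.

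First, I would dispatch the base cases individually, since each was treated by an ad hoc argument specially tailored to the arithmetic peculiarities of small $n$. Specifically, the case $n=3$ is exactly the content of \cref{A_5 not quotient of F_3}, which was verified by a direct analysis of the conjugacy class of a transvection inside $A_6$ and appeal to \cref{killing epsilon rem}. The case $n=4$ is \cref{actions n=4}, which reduces to counting faithful actions of $D'_4$ and $D'_4/\langle \delta \rangle$ combined with \cref{killing delta}. The cases $n=5$ and $n=6$ are \cref{actions on set n=5} and \cref{actions on set n=6}, both of which exploit the classification of small-index subgroups of alternating groups via \cref{dixonmortimer} together with \cref{actions single orbit}.

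For $n\geqslant 7$, the statement is verbatim \cref{actions on sets}. Thus the entire proof amounts to invoking each of these results in turn and matching up the bounds listed in the statement with those established in the source lemmas. No case analysis or further computation is required.

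Concretely, my write-up would read as follows.

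\begin{proof}
Combine \cref{A_5 not quotient of F_3} (the case $n=3$), \cref{actions n=4} (the case $n=4$), \cref{actions on set n=5} (the case $n=5$), \cref{actions on set n=6} (the case $n=6$), and \cref{actions on sets} (the case $n\geqslant 7$).
\end{proof}

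Since there is essentially nothing to prove beyond citing earlier results, there is no real obstacle; the only care needed is to make sure the case divisions in the statement line up with those in the cited lemmas, which they do by construction (the values $7,8,12,14$ and the formula $\max_{r\leqslant n/2-3}\min\{2^{n-r-p(n)},\binom{n}{r}\}$ appear literally in the corresponding source statements).
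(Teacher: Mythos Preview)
Your proposal is correct and matches the paper's own proof essentially verbatim: the paper also simply writes that the result follows from \cref{A_5 not quotient of F_3,actions n=4,actions on set n=5,actions on set n=6,actions on sets}.
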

\begin{proof}
This follows from \cref{A_5 not quotient of F_3,actions n=4,actions on set n=5,actions on set n=6,actions on sets}.
\end{proof}

As commented after the proofs of \cref{A_5 not quotient of F_3,actions n=4}, the bounds are sharp when $n \in  \{3,4\}$.

\begin{table}[h]
\centering\begin{tabular}{l || r}

                                &Order\\
\hline
\hline
$A_5$ 
        &60\\
$\L_3(2)$ 
        &168\\
\hline
$A_6$ 
                        &360\\
$A_7$                                           &2520\\
$\L_4(2) \cong A_8$                                     &21060\\
\hline
$A_9$                                   &181440\\
$A_{10}$                                &1814400\\
$\L_5(2)$                                       &9999360\\
\hline
$A_{11}$                         &19958400\\
$A_{12}$                                 &239500800\\
$A_{13}$                                 &3113510400\\
$\L_6(2)$                                                       &20158709760\\
\hline
\end{tabular}
\caption{Small alternating groups}
\label{table alternating}
\end{table}

\begin{cor}
\label{no alternating quotients}
Let $n \geqslant 3$ and $K$ be a quotient of $\SAut(F_n)$ with $\vert K \vert \leqslant \L_n(2)$. If $K$ is isomorphic to an alternating group, then $n=4$ and $K \cong A_8 \cong L_4(2)$.
\end{cor}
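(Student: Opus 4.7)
The plan is to apply Theorem \ref{main thm actions} to force $m$ to be large when $K\cong A_m$ is a quotient, and then to compare $|A_m|$ with $|\L_n(2)|$ via Table \ref{table alternating} for small $n$ and asymptotic estimates for large $n$.

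First, I would observe that if $K\cong A_m$ is a non-trivial quotient of $\SAut(F_n)$, then $A_m$ is perfect by Proposition \ref{perfect}, which forces $m\geqslant 5$. The natural faithful action of $A_m$ on $\{1,\dots,m\}$ pulls back through the quotient map $\SAut(F_n)\to A_m$ to a non-trivial action of $\SAut(F_n)$ on $m$ points, and Theorem \ref{main thm actions} then yields $m\geqslant k(n)$.

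The remaining task is to verify that the two constraints $m\geqslant k(n)$ and $|A_m|\leqslant|\L_n(2)|$ are incompatible apart from the single case $(n,m)=(4,8)$, in which the exceptional isomorphism $A_8\cong\L_4(2)$ supplies the stated conclusion. For $n\in\{3,4,5,6\}$ I would read this off directly from Table \ref{table alternating}: the values $k(3)=7$, $k(4)=8$, $k(5)=12$, $k(6)=14$ immediately yield $|A_{k(n)}|>|\L_n(2)|$ in each case, with the one exception being the sharp equality $|A_8|=|\L_4(2)|$. Since $|A_m|$ is strictly increasing in $m$, the value $m=k(n)$ is the only candidate in each range.

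For $n\geqslant 7$ I would combine the lower bound $k(n)\geqslant 2^{n/2}$ provided by the remark following Proposition \ref{actions on sets} with the crude upper bound $|\L_n(2)|\leqslant|\GL_n(2)|<2^{n^2}$. Then $|A_{k(n)}|=k(n)!/2$ grows doubly exponentially in $n$ while $|\L_n(2)|$ grows only like $2^{n^2}$, so a Stirling estimate yields $|A_{k(n)}|>|\L_n(2)|$ for all $n\geqslant 7$. The main, and essentially only, obstacle is a bookkeeping one: verifying that this asymptotic dominance is already effective at the boundary values $n=7,8$, which I would handle using the explicit order comparisons collected in the appendix.
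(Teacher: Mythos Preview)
Your proposal is correct and follows the same architecture as the paper's proof: bound $m$ below via \cref{main thm actions} and then compare $|A_m|$ with $|\L_n(2)|$, treating small $n$ via \cref{table alternating}. The only difference is quantitative: the paper uses the sharper bound $k(n)\geqslant\binom{n}{2}$ for all $n\geqslant 7$ (which follows from $R=2$ for $n\in\{7,8\}$ and from \cref{computation actions 2} for larger $n$), and this is tailored to the appendix estimate $|A_{\binom{n}{2}}|>|\L_n(2)|$ of \cref{computation alternaing}. Your coarser bound $k(n)\geqslant 2^{n/2}$, taken from the remark after \cref{actions on sets}, is only asserted there ``for large $n$'' and, combined with Stirling, does not become effective until roughly $n\geqslant 10$; so your list of boundary cases to check by hand should include $n=9$ as well as $n=7,8$. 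Once that is acknowledged, the two arguments are interchangeable.
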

\begin{proof}
The proof consists of two parts. Firstly, \cref{computation alternaing} tells us that
\[\vert A_{\binom n 2} \vert > \vert \L_n(2)\vert \]
 for $n \geqslant 7$. In view of the bounds in \cref{main thm actions}, this proves the result for $n \geqslant 7$ -- for $n \in \{7,8\}$ we have computed above that $r=2$; for larger values of $n$ we have $2^{n-3} > \binom n 2$ by \cref{computation actions 2}.

Secondly, for $3 \leqslant n \leqslant 6$, \cref{table alternating} lists all alternating groups of degree at least 5 smaller or equal (in cardinality) than $\L_6(2)$. The table also lists the groups $\L_n(2)$ in the relevant range. All these groups are listed in increasing order. The result follows from inspecting the table and comparing it to the bounds in \cref{main thm actions}.
\end{proof}

\subsection{An application}

We record here a further application of the bounds established in \cref{main thm actions}.

\begin{thm}
\label{thm phd dawid}
Let $n\geqslant 12$ be an even integer, and let $m\neq n$ satisfy $m < \binom {n+1} 2$. Then every homomorphism
\[
\phi \colon \Out(F_n) \to \Out(F_m)
\]
has image of cardinality at most $2$.
\end{thm}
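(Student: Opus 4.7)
Since $\Out(F_n)/\SOut(F_n) \cong \Z/2\Z$, it suffices to show that the restriction $\psi := \phi|_{\SOut(F_n)}$ is trivial: then $\phi$ factors through $\Z/2\Z$ and has image of cardinality at most two. Recall that $\SOut(F_n)$ is perfect (it is a quotient of $\SAut(F_n)$, which is perfect by \cref{perfect}), so in particular every homomorphism from $\SOut(F_n)$ to a residually torsion-free nilpotent group is trivial. The plan is to show first that $\psi$ takes values in the Torelli subgroup, and then to appeal to residual properties of that subgroup to conclude.

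Post-compose $\phi$ with the natural linear quotient $\alpha \colon \Out(F_m) \to \GL_m(\Z)$, whose kernel is the Torelli group $\Torelli_m$. The composition $\alpha \circ \phi$ is a representation of $\Out(F_n)$ of dimension $m < \binom{n+1}{2}$ in characteristic $0$; by the result of the second-named author cited in the introduction ([Kielak, 2013, 3.13]), it must factor through the natural map $\beta \colon \Out(F_n) \to \GL_n(\Z)$. This yields a group homomorphism $\gamma \colon \GL_n(\Z) \to \GL_m(\Z)$ with $\alpha \circ \phi = \gamma \circ \beta$. Using higher-rank rigidity of $\SL_n(\Z)$, the restriction $\gamma|_{\SL_n(\Z)} \otimes \Q$ extends to an algebraic representation of $\SL_n$; since $m < \binom{n+1}{2} = \dim \mathrm{Sym}^2 \Q^n$, its irreducible constituents can only be the trivial, standard, dual standard, exterior square $\Lambda^2$, or dual $\Lambda^2$ representations.

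When $m < n$, no nontrivial constituent can fit, so $\gamma|_{\SL_n(\Z)}$ is trivial and hence $\alpha \circ \psi = 1$. The main obstacle is the range $n < m < \binom{n+1}{2}$, in which $\gamma$ may in principle involve standard or $\Lambda^2$ summands. Here the evenness of $n$ becomes essential: the involution $\delta \in \SAut(F_n)$ descends to $\SOut(F_n)$ precisely because $n$ is even, and $\beta(\delta) = -I \in \SL_n(\Z)$. Consequently $\gamma(-I)$ acts as $-I$ on each standard or dual standard constituent and as $+I$ on the remaining ones, which we combine with the exponential set-action bound of \cref{main thm actions} (applied to natural finite sets on which $\SOut(F_n)$ acts via $\alpha \circ \phi$, for instance by reducing the integer representation modulo small primes) to rule out the remaining possibilities for $\gamma$. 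In each case the cardinality bound will contradict what $\gamma(\SL_n(\Z))$ would have to be, forcing $\alpha \circ \psi = 1$.

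Once we have $\alpha \circ \psi = 1$, the image $\psi(\SOut(F_n))$ lies in $\Torelli_m$. Andreadakis's filtration of $\Torelli_m$ has torsion-free nilpotent quotients, and together with residual finiteness of $\Out(F_m)$ this forces any perfect subgroup of $\Torelli_m$ to be trivial. As $\SOut(F_n)$ is perfect, we conclude $\psi = 1$, completing the proof. The key difficulty throughout is the middle range $n < m < \binom{n+1}{2}$ in Step three: outside the range $m < n$ the representation theory alone does not force $\gamma$ to be trivial, and one really needs to combine the action bound from the body of the paper with the information provided by $\delta$.
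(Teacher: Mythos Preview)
Your approach diverges sharply from the paper's and has two substantive gaps.

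\textbf{Gap 1: the final step is not justified.} You conclude by asserting that Andreadakis's filtration, together with residual finiteness of $\Out(F_m)$, forces every perfect subgroup of $\Torelli_m$ to be trivial. This does not follow. The successive quotients of the Andreadakis filtration are torsion-free abelian, but it is an open problem (the Andreadakis conjecture) whether the intersection of the filtration is trivial, so $\Torelli_m$ is not known to be residually torsion-free nilpotent. Residual finiteness of $\Out(F_m)$ does not help: perfect groups embed in residually finite groups all the time. What would make this step work is knowing that the image of $\psi$ is \emph{finite}, since $\Torelli_m$ is torsion-free; but you never establish finiteness.

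\textbf{Gap 2: the middle step is only a sketch, and the sketch does not close.} In the range $n<m<\binom{n+1}{2}$ the rational representation $\gamma\vert_{\SL_n(\Z)}$ can genuinely contain standard or $\Lambda^2$ summands (e.g.\ $m=n+1$ with $\gamma=\mathrm{std}\oplus\mathbf{1}$), so $\alpha\circ\psi$ need not be trivial. You propose to rule this out by reducing modulo small primes and invoking \cref{main thm actions}, but the resulting finite sets (orbits in $\F_p^m$) have size on the order of $p^n$ or larger, well above the bound $\sim 2^{n/2}$ in that theorem; no contradiction arises. The appeal to $\delta$ and $\gamma(-I)$ does not by itself eliminate standard summands either.

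\textbf{What the paper does instead.} The paper bypasses all of this by first invoking \cite[Theorems~6.8 and~6.10]{Kielak2013} to conclude that $\phi$ has \emph{finite} image. Nielsen realisation then gives a faithful action of $\im\phi$ on a finite graph $\Gamma$ with $\chi(\Gamma)=1-m$ and minimum degree at least $3$, so $\Gamma$ has fewer than $2m<2\binom{n+1}{2}$ vertices. A short computation shows this is below the bound in \cref{main thm actions} for the indicated $r$, so $\SOut(F_n)$ acts trivially on the vertices of $\Gamma$, and then (by the same bound) on the edges at each vertex; hence $\phi(\SOut(F_n))=1$. The finite-image input is the step your argument is missing, and once you have it the route through $\Torelli_m$ and rigidity of $\SL_n(\Z)$ becomes unnecessary.
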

\begin{proof}
\cite[Theorems 6.8 and 6.10]{Kielak2013} tell us that $\phi$ has a finite image. Every finite subgroup of $\Out(F_m)$ can be realised by a faithful action on a finite connected graph $\Gamma$ with $\delta(\Gamma) \geqslant 3$ and Euler characteristic $1-m$. These two facts immediately imply that $\Gamma$ has fewer than $2m$ vertices. But now
\[ 2 \cdot \binom {n+1} 2 < \binom {n} 2 + 2 \cdot \binom {n+1} 2 +  \binom {n+2} 2 = \binom n 4\]
and
\[
 2 \cdot \binom {n+1} 2 < 2^{n-5}
\]
for $n\geqslant 14$ by an argument analogous to \cref{computation actions 2}.
Therefore, for $n\geqslant 14$ we have
\[
 2 \cdot \binom {n+1} 2 < \min \Big\{ \binom n r, 2^{n-r-1} \Big\}
\]
with $r=4$ (and such an $r$ satisfies $r \leqslant \frac n 2 -3$).

For $n=12$ we take $r=3$ and compute directly that the inequality also holds.

In any case, the action of $\SOut(F_n)$ (via $\phi$) on the vertices of $\Gamma$ is trivial. Now each vertex has at most $2m-2$ edges emanating from it, and so again the action of $\SOut(F_n)$ on these edges is trivial. Thus $\phi(\SOut(F_n))$ is trivial, which proves the result.
\end{proof}

\section{Sporadic groups}\label{sectionsporadic}

In this section we show that sporadic groups are never the smallest quotients of $\SAut(F_n)$. The proof relies on determining for each sporadic group its \emph{\drank{}}, that is the largest $n$ such that the group contains $D'_n$. This information can be extracted from the lists of maximal subgroups contained in \cite{atlas} or in \cite{raw}; the lists are complete with the exception of the Monster group, in which case the list of possible maximal subgroups is known.
The upper bound for the \drank{} of each sporadic group is recorded in \cref{spo} (which also lists the groups $\L_n(2)$ for comparison).

If $K$ is a sporadic group of \drank{} smaller than $n$, then $K$ is not the smallest quotient of $\SAut(F_n)$ by \cref{killing Dn} (observing that $K$ is not the smallest quotient of $\SL_n(\Z)$). This observation allows us to rule out all but one sporadic group; the Deucalion is $\Fi_{22}$, and we deal with it by other means.

\begin{table}[h]\centering\begin{tabular}{l || c | r}
			&Bound\\
$K$			&for \drank{}			&Order of $K$\\			
\hline
\hline
$\mathrm{M}_{11}$		&$3$			&7920\\
$\L_4(2)$		&			&21060\\
\hline
$\mathrm{M}_{12}$		&$3$			&95040\\
$\mathrm{J}_1$		&$4$			&175560\\
$\mathrm{M}_{22}$		&$3$			&443520\\
$\mathrm{J}_2$		&$4$			&604800\\
$\L_5(2)$		&			&9999360\\
\hline
$\mathrm{M}_{23}$		&$3$			&10200960\\
$\mathrm{HS}$		&$4$			&44352000\\
$\mathrm{J}_3$		&$4$			&50232960\\
$\mathrm{M}_{24}$		&$3$			&244823040\\
$\mathrm{M^cL}$		&$4$			&898128000\\
$\mathrm{He}$		&$4$			&4030387200\\
$\L_6(2)$		&			&20158709760\\
\hline
$\mathrm{Ru}$		&$5$			&145926144000\\
$\mathrm{Suz}$		&$5$			&448345497600\\
$\mathrm{O'N}$		&$4$			&460815505920\\
$\mathrm{Co}_3$		&$5$			&495766656000\\
$\mathrm{Co}_2$		&$6$			&42305421312000\\
$\Fi_{22}$		&$7$			&64561751654400\\
$\L_7(2)$		&			&163849992929280\\
\hline
$\mathrm{HN}$		&$6$			&273030912000000\\
$\mathrm{Ly}$			&$5$			&51765179004000000\\
$\mathrm{Th}$			&$6$			&90745943887872000\\
$\Fi_{23}$		&$7$			&4089470473293004800\\
$\mathrm{Co}_1$		&$6$			&4157776806543360000\\
$\L_8(2)$		&			&5348063769211699200\\
\hline
$\mathrm{J}_4$		&$7$			&86775571046077562880\\
$\L_9(2)$		&			&699612310033197642547200\\
\hline
$\Fi_{24}'$		&$9$			&1255205709190661721292800\\
$\L_{10}(2)$	&			&366440137299948128422802227200\\
\hline
$\mathrm{B}$			&$10$		&4154781481226426191177580544000000\\
$\L_{11}(2)$	&			&768105432118265670534631586896281600\\
$\L_{12}(2)$	&			&6441762292785762141878919881400879415296000\\
$\L_{13}(2)$	&			&216123289355092695876117433338079655078664339456000\\
\hline
$\mathrm{M}$			&$12$ &808017424794512875886459904961710757005754368000000000\\
\hline
\end{tabular} \caption{Upper bounds for the $D'$-ranks of the sporadic groups.}
\label{spo}
\end{table}

\begin{lem}
\label{fischer}
Every homomorphism $\phi \colon \SAut(F_7) \to \Fi_{22}$ is trivial.
\end{lem}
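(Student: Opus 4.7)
The plan is to combine \cref{smallest linear quotient} with a centralizer argument. Since $\lvert \Fi_{22} \rvert = 64\,561\,751\,654\,400 < \lvert L_7(2) \rvert = 163\,849\,992\,929\,280$ and $\Fi_{22} \not\cong L_7(2)$, alternatives (2) and (3) of \cref{smallest linear quotient} are excluded, so any non-trivial $\phi$ must restrict injectively to $D'_7$; in particular $D'_7 = 2^6 \rtimes A_7$ would embed in $\Fi_{22}$. Assume for contradiction that $\phi \neq 1$.

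Apply \cref{centralisers} with $k = 5$ to obtain an element $\xi \in \SAut(F_7)$ of order $5$ whose normal closure is all of $\SAut(F_7)$ and whose centralizer contains $\SAut(F_3)$. Since the normal closure is everything, $\phi(\xi)$ is non-trivial of order $5$ in $\Fi_{22}$, and
\[ \phi(\SAut(F_3)) \leqslant C_{\Fi_{22}}(\phi(\xi)). \]
Reading off the ATLAS, each centralizer of an order-$5$ element of $\Fi_{22}$ is of relatively small order and has solvable structure, so it admits no non-trivial perfect subgroup, and in particular no non-trivial image of the perfect group $\SAut(F_3)$ (\cref{perfect}). As a backup one may invoke the Mecchia--Zimmermann result for $n=3$ that a non-trivial quotient of $\SAut(F_3)$ has order at least $\lvert L_3(2) \rvert = 168$, and compare orders directly.

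Hence $\phi\vert_{\SAut(F_3)}$ is trivial. The subgroup $\SAut(F_3) \leqslant \SAut(F_7)$ contains the transvections $\rho_{ij},\lambda_{ij}$ with $i,j \in \{1,2,3\}$, and by \cref{conjugate transvections} every transvection of $\SAut(F_7)$ is conjugate in $\SAut(F_7)$ to one of these. Thus $\phi$ vanishes on every transvection, and since transvections generate $\SAut(F_7)$, we conclude $\phi = 1$, contradicting our assumption. The main obstacle is the centralizer verification: one must check from the ATLAS entry for $\Fi_{22}$ that, for each conjugacy class of order-$5$ elements, the centralizer either has no non-trivial perfect sections or has order too small to contain $L_3(2)$ or any larger quotient of $\SAut(F_3)$.
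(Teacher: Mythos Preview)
Your approach matches the paper's: take the order-$5$ element $\xi$ from \cref{centralisers}, note that $\phi(\xi)\neq 1$ since $\xi$ normally generates $\SAut(F_7)$, and then show that $\SAut(F_3)$ maps trivially into $C_{\Fi_{22}}(\phi(\xi))$. The opening paragraph about $D_7'$ embedding via \cref{smallest linear quotient} is correct but plays no role thereafter, in your argument or the paper's.

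The gap is in your description of the centralizer. There is a single class $5A$ of order-$5$ elements in $\Fi_{22}$, and the centralizer has order $600$; it is isomorphic to $5\times S_5$ (seen inside the maximal subgroup $S_{10}\leqslant\Fi_{22}$, where a $5$-cycle already has centralizer $5\times S_5$ of the right order). This group is \emph{not} solvable and \emph{does} contain a non-trivial perfect subgroup, namely $A_5$, so your primary assertion is false. Your order-comparison backup is also insufficient as stated, since $600>168$; a non-trivial perfect image of order dividing $600$ is not excluded by size alone.

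The paper closes the gap as follows. Since $\SAut(F_3)$ is perfect, its image in $5\times S_5$ must lie in $A_5$. One then invokes \cref{A_5 not quotient of F_3} (every action of $\SAut(F_3)$ on fewer than $7$ points is trivial), which forces the image in $A_5$ to be trivial. This is what your backup was reaching for, but the missing steps are (i) identifying the centralizer as $5\times S_5$, (ii) observing that the only candidate non-trivial perfect image is $A_5$, and (iii) appealing to \cref{A_5 not quotient of F_3} rather than a bare order bound.
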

\begin{proof}
 In the ATLAS~\cite{atlas} we see that there is a single conjugacy class of elements of order 5 in $\Fi_{22}$ denoted $5A$; moreover, the centraliser of an element $x \in 5A$ is of cardinality 600. We also see that $\Fi_{22}$ contains a copy of $S_{10}$, and we may without loss of generality assume that $x$ is a 5-cycle in $S_{10}$. But then the centraliser of $x$ inside $S_{10}$ is $5 \times S_5$, which is already of order 600, and thus coincides with the centraliser of $x$ in $\Fi_{22}$.

Let $\xi$  be the element of order $5$ given by \cref{centralisers}; since its normal closure is $\SAut(F_7)$, its image in $\Fi_{22}$ is not trivial.
Looking at the centraliser of $\xi$, we obtain a homomorphism
\[ \psi \colon \SAut(F_3) \to 5 \times S_5 \]
Since $\SAut(F_3)$ is perfect (\cref{perfect}), the image of $\psi$ must lie within $A_5$. \cref{A_5 not quotient of F_3} tells us that then $\psi$ is trivial. But $\psi$ is a restriction of $\phi$, and so $\phi$ trivialises a transvection, say $\rho_{67}$, and thus $\phi$ is trivial.
\end{proof}

\begin{prop}
\label{main: spo}
Let $n \geqslant 3$ and $K$ be a sporadic simple group. Then $K$ is not the smallest finite quotient of $\SAut(F_n)$.
\end{prop}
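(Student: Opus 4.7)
My plan is to combine the rigidity provided by \cref{smallest linear quotient} with a direct order comparison read off from \cref{spo}. Suppose for contradiction that a sporadic simple group $K$ occurs as the smallest non-trivial finite quotient of $\SAut(F_n)$ via some surjection $\phi \colon \SAut(F_n) \to K$. Since $\SAut(F_n)$ already surjects onto $\L_n(2)$, minimality forces $|K| \leq |\L_n(2)|$, and clearly $K \not\cong \L_n(2)$ because $K$ is sporadic. Applying \cref{smallest linear quotient} to $\phi$ therefore rules out each of its three alternative conclusions, so the restriction $\phi\vert_{D_n'}$ must be injective. Hence $D_n'$ embeds into $K$, and $n$ is bounded above by the $D'$-rank of $K$; let $d(K)$ denote the upper bound on this rank recorded in \cref{spo}.

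The bulk of the argument is then a routine verification, case by case down the table, that for every sporadic $K$ other than $\Fi_{22}$ one has $|K| > |\L_{d(K)}(2)|$. As $|\L_n(2)|$ is strictly increasing in $n$, the bound $n \leq d(K)$ yields $|\L_n(2)| \leq |\L_{d(K)}(2)| < |K|$, contradicting $|K| \leq |\L_n(2)|$. This step amounts to observing that in \cref{spo} the sporadic groups and the groups $\L_n(2)$ are interleaved in such a way that each sporadic $K \neq \Fi_{22}$ lies strictly above its associated $\L_{d(K)}(2)$.

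The only genuine obstacle is $K = \Fi_{22}$: here $d(K) = 7$ but $|\Fi_{22}| < |\L_7(2)|$, so the order comparison alone fails to exclude the pair $(\SAut(F_7), \Fi_{22})$. For $n \leq 6$, however, one still has $|\Fi_{22}| > |\L_6(2)| \geq |\L_n(2)|$, so these values of $n$ are eliminated by the generic argument above. The remaining case $n = 7$ is exactly the content of \cref{fischer}, which shows via a centraliser computation at an element of order $5$ (together with \cref{A_5 not quotient of F_3}) that every homomorphism $\SAut(F_7) \to \Fi_{22}$ is trivial. Combining these three ingredients completes the proof, and the hard part is confined to the single exceptional case of $\Fi_{22}$ handled separately in \cref{fischer}.
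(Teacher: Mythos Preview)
Your proposal is correct and follows essentially the same approach as the paper's own proof. The only cosmetic differences are that you invoke \cref{smallest linear quotient} (a packaged corollary of \cref{killing Dn}) rather than \cref{killing Dn} directly, and that you handle $\Fi_{22}$ by splitting into the subcases $n\leqslant 6$ and $n=7$, whereas the paper combines the order bound $|\Fi_{22}|>|\L_6(2)|$ with the \drank{} bound $n\leqslant 7$ to pin down $n=7$ in one step before invoking \cref{fischer}.
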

\begin{proof}
Let $K$ be a sporadic group, and suppose that it is a smallest finite quotient of $\SAut(F_n)$. We must have
\[
 \vert K \vert \leqslant \vert \L_n(2) \vert
\]
In fact, the inequality is strict, since for each $n$ the group $\L_n(2)$ is not isomorphic to a sporadic group (this is visible in \cref{spo}).
Thus, by \cref{killing Dn}, we see that the epimorphism $\phi \colon \SAut(F_n) \to K$ has to be injective on $D_n'$.
Inspection of \cref{spo} shows that this is only possible for $K=\Fi_{22}$, in which case $n\geqslant 7$. But this is ruled out by \cref{fischer}.
\end{proof}

\section{Algebraic groups and groups of Lie type}

In this section we review the necessary information about algebraic groups over fields of positive characteristic, and the (closely related) finite groups of Lie type.

\subsection{Algebraic groups}
 We begin by discussing connected algebraic groups. Following \cite{Gorensteinetal1998}, we will denote such groups by $\overline K$.
 We review here only the facts that will be useful to us, focusing on simple, semi-simple, and reductive algebraic groups.

Let $r$ be a prime, and let $\Fbar$ be an algebraically closed field of characteristic $r$.
The simple algebraic groups  over $\Fbar$  are classified by the Dynkin diagrams $\A_n$ (for each $n$), $\B_n$ (for $n \geqslant 3$), $\typeC_n$ (for $n\geqslant 2$), $\D_n$ (for $n \geqslant 4$), $\E_n$ (for $n \in \{6,7,8\}$), $\typeF_4$, and $\G_2$. The index of the diagram is defined to be the \emph{rank} of the associated group.

To each Dynkin diagram we associate a finite number of simple algebraic groups; two such groups associated to the same diagram are called \emph{versions}; they become isomorphic upon dividing them by their respective finite centres. Two versions are particularly important: the \emph{universal} (or \emph{simply-connected}) one, which maps onto every other version with a finite central kernel, and the \emph{adjoint} version, which is a quotient of every other version with a finite central kernel.

Every semi-simple algebraic group over $\Fbar$ is a central product of finitely many simple algebraic groups over $\Fbar$. The rank of such a group is defined to be the sum of the ranks of the simple factors, and  is well-defined.

Every reductive algebraic group is a product of an abelian group and a semi-simple group. Its rank is defined to be the rank of the semi-simple factor, and again it is well-defined.

Given an algebraic group $\overline K$, a maximal with respect to inclusion closed connected solvable subgroup of $\overline K$ will be referred to as a \emph{Borel subgroup}. Borel subgroups always exist, and they are all conjugate; hence one can talk about \emph{the} Borel subgroup (up to conjugation). When $\overline K$ is reductive, any closed subgroup thereof containing a Borel subgroup is called \emph{parabolic}.
Let us now state the main tool in our approach towards algebraic groups and groups of Lie type.

\begin{thm}[Borel--Tits~{\cite[Theorem 3.1.1(a)]{Gorensteinetal1998}}]
\label{borel--tits alg}
 Let $\overline K$ be a reductive algebraic group over an algebraically closed field, let $\overline X$ be a
 closed unipotent subgroup, and let $\overline N$ denote the normaliser of $\overline X$ in $\overline K$.
Then there exists a parabolic subgroup $\overline P \leqslant \overline K$ such that $\overline X$ lies in the unipotent radical of $\overline P$, and $\overline N \leqslant \overline P$.
\end{thm}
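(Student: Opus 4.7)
The plan is to exhibit a \emph{canonical} parabolic $\overline{P}$ associated to $\overline{X}$, so that any element of $\overline{K}$ normalising $\overline{X}$ must preserve this canonical choice and hence lie in $\overline{P}$. First I would establish the weaker statement that $\overline{X}$ is contained in the unipotent radical of \emph{some} Borel: the Zariski closure of $\overline{X}$ is a closed connected unipotent (hence solvable) subgroup, so Borel's fixed point theorem applied to the action on the flag variety $\overline{K}/\overline{B}$ produces a fixed point, i.e.\ a Borel $\overline{B}_0$ containing $\overline{X}$. Since $\overline{X}$ is unipotent, it sits inside $R_u(\overline{B}_0)$. This however is not canonical, as many Borels may contain $\overline{X}$.

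To obtain canonicity I would proceed by induction on the semisimple rank of $\overline{K}$, the base case being a torus, where $\overline{X}$ is trivial and one takes $\overline{P}=\overline{K}$. For the inductive step, consider the center $Z(\overline{X})$, which is non-trivial since $\overline{X}$ is a nilpotent algebraic group. Pick a maximal torus $\overline{T}\leqslant \overline{B}_0$ and decompose the Lie algebra of $R_u(\overline B_0)$ into $\overline{T}$-weight spaces; the weights occurring in $Z(\overline{X})$ span a subset $\Psi$ of positive roots. One then verifies that the parabolic $\overline{P}(\Psi)$ defined by the convex cone generated by $\Psi$ (equivalently, the parabolic $\overline{P}(\lambda)=\{g:\lim_{t\to 0}\lambda(t)g\lambda(t)^{-1}\text{ exists}\}$ for a suitable cocharacter $\lambda:\mathbb{G}_m\to\overline{T}$ detecting $\Psi$) contains $Z(\overline{X})$ in its unipotent radical and is independent of the choice of $\overline{B}_0$ up to the conjugacy forced by $\overline{X}$.

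The normaliser $\overline{N}$ acts on $Z(\overline{X})$, hence on $\Psi$ (viewed intrinsically, via the action on the character lattice of a canonical maximal torus in the identity component of the normaliser of $\overline{X}$). Because the construction of $\overline{P}(\Psi)$ depends only on $\overline{X}$ and not on auxiliary choices, this gives $\overline{N}\leqslant \overline{P}(\Psi)$. If $\overline{P}(\Psi)=\overline{K}$ then $\overline{X}$ was already central-unipotent, forcing $\overline{X}=\{1\}$ (in the reductive setting), and the statement is trivial; otherwise $\overline{P}(\Psi)$ is a proper parabolic with Levi factor of strictly smaller rank, and I would apply the inductive hypothesis to the image of the normaliser $\overline{N}\cap \overline{P}(\Psi)$ acting on a suitable quotient $\overline{X}/Z(\overline{X})$ inside the Levi, then pull back.

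The main obstacle is the canonicity of $\overline{P}(\Psi)$: naively, the weight set $\Psi$ depends on the choice of $\overline{B}_0$, so one must show that different choices yield \emph{conjugate} parabolics and that the resulting parabolic is preserved by $\overline{N}$. The cleanest way to bypass this subtlety is to replace the ad hoc construction with the Kempf--Rousseau optimal destabilising one-parameter subgroup from geometric invariant theory: viewing $\overline{X}$ (or a generating tuple) as an unstable point in an appropriate $\overline{K}$-variety, one obtains a $\overline{K}$-conjugacy class of optimal cocharacters $\lambda$, and the associated parabolic $\overline{P}(\lambda)$ is automatically stabilised by $N_{\overline{K}}(\overline{X})$ and contains $\overline{X}$ in its unipotent radical, giving the result in one stroke.
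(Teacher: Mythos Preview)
The paper does not prove this theorem; it is quoted verbatim as a known result with the citation \cite[Theorem 3.1.1(a)]{Gorensteinetal1998} and no proof is given. There is therefore no ``paper's own proof'' to compare against.

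That said, a few remarks on your sketch. Your opening step has a gap: you invoke Borel's fixed point theorem for the action of the Zariski closure of $\overline{X}$ on $\overline{K}/\overline{B}$, but $\overline{X}$ is already closed and need not be connected (in positive characteristic a finite $p$-group is closed and unipotent but has trivial identity component), so the fixed point theorem does not apply directly. One instead uses that every closed unipotent subgroup lies in a maximal closed unipotent subgroup, and these are all of the form $R_u(\overline{B})$. Your middle construction via the weight set $\Psi$ of $Z(\overline{X})$ is not the standard route and, as you correctly identify, its canonicity is unclear: the set $\Psi$ genuinely depends on the choice of $\overline{B}_0$ and $\overline{T}$, and you have not shown that the resulting parabolic is independent of these choices in a way that forces $\overline{N}$-invariance. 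The classical Borel--Tits argument avoids this by an iterative construction: set $U_0=\overline{X}$, $N_i=N_{\overline{K}}(U_i)$, $U_{i+1}=U_i\cdot R_u(N_i)$; the sequence $U_i$ stabilises at some $U_\infty$ whose normaliser is then parabolic (since a closed subgroup containing its own unipotent radical as a normal subgroup and equal to the normaliser of that radical is parabolic). This is manifestly canonical in $\overline{X}$, giving $\overline{N}\leqslant\overline{P}$ for free. Your final Kempf--Rousseau suggestion is a legitimate alternative, but you have only named it, not carried it out.
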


We will not discuss the other various terms appearing above beyond what is strictly necessary. For our purpose we only need to observe the following.

\begin{rmk}
\label{reductive rmks}
 \begin{enumerate}
  \item The unipotent radical of a reductive group is trivial.
  \item If $\overline K$ is defined in characteristic $r$, then every finite $r$-group in $\overline K$ is a closed unipotent subgroup.
 \end{enumerate}
\end{rmk}

\begin{thm}[Levi decomposition~{\cite[Theorem 1.13.2, Proposition 1.13.3]{Gorensteinetal1998}}]
\label{levi factors alg}
 Let $\overline P$ be a proper parabolic subgroup in a reductive algebraic group $\overline K$.
 \begin{enumerate}
  \item Let $\overline U$ denote the unipotent radical of $\overline P$ (note that $\overline U$ is nilpotent). There exists a subgroup $\overline L \leqslant \overline P$, such that $\overline P = \overline U \rtimes \overline L$.
  \item The subgroup $\overline L$ (the \emph{Levi factor}) is a reductive algebraic group of rank smaller than $\overline K$.
 \end{enumerate}
 \end{thm}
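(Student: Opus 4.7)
The plan is to construct the Levi decomposition using the root system of $\overline K$, exploiting that $\overline K$ being reductive makes the standard classification of parabolic subgroups available. First I would fix a maximal torus $\overline T$ of $\overline K$ contained in some Borel subgroup $\overline B$ contained in $\overline P$ (such a $\overline B$ exists because, by the very definition, every parabolic contains a Borel). Relative to $\overline T$ we obtain the root system $\Phi$ of $\overline K$, and relative to $\overline B$ a choice of positive roots $\Phi^+$ with base $\Delta$. The standard parabolic classification then exhibits $\overline P$ as $\overline P_I$ for a unique proper subset $I \subsetneq \Delta$; properness is forced by the hypothesis that $\overline P$ is a proper parabolic.

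Next I would define the closed subsystem $\Phi_I = \Phi \cap \Z I$ and introduce two candidate subgroups: let $\overline L$ be the closed subgroup generated by $\overline T$ and the root subgroups $\overline U_\alpha$ for $\alpha \in \Phi_I$, and let $\overline U$ be the closed subgroup generated by the $\overline U_\alpha$ for $\alpha \in \Phi^+ \smallsetminus \Phi_I$. Each $\overline U_\alpha$ is one-dimensional and isomorphic to the additive group, so $\overline U$ is unipotent, and using the fact that $\Phi^+ \smallsetminus \Phi_I$ is closed under addition (within $\Phi$) one obtains that $\overline U$ is a closed connected unipotent group whose dimension equals $|\Phi^+ \smallsetminus \Phi_I|$.

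Then I would establish the semidirect product structure in three checks: (a) $\overline U$ is normal in $\overline P$, because $\Phi^+ \smallsetminus \Phi_I$ is stable under addition by elements of $\Phi_I$, so $\overline L$ normalises $\overline U$, and it is stable under addition by itself, so $\overline U$ normalises itself; (b) $\overline U \cap \overline L = 1$, which follows from the uniqueness of the Bruhat-style factorisation of elements as ordered products over $\Phi^+$; (c) $\overline P = \overline U \cdot \overline L$, because the right-hand side is a closed subgroup containing both $\overline B$ and every $\overline U_\alpha$ with $\alpha \in \Phi_I$, and these generate $\overline P_I = \overline P$.

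Finally for (2) I would identify the constructed $\overline U$ with the actual unipotent radical of $\overline P$: any larger connected normal unipotent subgroup would have to contain some $\overline U_\alpha$ with $\alpha \in \Phi_I$ or a non-trivial part of $\overline T$, both ruled out since $\overline L$ contains semisimple elements (namely the images of coroots for $\alpha \in I$). Reductiveness of $\overline L$ follows because the connected component of the identity of its centre is $\overline T$ conjugated, and its derived group is semisimple with root system $\Phi_I$ of rank $|I|$; hence $\overline L$ has rank $|I| < |\Delta|$, which equals the rank of $\overline K$. The main technical obstacle in this sketch is verifying the ``closed subset'' computations with root strings — in particular that the product map $\overline U \times \overline L \to \overline P$ is an isomorphism of varieties, for which one typically invokes the explicit commutator relations (Chevalley commutator formula) governing the $\overline U_\alpha$.
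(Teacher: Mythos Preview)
The paper does not prove this theorem; it is quoted from \cite[Theorem 1.13.2, Proposition 1.13.3]{Gorensteinetal1998} and stated without proof, so there is no in-paper argument to compare against. Your sketch is the standard textbook construction of the Levi decomposition via root subgroups and is essentially correct in outline; it is precisely the approach taken in the reference the paper cites.

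Two small points worth tightening. First, your justification that $\overline L$ is reductive is a bit telegraphic: saying ``the connected component of the identity of its centre is $\overline T$ conjugated'' is not quite the statement needed. What one actually shows is that the unipotent radical of $\overline L$ is trivial, e.g.\ by observing that $\overline L$ is generated by $\overline T$ together with the root groups $\overline U_\alpha$, $\alpha \in \Phi_I$, and that $\Phi_I$ is a genuine root system with Weyl group acting, so any connected normal unipotent subgroup would have to be $\overline T$-stable and supported on a Weyl-stable set of roots in $\Phi_I$, forcing it to be trivial. Second, when you say the rank of $\overline L$ is $|I|$, bear in mind the paper's convention that ``rank'' means the semisimple rank (the rank of the derived group), not the dimension of a maximal torus; your conclusion $|I| < |\Delta|$ matches that convention, but it is worth being explicit since $\overline L$ contains the full maximal torus $\overline T$ of $\overline K$.
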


\subsection{Finite groups of Lie type}

Let $r$ be a prime, and $q$ a power thereof.

Any finite group of Lie type $K$ is obtained as a fixed point set of a Steinberg endomorphism of a connected simple algebraic group $\overline K$ defined over an algebraically closed field of characteristic $r$. Such groups have a \emph{type}, which is related to the Dynkin diagram of $\overline K$, and an associated \emph{twisted rank}. As stated in \cref{sec: intro}, the finite groups of Lie type fall into two families: the types $\A_n, \Atwo_n, \B_n,\typeC_n, \D_n$ and $\Dtwo_n$ are called \emph{classical}, and the types $\Btwo_2$, $\Dthree_4$, $\E_6$, $\Etwo_6$, $\E_7$, $\E_8$, $\typeF_4$, $\typeFtwo_4$, $\G_2$ and
$\Gtwo_2$ are called \emph{exceptional}.

The types of the classical groups and their twisted ranks are listed in \cref{dynkinclassical}. Note that $\lceil . \rceil$ denotes the ceiling function. In the case of the exceptional groups, for the groups of types
$\G_2$, $\typeF_4$, $\E_6$, $\E_7$ and $\E_8$ the twisted rank is equal to the rank. Groups of type $\Btwo_2$ or $\Gtwo_2$ have twisted rank $1$, those of type $\Dthree_4$ or $\typeFtwo_4$ have twisted rank $2$ and groups of type $\Etwo_6$ have twisted rank $4$. Groups of types $\Btwo_2$ and $\typeFtwo_4$ are defined only over fields of order $2^{2m+1}$ while groups of type $\Gtwo_2$ are defined only over fields of order $3^{2m+1}$. All groups of all other types are defined in all characteristics.

As was the case with algebraic groups, each type corresponds to a finite number of finite groups (the \emph{versions}), and two such are related by  factoring out the centre as before. The smallest version (in cardinality, say) is called \emph{adjoint} as before; the adjoint version is a simple group with the following exceptions \cite[Chapter 3.5]{atlas}
\begin{alignat*}{4}
\A_1(2) &\cong S_3, & \A_1(3) &\cong A_4, & \typeC_2(2) &\cong S_6, &
\Atwo_2(2) &\cong 3^2 \rtimes Q_8, \\
\G_2(2) &\cong \Atwo_2(3) \rtimes 2,\phantom{xx} & \Btwo_2(2) &\cong 5 \rtimes 4,\phantom{xx} &
\Gtwo_2(3) &\cong \A_1(8) \rtimes 3,\phantom{xx} & \typeFtwo_4(2)
\end{alignat*}
where $Q_8$ denotes the quaternion group of order $8$, and the index $2$ derived subgroup of $\typeFtwo_4(2)$ is simple, known as the Tits group. For the purpose of this paper, we treat the Tits group as  an exceptional finite group of Lie type.

For reference, we also recall the following additional exceptional isomorphisms
\[\A_1(4) \cong \A_1(5) \cong A_5, \; \; \A_1(9) \cong A_6, \; \; \A_1(7) \cong \A_2(2), \; \; \A_3(2) \cong A_8, \; \; \Atwo_3(2) \cong \typeC_2(3)\]

In addition, $\B_n(2^e) \cong \typeC_n(2^e)$ for all $n \geqslant 3$ and $e \geqslant 1$.
We will sometimes abuse the notation, and denote the adjoint version of some type by the type itself.

The adjoint version of a classical group over $q$ comes with a natural projective module over an algebraically closed field in characteristic $r$; the dimensions of these modules are taken from \cite[Table 5.4.C]{KleidmanLiebeck1990} and listed in \cref{dynkinclassical}. Note that these projective modules are irreducible.

\begin{table}
\centering\begin{tabular}{r | c | c | c | c}
			&				&Twisted			&			&Classical\\
Type 		& Conditions 		&rank			& Dimension 	&isomorphism \\ \hline \hline
$\A_n(q)$		&  $n\geqslant 1$ 	& $n$ 			& $n+1$ 		&$\L_{n+1}(q)$\\
$\Atwo_n(q)$	&  $n\geqslant 2$ 	& $\lceil \frac n 2 \rceil$ & $n+1$	&U$_{n+1}(q)$\\
$\B_n(q)$		& $n\geqslant 2$ 	& $n$ 			& $2n+1$ 		&O$_{2n+1}(q)$\\
$\typeC_n(q)$ 	& $n\geqslant 3$ 	& $n$ 			& $2n$ 		&S$_{2n}(q)$\\
$\D_n(q)$ 	& $n\geqslant 4$ 	& $n$ 			& $2n$		&O$_{2n}^+(q)$\\
$\Dtwo_n(q)$	& $n\geqslant 4$ 	& $n-1$ 			& $2n$		&O$_{2n}^-(q)$\\ \hline
\end{tabular}
\caption{The classical groups of Lie type}
\label{dynkinclassical}
\end{table}

A \emph{parabolic} subgroup of $K$ is any subgroup containing the Borel subgroup of $K$, which is obtained by taking an $\alpha$-invariant Borel subgroup in $\overline K$ (where $\alpha$ denotes a Steinberg endomorphism), and intersecting it with $K$. Note that such a Borel subgroup always exists -- in fact, its intersection with $K$ is equal to the normaliser of some Sylow $r$-subgroup of $K$.

\begin{thm}[Borel--Tits~{\cite[Theorem 3.1.3(a)]{Gorensteinetal1998}}]
\label{borel--tits}
 Let $K$ be a finite group of Lie type in characteristic $r$, and let $R$ be a non-trivial $r$-subgroup of $K$. Then there exists a proper parabolic subgroup $P \leqslant K$ such that $R$ lies in the normal $r$-core of $P$, and $N_K(R) \leqslant P$.
\end{thm}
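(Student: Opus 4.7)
The plan is to deduce this finite statement from the algebraic Borel--Tits theorem \cref{borel--tits alg} by a Galois-descent argument. Realise $K$ as $\overline{K}^{\alpha}$, where $\overline{K}$ is a connected reductive algebraic group over $\overline{\F_r}$ and $\alpha$ is a Steinberg endomorphism. Since $R$ is a finite $r$-group and $\overline{K}$ lives in characteristic $r$, \cref{reductive rmks}(2) identifies $R$ with a closed unipotent subgroup of $\overline{K}$. Applying \cref{borel--tits alg} to $R$ inside $\overline{K}$ produces a parabolic subgroup $\overline{P} \leqslant \overline{K}$ whose unipotent radical $\overline{U}$ contains $R$, and such that $\overline{N} := N_{\overline{K}}(R) \leqslant \overline{P}$.

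The first key step would be to upgrade this to an $\alpha$-invariant parabolic. The proof of \cref{borel--tits alg} does not merely produce some parabolic containing $\overline{N}$; it produces one that is canonically attached to $R$ (essentially by iterating normaliser and closure operations starting from $R$, or equivalently by collecting the root subgroups on which $R$ acts appropriately relative to a maximal torus of a Cartan of $\overline{N}$). Any group endomorphism of $\overline{K}$ fixing $R$ setwise therefore fixes $\overline{P}$ and $\overline{U}$ setwise as well; since $R \leqslant K$ gives $\alpha(R) = R$, we obtain $\alpha(\overline{P}) = \overline{P}$ and $\alpha(\overline{U}) = \overline{U}$.

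Having arranged $\alpha$-invariance, set $P := \overline{P}^{\alpha}$ and $U := \overline{U}^{\alpha}$. Then $P$ is a parabolic subgroup of $K$ in the sense of the paper (it contains the Borel $\alpha$-fixed in $\overline{P}$), and $U$ is a finite $r$-subgroup of $P$ that is normal in $P$. Taking $\alpha$-fixed points of the algebraic inclusions yields
\[
R \leqslant U \quad \textrm{and} \quad N_K(R) = \overline{N}^{\alpha} \leqslant \overline{P}^{\alpha} = P,
\]
while properness of $P$ in $K$ follows from properness of $\overline{P}$ in $\overline{K}$, which in turn holds because $R$ is non-trivial. Finally, one must identify $U$ with the normal $r$-core $O_r(P)$: the containment $U \leqslant O_r(P)$ is immediate from normality of $U$ in $P$, and for the reverse containment one observes that the Zariski closure of $O_r(P)$ is a normal unipotent subgroup of $\overline{P}$, hence contained in $\overline{U}$ by definition of the unipotent radical, so $O_r(P) \leqslant \overline{U} \cap K = U$.

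The main obstacle is the canonicity/$\alpha$-invariance step, since \cref{borel--tits alg} was stated purely as an existence result. To finesse this one can either open up the proof of \cref{borel--tits alg} and single out a distinguished $\overline{P}$ depending functorially on $R$, or argue $\alpha$-invariance directly by showing that the set of parabolics containing $\overline{N}$ with the required containment of $R$ in the unipotent radical has a canonical member (for instance a smallest one with respect to inclusion); the Lang--Steinberg theorem then guarantees that $\alpha$-fixed points of this $\alpha$-stable parabolic form a parabolic subgroup of $K$ of the desired shape.
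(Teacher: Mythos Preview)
The paper does not give its own proof of this statement: it is quoted as a known result with a reference to \cite[Theorem 3.1.3(a)]{Gorensteinetal1998}, with no argument supplied. So there is nothing in the paper to compare your proposal against.

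For what it is worth, your sketch is a reasonable outline of the standard deduction of the finite Borel--Tits theorem from the algebraic one, and you have correctly identified the one genuine subtlety, namely that \cref{borel--tits alg} as stated in the paper is a pure existence statement and does not directly hand you an $\alpha$-stable parabolic. The usual fix is exactly the one you indicate: the proof of the algebraic Borel--Tits theorem actually constructs a canonical parabolic $\overline P(R)$ (via the iterated process $R_0 = R$, $R_{i+1}$ the unipotent radical of $N_{\overline K}(R_i)$, which stabilises), and canonicity forces $\alpha$-stability when $\alpha(R)=R$. Your identification $U = O_r(P)$ is also correctly argued. So the proposal is sound as a sketch, but it goes beyond what the paper does, which is simply to invoke the literature.
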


Note that the normal $r$-core of $K$ is trivial.

\begin{thm}[Levi decomposition~{\cite[Theorem 2.6.5(e,f,g), Proposition 2.6.2(a,b)]{Gorensteinetal1998}}]
\label{levi factors}
 Let $P$ be a proper parabolic in a finite group $K$ of Lie type in characteristic $r$.
 \begin{enumerate}
  \item Let $U$ denote the normal $r$-core of $P$ (note that $U$ is nilpotent).  There exists a subgroup $L \leqslant P$, such that $L\cap U = \{1\}$ and $L U = P$.
  \item The subgroup $L$ (the \emph{Levi factor}) contains a normal subgroup $M$ such that $L/M$ is abelian of order coprime to $r$.
  \item The subgroup $M$ is isomorphic to a central  product of finite groups of Lie type (the \emph{simple factors of $L$}) in characteristic $r$ such that the sum of the twisted ranks of these groups is lower than the twisted rank of $K$.
  \item When $K$ is of classical type other than $\Dtwo$ or $\B$, then each simple factor of $L$ is either of the same type as $K$, or of type $\A$. For type $\Dtwo$ we also get factors of type $\Atwo_3$; for type $\B$ we also get factors of type $\typeC_2$.
  \item When $K$ is of classical type other than $\Atwo$ or $\Dtwo$, then the simple factors of $L$ are defined over the same field. The groups $\Atwo(q)$ admit simple factors of $L$ of type $\A(q^2)$, and the groups $\Dtwo(q)$ admit a simple factor of $L$ of type $\A_1(q^2)$.
 \end{enumerate}
 \end{thm}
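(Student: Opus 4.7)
The plan is to deduce this from the algebraic version (\cref{levi factors alg}) by taking fixed points under a Steinberg endomorphism. Write $K$ as the fixed-point subgroup $\overline K^\sigma$ for some connected reductive algebraic group $\overline K$ in characteristic $r$ and some Steinberg endomorphism $\sigma$. By the definition of a parabolic of $K$ adopted in the excerpt, $P = \overline P^\sigma$ for some $\sigma$-stable proper parabolic $\overline P \leqslant \overline K$. Apply \cref{levi factors alg} to $\overline P$: we obtain the unipotent radical $\overline U$ and a Levi complement $\overline L$ with $\overline P = \overline U \rtimes \overline L$. Since all Levi complements of $\overline P$ are conjugate by elements of $\overline U$, the Lang--Steinberg theorem lets us choose $\overline L$ to be $\sigma$-stable.

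Set $U = \overline U^\sigma$ and $L = \overline L^\sigma$. The short exact sequence $1 \to \overline U \to \overline P \to \overline L \to 1$ together with Lang--Steinberg applied to the connected group $\overline U$ yields $P = U \rtimes L$, which establishes part (1) once we identify $U$ with the normal $r$-core. For that, note that $U$ is an $r$-group (being the $\sigma$-fixed points of a closed connected unipotent group in characteristic $r$) and is normal in $P$; conversely any normal $r$-subgroup of $P$ normalises itself and so, by \cref{borel--tits}, lies in the normal $r$-core of some parabolic containing $P$, which can only be $P$ itself, placing it inside $U$. For (2), decompose the reductive group $\overline L$ as the almost-direct product of its connected centre $\overline Z$ (a torus) and its derived subgroup $\overline M := [\overline L, \overline L]$ (which is semisimple); both are $\sigma$-stable. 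Setting $M = \overline M^\sigma$, the quotient $L/M$ embeds into $(\overline L / \overline M)^\sigma$, which is the group of $\sigma$-fixed points of a torus and hence abelian of order prime to $r$.

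For (3), the semisimple group $\overline M$ is a central product of simple algebraic factors, and $\sigma$ permutes these factors in cycles. A standard computation (see e.g.~\cite[Theorem 2.2.7]{Gorensteinetal1998}) shows that the $\sigma$-fixed points of a single $\sigma$-orbit $\{\overline M_1, \dots, \overline M_d\}$ of simple factors form a single finite group of Lie type of the same type as $\overline M_i$, defined over the fixed field of $\sigma^d$. Summing twisted ranks across all orbits gives the ranks of the resulting simple factors of $M$; the sum is bounded above by the rank of $\overline L$, which is strictly less than the rank of $\overline K$ by \cref{levi factors alg}(2), and this rank inequality passes to twisted ranks because $\sigma$ acts compatibly on root systems of $\overline K$ and of $\overline L$.

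Parts (4) and (5) are then a case-by-case inspection and should be the main obstacle: $\sigma$-stable proper parabolics of $\overline K$ correspond to proper $\sigma$-invariant subsets of the simple roots, and removing such a subset yields a sub-Dynkin diagram whose connected components (with the induced $\sigma$-action) determine the types and defining fields of the simple factors of $L$. For untwisted classical types, the sub-diagrams are built from diagrams of the same type and from $\A$-type tails, defined over the same field as $K$. The exceptional behaviour for $\Atwo(q)$ and $\Dtwo(q)$ arises because $\sigma$ swaps pairs of simple roots, so a pair of $\A$-components fused by $\sigma$ produces a single $\A(q^2)$-factor (respectively, an $\A_1(q^2)$-factor in type $\Dtwo$); the appearance of $\Atwo_3$ inside $\Dtwo$-type Levis and of $\typeC_2$ inside $\B$-type Levis in characteristic $2$ is forced by the same root-system book-keeping together with the exceptional isomorphism $\B_n(2^e) \cong \typeC_n(2^e)$. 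Carrying out this analysis uniformly over all six classical types is routine but lengthy, and constitutes the bulk of the work.
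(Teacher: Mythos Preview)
The paper does not give its own proof of this statement: it is quoted as background from \cite[Theorem 2.6.5(e,f,g), Proposition 2.6.2(a,b)]{Gorensteinetal1998}, so there is nothing in the paper to compare your argument against. Your outline is the standard route taken in that reference (descend from the algebraic Levi decomposition via Lang--Steinberg, then analyse $\sigma$-orbits on the simple factors of $[\overline L,\overline L]$), and it is essentially correct as a sketch.

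Two small points are worth tightening. First, your argument that $U=\overline U^\sigma$ coincides with $O_r(P)$ is slightly roundabout: invoking \cref{borel--tits} gives a parabolic $Q\geqslant P$ with the normal $r$-subgroup inside $O_r(Q)$, and you still need the standard fact that $O_r(Q)\leqslant O_r(P)$ when $Q\geqslant P$. The cleaner argument is that $P/U\cong L$ has trivial $O_r$ (being the fixed points of a reductive group), so $O_r(P)\leqslant U$ and equality follows. Second, your explanation for the $\typeC_2$ factors in type $\B$ is not quite right: this has nothing to do with the characteristic-$2$ isomorphism $\B_n(2^e)\cong\typeC_n(2^e)$; it is simply the Dynkin-diagram coincidence $\B_2=\typeC_2$, so that the short-root end of a $\B$-type subdiagram is recorded as $\typeC_2$ regardless of characteristic.
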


\section{Groups of Lie type in characteristic $2$}

Because of the special role the involutions $\epsilon_1, \dots, \epsilon_n$ play in the structure of $\Aut(F_n)$, groups of Lie type in characteristic $2$ require a different approach than groups in odd characteristic. The strategy is to look at the centraliser of $\epsilon_n$ in $\Aut(F_n)$, note that it contains $\Aut(F_{n-1})$, and then use the Borel-Tits theorem (\cref{borel--tits}) for its image. The same strategy works for reductive algebraic groups in characteristic $2$.

\subsection{Uniqueness of the map \texorpdfstring{$\SAut(F_n) \to \L_n(2)$}{SAut(F_n) to L_n(2)}}
Before we proceed to the main part of this section, we will investigate maps $\SAut(F_n) \to \L_n(2)$, with the aim of showing that there is essentially only one such non-trivial map.

\begin{lem}[{\cite[Proposition 5.3.7]{KleidmanLiebeck1990}}]
\label{kleidmanliebeck}
Let $A_n$ be the alternating group of degree $n$ where $3 \leqslant n \leqslant 8$. The degree $R_p(A_n)$ of the smallest nontrivial irreducible projective representation of $A_n$ over a field of characteristic $p$ is as given in \cref{smallaltrep}. If $n \geqslant 9$, then the degree of the smallest nontrivial projective representation of $A_n$ is at least $n-2$. \end{lem}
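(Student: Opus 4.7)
The plan is to handle the small cases $3\leqslant n\leqslant 8$ by direct inspection of character tables, and to treat the asymptotic case $n\geqslant 9$ by strong induction on $n$. For the small cases, recall that the Schur multiplier of $A_n$ is $\Z/2\Z$ for $n\in\{4,5,8\}$ and $\Z/6\Z$ for $n\in\{6,7\}$, so irreducible projective representations correspond to irreducible linear representations of the Schur cover $2.A_n$ (respectively $6.A_n$). In each characteristic $p$ one then reads off the minimal dimension of a non-trivial constituent from the ordinary and Brauer character tables of these covers (which are small and well-documented in the \textsc{Atlas}); this produces the table of values $R_p(A_n)$. A few entries require care: for example $A_5\cong\L_2(5)$ has a faithful $2$-dimensional projective representation in every characteristic other than $5$, and $A_6\cong\L_2(9)$ admits a faithful $3$-dimensional projective representation via its triple cover; these account for the smallest entries.

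For $n\geqslant 9$ the strategy is induction. The base case $n=9$ can be checked against the character table of $2.A_9$ and the corresponding Brauer tables, verifying that every non-trivial irreducible projective representation has dimension at least $7=n-2$. For the inductive step, let $\rho\colon A_n\to\PGL(V)$ be a non-trivial irreducible projective representation with $d:=\dim V< n-2$, and restrict $\rho$ to the standard subgroup $A_{n-1}<A_n$. Each irreducible projective constituent of $\rho|_{A_{n-1}}$ has dimension at most $d\leqslant n-3=(n-1)-2$, so by the inductive hypothesis every such constituent is trivial as a projective representation. This means $A_{n-1}$ lifts to an abelian subgroup of $\mathrm{GL}(V)$ and, as $A_{n-1}$ is perfect, it must act by the identity in $\PGL(V)$. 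Since $A_{n-1}$ is a non-trivial subgroup of the simple group $A_n$, its normal closure is all of $A_n$, forcing $\rho$ to be trivial, a contradiction.

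The subtle point, and the main obstacle, is the borderline situation: the inductive hypothesis is vacuous at dimension exactly $(n-1)-2=n-3$, which is precisely the upper bound for $d$. If $d=n-3$ and some constituent of $\rho|_{A_{n-1}}$ also has dimension $n-3$, then the restriction is irreducible of that critical dimension, and the constituent is forced to be (a twist of) the standard deleted permutation module of $A_{n-1}$. One then has to rule out that such a module extends to an $(n-3)$-dimensional projective representation of $A_n$. This is where the induction truly bites: the explicit description of the deleted permutation module as a Specht module $S^{(n-2,1)}$ (modulo its socle in characteristic dividing $n-1$) allows one to use Clifford theory, together with the fact that the inertia subgroup of this module in $A_n$ is $A_{n-1}$ itself, to obtain that an extension would force $d\geqslant n-2$, contradicting $d=n-3$.

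Finally, having handled these two cases one assembles the small $n$ values into the cited table and the asymptotic inequality $R_p(A_n)\geqslant n-2$ for $n\geqslant 9$. The only non-routine computational input is the base case $n=9$ and the tabulation for $n\leqslant 8$; the inductive step is uniform across characteristics, which is what makes the statement clean.
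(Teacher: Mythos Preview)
The paper does not prove this lemma at all: it is quoted directly from Kleidman--Liebeck \cite[Proposition 5.3.7]{KleidmanLiebeck1990} as a black box, with no argument supplied. So there is no ``paper's proof'' to compare your approach against; you are attempting to supply a self-contained proof where the authors simply cite the literature.

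Your inductive scheme is natural, but the borderline case $d=n-3$ is not handled correctly. You invoke Clifford theory and speak of ``the inertia subgroup of this module in $A_n$'', but Clifford theory applies to \emph{normal} subgroups, and $A_{n-1}$ is not normal in $A_n$ for $n\geqslant 5$. There is no inertia group in this situation, and the conclusion you draw (that an extension would force $d\geqslant n-2$) does not follow from anything you have set up. Concretely: nothing you have written excludes the possibility of an irreducible projective $A_n$-module of dimension $n-3$ whose restriction to $A_{n-1}$ is the deleted permutation module. Ruling this out is the actual content of the bound, and it genuinely requires input beyond a bare induction --- for linear representations this is James' theorem on minimal degrees of symmetric and alternating groups, and for faithful representations of the double cover $2.A_n$ one needs separate lower bounds on spin representations. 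Your sketch for $n\leqslant 8$ (read off Brauer tables of the Schur covers) is fine in principle, though the parenthetical about $A_5$ in characteristic $5$ is inaccurate: the table records $R_5(A_5)=2$.
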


\begin{table}[h]\centering\begin{tabular}{c | c c c c c c}
$n$             &$R_2(A_n)$     &$R_3(A_n)$     &$R_5(A_n)$     &$R_7(A_n)$\\
\hline
5               &2                              &2                              &2                              &2\\
6               &3                              &2                              &3                              &3\\
7               &4                              &4                              &3                              &4\\
8               &4                              &7                              &7                              &7\\ \hline
\end{tabular} \caption{\label{smallaltrep}}
\end{table}

\begin{rmk}
\label{wagner rmk}
Moreover, the result of Wagner \cite{Wagner1976} tells us the following: Assume that the characteristic is $2$ and that $n\geqslant 9$. Then the smallest non-trivial $A_n$-module appears in dimension $n-1$ when $n$ is odd and $n-2$ when $n$ is even, and is unique. This module appears as an irreducible module in our group $D'_n = 2^{n-1} \rtimes A_n$, where for $n$ even we take a quotient by $\langle \delta \rangle$.
\end{rmk}

\begin{lem}
\label{uniqueness lemma}
Let $n\geqslant 4$.
Let $\phi \colon D_n' \to \L_m(2) = \GL(V)$ be a homomorphism.
\begin{enumerate}
 \item When $m<n-1$ then $\phi(2^{n-1})$ is trivial.
 \item Suppose that $m=n-1$ and $\phi(2^{n-1})$ is non-trivial. Then $n$ is even, $\phi(\delta) = 1$ and we can choose a basis of $V$ in such a way that either for $i<n-1$ the element $\phi(\epsilon_i \epsilon_{i+1})$ is given by the elementary matrix $E_{1i}$, that is the matrix equal to the identity except at the position $(1i)$, or each element $\phi(\epsilon_i \epsilon_{i+1})$ is given by the elementary matrix $E_{i1}$.
 \item When $m=n$ and we additionally assume that $\phi$ is injective and that when $n=8$ the representation $\phi\vert_{A_8}$ is the $8$-dimensional permutation representation, then we can choose a basis of $V$ in such a way that either for each $i<n-2$ the element  $\phi(\epsilon_i \epsilon_{i+1})$ is given by the elementary matrix $E_{1i}$, or each element $\phi(\epsilon_i \epsilon_{i+1})$ is given by the elementary matrix $E_{i1}$.
\end{enumerate}
\end{lem}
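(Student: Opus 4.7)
The strategy rests on one general principle -- Clifford theory applied to the normal $2$-subgroup $2^{n-1}$ of $D_n'$ -- together with two dimension inputs. Since $\mathbb{F}_2$ has characteristic $2$, the only irreducible $\mathbb{F}_2[2^{n-1}]$-module is trivial, so the fixed subspace $V^{\phi(2^{n-1})}$ is non-zero and $D_n'$-invariant, and every $D_n'$-composition factor of $V$ is an irreducible $\mathbb{F}_2[A_n]$-module inflated along $D_n'\twoheadrightarrow A_n$. The first dimension input is \cref{kleidmanliebeck} together with \cref{wagner rmk}: for $n\geqslant 9$ every non-trivial irreducible $\mathbb{F}_2[A_n]$-module has dimension at least $n-2$, with equality attained only by the heart $M:=2^{n-1}/\langle\delta\rangle$ when $n$ is even, and by $2^{n-1}$ itself in dimension $n-1$ when $n$ is odd. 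The second input is the vanishing of $A_n$-coinvariants of $2^{n-1}$, which follows from the $A_n$-transitivity on the generators $\epsilon_i\epsilon_j$ via the identity $\epsilon_i\epsilon_j = \epsilon_i\epsilon_\ell + (i\,j\,k)\cdot(\epsilon_i\epsilon_\ell)$; this implies that $\phi(A_n)=1$ forces $\phi(2^{n-1})=1$, using also that $A_n$ admits no non-trivial $2$-quotient for $n\geqslant 4$.

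For part~(1), $\dim V<n-1$ admits at most one non-trivial $A_n$-composition factor, which if present equals all of $V$ and is the heart $M$; either option makes $V$ irreducible or all-trivial as a $D_n'$-module, and both cases yield $\phi(2^{n-1})=1$ via Clifford together with the coinvariants observation. For part~(2), $\dim V=n-1$ combined with $\phi(2^{n-1})\neq 1$ rules out $n$ odd, since an $(n-1)$-dimensional factor would make $V$ $A_n$-irreducible and hence $D_n'$-irreducible, killing $\phi(2^{n-1})$. Thus $n$ is even and the composition factors of $V|_{A_n}$ are exactly one trivial and one copy of $M$. Fixing a $D_n'$-filtration $0\subset V_1\subset V$ realising these factors, the image $\phi(2^{n-1})$ lies in the unipotent radical of the flag stabiliser, which has $\mathbb{F}_2$-dimension $n-2$. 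Hence $\ker\phi|_{2^{n-1}}$ is a non-zero $A_n$-submodule of $2^{n-1}$; the only such non-trivial proper submodule is $\langle\delta\rangle$, giving $\phi(\delta)=1$ and, by dimensions, an $A_n$-equivariant isomorphism from $M$ onto the unipotent radical.

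To extract the explicit matrix form, I would choose a basis $v_1,\dots,v_{n-1}$ of $V$ adapted to the flag (with $v_1$ spanning $V_1$ in one subcase, or $v_{n-1}$ spanning the trivial quotient in the other), selecting the remaining basis vectors so that under the natural identification of the unipotent radical with $M$ the classes $\bar{\epsilon_i\epsilon_{i+1}}\in M$ (for $i=1,\dots,n-2$) correspond to the $i$-th standard basis vector. A short linear-algebra check in $\mathbb{F}_2$ using $\bar\delta=0$ shows that these $n-2$ classes form a basis of $M$, so this is possible; the resulting $\phi(\epsilon_i\epsilon_{i+1})$ is then precisely $E_{1i}$ in the first subcase and $E_{i1}$ in the second. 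Part~(3) follows by the same template with $\dim V=n$: the composition factors of $V|_{A_n}$ must consist of one copy of $M$ of dimension $n-2$ together with two trivials, the $n=8$ complication (where a second heart could appear through $A_8\cong L_4(2)$ and a $4$-dimensional irreducible) being precisely what the standing hypothesis on $\phi|_{A_8}$ excludes, and an analogous basis adjustment produces $E_{1i}$ or $E_{i1}$.

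The main obstacle is the final basis construction: although Clifford theory and the uniqueness of the heart identify $\phi(2^{n-1})$ with $M$ as an $A_n$-module abstractly, translating this into the concrete elementary-matrix form requires careful tracking of the orientation of the flag (this is what produces the $E_{1i}$ versus $E_{i1}$ dichotomy) and, in part~(3), of the second trivial composition factor. A secondary technical point is that one has to verify the dimension bounds for the small values $n\in\{4,5,6,7\}$, where \cref{wagner rmk} is not available and the needed information must be extracted directly from \cref{kleidmanliebeck} together with some explicit case-by-case composition-factor analysis.
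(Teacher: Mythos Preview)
Your approach via $D_n'$-composition series is a genuine alternative to the paper's argument. The paper fixes $n$ and inducts on $m$: it takes the fixed subspace $Z=V^{\phi(2^{n-1})}$, observes that (after possibly replacing $\phi$ by its transpose-inverse so that $\dim Z\leqslant\dim V/Z$) the subspace $Z$ is a trivial $A_n$-module, applies the inductive hypothesis to $V/Z$ to see that $2^{n-1}$ acts trivially there as well, and then notes that $\Hom(V/Z,Z)$ is too small as an $A_n$-module to accommodate an injective image of $2^{n-1}$. This covers all $n\geqslant 4$ essentially uniformly (with one short extra paragraph for $n=4$), whereas your route trades the induction for a direct appeal to Wagner's theorem, which is clean for $n\geqslant 9$ but --- as you correctly flag --- leaves $n\in\{4,5,6,7,8\}$ to separate verification that you do not carry out.

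Your sketch of part~(3) has two gaps beyond the small-$n$ issue. The minor one is that your composition-factor count (``$M$ of dimension $n-2$ plus two trivials'') is only correct for $n$ even; for $n$ odd the heart has dimension $n-1$ and there is a single trivial factor. The more substantial one is that identifying the composition factors does not by itself force the elementary-matrix form. For $n$ even the unipotent radical of a flag with successive quotients of dimensions $1,1,n-2$ is non-abelian of dimension $2n-3$, and an $A_n$-equivariant embedding of $2^{n-1}$ into it need not land in a single $\Hom$-block. The paper resolves this by first arguing that $\dim Z=1$, then applying part~(2) to the induced action on $V/Z$: if $2^{n-1}$ acted non-trivially there, one would obtain elements of the shape $E_{12}+E_{2(i+1)}+(\text{terms in }E_{1j})$, and a direct check shows any such element squares to $E_{1(i+1)}\neq I$, hence has order $4$, contradicting the fact that $\epsilon_i\epsilon_{i+1}$ is an involution. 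This order-$4$ obstruction is the mechanism that collapses the two-step filtration to one; your proposal has no analogue of it, and ``an analogous basis adjustment'' does not suffice.
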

\begin{proof}
Fix $n$, and proceed by induction on $m$. Clearly $m>1$.

Consider the subgroup $V \rtimes \phi(2^{n-1}) < V \rtimes \GL(V)$. It is a $2$-group, hence it is nilpotent, and therefore it has a non-trivial centre $Z$. Since $\phi(2^{n-1})$ acts faithfully, we have $Z \leqslant V$ as a subgroup, and hence also as a $2$-vector subspace.
Clearly,
 \[
  Z = \{ v \in V \mid \phi(\xi)(v) = v \textrm{ for all } \xi \in 2^{n-1} \}
 \]
and therefore $Z$ is preserved setwise by $\phi(D'_n)$, as $2^{n-1}$ is a normal subgroup of $D'_n$.

Suppose that $\dim Z \leqslant \dim V/Z = m- \dim Z$.
If $n\geqslant 5$, this implies that $Z$ is a trivial $A_n$-module, as the smallest non-trivial module of $A_n$ over the field of two elements is of dimension at least $4$ (see \cite{atlas}).


When $n=4$ we could have $m=n$ and $\dim Z = 2$, in which case $Z$ does not have to be a trivial $A_4$-module. But
in this case we have
\[\GL(Z) \cong \GL(V/Z) \cong \L_2(2) \cong S_3\] and every homomorphism $D'_4 \to S_3$ has $2^3 \rtimes V_4$ in its kernel, where
$V_4$ denotes the Klein four-group. But then
$\phi$ takes $2^3 \rtimes V_4$  to an abelian group of matrices which differ from the identity only in the top-right $2 \times 2$ corner. Thus $\phi(\delta) = \phi([\epsilon_1 \epsilon_2, \sigma_{13} \sigma_{24}]) = 1$, contradicting the injectivity of $\phi$.

We may therefore assume that $Z$ is a trivial $A_n$ module even when $n=4$.

\smallskip
Suppose that $m<n-1$. Then, by the inductive hypothesis, we know that the action of $2^{n-1}$ on $V/Z$ is trivial; it is also trivial on $Z$ by construction.
Hence $\phi(D_n')$ is a subgroup of
\[
 2^{\dim Z (m -\dim Z)} \rtimes \GL(V/Z)
\]
and $\phi$ takes $2^{n-1}$ into the $2^{\dim Z (m -\dim Z)}$ part. But this subgroup cannot contain $2^{n-1}$ as an $A_n$-module, since as a $\GL(V/Z)$-module it is a direct sum of $(m -\dim Z)$-dimensional modules, and $\dim Z \geqslant 1 $. This shows that $\phi$ is not injective on $2^{n-1}$. But the only subgroup of $2^{n-1}$ which can lie in $\ker \phi$ is $\langle \delta \rangle$, and therefore if $\phi(2^{n-1})$ is not trivial, then we need to be able to fit a $(n-2)$-dimensional module into  $2^{\dim Z (m -\dim Z)}$. This is impossible when $m<n-1$, and so (1) follows.

All of the above was conducted under the assumption that $\dim Z \leqslant m - \dim Z$. If this is not true, then we take the transpose inverse of $\phi$; for this representation the inequality is true, and the kernel of this representation coincides with the kernel of $\phi$.

When $m=n-1$ then we use the same proof to show (2) -- it is clear that we can change the basis of $V$ if necessary to have each $\phi(\epsilon_i \epsilon_{i+1})$ as required.

\smallskip
In case (3) we immediately see that $\dim Z=1$. If $2^{n-1}$ does not act trivially on $V/Z$ then we apply (2). Since now $\phi$ is injective, it must take $2^{n-1}$ to the subgroup of $\L_n(2)$ generated by  $E_{ji}$ with $j\in \{1,2\}$ and $i>j$.
Suppose that for some $i$ we have
\[
 \phi(\epsilon_i \epsilon_{i+1}) = E_{12} + E_{2(i+1)} +M
\]
where $M \in \langle \{ E_{1i} \mid i>2 \} \rangle$. Then $\phi(\epsilon_i \epsilon_{i+1})$ is of order $4$, which is impossible. So
\[
 \phi({2^{n-1}}) \leqslant \langle \{ E_{ji} \mid j\in \{1,2\}, i>2 \} \rangle \cong 2^{n-2} \oplus 2^{n-2}
\]
as an $A_n$-module, which contradicts injectivity of $\phi$. Therefore $2^{n-1}$ acts trivially on $V/Z$, and the result follows as before.
\end{proof}

\begin{rmk}
\label{uniqueness lemma rmk}
 In fact, for $n=3$ we can obtain identical conclusions, with the exception that in (3) we may need to postcompose $\phi$ with an outer automorphism of $\L_3(2)$. To see this note that in (1) we have $\L_m(2) = \L_1(2) = \{1\}$; in (2) we have
 $\L_m(2) = \L_2(2) \cong S_3$, and every map from $\D'_3 \cong A_4$ to $S_3$ has $2^2 \cong V_4$ in the kernel. For (3) we see that $\L_m(2) = \L_3(2)$ contains exactly two conjugacy classes of $A_4 \cong D_3'$, and these are related by an outer automorphism of $\L_3(2)$ (see \cite{atlas}). Thus, up to postcomposing $\phi$ with an outer automorphism of $\L_3(2)$, we may assume that $\phi$ maps the involutions $\epsilon_i \epsilon_j$ in the desired manner.
\end{rmk}

We are now ready for our uniqueness result.

\begin{prop}
\label{n=3 char 2}
\label{uniqueness}
 Let $n \geqslant 3$ and $m \leqslant n$ be integers. If
 \[
\phi \colon \SAut(F_n) \to \L_m(2)
 \]
 is a non-trivial homomorphism, then $m=n$ and $\phi$ is equal to the natural map $\SAut(F_n) \to \L_n(2)$  postcomposed with an automorphism of $\L_n(2)$.
\end{prop}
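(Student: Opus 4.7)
My plan is to case-split on whether $\phi\vert_{D'_n}$ is injective.  If it is not, \cref{smallest linear quotient} applies directly and gives three possibilities: $\phi$ is trivial, $\vert \L_m(2)\vert > \vert \L_n(2)\vert$, or $\L_m(2) \cong \L_n(2)$ with $\phi$ being the natural map up to postcomposition with an automorphism of $\L_n(2)$.  The first is ruled out by the non-triviality hypothesis; the second is ruled out since the order formula for $\L_k(2)$ is strictly increasing in $k$ and so the assumption $m \leqslant n$ forces $\vert \L_m(2)\vert \leqslant \vert \L_n(2)\vert$; the third is exactly the desired conclusion, and in particular forces $m = n$.

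If $\phi\vert_{D'_n}$ is injective, I would derive a contradiction using \cref{uniqueness lemma}, together with \cref{uniqueness lemma rmk} to cover the case $n = 3$.  For $m < n-1$, part (1) gives $\phi(2^{n-1}) = \{1\}$, contradicting injectivity.  For $m = n - 1$, part (2) forces $\phi(\delta) = 1$, again contradicting injectivity.  For the remaining case $m = n$, part (3) places $\phi\vert_{D'_n}$ in an explicit standard form after a basis change (and, when $n = 3$, after postcomposition with an outer automorphism of $\L_3(2)$), realising $\phi(2^{n-1})$ as a root subgroup of $\L_n(2)$.  I would then analyse the possible extensions of this standard form to the transvections using the commutations $[\rho_{ij}, \epsilon_k \epsilon_l] = 1$ for $\{k,l\} \cap \{i,j\} = \emptyset$, together with the conjugations $\rho_{ij}^{\epsilon_i \epsilon_j} = \lambda_{ij}$ and $\rho_{ij}^{\epsilon_j \epsilon_k} = \rho_{ij}^{-1}$ from \cref{conjugate transvections}.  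The aim is to show these relations, combined with the fixed form of $\phi\vert_{D'_n}$, force $\phi(\rho_{ij}) = \phi(\lambda_{ij}) = \phi(\rho_{ij})^{-1}$.  An application of \cref{killing epsilon rem} then pushes $\phi$ through the natural map $\SAut(F_n) \to \L_n(2)$; but since $\epsilon_i$ acts trivially on $\mathrm{H}_1(F_n;\F_2)$, the natural map has $2^n$ in its kernel, so its restriction to $D'_n$ is \emph{not} injective, producing the required contradiction.

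The hard part will be the transvection analysis in the last step: pinning down $\phi$ on each $\rho_{ij}$ tightly enough from the centraliser of the standard root subgroup to conclude the three-fold equality.  A subsidiary obstacle is verifying the extra hypothesis of \cref{uniqueness lemma}(3) in the exceptional case $n = 8$, namely that $\phi\vert_{A_8}$ realises the $8$-dimensional permutation representation of $A_8$.  For this I expect one can invoke the classification of faithful irreducible $\F_2$-representations of $A_8 \cong \L_4(2)$ in low dimension, combined with the constraint that such a representation must extend compatibly to the surrounding subgroup $D'_8$, to eliminate the non-permutation candidates.
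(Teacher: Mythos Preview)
Your case split and the handling of the non-injective case, as well as the subcases $m < n-1$ and $m = n-1$, are correct and match the paper. The divergence is in the decisive case $m = n$ with $\phi\vert_{D'_n}$ injective.

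Your proposed endgame there---establishing $\phi(\rho_{ij}) = \phi(\lambda_{ij}) = \phi(\rho_{ij})^{-1}$ by a direct transvection analysis---is logically sound as a \emph{target} (it would give the contradiction you describe), but you rightly flag it as ``the hard part'' and supply no mechanism. The relations you list only say that $\phi(\lambda_{ij})$ and $\phi(\rho_{ij})^{-1}$ are specific \emph{conjugates} of $\phi(\rho_{ij})$ by known elements of the root subgroup; nothing forces those conjugates to equal $\phi(\rho_{ij})$ itself. Pinning $\phi(\rho_{ij})$ down from centraliser constraints alone looks at least as hard as the whole problem.

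The paper avoids this entirely by a dimension reduction. After placing $\phi(2^{n-1})$ in the first-row root subgroup, one notes that the line $Z = \langle e_1 \rangle$ is both the fixed space of $\phi(2^{n-1})$ and the commutator $[V,\phi(\xi)]$ for every nontrivial $\xi \in 2^{n-1}$; hence any matrix commuting with any such $\phi(\xi)$ preserves $Z$. For $n \geqslant 4$ every transvection commutes with some $\epsilon_k\epsilon_l$, so $\phi(\SAut(F_n))$ preserves $Z$ and one obtains an induced map $\SAut(F_n) \to \GL(V/Z) \cong \L_{n-1}(2)$. Feeding $D'_n$ back through \cref{uniqueness lemma} (parts (1) and (2)) kills this induced map, so $\phi$ lands in the abelian unipotent radical; perfectness of $\SAut(F_n)$ then makes $\phi$ trivial, the desired contradiction. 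This reduction to $\L_{n-1}(2)$ is the key idea your outline lacks.

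For the $n = 8$ side condition in \cref{uniqueness lemma}(3), the paper does not analyse $A_8$ or $D'_8$ directly: it passes to the overgroup $A_9 = A_{n+1} < \SAut(F_8)$ and invokes \cref{wagner rmk}, which forces the unique $8$-dimensional $\F_2$-module for $A_9$; restricting to $A_8$ then yields the permutation module automatically. This is sharper than trying to rule out the other small $A_8$-modules by compatibility with $D'_8$.
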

\begin{proof}
Assume that $n\geqslant 3$.
Observe that if $\phi$ is not injective on $D'_n$ then we are done by \cref{killing Dn} -- one has to note that when $\phi(\delta)=1$ then we know that $\phi$ factors through $\SL_n(\Z)$, and so we know (from the Congruence Subgroup Property) that every non-trivial map $\SL_n(\Z) \to \L_n(2)$ factors through the natural such map. We will assume that $\phi$ is injective on $D_n'$.

We apply \cref{uniqueness lemma} (and \cref{uniqueness lemma rmk} when $n=3$); for $n=8$ we consider $\phi(A_9)$ -- by \cref{wagner rmk}, $\phi$ must be the unique $8$-dimensional representation, and so as an $A_8$-module $V$ is the natural permutation representation.
Up to possibly taking the transpose inverse of $\phi$, we see that $m=n$, and
\[
 \phi(\epsilon_i \epsilon_{i+1}) = E_{1i}
\]
Let $Z$ denote the subspace of $V$ generated by the first basis vector; note that $Z$ is precisely the centraliser in $V$ of $\phi(2^{n-1})$ and coincides with the commutator $[V,\phi(\xi)]$ for every $\xi \in 2^{n-1} \smallsetminus \{1\}$. 
Thus, if a matrix in $\L_n(2)$ commutes with any non-zero sum of matrices $E_{1i}$, then it preserves $Z$.

 The group $\SAut(F_n)$ is generated by transvections, and each of them commutes with some $\epsilon_i \epsilon_{j}$ as $n\geqslant 4$, and so $\SAut(F_n)$ preserves $Z$. Thus we have a representation
\[
 \SAut(F_n) \to \GL(V/Z) \cong \L_{n-1}(2)
\]
and such a representation is trivial, or $n$ is even and the representation has $\delta$ in its kernel by \cref{uniqueness lemma}. But then it factors through $\SL_n(\Z)$, and therefore must be trivial, since the smallest quotient of $\SL_n(\Z)$ is $\L_n(2)$.

Therefore $\phi$ takes $\SAut(F_n)$ to $2^{n-1}$, and hence must be trivial.
\end{proof}

\subsection{The case of small $n$}

We now proceed to the main discussion.
We start by looking at small values of $n$. These considerations will form the base of our induction.

\begin{lem}
\label{maps F_3 to L_2}
Let $\Fbar$ be an algebraically closed field of characteristic $2$.
Every homomorphism $\phi \colon \SAut(F_3) \to \L_2(\Fbar) = \PSL_2(\Fbar)$ is trivial.
\end{lem}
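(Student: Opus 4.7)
The plan is to exhibit an element of order $4$ in $\SAut(F_3)$, use the fact that $\PSL_2(\Fbar)$ in characteristic $2$ has no element of order $4$ to force $\phi(\epsilon_1\epsilon_2) = 1$, and then apply \cref{killing Dn}(2) to reduce to a non-embedding $\L_3(2) \not\hookrightarrow \PSL_2(\Fbar)$, which is another instance of the same obstruction.

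Concretely, I take $\xi := \epsilon_1 \sigma_{12} \in \Aut(F_3)$. Its image in the abelianisation $\GL_3(\Z)$ is a product of two elements of determinant $-1$, hence has determinant $+1$, so $\xi$ lies in $\SAut(F_3)$. A direct calculation gives $\xi(a_1) = a_2$, $\xi(a_2) = a_1^{-1}$ and $\xi(a_3) = a_3$, whence $\xi^2 = \epsilon_1\epsilon_2$ and $\xi$ has order $4$. On the other side, $\PSL_2(\Fbar) = \SL_2(\Fbar)$ in characteristic $2$, and every finite-order element of $\SL_2(\Fbar)$ is either semisimple of odd order or unipotent: by Jordan decomposition an element of $2$-power order must be purely unipotent, and every non-trivial unipotent element is conjugate to $\begin{pmatrix}1 & 1 \\ 0 & 1\end{pmatrix}$, which has order $2$. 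Hence $\PSL_2(\Fbar)$ contains no element of order $4$, so $\phi(\xi)$ has order at most $2$ and $\phi(\epsilon_1\epsilon_2) = \phi(\xi^2) = 1$.

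Since $n = 3$ is odd, $\delta = \epsilon_1\epsilon_2\epsilon_3 \notin 2^{n-1}$, so $\epsilon_1\epsilon_2 \in 2^{n-1} \setminus \{1,\delta\}$, and $\phi(\epsilon_1\epsilon_2) = 1$ is central in the image. Therefore \cref{killing Dn}(2) applies and $\phi$ factors through the natural map $\SAut(F_3) \to \L_3(2)$. To finish, I note that $\L_3(2)$ is simple and contains an element of order $4$ (the regular unipotent matrix in $\SL_3(\F_2)$), so by the same no-order-$4$ obstruction the induced map $\L_3(2) \to \PSL_2(\Fbar)$ must be trivial, and therefore so is $\phi$.

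The main obstacle is really locating a convenient element of order $4$ whose square lies in $2^{n-1}$; once $\xi = \epsilon_1 \sigma_{12}$ is in hand the rest is a mechanical application of \cref{killing Dn} and the observation that $\SL_2$ in characteristic $2$ contains no element of order $4$.
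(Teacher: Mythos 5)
Your proof is correct, but it takes a genuinely different route from the paper's. The paper splits into two cases according to whether $\phi(\epsilon_1\epsilon_2)=1$: if so, it factors through $\L_3(2)$ and rules out a faithful map by comparing the non-abelian Sylow $2$-subgroup of $\L_3(2)$ (dihedral of order $8$) with the abelian $2$-subgroups of $\L_2(\Fbar)$; if not, it places the images of the $\epsilon_i\epsilon_j$ inside a unipotent subgroup and derives a contradiction from the fact that the centraliser of a non-trivial unipotent in $\SL_2(\Fbar)$ is the full Borel. You instead exploit the \emph{exponent} of the $2$-subgroups rather than their commutativity: the element $\xi=\epsilon_1\sigma_{12}$ of order $4$ with $\xi^2=\epsilon_1\epsilon_2$ (your computation checks out, and $\det=(-1)^2=1$ so $\xi\in\SAut(F_3)$), together with the observation that $\SL_2(\Fbar)=\PSL_2(\Fbar)$ has no elements of order $4$ in characteristic $2$ (a $2$-power-order element is unipotent, and $(g-I)^2=0$ forces $g^2=I$), kills $\epsilon_1\epsilon_2$ unconditionally; the same order-$4$ obstruction, applied to a regular unipotent of $\L_3(2)$, then disposes of the induced map $\L_3(2)\to\L_2(\Fbar)$ by simplicity. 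Both arguments ultimately rest on the same structural fact (the $2$-subgroups of $\SL_2$ in characteristic $2$ are elementary abelian), but your version avoids the case analysis and the centraliser computation entirely, at the modest cost of locating the explicit order-$4$ element; it is arguably the cleaner of the two. The one point worth tightening is the parenthetical claim that every finite-order element of $\SL_2(\Fbar)$ is ``semisimple of odd order or unipotent'' — mixed elements of order $2m$ with $m$ odd do exist — but this does not affect the argument, since all you use is that elements of $2$-power order are unipotent and hence of order at most $2$.
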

\begin{proof}
We start by observing that $\PSL_2(\Fbar) = \SL_2(\Fbar)$, since the only element in $\Fbar$ which squares to $1$ is $1$ itself.

Suppose first that $\epsilon_1 \epsilon_2$ lies in the kernel of $\phi$. Then $\phi$ descends to a map
\[\L_3(2) \to \L_2(\Fbar)\]
by \cref{killing Dn}. Since $\L_3(2)$ is simple, this map is either faithful or trivial. But it cannot be faithful, since the upper triangular matrices in $\L_3(2)$ form a 2-group (the dihedral group of order $8$) which is nilpotent of class 2, whereas every non-trivial 2-subgroup of $\L_2(\Fbar)$ is abelian. (Alternatively, one can use the fact that $\L_3(2)$ has no non-trivial projective representations in dimension $2$ in characteristic $2$, as can be seen from the $2$-modular Brauer table which exists in GAP.)

Hence we may assume that $\phi(\epsilon_1 \epsilon_2) \neq 1$. Consider the 2-subgroup of $\L_2(\Fbar)$ generated by $\phi(\epsilon_i \epsilon_j)$ with $1 \leqslant i,j \leqslant 3$ (it is isomorphic to $2^2$). As before, up to conjugation, this subgroup lies within the unipotent subgroup of upper triangular matrices with ones on the diagonal.
Now a direct computation shows that the matrices which commute with
\[
\left( \begin{array}{cc} 1 & x \\ 0 & 1 \end{array} \right)
\]
with $x \neq 0$ are precisely the matrices of the form
\[
\left( \begin{array}{cc} \ast & \ast\\ 0 & \ast \end{array} \right)
\]
In particular, this implies that if an element of $\L_2(\Fbar)$ commutes with $\phi(\epsilon_1 \epsilon_2)$, then it also commutes with $\phi(\epsilon_2 \epsilon_3)$. This applies to $\phi(\sigma_{12} \epsilon_3)$, and so
\[
\phi(\epsilon_2 \epsilon_3) = \phi(\epsilon_2 \epsilon_3)^{\phi(\sigma_{12} \epsilon_3)}  =\phi(\epsilon_1 \epsilon_3)
\]
and so $\phi(\epsilon_1 \epsilon_2) = 1$, contradicting our assumption.
\end{proof}

\begin{lem}
\label{maps to lie(2) of small rank basic case}
Every homomorphism $\phi$ from $\Aut(F_3)$ to a finite group of Lie type in characteristic $2$ of twisted rank $1$ has abelian image.
\end{lem}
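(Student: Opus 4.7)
Plan: Since $\SAut(F_3)$ is perfect (\cref{perfect}) of index $2$ in $\Aut(F_3)$, it suffices to show that $\phi$ vanishes on $\SAut(F_3)$: then $\phi(\Aut(F_3))$ is cyclic of order at most $2$, hence abelian. Moreover, we may assume $K$ is the adjoint version, because if $\SAut(F_3)$ maps trivially to the adjoint form, then its image in $K$ lies in the central kernel of $K \to K_{\mathrm{ad}}$, hence is abelian, hence trivial (a perfect abelian group is trivial). Inspecting \cref{dynkinclassical} and the list of exceptional types, the adjoint finite groups of Lie type of twisted rank $1$ in characteristic $2$ are $\A_1(q) = \L_2(q)$, $\Atwo_2(q) = U_3(q)$, and $\Btwo_2(q)$ (the Suzuki groups), with $q$ a power of $2$.

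For $K = \L_2(q)$, the embedding $\L_2(q) \hookrightarrow \L_2(\Fbar)$ reduces the claim directly to \cref{maps F_3 to L_2}. For $K = \Btwo_2(q)$ with $q = 2^{2m+1}$, we have $|K| = q^2(q-1)(q^2+1)$ and $q \equiv -1 \pmod 3$, so $3 \nmid |K|$. Consequently $\phi(\sigma_{12}\sigma_{23}) = 1$; since $\sigma_{12}\sigma_{23}$ lies in $D_3' \smallsetminus 2^{n-1}$, \cref{killing Dn}(3) forces $\phi\vert_{\SAut(F_3)} = 1$.

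The substantive case is $K = U_3(q)$. The plan is to exploit the subgroup $H := \SAut(F_3) \cap (2^3 \rtimes S_3)$, of order $24$: using for instance that the involution $\sigma_{12}\epsilon_3$ lies in $\SAut(F_3)$, commutes with $\epsilon_1\epsilon_2$, but swaps $\epsilon_1\epsilon_3$ with $\epsilon_2\epsilon_3$, one checks that $H \cong S_4$ with $D_3' \cong A_4$ of index $2$ and Sylow $2$-subgroup isomorphic to $D_8$. On the other hand, the Sylow $2$-subgroup $Q$ of $U_3(q)$ has the structure $q^{1+2}$: its centre $Z(Q)$ is elementary abelian of order $q$ and contains every involution of $Q$, while every element of $Q \smallsetminus Z(Q)$ has order $4$. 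If $D_8 = \langle r, s \mid r^4 = s^2 = 1,\ srs^{-1} = r^{-1}\rangle$ embedded in $Q$, the involution $s$ would lie in $Z(Q)$ and therefore commute with $r$, contradicting $srs^{-1} = r^{-1}$. Hence $D_8 \not\hookrightarrow U_3(q)$ for any power $q$ of $2$, so $\phi\vert_H$ is not injective; every nontrivial normal subgroup of $S_4$ contains $V_4 = 2^{n-1}$, so $\phi(2^{n-1}) = 1$. \cref{killing epsilon}(2) now forces $\phi$ to factor through the natural map $\SAut(F_3) \to \L_3(2)$, and since the Sylow $2$-subgroup of $\L_3(2)$ is also $D_8$, the same non-embedding argument forces the induced map $\L_3(2) \to U_3(q)$ to be trivial.

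The crux of the argument is the $U_3(q)$ case: the Borel--Tits theorem alone is not sharp enough to conclude, and one must contrast the $2$-local rigidity of $U_3(q)$ (all involutions central in the Sylow, exponent $4$ outside the centre) with the $D_8$ hidden inside $\SAut(F_3)$ as the Sylow $2$-subgroup of $\SAut(F_3) \cap (2^3 \rtimes S_3) \cong S_4$. Once this is in place, \cref{killing epsilon} and \cref{killing Dn} finish the job, while the other two families fall to \cref{maps F_3 to L_2} and a divisibility count respectively.
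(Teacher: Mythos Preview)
Your proof is correct. The reductions to the adjoint form and the treatment of $\A_1(q)$ and $\Btwo_2(q)$ are the same as in the paper. For $\Atwo_2(q)=U_3(q)$, however, you take a genuinely different route. The paper splits on whether $\phi(\epsilon_3)$ is trivial: if not, Borel--Tits places $\Aut(F_2)$ (the centraliser of $\epsilon_3$) inside a Borel, and the fact that all involutions of the unipotent radical are central forces the image of $\Aut(F_2)$ to be abelian, after which a short transvection computation kills $\SAut(F_3)$; if $\phi(\epsilon_3)=1$, the paper appeals to the classification of maximal subgroups of $U_3(q)$ to exclude $\L_3(2)$. You instead exploit the index-$2$ overgroup $H=\SAut(F_3)\cap(2^3\rtimes S_3)\cong S_4$ of $D'_3$ with Sylow $2$-subgroup $D_8$, and observe that the same structural fact about the Sylow $2$-subgroup of $U_3(q)$ (all involutions central, all non-central elements of order $4$) forbids any embedding of $D_8$. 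This single obstruction simultaneously kills $2^{2}$ (via the normal subgroup lattice of $S_4$) and, after factoring through $\L_3(2)$ by \cref{killing epsilon}(2), kills the induced map $\L_3(2)\to U_3(q)$ as well, since the Sylow $2$-subgroup of $\L_3(2)$ is again $D_8$. Your argument is more self-contained: it avoids both the explicit invocation of Borel--Tits and the appeal to maximal-subgroup tables, trading them for the identification $H\cong S_4$ and a uniform $D_8$-obstruction.
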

\begin{proof}
The groups we have to consider as targets here are the versions of $\A_1(q)$, $\Atwo_2(q)$, and $\Btwo_2(q)$, where $q$ is a power of $2$ (where the exponent is odd in type $\Btwo_2$). Since $\SAut(F_3)$ is perfect, and we claim that it has to be contained in the kernel of our homomorphism, we need only look at the adjoint versions.

For type $\A_1$ the result follows from \cref{maps F_3 to L_2}, since $\L_2(q) \leqslant \L_2(\Fbar)$ with $\Fbar$ algebraically closed and of characteristic $2$.
The simple group of type $\Btwo_2(q)$ has no elements of order $3$ (this can easily be seen from the order of the group), and so $\SAut(F_3)$ lies in  the kernel of the homomorphism by \cref{killing Dn}.

We are left with the type $\Atwo_2(q)$. In this case we observe that, up to conjugation, there are only two parabolic subgroups of $K = \Atwo_2(q)$, namely $K$ itself and a Borel subgroup $B$.

Suppose first that $\epsilon_3$ has a non-trivial image in $K$.
By \cref{borel--tits}, the image of the centraliser of $\epsilon_3$ in $\Aut(F_n)$ lies in $B$.
The Borel subgroup $B$ is a semi-direct product of the unipotent subgroup by the torus. The torus contains no elements of order $2$. Moreover, the only elements of order $2$ in the unipotent subgroup lie in its centre -- this can be verified by a direct computation with matrices.

The centraliser of $\epsilon_3$ in $\Aut(F_3)$ contains $\Aut(F_2)$, which is generated by involutions $\epsilon_1, \epsilon_2, \rho_{12} \epsilon_2$ and $\rho_{21} \epsilon_1$. Thus the image of $\Aut(F_2)$ lies in the centre of the unipotent subgroup of $B$, which is abelian. Therefore, we have \[\phi(\rho_{12}) = \phi({\rho_{12}}^{\epsilon_1 \epsilon_2 \sigma_{12}}) =  \phi(\lambda_{21})\]
and therefore
\[\phi({\rho_{13}})^{-1} = [\phi({\rho_{12}})^{-1}, \phi({\rho_{23}})^{-1}] = [\phi({\lambda_{21}})^{-1}, \phi({\rho_{23}})^{-1}] =1 \]
This trivialises the subgroup $\SAut(F_3)$ as claimed.

Recall that we have assumed that $\epsilon_3$ is not in the kernel of $\phi$; when it is, then the homomorphism factors through $\L_3(2)$, which is simple and not a subgroup of $\Atwo_2(q)$ whenever $q$ is a power of $2$ -- this can be seen by inspecting the maximal subgroups of $\Atwo_2(q)$ \cite{bhrd}.
\end{proof}

\subsection{The main result}

\begin{thm}
\label{rigidity for Aut(F_n)}
\label{main thm char 2 lie type}
Let $n \geqslant 3$. Let $K$ be a finite
group of Lie type in characteristic $2$ of twisted rank less than $n-1$, and let $\overline K$ be a reductive algebraic group over an algebraically closed field of characteristic $2$ of rank less than $n-1$. Then any homomorphism $\Aut(F_n) \to K$ or $\Aut(F_n) \to \overline K$ has abelian image, and any homomorphism $\SAut(F_{n+1}) \to K$ or $\SAut(F_{n+1}) \to \overline K$ is trivial.
\end{thm}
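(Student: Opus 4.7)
The plan is to induct on $n$, simultaneously handling both halves of the statement. The key observation reducing the $\SAut(F_{n+1})$ half to the $\Aut(F_n)$ half at the same level is that $\Aut(F_n) \times \langle \epsilon_{n+1}\rangle \leqslant \Aut(F_{n+1})$ meets $\SAut(F_{n+1})$ in the index-$2$ subgroup where the total determinant is $+1$, which is isomorphic to $\Aut(F_n)$ and contains the standard $\SAut(F_n)$ (fixing $a_{n+1}$) as its commutator. Thus given $\phi\colon \SAut(F_{n+1}) \to K$, restricting to this copy of $\Aut(F_n)$ and applying the first half at level $n$ gives abelian image, which by \cref{perfect} forces the standard $\SAut(F_n)$ into $\ker\phi$; $S_{n+1}$-symmetry then kills all $n+1$ such copies, and their union contains every transvection and so generates $\SAut(F_{n+1})$. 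Hence it suffices to prove the first half by induction on $n$, with the base case $n=3$ supplied by \cref{maps to lie(2) of small rank basic case} and \cref{maps F_3 to L_2}.

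For the inductive step, take $\phi\colon \Aut(F_n) \to K$ with $K$ of twisted rank at most $n-2$; the algebraic-group case is formally identical after replacing \cref{borel--tits} and \cref{levi factors} by \cref{borel--tits alg} and \cref{levi factors alg}, and invoking \cref{reductive rmks}(2) so that finite $2$-subgroups of $\overline K$ are closed unipotent. Examine $\phi(\epsilon_n)$. If $\phi(\epsilon_n)\neq 1$, Borel--Tits puts $\phi(C_{\Aut(F_n)}(\epsilon_n)) \geqslant \phi(\Aut(F_{n-1}))$ inside a proper parabolic $P = U \rtimes L$ whose Levi $L$ has, by \cref{levi factors}, a normal subgroup $M$ with $L/M$ abelian of $2'$-order and $M$ a central product of simple Lie-type factors whose twisted ranks sum strictly below the rank of $K$, hence strictly below $n-2$. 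Composing $\Aut(F_{n-1}) \to P \twoheadrightarrow L/M$ gives abelian image, and perfectness of $\SAut(F_{n-1})$ puts $\phi(\SAut(F_{n-1}))$ inside $UM$; projecting further to each simple factor of $M/Z(M)$, the inductive hypothesis at $n-1$ forces abelian (hence trivial) image on each. Peeling off the central extensions and then the nilpotent $U$, each time using perfectness, yields $\phi(\SAut(F_{n-1})) = 1$. Since the $\phi(\epsilon_i)$ are all conjugate to $\phi(\epsilon_n)$ via $\sigma_{in}$, the same argument applied to any $\epsilon_i$ kills the $\SAut(F_{n-1})$ fixing $a_i$; these copies together generate $\SAut(F_n)$, so $\phi$ has abelian image.

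The remaining case is $\phi(\epsilon_n) = 1$. By $S_n$-conjugation $\phi(\epsilon_i)=1$ for all $i$, so in particular $\phi(\epsilon_1\epsilon_2)=1$, and \cref{killing epsilon}(2) makes $\phi\vert_{\SAut(F_n)}$ factor through the natural map $\SAut(F_n) \to \L_n(2)$. Since $\L_n(2)$ is simple, the image of $\SAut(F_n)$ in $K$ is either trivial --- in which case $\phi(\Aut(F_n))$ is a quotient of $\Z/2$ and we are done --- or the whole $\L_n(2)$. This second possibility is the main obstacle: one must exclude an abstract embedding $\L_n(2) \hookrightarrow K$ with $K$ of twisted rank below $n-1$. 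My plan is to rerun the Case A argument with $\epsilon_n$ replaced by the non-trivial involution $\phi(\rho_{12})$, whose centraliser in $\Aut(F_n)$ still contains a naturally embedded $\Aut(F_{n-2})$ (acting on $a_3, \dots, a_n$) large enough that the Levi-reduction tower, combined with perfectness of $\SAut(F_{n-2})$ and nilpotence of the unipotent radicals, forces enough transvections into $\ker\phi$ to contradict a surjection onto $\L_n(2)$.
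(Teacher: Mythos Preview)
Your overall inductive architecture, the reduction of the $\SAut(F_{n+1})$ statement to the $\Aut(F_n)$ statement, and your Case~A ($\phi(\epsilon_n)\neq 1$) all match the paper and are correct. One small omission: you should first pass to the adjoint version of $K$ (as the paper does), since otherwise $\phi(\epsilon_n)$ could be a non-trivial central $2$-element and Borel--Tits would not yield a \emph{proper} parabolic.

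The genuine gap is in Case~B. Your plan is to apply Borel--Tits to $\langle\phi(\rho_{12})\rangle$ and land $\phi(\Aut(F_{n-2}))$ in a proper parabolic. The Levi factors there have twisted rank strictly below that of $K$, i.e.\ at most $n-3$. But the inductive hypothesis at level $n-2$ only controls maps from $\Aut(F_{n-2})$ to groups of twisted rank \emph{less than} $n-3$; a Levi factor of rank exactly $n-3$ is not covered. You drop two levels in $\Aut$ but only one in rank, and the induction does not close. The hand-wave ``Levi-reduction tower'' does not repair this: to iterate you would need, at each stage, a non-trivial $2$-element in the image whose centraliser still contains a large enough $\Aut(F_{n-2k})$, and you have not arranged this (nor does it survive down to $n=4$, where $\SAut(F_{n-2})=\SAut(F_2)$ is not perfect).

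The paper avoids this mismatch by a cleaner choice of $2$-subgroup. Once $\phi\vert_{\SAut(F_n)}$ factors through $\eta\colon\L_n(2)\to K$, take $G\leqslant\L_n(2)$ to be the unipotent radical of a maximal parabolic, so that $G$ is normalised by $\L_{n-1}(2)$, which is exactly the image of $\Aut(F_{n-1})$. If $\eta(G)=1$ then $\eta$ kills an elementary matrix and hence is trivial; otherwise Borel--Tits applied to $\eta(G)$ places $\phi(\Aut(F_{n-1}))$ in a proper parabolic of $K$, and you are back in the Case~A situation at level $n-1$, where the induction goes through. The point is to feed Borel--Tits a $2$-group with a large \emph{normaliser}, not a single involution with a large centraliser.
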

\begin{proof}
We start by looking at the finite group $K$, and a homomorphism \[\phi \colon \Aut(F_n) \to K\]

Since $\SAut(F_n)$ is perfect and of index $2$ in $\Aut(F_n)$, we may without loss of generality divide $K$ by its centre; we may also assume that $K$ is not solvable.

Our proof is an induction on $n$. The base case ($n=3$) is covered by \cref{maps to lie(2) of small rank basic case}.
In what follows let us assume that $n>3$.

We claim that $\phi(\SAut(F_{n-1}))$ lies in a proper parabolic subgroup $P$ of $K$.
If $\phi(\epsilon_n )$ is central then
$\phi(\epsilon_{n-1} \epsilon_n)$ is trivial, since $\epsilon_n$ and $\epsilon_{n-1}$ are conjugate. Thus $\phi$ factors through
\[\Aut(F_n) \to \L_n(2)\]
by \cref{killing epsilon}. Let $\eta \colon \L_n(2) \to K$ denote the induced homomorphism.

The group $\L_n(2)$ contains $\L_{n-1}(2)$ inside a proper parabolic subgroup which normalises a non-trivial $2$-group $G$.
This $2$-group contains an elementary matrix, and so if $\eta(G)$ is trivial, then so is every elementary matrix in $\L_n(2)$, and therefore $\eta$ is trivial (as $\L_n(2)$ is generated by elementary matrices). This proves the claim.

Now let us assume that $G$ has a non-trivial image in $K$. Thus, by \cref{borel--tits}, the normaliser of $G$ in $\L_n(2)$ is mapped by $\eta$ into a proper parabolic subgroup $P$. Clearly, we may choose $G$ so that it is normalised by the image of $\Aut(F_{n-1})$ in  $\L_n(2)$. This way we have shown that $\phi(\Aut(F_{n-1}))$ lies in $P$.

Now assume that $\phi(\epsilon_n)$ is not central, and so in particular not trivial. We conclude, using \cref{borel--tits}, that $\phi(\Aut(F_{n-1}))$ lies in a parabolic $P$ inside $K$ such that $P\neq K$. Hence we have
\[
\phi(\Aut(F_{n-1})) \leqslant P < K
\]
irrespectively of what happens to $\epsilon_n$, which proves the claim.

Consider the induced map $\psi \colon \Aut(F_{n-1}) \to P/U \cong L$ (using the notation of \cref{levi factors}). Note that in fact the image of $\psi$ lies in $M$, since $L/M$ is abelian and contains no element of order 2. Now $M$ is a central product of finite groups of Lie type in characteristic $2$, where the sum of the twisted ranks is lower than that of $K$. Thus, using the projections, we get maps
from $\Aut(F_{n-1})$ to finite groups of Lie type of twisted rank less than $n-2$. By the inductive assumption all such maps have abelian image, and so the image of $\Aut(F_{n-1})$ in $M$ is abelian. This forces $\phi$ to contain $\SAut(F_{n-1})$ in its kernel, and the result follows, since $U$ is nilpotent and $\SAut(F_{n-1})$ is perfect as $n\geqslant 4$.

\smallskip
Now let us look at a homomorphism $\phi \colon \Aut(F_n) \to \overline K$. We proceed as above; the base case ($n=3$) is covered by \cref{maps F_3 to L_2}.

We claim that, as before, $\phi(\Aut(F_{n-1}))$ is contained in a proper parabolic subgroup $\overline P$ of $\overline K$. This is proved exactly as before using \cref{borel--tits alg}, except that now we use the fact that every finite $2$-group in $\overline K$ is a closed unipotent subgroup. Note that $\overline P$ is a proper subgroup, since $\overline K$ is reductive, and thus its unipotent radical is trivial.

Again as before we look at the induced map $\psi \colon \Aut(F_{n-1}) \to \overline P/ \overline U \cong \overline L$.
By \cref{levi factors alg}, the group $\L$ is reductive of lower rank, and so $\psi$ has abelian image by induction. But then $\phi\vert_{\Aut(F_{n-1})}$ has solvable image, and so $\phi(\SAut(F_{n-1}))= \{1\}$. Therefore
\[
 \phi(\SAut(F_{n}))= \{1\}
\]
as well, and the image of $\phi$ is abelian.

\smallskip
The statements for $\SAut(F_{n+1})$ follow from observing that the natural embedding $\SAut(F_{n}) \into \SAut(F_{n+1})$ extends to an embedding $\Aut(F_{n}) \into \SAut(F_{n+1})$, where we map an element $x \in \Aut(F_n)$ of determinant $-1$ to $x \epsilon_{n+1}$. When this copy of $\Aut(F_n)$ has an abelian image under a homomorphism, then the homomorphism is trivial on $\SAut(F_{n})$, and hence on the whole of $\SAut(F_{n+1})$.
\end{proof}

\begin{thm}
\label{thm char 2}
Let $n \geqslant 8$.
Let $K$ be a finite simple group of Lie type in characteristic $2$ which is a quotient of $\SAut(F_n)$. Then either $\vert K \vert > \vert \L_n(2) \vert$, or $K = \L_n(2)$ and $\phi$ is obtained by postcomposing the natural map $\SAut(F_n) \to \L_n(2)$ by an automorphism of $\L_n(2)$.
\end{thm}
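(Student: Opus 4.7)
Suppose $\phi$ is non-trivial. The plan is to use \cref{main thm char 2 lie type} to bound the twisted rank of $K$ from below, and then to discharge the $K \cong \L_m(2)$ case via \cref{uniqueness} and all remaining cases by order comparisons.

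First, \cref{main thm char 2 lie type}, applied with its ``$n$'' taken to be $n-1$ so that $\SAut(F_{(n-1)+1}) = \SAut(F_n)$, forces the twisted rank of $K$ to be at least $n-2$; otherwise $\phi$ would be trivial. Since $n \geqslant 8$, the twisted rank of $K$ is at least $6$.

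Next, if $K \cong \L_m(2)$ with $m \leqslant n$, then \cref{uniqueness} yields $m = n$ and $\phi$ is the natural map up to post-composition with an automorphism of $\L_n(2)$, as required. If instead $m > n$, then $|K| > |\L_n(2)|$ and the theorem holds in this branch.

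In the remaining cases $K$ is either classical over $\F_{2^e}$ and not of the form $\L_m(2)$, or $K$ is of exceptional type. For exceptional types, twisted rank at least $6$ combined with the fact that $\E_8$ has the largest rank among exceptional types (namely $8$) leaves only $\E_6(2^e)$ (for $n=8$), $\E_7(2^e)$ (for $n \in \{8,9\}$) and $\E_8(2^e)$ (for $n \in \{8,9,10\}$) as candidates; direct order comparisons, which we record in the appendix, give $|K| > |\L_n(2)|$ in each case. For classical types the twisted rank bound combined with the order formulae of \cref{dynkinclassical} likewise forces $|K| > |\L_n(2)|$; the tightest comparison is between $|\L_n(2)|$ and $|\L_{n-1}(q)|$ for $q = 2^e \geqslant 4$, and the types $\Atwo$, $\B=\typeC$, $\D$, $\Dtwo$ have natural modules of larger dimension, and hence produce even larger groups at each fixed twisted rank. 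The main obstacle is collating this family of order inequalities; they are routine but numerous, and are deferred to the appendix.
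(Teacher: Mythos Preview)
Your approach is essentially the same as the paper's: invoke \cref{main thm char 2 lie type} to force twisted rank at least $n-2$, dispose of the $\L_m(2)$ cases via \cref{uniqueness}, and handle everything else by order comparison deferred to the appendix (the paper packages all of this into \cref{computation orders char 2}).

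One point of caution: your heuristic that ``the types $\Atwo$, $\B=\typeC$, $\D$, $\Dtwo$ have natural modules of larger dimension, and hence produce even larger groups at each fixed twisted rank'' is not a safe shortcut, and your identification of $\A_{n-2}(4)$ as the tightest case is incorrect. In fact $\D_{n-2}(2)$ is smaller than $\A_{n-2}(4)$ (roughly $2^{2(n-2)^2}$ versus $2^{2(n-1)^2}$), and the paper's appendix lemma shows $\D_{n-2}(2)$ is the smallest candidate that must be compared against $\L_n(2)$. Since you defer the actual inequalities to the appendix anyway this does not break your argument, but the expository reasoning in your sketch would mislead a reader about where the real work lies.
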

\begin{proof}
By \cref{main thm char 2 lie type}, $K$ is of twisted rank at least $n-2$.

Since $n\geqslant 8$, by \cref{computation orders char 2} we see that
all the finite simple groups of Lie type in characteristic $2$ and twisted rank at least $n-2$ are larger than $\L_n(2)$, with the exception of $\A_{n-2}(2)$ and $\A_{n-1}(2)$. \cref{n=3 char 2} immediately tells us that $K = \L_n(2)$ and $\phi$ is as claimed.
\end{proof}

\section{Classical groups in odd characteristic}
We now turn to the classical groups in odd characteristic. Contrary to the situation in characteristic $2$, our approach here is to develop and make use of the representation theory of $\Aut(F_n)$ in small dimensions over odd characteristics.

In the case that $K$ is defined over the field of three elements however, for technical reasons, it is still advantageous to use the Borel--Tits theorem, and this is where we start.

\subsection{Field of $3$ elements}
\label{sec: char 3}
In this subsection we will show that finite groups of Lie type over the field of $3$ elements are not the smallest quotients of $\SAut(F_n)$. To this end, we will
use Borel--Tits in characteristic 3.
To do this, we need to find suitable elements of order $3$ in $\SAut(F_n)$.

Let $\gamma = \epsilon_{n-1} \epsilon_{n} {\lambda_{(n-1)n}}^{-1} \rho_{n (n-1) }$. A direct computation immediately shows that $\gamma$ is of order $3$. Also, the centraliser of $\gamma$ in $\SAut(F_n)$ contains $\SAut(F_{n-2})$. In fact, $\gamma$ is the element constructed in \cref{centralisers}. We define it here algebraically, since it allows us to easily show the following.
\begin{lem}
 \label{killing t}
 Let $n\geqslant 4$.
 The normal closure of $\gamma$ inside $\SAut(F_n)$ is the whole of $\SAut(F_n)$.
\end{lem}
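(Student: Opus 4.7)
My plan is to apply \cref{killing Dn}(3) to the quotient map $\phi\colon\SAut(F_n)\to Q:=\SAut(F_n)/\langle\!\langle\gamma\rangle\!\rangle$. It suffices to exhibit a non-trivial element of $D'_n\smallsetminus 2^{n-1}$ inside $\langle\!\langle\gamma\rangle\!\rangle$, since \cref{killing Dn}(3) would then force $Q$ to be trivial, giving $\langle\!\langle\gamma\rangle\!\rangle=\SAut(F_n)$.

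The first step is an explicit commutator calculation. Applying both sides to the generators $a_{n-1}$ and $a_n$ directly verifies that
\[
\gamma\,\rho_{(n-1)n}\,\gamma^{-1}=\rho_{n(n-1)}^{-1},
\]
and hence $[\gamma,\rho_{(n-1)n}]=\rho_{n(n-1)}^{-1}\rho_{(n-1)n}^{-1}$ lies in $\langle\!\langle\gamma\rangle\!\rangle$. Since $n\geqslant 4$, the group $A_n$ is $2$-transitive on $N$; conjugating the above relation under $A_n$ yields $\rho_{ji}^{-1}\rho_{ij}^{-1}\in\langle\!\langle\gamma\rangle\!\rangle$ for every pair of distinct $i,j\in N$. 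Writing $\bar{x}$ for the image in $Q$, we obtain the fundamental identification $\bar\rho_{ji}=\bar\rho_{ij}^{-1}$.

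Next I would exploit $\bar\gamma=1$ in $Q$ together with the Magnus-type relation $\rho_{ij}^{\epsilon_i\epsilon_j}=\lambda_{ij}$. Substituting the definition of $\gamma$ and using $\bar\rho_{n(n-1)}=\bar\rho_{(n-1)n}^{-1}$, a short manipulation yields $\bar\epsilon_i\bar\epsilon_j=\bar\rho_{ij}^{-2}$ in $Q$ for every $i\neq j$; squaring and using that $\epsilon_i\epsilon_j$ is an involution gives $\bar\rho_{ij}^{4}=1$.

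The main obstacle is to deduce the stronger relation $\bar\rho_{ij}^{2}=1$ in $Q$ for some (equivalently every) pair, i.e.\ $\bar\epsilon_i\bar\epsilon_j=1$. Once this is obtained, $\epsilon_i\epsilon_j\in\langle\!\langle\gamma\rangle\!\rangle$ lies in $2^{n-1}\smallsetminus\{1,\delta\}$, so \cref{killing Dn}(2) forces $\phi$ to factor through the natural map $\SAut(F_n)\to\L_n(2)$. A direct check by reducing the abelianisation of $\gamma$ modulo $2$ shows that the image of $\gamma$ in $\L_n(2)$ has order $3$, so the induced map $\L_n(2)\to Q$ has non-trivial kernel; by simplicity of $\L_n(2)$ this forces $Q$ to be trivial. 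Establishing $\bar\rho_{ij}^{2}=1$ should follow from combining the derived relations with the Steinberg commutator identities and the commutativity of distinct $\epsilon_i\epsilon_j$'s, but this final combinatorial step is the technically delicate part of the argument.
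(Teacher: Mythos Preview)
Your proposal has a genuine gap: you explicitly leave unfinished the step ``$\bar\rho_{ij}^{2}=1$ in $Q$,'' calling it ``the technically delicate part of the argument.'' Without this, nothing in your outline establishes that $\epsilon_i\epsilon_j$ (or any other non-trivial element of $D_n'$) lies in $\langle\!\langle\gamma\rangle\!\rangle$, so neither \cref{killing Dn}(2) nor (3) can be invoked. Your opening sentence also announces an application of \cref{killing Dn}(3), which would require producing an element of $D_n'\smallsetminus 2^{n-1}$ in the normal closure, but the rest of the argument instead aims for $\epsilon_i\epsilon_j\in 2^{n-1}$ and switches to part (2); this mismatch is confusing.

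The paper's argument is both shorter and avoids these auxiliary results entirely. Working in $Q=\SAut(F_n)/\langle\!\langle\gamma\rangle\!\rangle$, the relation $\gamma\equiv 1$ reads $\rho_{n(n-1)}^{-1}\equiv \epsilon_{n-1}\epsilon_n\lambda_{(n-1)n}^{-1}$. Substituting this into the Steinberg relation $\rho_{n1}^{-1}=[\rho_{n(n-1)}^{-1},\rho_{(n-1)1}^{-1}]$ and expanding the commutator yields $\rho_{n1}^{-1}\equiv \lambda_{(n-1)1}\rho_{(n-1)1}$ in $Q$. Feeding this into the second Steinberg relation $\rho_{21}^{-1}=[\rho_{2n}^{-1},\rho_{n1}^{-1}]$ gives $\rho_{21}^{-1}\equiv [\rho_{2n}^{-1},\lambda_{(n-1)1}\rho_{(n-1)1}]=1$, since $\rho_{2n}$ commutes with both $\lambda_{(n-1)1}$ and $\rho_{(n-1)1}$ (here one uses $n\geqslant 4$ so that $1,2,n-1,n$ are distinct). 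Thus a transvection lies in $\langle\!\langle\gamma\rangle\!\rangle$, and since transvections are all conjugate and generate $\SAut(F_n)$, the result follows immediately --- no appeal to \cref{killing Dn} or to the simplicity of $\L_n(2)$ is needed.
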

\begin{proof}
 Let $C$ denote the normal closure. Then
 \begin{align*}
  {\rho_{n 1}}^{-1} C &= [{\rho_{n (n-1)}}^{-1},{\rho_{(n-1) 1}}^{-1}] C \\
  &= [\epsilon_{n-1} \epsilon_{n} {\lambda_{(n-1)n}}^{-1},{\rho_{(n-1) 1}}^{-1}] C \\
  &= {\lambda_{(n-1) 1}}{\rho_{(n-1) 1}} C
 \end{align*}
 where the last equality follows by expanding the commutator. Now
 \begin{align*}
  {\rho_{2 1}}^{-1} C &= [{\rho_{2 n}}^{-1},{\rho_{n 1}}^{-1}] C \\
  &= [{\rho_{2 n}}^{-1},{\lambda_{(n-1) 1}}{\rho_{(n-1) 1}}] C \\
  &=  C
 \end{align*}
 and we are done.
\end{proof}

\begin{table}
\caption{Some conjugacy classes in $\typeC_2(3)$}
\label{cc table}
\begin{center}
\begin{tabular}{c | c c c c c c c c c c c c c c c c c}
Class                                   &$2A$   &$2B$   &$3C$   &$3D$   &$4A$   &$4B$   &$5A$   &$6E$   &$6F$\\ \hline
$\vert x^G \cap C_G(x) \vert$   &13             &22             &6              &12             &8              &4              &4              &2              &2\\
\end{tabular}
\end{center}
\label{2a33}
\end{table}%

Now that we have a suitable element of order $3$ in $\SAut(F_n)$ at our disposal, we will be able to use Borel--Tits theorem. First we need to (as usual) look into the small values of $n$ to be able to start the induction.

\begin{lem}
\label{char 3 even base case}
Every homomorphism $\phi \colon \SAut(F_4) \to K$, where $K$ is a finite group of Lie type of type $\A_2(3)$ or $\typeC_2(3)$, is trivial.
\end{lem}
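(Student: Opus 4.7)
I would treat the two types of $K$ separately.

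\textbf{Case 1} ($K = \A_2(3) = \L_3(3)$): Since $|\L_3(3)| = 5616 = 2^4 \cdot 3^3 \cdot 13$, the Sylow 2-subgroup of $\L_3(3)$ is semi-dihedral of order $16$, which has 2-rank 2 and hence contains no copy of $2^3$. But $D'_4 \cong 2^3 \rtimes A_4$ inside $\SAut(F_4)$ does contain $2^3$, so $\phi\vert_{D'_4}$ must have non-trivial kernel. Invoking \cref{smallest linear quotient}, together with the inequalities $|\L_3(3)| = 5616 < 20160 = |\L_4(2)|$ and $\L_3(3) \not\cong \L_4(2)$, then forces $\phi$ to be trivial.

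\textbf{Case 2} ($K = \typeC_2(3) \cong \mathrm{PSp}_4(3)$): Here $K$ does contain $2^3$ (e.g.\ inside the maximal subgroup $2^4{:}A_5$), so the rank argument above is unavailable. I would instead use the order-$3$ element $\gamma$ together with the Borel--Tits theorem in characteristic $3$. By \cref{killing t}, if $\phi(\gamma) = 1$ then $\phi$ is trivial, so assume $\phi(\gamma) \neq 1$. Then \cref{borel--tits} places the normaliser of $\langle\phi(\gamma)\rangle$ inside a proper parabolic $P \leqslant K$, and hence $\phi(C_{\SAut(F_4)}(\gamma)) \leqslant P$. This centraliser contains the subgroup $\Aut(F(\{1,2\})) \cong \Aut(F_2)$ acting on $a_1, a_2$ (which fixes $a_3, a_4$, hence commutes with $\gamma$), so $\phi(\Aut(F_2))$ lies in $P$. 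Composing with the Levi projection of \cref{levi factors} gives a map to a Levi factor $L$ of small order (both maximal parabolics of $\mathrm{PSp}_4(3)$ having Levi of order $24$).

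To close Case 2, I would combine the resulting constraints with the conjugacy class data of \cref{cc table}. In particular, $\Aut(F_2)$ contains an order-$3$ element $\gamma'$ (lifting any order-$3$ element of $\SL_2(\Z)$), and $\langle \gamma, \gamma' \rangle \cong 3^2$ is an elementary abelian $3$-subgroup of $\SAut(F_4)$ centralising $\gamma$. The bounds $|x^G \cap C_G(x)| \in \{6, 12\}$ for the classes $3C, 3D$ restrict how the eight non-trivial elements of $\phi(3^2)$ can sit inside $C_K(\phi(\gamma))$; analogous bounds for the involution classes $2A, 2B$ applied to $\phi(\delta)$ and $\phi(2^3)$ (noting that $\delta$ is the central involution of $D'_4$) then force $\phi(\delta) = 1$. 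By \cref{killing delta}, $\phi$ factors through $\SAut(F_4) \to \SL_4(\Z)$; but the simple quotients of $\SL_4(\Z)$ are the groups $\L_4(p)$, and none of these embeds in $K$: the maximal subgroups of $\mathrm{PSp}_4(3)$ have order at most $960 < 20160 = |\L_4(2)|$, while $|\L_4(p)| > |K|$ for every odd prime $p$. Thus $\phi$ is trivial. The principal obstacle is Case 2, where one must carefully juggle the Borel--Tits reduction to a parabolic with the class-algebra constraints of \cref{cc table}.
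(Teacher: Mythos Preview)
Your Case~1 is correct and gives a clean alternative to the paper's argument: the paper simply observes that $5 \nmid |\L_3(3)|$, so the $5$-cycle in $A_5 < D'_4$ dies and \cref{killing Dn}(3) applies; you instead use that the Sylow $2$-subgroup of $\L_3(3)$ is semi-dihedral of $2$-rank~$2$, so $2^3$ cannot inject. Both routes are valid.

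Case~2 has a genuine gap. First a minor point: the natural copy of $\Aut(F(\{1,2\}))$ sits in $\Aut(F_4)$ but not in $\SAut(F_4)$, since determinant-$(-1)$ elements are excluded; only $\SAut(F(\{1,2\}))$ centralises $\gamma$ inside $\SAut(F_4)$. This matters because $\SAut(F_2)$ is \emph{not} perfect (it surjects onto $\SL_2(\Z)$, whose abelianisation is $\Z/12$), so projecting it to a solvable Levi factor does not kill it, and your Borel--Tits thread stalls without conclusion. The decisive step ``then force $\phi(\delta)=1$'' is asserted but never carried out, and the tools you set up do not deliver it: the class $2A$ in $\typeC_2(3)$ has centraliser of order $576$, easily large enough to accommodate an injective image of $D'_4$ (of order~$96$), so the table data applied to $D'_4$ alone cannot exclude $\phi(\delta)\in 2A$. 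The class-count for the $3^2$ subgroup $\langle\gamma,\gamma'\rangle$ does not help either, since its eight nontrivial elements need not lie in a single $K$-class and the bounds $6$, $12$ are not violated.

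The paper's argument for $\typeC_2(3)$ is entirely different and avoids both Borel--Tits and $\phi(\delta)$: it works directly with the $24$ transvections in $C_T(\rho_{12})$, first disposes of the case that $\phi(\rho_{12})$ is an involution via the (GAP-verified) isomorphism $\SAut(F_4)/\langle\!\langle\rho_{12}^{2}\rangle\!\rangle \cong 2^4 \rtimes \L_4(2)$, and then uses the table to see that the remaining classes admit at most~$12$ conjugates in their own centraliser. Pigeonhole forces two transvections in $C_T(\rho_{12})$ to coincide under $\phi$, and a short commutator case-analysis on \emph{which} two then trivialises $\phi$.
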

\begin{proof}
Since $\SAut(F_4)$ is perfect, we may assume that $K$ is simple. If $K$ is of type $\A$, then $K \cong \L_3(3)$
which has no element of order $5$ -- this follows immediately from the order of the group. But then $\phi$ trivialises the five cycle in $A_5$, and so $\phi$ is trivial by \cref{killing Dn}.

Suppose that $K$ is of type $\typeC$. 
We are now going to argue as in  the proof of \cref{A_5 not quotient of F_3}.
Consider the set of transvections
\[
 T = \{ {\rho_{ij}}^{\pm 1}, {\lambda_{ij}}^{\pm 1} \}
\]
Recall that any two elements in $T$ are conjugate in $\SAut(F_4)$ (by \cref{conjugate transvections}).
Let $ C_T(\rho_{12})$ denote the set of elements in $T$ which commute with $\rho_{12}$.
There are exactly $24$ elements in  $C_T(\rho_{12})$, namely
\[
 \{{\rho_{12}}^{\pm1}, {\lambda_{12}}^{\pm1}, {\lambda_{13}}^{\pm1}, {\lambda_{14}}^{\pm1},  {\rho_{32}}^{\pm1}, {\lambda_{32}}^{\pm1}, {\rho_{42}}^{\pm1}, {\lambda_{42}}^{\pm1}, {\rho_{34}}^{\pm1}, {\lambda_{34}}^{\pm1}, {\rho_{43}}^{\pm1}, {\lambda_{43}}^{\pm1}\}
\]

\cref{cc table} lists every conjugacy class in $K$ of elements conjugate to their inverse, as can be computed in GAP; it also lists the number of elements in the conjugacy class which commute with a fixed representative of the class.

Note that if $\phi(\rho_{12})$ is an involution, then a direct computation with GAP reveals that $\phi$ factors through
\[
 \SAut(F_4) / \langle \! \langle {\rho_{12}}^{2} \rangle \! \rangle \cong 2^4 \rtimes \L_4(2)
\]
(Note that an analogous statement is true for $n=3$, but for large enough $n$ the quotient is infinite, as shown in \cite{BridsonVogtmann2003}.)
The group $K$ is simple and non-abelian, and so $\phi$ factors through $\L_4(2)$. Hence $\phi$ is trivial, as $\L_4(2)$ is simple and not isomorphic to $K$.

We may thus assume that $\phi(\rho_{12})$ is not an involution. Inspecting \cref{cc table} we see that
there are at most $12$ elements in the conjugacy class of $\phi(\rho_{12})$ which commute with $\phi(\rho_{12})$. Thus there exist two elements in $ C_T(\rho_{12})$ which get identified under $\phi$. Without loss of generality we may assume that we have
\[
 \phi(\rho_{12}) = \phi({x_{ij}}^{\pm1})
\]
where $x$ is either $\rho$ or $\lambda$, and ${x_{ij}}^{\pm1} \not\in \{ \rho_{12}, {\rho_{12}}^{-1}\}$.

If $j>2$, take $k$ such that $k \in \{2,3,4\} \s- \{i,j \}$. Now
\[
 \phi({x_{ik}}^{-1}) = \phi([{x_{ij}}^{-1},{x_{jk}}^{-1}]) = [\phi(\rho_{12})^{\mp 1},\phi(x_{jk})^{-1}] = 1
\]
and so $\phi$ is trivial. Let us assume that $j \leqslant 2$.

Similarly, if $i>1$,  take $k\in \{3,4\} \s- \{i\}$. Now
\[
 \phi({x_{kj}}^{-1}) = \phi([{x_{ki}}^{-1},{x_{ij}}^{-1}]) = [\phi(x_{ki})^{-1},\phi(\rho_{12})^{\mp 1}] = 1
\]
and so $\phi$ is trivial.

We are left with the case $(i,j) = (1,2)$ and $x = \lambda$. If ${x_{ij}}^{\pm 1}=\lambda_{12}$ then  $\phi$ factors through $\SL_4(\Z)$, since adding the relation $\rho_{12} {\lambda_{12}}^{-1}$ takes the Gersten's presentation of $\SAut(F_n)$ to the Steinberg's presentation of $\SL_n(\Z)$. But we know all the finite simple quotients of $\SL_4(\Z)$, and $K$ is not one of them. Hence $\phi$ is trivial.

We are left with the case ${x_{ij}}^{\pm 1}={\lambda_{12}}^{-1}$. Gersten's presentation contains the relation
\[
 (\rho_{12} {\rho_{21}}^{-1} \lambda_{12})^4
\]
Using the relation $\rho_{12} {\lambda_{12}}$ gives
\[
 (\rho_{12} {\rho_{21}}^{-1} {\rho_{12}}^{-1})^4
\]
  which is equivalent to ${\rho_{21}}^4$. Thus $\phi(\rho_{21})$, and hence also $\phi(\rho_{12})$, has order $4$. Inspecting \cref{cc table} again we see that in fact we have at least three elements in  $C_T(\rho_{12})$ which coincide under $\phi$, and so, without loss of generality, there exists ${x_{ij}}^{\pm1} \not\in \{ \rho_{12}, {\rho_{12}}^{-1}, {\lambda_{12}}^{-1} \}$ such that
  \[
\phi(\rho_{12}) = \phi({x_{ij}}^{\pm1})
  \]
Thus we are in one of the cases already considered.
\end{proof}

\begin{lem}
\label{char 3 odd base case}
Every homomorphism $\phi \colon \SAut(F_5) \to K$ where $K$ is a finite group of Lie type of type $\A_3(3), \Atwo_3(3)$ or $\typeC_3(3)$ is trivial.
\end{lem}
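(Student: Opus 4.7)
The plan is to bootstrap from \cref{char 3 even base case} via a Borel--Tits argument using the order-$3$ element $\gamma \in \SAut(F_5)$ introduced above. Since $\SAut(F_5)$ is perfect (\cref{perfect}), it suffices to trivialise $\phi$ after composition with the quotient of $K$ by its centre, so we may assume $K$ is simple. Set $\gamma = \epsilon_4\epsilon_5 \lambda_{45}^{-1} \rho_{54}$; this element of order $3$ has centraliser containing $\SAut(F(\{1,2,3\})) \cong \SAut(F_3)$, and its normal closure in $\SAut(F_5)$ is all of $\SAut(F_5)$ by \cref{killing t}. If $\phi(\gamma) = 1$, then $\phi$ is trivial. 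Otherwise $\langle \phi(\gamma)\rangle$ is a non-trivial $3$-subgroup of $K$, and the Borel--Tits theorem (\cref{borel--tits}) places its normaliser -- hence its centraliser, and in particular $\phi(\SAut(F_3))$ -- inside a proper parabolic $P = UL \leqslant K$.

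By \cref{levi factors}, the induced map $\psi \colon \SAut(F_3) \to L$ has image in the subgroup $M \trianglelefteq L$, since $L/M$ is abelian of order coprime to $3$ and $\SAut(F_3)$ is perfect. The group $M$ is a central product of finite groups of Lie type in characteristic $3$ whose twisted ranks sum to less than that of $K$; consulting parts (4) and (5) of \cref{levi factors} and the exceptional isomorphisms recalled earlier, the possible factors modulo their centres are $\A_1(3) \cong A_4$ and $\A_2(3) \cong \L_3(3)$ when $K = \A_3(3)$; $\A_1(9) \cong A_6$ and $\Atwo_2(3) \cong U_3(3)$ when $K = \Atwo_3(3)$; and $\A_1(3) \cong A_4$, $\A_2(3) \cong \L_3(3)$, $\typeC_2(3) \cong \PSp_4(3)$ when $K = \typeC_3(3)$. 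It therefore suffices to show that every homomorphism from $\SAut(F_3)$ into each of these factors is trivial: given this, perfectness of $\psi(\SAut(F_3))$ forces $\psi = 1$, so $\phi(\SAut(F_3)) \leqslant U$; since $U$ is a nilpotent $3$-group, $\phi(\SAut(F_3)) = 1$, and then \cref{conjugate transvections} trivialises $\phi$.

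The alternating factors $A_4$ and $A_6$ both embed in $S_6$, so maps to them are trivial by \cref{A_5 not quotient of F_3}. The main obstacle is the remaining three rank-$2$ targets $\L_3(3)$, $U_3(3)$ and $\PSp_4(3)$; one expects to handle them with an explicit conjugacy-class analysis modelled on \cref{char 3 even base case}. The $24$ transvections of $\SAut(F_3)$ form a single conjugacy class by \cref{conjugate transvections}, exactly $10$ of which commute with $\rho_{12}$; a GAP computation of the sizes of conjugacy classes of self-inverse elements together with the sizes of their maximal pairwise-commuting subsets in each target, matched against this count, should force coincidences $\phi(\rho_{12}) = \phi(x_{ij}^{\pm 1})$ that, via the Steinberg relations and Gersten's presentation, collapse $\phi$ to an abelian and hence trivial image.
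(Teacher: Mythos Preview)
Your Borel--Tits reduction has a fatal flaw: the statement you need at the end, that every homomorphism $\SAut(F_3) \to \A_2(3) = \L_3(3)$ is trivial, is simply false. The natural map
\[
\SAut(F_3) \twoheadrightarrow \SL_3(\Z) \twoheadrightarrow \SL_3(\Z/3\Z) \twoheadrightarrow \L_3(3)
\]
is surjective, so $\L_3(3)$ genuinely is a quotient of $\SAut(F_3)$. Since $\A_2(3)$ occurs as a Levi factor in both $\A_3(3)$ and $\typeC_3(3)$ (remove an end node of the Dynkin diagram), your inductive step cannot force $\psi(\SAut(F_3))$ to be trivial there, and the argument collapses for those two targets. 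No amount of conjugacy-class bookkeeping in $\L_3(3)$ will rescue this, because the map you are trying to exclude actually exists.

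The paper sidesteps this precisely by \emph{not} using the order-$3$ element $\gamma$ for this base case. For $\A_3(3)$ and $\Atwo_3(3)$ it instead looks at the involution $\epsilon_4\epsilon_5$: if it dies, the map factors through $\L_5(2)$ and is trivial; if not, one reads off from the ATLAS that involution centralisers in these groups are either solvable of order $1152$ or have $A_6$ as their only non-abelian composition factor, and $\SAut(F_3) \to A_6$ is trivial by \cref{A_5 not quotient of F_3}. For $\typeC_3(3)$ the paper checks directly from the list of maximal subgroups that $D_5'$ does not embed, so \cref{killing Dn} forces the map through $\L_5(2)$, which is then excluded because $\L_5(2)$ contains an element of order $31$ while $\typeC_3(3)$ does not. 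The element $\gamma$ is only deployed afterwards, in the inductive step of \cref{char 3 small}, once these base cases are in hand.
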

\begin{proof}
As always, we assume that $K$ is simple. The simple group $\A_3(3) \cong \L_4(3)$ contains two conjugacy classes of involutions \cite{atlas} where they are denoted $2A$ and $2B$. The ATLAS also gives the order of their centralisers. The centraliser of an involution in class $2B$ has order $1152=2^73^2$ and hence is solvable by Burnside's $p^aq^b$-Theorem. The structure of the centraliser of an involution in class $2A$ is given in \cite{atlas} and is isomorphic to
\[
 (4 \times A_6) \rtimes 2
\]
Consider $\epsilon_4 \epsilon_5 \in \SAut(F_5)$. If $\phi(\epsilon_4 \epsilon_5) = 1$ then $\phi$ factors through $\L_5(2)$ (by \cref{killing Dn}), which is simple and non-isomorphic to $\A_2(3)$. This trivialises $\phi$.

If $\phi(\epsilon_4 \epsilon_5) \neq 1$ then $\phi$ maps $\SAut(F_3)$ (which centralises $\epsilon_4 \epsilon_5$) to either a solvable group, or to $(4 \times A_6) \rtimes 2$. In both cases we have $\SAut(F_3) \leqslant \ker \phi$, as $\SAut(F_3)$ is perfect and has no non-trivial homomorphisms to $A_6$ by \cref{A_5 not quotient of F_3}.
This trivialises $\phi$.

The simple group $\Atwo_3(3)$ has a single conjugacy class of involutions, denoted $2A$ in \cite{atlas}, and the centraliser of an involution in this class again has order 1152, hence it is solvable, and so we argue as before.

The conjugacy classes of maximal subgroups of the simple group $\typeC_3(3)$ are known \cite[pg.113]{atlas}. By inspection we see that it does not contain $D_5'$ and so $\phi$ factors through $\L_5(2)$ by \cref{killing Dn}.
But $\L_5(2)$ contains an element of order $31$, whereas $\typeC_3(3)$ does not. Thus $\phi$ is trivial.
\end{proof}

With the base cases covered, we are ready for the main result in this section.

\begin{lem}
\label{char 3 small}
Let $n\geqslant 4$.
Every homomorphism $\phi \colon \SAut(F_n) \to K$ is trivial, where
\begin{enumerate}
 \item $n$ is even, and $K$ is the of type $\A_k(3)$ or $\B_k(3)$  or $\typeC_2(3)$ with $k \leqslant \frac n 2$; or
 \item $n$ is odd, and $K$ is of type $\A_k(3),  \Atwo_3(3), \typeC_k(3), \D_k(3)$ or $\Dtwo_k(3)$ with $k \leqslant \frac {n+1} 2$.
\end{enumerate}
\end{lem}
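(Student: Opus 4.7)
The plan is to argue by induction on $n\geqslant 4$, with the base cases $n=4$ (for statement (1)) and $n=5$ (for statement (2)) supplied by \cref{char 3 even base case,char 3 odd base case}. Fix $n\geqslant 6$ and a homomorphism $\phi \colon \SAut(F_n) \to K$ with $K$ on the list, and suppose for contradiction that $\phi$ is non-trivial. Consider the order-$3$ element $\gamma = \epsilon_{n-1}\epsilon_n{\lambda_{(n-1)n}}^{-1}\rho_{n(n-1)}$ defined just before \cref{killing t}. Since $\gamma$ normally generates $\SAut(F_n)$ by \cref{killing t}, we must have $\phi(\gamma) \neq 1$. As $K$ has characteristic $3$, the group $\langle \phi(\gamma) \rangle$ is a non-trivial $3$-subgroup of $K$, and the Borel--Tits theorem (\cref{borel--tits}) supplies a proper parabolic subgroup $P \leqslant K$ containing $C_K(\phi(\gamma))$. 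Because $\gamma$ centralises the standard $\SAut(F_{n-2})$ (compare \cref{centralisers}), this gives $\phi(\SAut(F_{n-2})) \leqslant P$.

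Next I invoke the Levi decomposition (\cref{levi factors}): write $P = UL$ with $U$ the nilpotent normal $3$-core and $M \trianglelefteq L$ a central product of finite simple groups of Lie type in characteristic $3$ such that $L/M$ is abelian of order prime to $3$. Since $\SAut(F_{n-2})$ is perfect by \cref{perfect}, its image in $L/M$ is trivial, so its image in $L = P/U$ lies in $M$. The simple factors of $M$ have strictly smaller twisted rank than $K$ and, by parts (4) and (5) of \cref{levi factors}, their types fall into a controlled list: for each $K$ on our list, each simple factor of $M$ is either of a type already on the list (same parity, smaller rank), or is isomorphic to $\A_1(9) \cong \L_2(9) \cong A_6$ (coming from the $\A_1(q^2)$ exceptions for the twisted types $\Atwo_3$ and $\Dtwo_k$), or is isomorphic to $\typeC_2(3) \cong U_4(2)$ (appearing inside $\typeC_3(3)$ and inside $\B_k(3)$).

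The inductive hypothesis handles all factors of the first kind, so it remains to dispose of the two exceptional factors. For the factor $A_6$, \cref{main thm actions} bounds the smallest set on which $\SAut(F_{n-2})$ acts non-trivially by $k(n-2) \geqslant 12$ (using $n-2 \geqslant 4$), so any homomorphism $\SAut(F_{n-2}) \to A_6$ is trivial since $A_6$ acts on $6$ points. For the factor $\typeC_2(3)$, I use the exceptional isomorphism $\typeC_2(3) \cong U_4(2) = \Atwo_3(2)$ and regard the target as a finite group of Lie type in characteristic $2$ of twisted rank $2$; applying \cref{main thm char 2 lie type} then kills the image for $n-2 - 1 > 2$, i.e.\ for $n \geqslant 6$. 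Hence every simple factor of $M$ kills $\SAut(F_{n-2})$, which forces $\phi(\SAut(F_{n-2})) \leqslant U$; since $U$ is nilpotent and $\SAut(F_{n-2})$ is perfect, we conclude $\phi(\SAut(F_{n-2})) = \{1\}$. In particular $\phi(\rho_{12}) = 1$, and because transvections are all conjugate in $\SAut(F_n)$ by \cref{conjugate transvections} and generate the whole group, $\phi$ is trivial, contradicting our assumption. The main obstacle is not the induction itself but the book-keeping of exactly which simple factors can appear as Levi components for each type on the list, and in particular the need to reach outside the characteristic-$3$ framework (using the alternating-set bound and the characteristic-$2$ rigidity theorem) to deal with the exceptional Levi factors $\A_1(9)$ and $\typeC_2(3)$.
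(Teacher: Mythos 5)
Your induction is the paper's own argument: the same element $\gamma$, the same appeal to \cref{killing t}, Borel--Tits, and the Levi decomposition, with perfectness of $\SAut(F_{n-2})$ pushing the image into $M$ and the inductive hypothesis killing the "generic" simple factors. Your treatment of the $\A_1(9)\cong A_6$ factor via \cref{main thm actions} is a harmless variant of the paper's use of \cref{A_5 not quotient of F_3} (note only that $k(4)=8$, not $12$; the bound $>6$ is what matters, and in fact this factor only arises in the odd case where $n-2\geqslant 5$).

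The one step that does not work as written is your disposal of the exceptional Levi factor $\typeC_2(3)$. Applying \cref{main thm char 2 lie type} to a homomorphism from $\SAut(F_m)$ requires the target to have twisted rank strictly less than $m-2$, not $m-1$: the theorem kills homomorphisms from $\SAut(F_{n+1})$ into groups of twisted rank less than $n-1$. Since $\Atwo_3(2)$ has twisted rank exactly $2$, you need $2<(n-2)-2$, i.e.\ $n\geqslant 7$, whereas the $\typeC_2(3)$ factor occurs for $K=\B_k(3)$ in the even case starting at $n=6$, where $m=n-2=4$ and the theorem gives nothing. The gap is easily closed, and indeed the detour into characteristic $2$ is unnecessary: $\typeC_2(3)$ is explicitly part of the statement being proved (it appears on list (1) for every even $n\geqslant 4$), so the inductive hypothesis -- in the critical case $m=4$, the base case \cref{char 3 even base case} -- already covers it. This is exactly how the paper handles it, which is why the lemma's statement carries the seemingly redundant "$\typeC_2(3)$" alongside the rank-bounded families.
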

\begin{proof}
As usual, since $\SAut(F_n)$ is perfect, we may assume that we are dealing with adjoint versions; therefore we will use type to denote its adjoint version.

The proof is an induction; the base case when $n$ is even is covered by \cref{char 3 even base case}, upon noting that for $k=1$ we only have to consider $\A_1(3)$, which is solvable.

When $n$ is odd, the base case consists of the groups $\A_1(3)$, $\A_2(3)$, $\A_3(3)$, $\typeC_2(3)$, $\typeC_3(3)$, 
and $\Atwo_3(3)$.
The first two are subgroups of the third, which is covered by \cref{char 3 odd base case}, and so is $\typeC_3(3)$. The group $\typeC_2(3)$ is covered by \cref{char 3 even base case}.
The remaining group $\Atwo_3(3)$ is again covered by \cref{char 3 odd base case}.

\smallskip
Now suppose that $n>4$. Consider $\phi(\gamma)$. If this is trivial, then we are done by \cref{killing t}. Otherwise, \cref{borel--tits} tells us that $\phi$ maps $\SAut(F_{n-2})$ to a parabolic subgroup $P$ of $K$.
We will now use the notation of \cref{levi factors}.

Let
\[\psi \colon \SAut(F_{n-2}) \to L\]
 be the map induced by taking the quotient $P \to P/U \cong L$. Since $L/M$ is abelian, and $\SAut(F_{n-2})$ is perfect, we immediately see that $\im\psi \leqslant M$.

Suppose that $n$ is even. Then $M$ admits projections onto groups of type $\A_l(3)$ or $\B_l(3)$ with $l<k$ or $\typeC_2(3)$. The inductive hypothesis shows that $\psi(\SAut(F_{n-2}))$ lies in the intersection of the kernels of such projections. But $M$ is a central product of the images of these projections, and so
$\psi$ is trivial. But then $\phi$ trivialises $\SAut(F_{n-2})$, and the result follows.

When $n$ is odd the situation is similar: the group $M$ admits projections to groups of type $\A_l(3)$, $\Atwo_3(3)$, $\typeC_l(3), \D_l(3), \Dtwo_l(3)$ or $\A_1(9)$. The last group is isomorphic to $A_6$, and every homomorphism from $\SAut(F_3)$ to $A_6$ is trivial by \cref{A_5 not quotient of F_3}. The other groups are covered by the inductive hypothesis, and we conclude as before.
\end{proof}

\begin{rmk}
In fact the groups of type $\A_k(3)$ are not quotients of $\SAut(F_n)$ when $k \leqslant n-2$ which will become clear in the following section.
\end{rmk}

\subsection{Representations of $D'_n$}

Having dealt with the field of $3$ elements, we move on to study projective representations of $\SAut(F_n)$ in odd characteristics.
To do this we will first develop some representation theory of the subgroup $D'_n$.

\begin{dfn}
 The action of $D'_n$ on $\Z^n$ obtained by abelianising $F_n$ is the \emph{standard action}. Tensoring $\Z^n$ with a field $\F$ gives us the \emph{standard $D'_n$-module} $\F^n$, and the image of the generators $a_1, \dots, a_n$ in $\F^n$ is the \emph{standard basis}.
\end{dfn}

\begin{dfn}
 Let $\pi$ be a representation of $2^{n-1}$. We set
\[
 E_I = \{v \in V \mid \pi(\epsilon_i \epsilon_j)(v) = (-1)^{\chi_I(i) + \chi_I(j)} v \textrm{ for all } i,j \in N\}
\]
 with $\chi_I$ standing for the characteristic function of $I \subseteq N$.
\end{dfn}

Note that $E_I = E_{N \s- I}$, but otherwise these subspaces intersect trivially.

\begin{lem}
\label{reps of D'n}
Let $n \geqslant 7$.
Let $\pi\colon D'_n \to \GL(V)$ be a linear representation of $D'_n$ over a field of characteristic other than $2$ in dimension $k<2n$, such that there is no vector fixed by all elements $\pi(\epsilon_i \epsilon_j)$. Then $k=n$ and $\pi$ is the standard representation.
\end{lem}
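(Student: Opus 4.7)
The plan is to exploit that, since the ground field has characteristic different from $2$, the normal abelian subgroup $2^{n-1} \trianglelefteq D'_n$ has semisimple group algebra, and all its irreducible representations are one-dimensional with characters taking values in $\{\pm 1\}$. Hence $V$ decomposes as a direct sum of character spaces; since $\chi_I = \chi_{N\setminus I}$, these character spaces are exactly the subspaces $E_I$ defined above (indexed by unordered pairs $\{I, N\setminus I\}$), so
\[
V \;=\; \bigoplus_{\{I,\, N\setminus I\}} E_I,
\]
and the standing hypothesis says precisely that $E_\emptyset = 0$.

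Next I would note that, as $2^{n-1}$ is normal in $D'_n$ with $A_n$ acting by index permutation, $A_n$ permutes the summands $E_I$ according to its natural action on subsets of $N$ modulo complementation. For each $1 \leqslant r \leqslant \lfloor n/2 \rfloor$ the orbit of pairs $\{I, N\setminus I\}$ with $|I| = r$ is transitive of size $\binom{n}{r}$ when $r < n/2$ and $\binom{n}{n/2}/2$ when $r = n/2$ (transitivity uses that $A_n \curvearrowright \binom{N}{r}$ is transitive for $n \geqslant 3$). All $E_I$ in a single orbit share a common dimension $d_r \geqslant 1$. For $n \geqslant 7$ a short numerical check shows that for every $r \geqslant 2$ the orbit size is at least $\binom{n}{2} \geqslant 2n$, and likewise $\binom{n}{n/2}/2 \geqslant 2n$ in the relevant even cases; since $\dim V < 2n$, no such orbit can contribute, i.e.\ $E_I = 0$ whenever $|I| \geqslant 2$.

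Consequently $V = \bigoplus_{i=1}^n E_{\{i\}}$, each summand of a common dimension $d_1 \geqslant 1$, and $n d_1 < 2n$ forces $d_1 = 1$ and $\dim V = n$. It remains to identify the $D'_n$-module structure with that of $\F^n$. The stabiliser of the line $E_{\{1\}}$ in $A_n$ is $A_{n-1}$, which for $n \geqslant 7$ is simple and hence perfect, so its action on $E_{\{1\}}$ (a priori a character $A_{n-1} \to \F^\times$) must be trivial. Fixing nonzero $v_1 \in E_{\{1\}}$ and setting $v_i = \pi(\sigma)v_1$ for any $\sigma \in A_n$ with $\sigma(1) = i$ is therefore well defined, and yields a basis of $V$ on which $A_n$ acts by the natural permutation representation. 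The defining formula $\pi(\epsilon_i\epsilon_j)\vert_{E_{\{k\}}} = (-1)^{\chi_{\{k\}}(i)+\chi_{\{k\}}(j)}$ evaluates to $-1$ exactly when $k \in \{i,j\}$, matching the standard action of $\epsilon_i\epsilon_j$ on $a_k$, so $v_k \mapsto a_k$ is the required isomorphism of $D'_n$-modules.

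The main obstacle is the combinatorial estimate that excludes all $r \geqslant 2$: while $\binom{n}{2} \geqslant 2n$ for $n \geqslant 7$ is elementary, one must also verify $\binom{n}{n/2}/2 \geqslant 2n$ in the borderline even cases (e.g.\ $n = 8$, where $\binom{8}{4}/2 = 35 > 16$), and the hypothesis $\dim V < 2n$ is essentially sharp so no slack is available. Everything else is routine once the decomposition into character spaces is organised along $A_n$-orbits.
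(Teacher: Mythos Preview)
Your proposal is correct and follows essentially the same approach as the paper's proof: decompose $V$ into the eigenspaces $E_I$ of the commuting involutions, use the $A_n$-action on subsets together with the bound $\dim V<2n$ to force $V=\bigoplus_i E_{\{i\}}$ with each summand one-dimensional, and then invoke the simplicity (hence perfectness) of the stabiliser $A_{n-1}$ to rigidify the basis so that $A_n$ acts by permutation matrices. Your write-up is in fact slightly more explicit than the paper's about the orbit-size estimate and the well-definedness of the basis $v_i=\pi(\sigma)v_1$, but the underlying argument is the same.
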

\begin{proof}
The elements $\pi(\epsilon_i \epsilon_j) \in \GL(V)$ are all commuting involutions, and so we can simultaneously diagonalise them (since the characteristic of the ground field is not $2$).
This implies that
\[
 V = \bigoplus_{\vert I \vert \leqslant \frac n 2 } E_I
\]

Note that for each $m\leqslant \frac n 2$, the subgroup $A_n$ acts on $\bigoplus_{\vert I \vert = m} E_I$; such a subspace is also preserved by the subgroup $2^{n-1}$, and so by the whole of $D'_n$. Since there are no vectors fixed by each $\pi(\epsilon_i \epsilon_j)$, we have
\[
 V = \bigoplus_{\vert I \vert > 0 } E_I
\]
The action of $A_n$ permutes the subspaces $E_I$ according to the natural action of $A_n$ on the subsets of $N$. Hence for any $k < \frac n 2$ we have
\[
 \dim \bigoplus_{\vert I \vert = k} E_I = \binom n k \dim E_I
\]
for any $I \subseteq N$ with $\vert I \vert = k$, and for $k = \frac n 2$ (assuming that $n$ is even) we have
\[
 \dim \bigoplus_{\vert I \vert = k} E_I = \frac 1 2 \binom n k \dim E_I
\]

Since $\dim V < 2n$, we conclude that
\[
 V = \bigoplus_{i \in N} E_{\{i\}}
\]
and each $E_{\{i\}}$ is $1$-dimensional. Let us pick a non-zero vector in $E_{\{i\}}$ for each $i$; these vectors form a basis of $V$.

It is immediate that with respect to this basis, the action of $2^{n-1}$ agrees with that of the restriction of the standard representation of $D_n'$ to $2^{n-1}$; moreover, it also shows that for each $\tau \in A_n$ the matrix $\pi(\tau)$ is a monomial matrix obtainable from the matrix given by the standard representation of $D_n'$ by multiplication by a diagonal matrix.

Since $n\geqslant 6$, the setwise stabiliser in $A_n$ of any $E_{\{i\}}$ is simple (as it is isomorphic to $A_{n-1}$), and so we can rescale each vector in our basis so that $\pi(\tau)$ becomes a permutation matrix for each $\tau \in A_n$, and this concludes our proof.
\end{proof}

Recall that $R_p(A_n)$ (occurring in the statement of the following result) denotes the minimal dimension of a faithful projective representation of $A_n$ as in \cite{KleidmanLiebeck1990} (the values of $R_p(A_n)$ are stated in \cref{kleidmanliebeck}).

\begin{prop}
\label{Barbara even}
Let $n\geqslant 8$ be even.
 Let $\pi \colon D'_n \to \PGL(V)$ be a faithful projective representation of dimension less than $ n +R_p(A_n)$ over an algebraically closed field $\Fbar$ of characteristic $p > 2$. Then the projective representation lifts to a representation $\overline \pi \colon D'_n \to \GL(V)$ so that the module $V$ splits as $W \oplus U$ where $W$ is a sum of trivial modules, and $U$ is the standard module of  $D'_n$.
\end{prop}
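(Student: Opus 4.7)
The plan is to first lift $\pi$ to a linear representation $\overline{\pi}\colon D'_n \to \GL(V)$, and then decompose $V$ via the action of $2^{n-1}$: Lemma~\ref{reps of D'n} will identify the non-trivial part as the standard module $U$, and the bound $R_p(A_n)$ will force the complement to be a sum of trivial modules. The main technical obstacle is the lift itself.

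For the lift, one can pass to a central extension $\hat{D}'_n$ of $D'_n$ through which $\pi$ factors as a linear representation $\hat{\pi}$. Since $p > 2$, the lifts of the involutions $\pi(\epsilon_i \epsilon_j)$ can be normalised to be involutions in $\GL(V)$; their failure to commute defines an $A_n$-invariant alternating form $\beta$ on the $\F_2$-space $2^{n-1}$. As $2^{n-1}/\langle \delta \rangle$ is $A_n$-irreducible over $\F_2$ for $n \geq 8$ even (Wagner's theorem, Remark~\ref{wagner rmk}), a non-zero $\beta$ would realise the preimage of $2^{n-1}$ in $\hat{D}'_n$ as an extraspecial-type 2-group, whose unique faithful irreducible module has dimension $2^{(n-2)/2}$; an $A_n$-equivariance argument combined with the bound $\dim V < n + R_p(A_n) \leq 2n$ rules this out, so $\beta = 0$ and the preimage of $2^{n-1}$ is elementary abelian. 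The $A_n$-part of the lift is then automatic, since any projective representation of $A_n$ of dimension below $R_p(A_n)$ is trivial.

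With $\overline{\pi}$ in place, Maschke's theorem (using $p \neq 2$) decomposes $V$ as a $2^{n-1}$-module into simultaneous eigenspaces $V = \bigoplus_{I \subseteq N} E_I$; $A_n$ permutes the $E_I$ by its natural action on subsets of $N$, so the splitting $V = V_0 \oplus V_1$ with $V_0 = E_\emptyset$ and $V_1 = \bigoplus_{I \neq \emptyset} E_I$ is $D'_n$-stable. No non-zero vector of $V_1$ is fixed by every $\overline{\pi}(\epsilon_i \epsilon_j)$ and $\dim V_1 \leq \dim V < 2n$, so Lemma~\ref{reps of D'n} identifies $V_1$ with the standard $n$-dimensional module $U$, and $\dim V_0 < R_p(A_n)$. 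Since $2^{n-1}$ acts trivially on $V_0$, the $D'_n$-action factors through $A_n$; simplicity of $A_n$ together with $\dim V_0 < R_p(A_n)$ then forces this action to be trivial, giving $V_0 = W$ and the decomposition $V = W \oplus U$.
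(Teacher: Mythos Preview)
Your decomposition argument in the third paragraph is fine once a linear lift $\overline{\pi}$ exists, but the lifting argument in the second paragraph has a genuine gap.

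The sentence ``the $A_n$-part of the lift is then automatic, since any projective representation of $A_n$ of dimension below $R_p(A_n)$ is trivial'' is not a valid justification: the hypothesis gives $\dim V < n + R_p(A_n)$, not $\dim V < R_p(A_n)$. For $n\geqslant 8$ the Schur multiplier of $A_n$ is $\Z/2$, so there are honest projective representations of $A_n$ (the spin representations of $2.A_n$) that do not lift to $A_n$, and their dimensions lie well inside your bound --- for $n=8$ one has spin representations of dimension $8 < 8 + R_p(A_8)$. Thus even after you know the preimage of $2^{n-1}$ is elementary abelian, you have not explained why the cocycle on $A_n$ vanishes; doing so requires using the specific structure of $V$ (for example, the monomial form forced by the $E_{\{i\}}$-decomposition is incompatible with spin representations, but this needs to be argued). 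Your extraspecial argument for $\beta=0$ is likewise incomplete: for $n\in\{8,10\}$ the crude bound $2^{(n-2)/2}$ does not exceed $\dim V$, and the promised ``$A_n$-equivariance argument'' is never spelled out.

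The paper bypasses this entire cohomological analysis by exploiting a feature you do not use: the element $\delta=\epsilon_1\cdots\epsilon_n$ is \emph{central} in $D'_n$. One lifts only $\pi(\delta)$ to an involution $d\in\GL(V)$; centrality forces every lift of every element of $D'_n$ to preserve both eigenspaces of $d$. A short dimension count (faithfulness prevents both eigenspaces from having dimension $<n$) shows that one eigenspace $W$ has dimension below $R_p(A_n)$ and hence carries the trivial projective $D'_n$-action. Normalising every lift to act as the identity on $W$ then produces the linear lift $\overline{\pi}$ directly, with no appeal to Schur multipliers, and the decomposition $V=W\oplus U$ is already the eigenspace decomposition of $d$; Lemma~\ref{reps of D'n} then identifies $U$. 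So the paper's route is both shorter and avoids the obstruction you have not resolved.
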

\begin{proof}
Let $d \in \GL(V)$ be a lift of $\pi(\delta)$. Since $\delta$ is an involution, $d^2$ is central, and so the minimal polynomial of $d$ is
$x^2 - \lambda$ for some $\lambda \in \F^\times$. Since the field $\F$ is algebraically closed and not of characteristic 2, this minimal polynomial has two distinct roots, and so $d$ is diagonalisable. Upon multiplying $d$ by a central matrix we may assume that at least one of the entries in the diagonal matrix of $d$ is 1. Thus all the entries are $\pm 1$, and in particular $d$ is also an involution.

For any $\xi \in D'_n$, let $\overline \xi \in \GL(V)$ denote a lift of $\pi(\xi)$. Since $\delta$ is central in $D'_n$, every $\overline \xi$ either preserves the eigenspaces of $d$, or permutes them. This way we obtain a homomorphism $D'_n \to \Z/2\Z$, which has to be trivial by \cref{closure of An in Dn}. Thus every $\overline \xi$  preserves the eigenspaces of $d$.

Since $\pi(\delta)$ is not trivial (as $\pi$ is faithful), the involution $d$ has a non-trivial eigenspace for each eigenvalue, and the same is true for any other involution lifting $\pi(\delta)$.

Take an eigenspace  of $d$ of dimension less than $n$.
By \cite[Corollary 5.5.4]{KleidmanLiebeck1990}, the projective module obtained by restricting to this eigenspace is not faithful -- in fact, the action of $D_n'$ on $W$ has the whole of $2^{n-1}$ in its kernel.
Therefore, if both eigenspaces of $d$ are of dimension less than $n$, then the kernel of $\pi$ contains an index two subgroup of $2^{n-1}$, and therefore is not trivial. This contradicts the assumption on faithfulness of $\pi$.

We conclude that one of the eigenspaces of $d$, say $U$, has dimension at least $n$. But then the other eigenspace $W$, has dimension less than $R_p(A_n)$, and so the restricted projective $A_n$-module $W$ is trivial.
Hence it is also a trivial projective $D'_n$-module.
The abelianisation of $D'_n$ is trivial, and so for each $\xi \in D_n'$ we may choose $\overline \xi$ so that its restriction to $W$ is the identity matrix.
In this way we obtain a homomorphism $\overline \pi \colon D'_n \to \GL(V)$ by declaring $\overline{\pi}(\xi) = \overline \xi$.
Note that $W$ is a sum of trivial submodules of this representation, and so in particular it is the $(+1)$-eigenspace of $\overline \delta$.

It is easy to see that in fact $V = U \oplus W$ as a $D_n'$-module, since $\overline \xi$  preserves the eigenspaces of $\overline \delta$ for every $\xi \in D_n'$, as remarked above.

Suppose that there is a non-zero vector in $U$ fixed by each $\overline {\epsilon_i \epsilon_j}$. Then it is also fixed by $\overline \delta$, as $\delta = \epsilon_1 \cdots \epsilon_n$ and $n$ is even. But $\overline \delta$ acts as minus the identity on $U$, which is a contradiction. Hence we may apply \cref{reps of D'n} to $U$ and finish the proof.
\end{proof}

\begin{cor}
\label{Barbara}
Let $n\geqslant 8$.
 Let $\pi \colon D'_n \to \PGL(V)$ be a faithful projective representation over an algebraically closed field $\Fbar$ of characteristic $p > 2$ of dimension less than $2\cdot R_p(A_n)$ when $n$ is even or less than $n +  R_p(A_{n-1})-1$ when $n$ is odd. Then the representation lifts to a representation $\overline \pi \colon D'_n \to \GL(V)$, and the module $V$ splits as $W \oplus U$ where $W$ is a sum of trivial modules and $U$ is the standard module of  $D'_n$.
\end{cor}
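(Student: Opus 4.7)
The plan is to reduce both parities to \cref{Barbara even}.

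For $n$ even with $n \geqslant 8$, note that $R_p(A_n) \leqslant n$: this follows from \cref{kleidmanliebeck} when $n = 8$, and from the existence of the $(n-1)$-dimensional deleted permutation representation when $n \geqslant 9$. Consequently $\dim V < 2R_p(A_n)$ forces $\dim V < n + R_p(A_n)$, which is precisely the hypothesis of \cref{Barbara even}. The even case thus follows directly.

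For $n$ odd with $n \geqslant 9$, I would restrict $\pi$ to the standard copy of $D'_{n-1}$ inside $D'_n$, namely the pointwise stabiliser of the index $n$. Since $\pi$ is faithful, so is its restriction, and the dimension bound $\dim V < n + R_p(A_{n-1}) - 1 = (n-1) + R_p(A_{n-1})$ is precisely the hypothesis of \cref{Barbara even} for the even-rank group $D'_{n-1}$. Applying the proposition yields a linear lift $\bar\pi_{n-1} \colon D'_{n-1} \to \GL(V)$ and a $D'_{n-1}$-module decomposition $V = W' \oplus U'$, where $W'$ is a sum of trivials and $U'$ is the $(n-1)$-dimensional standard module; this lift is unique because $D'_{n-1}$ is perfect (its abelianisation vanishes, as each generator $\epsilon_i \epsilon_j$ of $2^{n-2}$ arises as the commutator of a $3$-cycle in $A_{n-1}$ with a suitable element of $2^{n-2}$, while $A_{n-1}$ itself is perfect).

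To extend this to $D'_n$, I would perform the same construction for each of the $n$ copies $D^{\prime(k)}_{n-1} \leqslant D'_n$ obtained by letting $A_{n-1}$ fix an index $k \in N$ instead of $n$, producing unique lifts $\bar\pi_k$ and decompositions $V = W_k \oplus U_k$. For distinct $j, k \in N$, the restrictions of $\bar\pi_j$ and $\bar\pi_k$ to the intersection $D^{\prime(j)}_{n-1} \cap D^{\prime(k)}_{n-1} \cong D'_{n-2}$ are both lifts of $\pi|_{D'_{n-2}}$, so they coincide by the same uniqueness argument (using that $D'_{n-2}$ is perfect for $n \geqslant 9$). Since any two of the subgroups $D^{\prime(k)}_{n-1}$ already generate the whole of $D'_n$, the compatible family $\{\bar\pi_k\}$ then assembles into a single linear lift $\bar\pi \colon D'_n \to \GL(V)$ of $\pi$. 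Setting $W = \bigcap_k W_k$ yields the $D'_n$-trivial part, while the complementary subspace has no nonzero $2^{n-1}$-fixed vector and has dimension less than $2n$, so \cref{reps of D'n} identifies it with the standard $n$-dimensional $D'_n$-module, providing the required decomposition.

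The main obstacle will be the patching step in the odd case: although the lifts $\bar\pi_k$ agree on pairwise intersections, showing that they glue to a single representation of $D'_n$ requires verifying that the relations holding in $D'_n$ (beyond those inherited from the individual $D^{\prime(k)}_{n-1}$'s) are also respected. The key observation making this work should be that any scalar discrepancy along such a relation would define a nontrivial linear character of $D'_n$, contradicting the perfectness of $D'_n$ itself.
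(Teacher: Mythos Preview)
Your overall approach matches the paper's: reduce even $n$ directly to \cref{Barbara even}, and for odd $n$ restrict to the copies $P_k \cong D'_{n-1}$ stabilising one index, lift there via \cref{Barbara even}, check the lifts agree on intersections, patch, and then read off the decomposition. Your use of perfectness of $D'_{n-1}$ (and of $D'_{n-2}$) to show the lifts agree on overlaps is a clean alternative to the paper's explicit eigenspace comparison of the elements $\epsilon_i\epsilon_j$ and $\sigma_{ij}\sigma_{kl}$.

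The gap is in the patching step itself. Perfectness of $D'_n$ gives \emph{uniqueness} of a linear lift (any two differ by a character, which must be trivial), but not \emph{existence}. Your claim that a scalar discrepancy along a relation would define a character of $D'_n$ does not hold: the relators are trivial in $D'_n$, so there is nothing for a character of $D'_n$ to see; what the discrepancies determine is a $2$-cocycle on $D'_n$ with values in $\Fbar^\times$, and the obstruction to lifting is its class in $H^2(D'_n,\Fbar^\times)$. Perfectness controls $H^1$, not $H^2$, and perfect groups can certainly have nontrivial Schur multipliers (e.g.\ $A_n$ itself). The paper closes this gap differently: it observes that $D'_n$ admits a presentation in which every defining relation involves indices from a proper subset of $N$, and hence is already a relation of some single $P_k$. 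Since each $\bar\pi_k$ is a genuine homomorphism, the map defined on generators then automatically satisfies all relations and so extends to $D'_n \to \GL(V)$.

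A smaller issue: taking $W=\bigcap_k W_k$ gives the $D'_n$-fixed subspace, but you have not produced a $D'_n$-invariant complement, and none is free in characteristic $p$. The paper instead sets $U=\sum_k U_k$, checks by a short direct calculation that $U$ is $D'_n$-invariant, applies \cref{reps of D'n} to identify $U$ as the standard module, and only then uses semisimplicity of the $2^{n-1}$-action (order coprime to $p$) to split off a complement, which is seen to be trivial for $2^{n-1}$ and for $A_n$ separately.
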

\begin{proof}
When $n$ is even the result is covered by \cref{Barbara even}; let us assume that $n$ is odd.

 We apply \cref{Barbara even} to two subgroups $P_1$ and $P_2$ of $D'_n$ isomorphic to $D'_{n-1}$, where $P_i$ is the stabiliser of $a_i$ in $D'_n$.

 If $\dim V < n-1$ then we immediately learn that $V$ is the sum of trivial $P_1$ modules, and thus it is also a sum of trivial $D_n'$-modules, as $D_n'$ is the closure of $A_n$ which is simple and has a non-trivial intersection with $P_1$.
 Let us assume that \[\dim V \geqslant n-1 \geqslant 5\]

 We obtain a lift of the projective representations of $P_1$ and $P_2$ into $\GL(V)$ using \cref{Barbara even}; it is immediate that the two lifts agree on each $\epsilon_i \epsilon_j$ with $i,j > 2$, since each of the lifts of such an element is an involution with $(-1)$-eigenspace of dimension 2 and $(+1)$-eigenspace of dimension $\dim V - 2 > 2$, lifting $\pi(\epsilon_i \epsilon_j)$. Similarly, the lifts of the elements $\sigma_{ij} \sigma_{kl}$ (with $i,j,k,l>2$ all distinct) also agree. It follows that the lifts agree on $P_1 \cap P_2 \cong D'_{n-2}$.

We now repeat the argument for any two stabilisers $P_i$ and $P_j$. This way we have defined a map from generators of $D_n'$  to $\GL(V)$, which respects all relations supported by some $P_i$. But it is easy to see that such relations are sufficient for defining the group, and so the map induces a homomorphism $\overline \pi \colon D'_n \to \GL(V)$ as required.

Let $U_i$ denote the standard $P_i$-module, and $W_i$ its complement which is a sum of trivial $P_i$-modules.
Let $U = \sum U_i$. We claim that $U$ is $D'_n$-invariant: take a generator $\xi$ of $D'_n$ lying in, say, $P_1 \s- P_2$. Let $x \in U_2$. Then $x = y + z$ with $y \in U_1$ and $z \in W_1$, and so
\[
 \overline{\pi}(\xi)(x)= y' + z = x -(y - y') \in U_2 + U_1
\]
Similar computations for arbitrary indices prove the claim.
Now \cref{reps of D'n} implies that $U$ is the standard representation of $D'_n$.

Consider $V$ as a $2^{n-1}$-representation. Since $V$ is a vector space over a  field of characteristic $p > 2$, this representation is semi-simple, and so $U$ has a complement $W$. It is clear that $W$ is is a sum of trivial $2^{n-1}$-representations, since all the non-trivial modules of elements $\epsilon_i \epsilon_j$ are contained in some $U_l$, and thus in $U$.
For the same reason it is clear that $W$ is a sum of trivial $A_n$-modules -- for this we look at elements $\sigma_{ij} \sigma_{kl}$. We conclude that $W$ is a sum of trivial $D_n'$-modules.
\end{proof}

\subsection{Projective representations of $\SAut(F_n)$}
Now we use the rigidity of $D_n'$-representations developed above in the context of projective representations of $\SAut(F_n)$. This (together with the information we already have over the field of $3$ elements) will be enough to conclude that for $n\geqslant 10$ the classical groups in odd characteristics are not the smallest quotients of $\SAut(F_n)$.

\begin{thm}
\label{reps odd char}
\label{main thm: representations}
Let $n\geqslant 8$.
 Let $\pi \colon \SAut(F_n) \to \PGL(V)$ be a projective representation of dimension $k$ with $k < 2n-4$ over an algebraically closed field $\Fbar$ of characteristic other than $2$. If $\pi$ does not factor through the natural map $\SAut(F_n) \to \SL_n(\Z)$, then $k \geqslant n+1$ and the projective module $V$ contains a trivial projective module of dimension $k-n-1$.

\end{thm}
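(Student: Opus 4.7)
The plan is to study $\pi$ via its restriction to $D'_n$ and then to the alternating subgroup $A_{n+1}\subseteq\SAut(F_n)$. First I would apply \cref{Barbara} to $\pi\vert_{D'_n}$: the hypothesis $k<2n-4$ implies both $k<2R_p(A_n)$ (when $n$ is even) and $k<n+R_p(A_{n-1})-1$ (when $n$ is odd), using $R_p(A_m)\geqslant m-2$ for $m\geqslant 9$ from \cref{kleidmanliebeck} (the case $n=8$ being handled by $R_p(A_8)\geqslant 7$ since $p>2$). This yields a linear lift $\bar\pi\colon D'_n\to\GL(V)$ and a decomposition $V=W\oplus U$ with $W$ a sum of trivial modules and $U$ the standard $n$-dimensional $D'_n$-module with canonical basis $e_1,\dots,e_n$. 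In particular $\dim W=k-n$.

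Next I would restrict further to $A_{n+1}$. From Step 1, $V\vert_{A_n}$ contains $U_0\oplus\F\cdot s$ as a direct summand (in the generic case), where $s=e_1+\cdots+e_n$ and $U_0$ is the reduced permutation module of $A_n$ of dimension $n-1$. By \cref{kleidmanliebeck} applied to $A_{n+1}$, every non-trivial projective $A_{n+1}$-module has dimension at least $n-1$; combined with $k<2(n-1)$, this forces $V\vert_{A_{n+1}}$ to contain at most one non-trivial indecomposable projective constituent $M$, with a trivial complement. Comparing with the $A_n$-structure identifies $M$ with one of: the exceptional $(n-1)$-dimensional module (only in characteristic dividing $n+1$), the $n$-dimensional reduced permutation module, or the full $(n+1)$-dimensional permutation module of $A_{n+1}$.

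The third step is to bootstrap to a full $\SAut(F_n)$-module structure. Since $A_{n+1}$ together with any single transvection generates $\SAut(F_n)$, it suffices to control how a transvection $\rho_{12}$ interacts with the $A_{n+1}$-decomposition. Using that $\rho_{12}$ commutes with the subgroup $\langle\epsilon_i\epsilon_j:i,j\geqslant 3\rangle\cong 2^{n-3}$, I would show $\rho_{12}$ preserves the joint eigenspaces of this action on $V$, namely $W\oplus\langle e_1,e_2\rangle$ and each line $\langle e_k\rangle$ for $k\geqslant 3$. A rescaling (free in the projective setting) normalizes $\rho_{12}$ to act trivially on each $\langle e_k\rangle$ for $k\geqslant 3$, and the Steinberg commutator relation $[\rho_{12}^{-1},\rho_{23}^{-1}]=\rho_{13}^{-1}$ (together with its $\lambda$-analogues) then pins down the action on $W\oplus\langle e_1,e_2\rangle$. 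This yields a dichotomy: either $\rho_{12}$ preserves $W$, in which case iterating over $A_n$-conjugates of $\rho_{12}$ forces the $D'_n$-decomposition to be $\SAut(F_n)$-invariant and the $\SAut(F_n)$-action on $U$ to coincide with the standard representation, so $\pi$ factors through the natural map $\SAut(F_n)\to\SL_n(\Z)$ contradicting the hypothesis; or $\rho_{12}$ mixes exactly a one-dimensional subspace of $W$ into $U$, in which case $M$ has dimension $n+1$ and $V$ admits a trivial projective submodule of codimension $n+1$, i.e.\ of dimension $k-n-1$.

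The main obstacle is this third step: since $D'_n$ is not normal in $\SAut(F_n)$, the decomposition $V=W\oplus U$ need not be $\SAut(F_n)$-invariant, and the entire rigidity argument rests on the sharp commutator analysis of a single transvection against a maximal commuting subgroup of $2^{n-1}$, combined with the constraints from the $A_{n+1}$-restriction and Gersten's presentation. The case distinction between $\dim M\in\{n-1,n,n+1\}$ must be resolved uniformly, and the exceptional $(n-1)$-dimensional case in characteristic dividing $n+1$ requires particular care to rule out.
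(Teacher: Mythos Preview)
Your first step matches the paper exactly: apply \cref{Barbara} to $\pi\vert_{D'_n}$ (after noting via \cref{killing epsilon} that this restriction is faithful) to obtain the linear lift and the splitting $V=W\oplus U$. Your observation that $\overline{\rho_{12}}$ preserves $Z=W\oplus\langle e_1,e_2\rangle$ and each $\langle e_k\rangle$ for $k\geqslant 3$, and can be normalised to fix each $e_k$, also matches the paper.

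The detour through $A_{n+1}$, however, is not in the paper and introduces a genuine gap. Your claimed dichotomy---$\rho_{12}$ either preserves $W$ or mixes exactly one dimension of $W$ into $U$---is not justified. A priori $\overline{\rho_{12}}$ acts on $Z$ preserving neither $W$ nor $\langle e_1,e_2\rangle$, and $W'=\overline{\rho_{12}}^{-1}(W)\cap W$ can have codimension $2$ in $W$, not just $0$ or $1$. Bounding the dimension of the non-trivial $A_{n+1}$-constituent by $n+1$ does not resolve this: the $A_{n+1}$-trivial subspace is not obviously related to $W$ (since $A_{n+1}\not\leqslant D'_n$), and an $A_{n+1}$-invariant subspace has no reason to be $\SAut(F_n)$-invariant, as $A_{n+1}$ is not normal. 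Your promised link between the $A_{n+1}$-decomposition and the action of $\rho_{12}$ is where the argument is needed, and it is not supplied.

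The paper instead works entirely inside the $D'_n$-decomposition. It defines $W'=\overline{\rho_{12}}^{-1}(W)\cap W$ and shows, by an explicit chain of identities of the form
\[
\overline{\rho_{31}}\,\overline{\rho_{12}}(x)=\overline{\sigma_{12}\sigma_{13}\epsilon_3\epsilon_2}\,\overline{\rho_{12}}^{-1}\overline{\rho_{12}}(x)=x
\]
for $x\in W'$, that every transvection (hence all of $\SAut(F_n)$) fixes $W'$ pointwise. This already gives a trivial submodule of dimension at least $k-n-2$. When $W'$ has codimension exactly $2$ in $W$, a separate argument is needed: the paper considers the involution $\overline{\rho_{12}\epsilon_2\epsilon_3}$ acting on $Z/W'$, uses a parity constraint on its $(-1)$-eigenspace (coming from $\pi$ landing in $\PSL$), and extracts one further fixed vector $v_3\in W$ to reach dimension $k-n-1$. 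This codimension-$2$ case and its resolution are absent from your outline.
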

\begin{proof}

Since $\Fbar$ is algebraically closed, $\PGL(V) = \PSL(V)$.
As $\pi$ does not factor through the natural map $\SAut(F_n) \to \SL_n(\Z)$, \cref{killing epsilon} tells us that $\pi$ restricted to $D_n'$ is injective.

By \Cref{Barbara} we see that there is a lifting $\overline \pi$ of the projective representation of $D'_n$ to a linear representation on $V$ such that $V = W \oplus U$ as a $D'_n$-module, where $U$ is standard and $W$ is a sum of trivial modules.  Let $u_1, \dots, u_n$ denote the standard basis for $U$.
For notational convenience we will write
 \[
  \overline \xi = \overline \pi(\xi)
 \]
for $\xi \in D'_n$.

Note that \Cref{Barbara} implies that
\[U = \bigoplus_{i \in N} E_{\{i\}}\]
where $E_{\{i\}}$ is spanned by $u_i$.

Let us pick a lift of $\pi(\rho_{12})$ acting linearly on $V$; we will call it $\overline{\rho_{12}}$.
Since $\rho_{12}$ commutes with $\epsilon_i \epsilon_j$ with $i,j>2$, the element $\overline{\rho_{12}}$ permutes the eigenspaces of $\overline{ \epsilon_i \epsilon_j}$. But for a given pair $(i,j)$, the eigenspaces of $\overline{ \epsilon_i \epsilon_j}$ have dimensions $2$ and $\dim V - 2 \geqslant n-2>2$. Thus $\overline{\rho_{12}}$ preserves each eigenspace of $\overline{ \epsilon_i \epsilon_j}$. It follows that
\[
\overline{\rho_{12}} (u_i) \in \langle u_i \rangle = E_{\{i\}}
\]
for each $i>2$.

Let us choose lifts $\overline {\rho_{ij}}$ of $\pi(\rho_{ij})$ for each pair $(i,j)$. By a discussion identical to the one above we see that $\overline {\rho_{ij}}$ preserves $E_{\{l\}}$ for $l \not\in \{i,j\}$.

We may choose $\overline{\rho_{12}}$ so that it fixes $u_3$.
We have
\[
 [\overline{\rho_{14}}^{-1},\overline{\rho_{42}}^{-1}] = \lambda \cdot \overline{\rho_{12}}^{-1}
\]
for some $\lambda \in \Fbar \s- \{0\}$. But clearly $[\overline{\rho_{14}}^{-1},\overline{\rho_{42}}^{-1}](u_3) = u_3$, since both $\overline{\rho_{14}}^{-1}$ and $\overline{\rho_{42}}^{-1}$ preserve $u_3$ up to homothety. Therefore $\lambda = 1$, and thus
\[
 \overline{\rho_{12}}^{-1}(u_i) = u_i
\]
for all $i > 4$. Replacing $4$ by another number greater than $3$ in the calculation above yields the same result for any $i > 3$. Using an analogous argument we may choose each $\overline{\rho_{ij}}$ so that it fixes $u_l$ for all $l \not \in \{i,j\}$.
It follows that conjugating $\overline {\rho_{12}}$ by an element $\overline{\xi}$ (with $\xi \in A_n$) yields an appropriate element $\overline {\rho_{ij}}$, and not just $\overline {\rho_{ij}}$ up to homothety.

We also see that
$\overline{\rho_{12}}$ preserves $Z = W \oplus \langle u_1,u_2\rangle$, as this is the centraliser of
\[\langle \{ \overline{\epsilon_i \epsilon_j}\} \mid i,j > 2 \} \rangle\]

Note that $W$ is a subspace of $Z$ of codimension $2$; therefore $W' = \overline{\rho_{12}}^{-1}(W) \cap W$ is a subspace of $W$ of codimension at most $2$, and so of dimension at least $k-n-2$. Let $x \in W'$ be any vector. Now $\overline{\rho_{12}}(x)$ lies in $W$, and so
\[ \overline{\rho_{12}}(x) = \overline{\sigma_{12} \sigma_{13} \epsilon_3 \epsilon_2}\, \overline{\rho_{12}}(x)
\]
Thus
\[
\overline {\rho_{31}}\, \overline{\rho_{12}}(x) = \overline {\rho_{31}}\,\overline{\sigma_{12} \sigma_{13} \epsilon_3 \epsilon_2} \, \overline{\rho_{12}}(x) = \overline{\sigma_{12} \sigma_{13} \epsilon_3 \epsilon_2}\, \overline{\rho_{12}}^{-1}\overline{\rho_{12}}(x) = \overline{\sigma_{12} \sigma_{13} \epsilon_3 \epsilon_2}(x) = x
\]
 where the last equality follows from the fact that $x \in W$.
Observe that \[\overline{\rho_{12}}.x = \overline{\epsilon_2 \epsilon_3}\,  \overline{\rho_{12}} .x = \overline{\rho_{12}}^{-1} \, \overline{\epsilon_2 \epsilon_3}.x =   \overline{\rho_{12}}^{-1} .x\]
Using a similar argument we show that
\[
\overline {\rho_{31}}^{-1}\overline{\rho_{12}}^{-1}(x)  = x
\]
and so
\[
 [\overline {\rho_{31}}^{-1}, \overline{\rho_{12}}^{-1}](x) = x
\]
But $[\overline{\rho_{31}}^{-1}, \overline{\rho_{12}}^{-1}] = \overline{\rho_{32}}^{-1}$, and so $\overline{\rho_{32}}(x) = x$. Conjugating by elements $\overline \xi$ with $\xi \in D'_n$ we conclude that
\[
 \overline {\rho_{ij}}(x) = x = \overline{\lambda_{ij}}(x)
\]
for every $i$ and $j$. This implies that $W'$ is preserved by $\SAut(F_n)$, and the restricted projective module is trivial.


\smallskip
If $\dim W' = k-n-1$ then we are done. Let us assume that this is not the case, that is that $W'$ is of codimension $2$ in $W$.
Consider the involution $\rho_{12} \epsilon_2 \epsilon_3$. We set
\[
\overline{\rho_{12} \epsilon_2 \epsilon_3} = \overline{\rho_{12}}\, \overline{\epsilon_2 \epsilon_3}
                                                                      \]
Note that this element satisfies
\[
 \overline{\rho_{12} \epsilon_2 \epsilon_3}^2 = \nu I
\]
for some $\nu \in \F^\times$. But $\overline{\rho_{12} \epsilon_2 \epsilon_3}(u_3) = -u_3$ and so $\nu = 1$. Therefore $\overline{\rho_{12} \epsilon_2 \epsilon_3}$ is an involution.

Let $Y = Z/W'$. Note that $\overline{\epsilon_2 \epsilon_3}$ acts on $Y$, and its $(-1)$-eigenspace of dimension exactly $1$, and the $(+1)$-eigenspace of dimension $3$. We also have an action of the involution  $\overline{\rho_{12} \epsilon_2 \epsilon_3}$
on $Y$.

Since $\overline{\rho_{12} \epsilon_2 \epsilon_3}$ acts trivially on $W'$ and its $(-1)$-eigenspace in the complement of $Z$ in $V$ is of dimension $1$, the dimension of its $(-1)$-eigenspace in $Y$ must be odd (here we use the fact that $\pi$ is a map to $\PSL(\F)$).
Thus
there are  at least two linearly independent vectors $v_1$ and $v_2$ lying in the intersection of the $(+1)$-eigenspace of $\overline{\epsilon_2 \epsilon_3}$ and some eigenspace of  $\overline{\rho_{12} \epsilon_2 \epsilon_3}$.

Since $\overline{\rho_{12} \epsilon_2 \epsilon_3}$ is an involution and the characteristic of $\F$ is odd, there exists a complement of $W'$ in $Z$ on which $\overline{\rho_{12} \epsilon_2 \epsilon_3}$ acts as on $Y$. Thus, we have the two vectors corresponding to $v_1$ and $v_2$; we will abuse the notation by calling them $v_1$ and $v_2$ as well.

Since $W'$ lies in the $(+1)$-eigenspace of $\overline{\epsilon_2 \epsilon_3}$, so do $v_1$ and $v_2$. Thus there is a non-zero linear combination $v_3$ of $v_1$ and $v_2$ which lies in $W$, since the codimension of $W$ in the $(+1)$-eigenspace of $\overline{\epsilon_2 \epsilon_3}$ is $1$. Also, $\overline {\epsilon_2\epsilon_3}$ act trivially on this vector, and so we have found another vector in $W$ which is mapped to $W$ by $\overline{\rho_{12}}$. Arguing exactly as before we show that $\langle v_3 \rangle$ is $\SAut(F_n)$ invariant and trivial as a projective module. Hence $W' \oplus \langle v_3 \rangle$ is also $\SAut(F_n)$ invariant, and is trivial as a projective module since $\SAut(F_n)$ is perfect (\cref{perfect}).
\end{proof}

Let us remark here that there do exist  representations of $\SAut(F_n)$ in dimension $n+1$ over any field (over $\Z$ in fact) which do not factor through the natural map $\SAut(F_n) \to \SL_n(\Z)$ -- see~\cite[Proposition 3.2]{BridsonVogtmann2012}.


\begin{thm} \label{oddcharacteristicn10}
 Let $n \geqslant 10$. Then every finite simple classical group of Lie type in odd characteristic which  is a quotient of $\SAut(F_n)$ is larger in order than $\L_n(2)$.
\end{thm}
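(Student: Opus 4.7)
The plan is to pull back the natural projective module of $K$ and invoke the representation-theoretic rigidity result \cref{reps odd char}. Suppose $\phi \colon \SAut(F_n) \twoheadrightarrow K$ surjects onto a finite simple classical group of Lie type in odd characteristic $p$, and let $V$ denote the natural absolutely irreducible projective $K$-module over $\Fbar$ of dimension $d$ as listed in \cref{dynkinclassical}. Composing $\phi$ with the embedding $K \hookrightarrow \PGL(V)$ yields an irreducible projective representation
\[
\pi \colon \SAut(F_n) \to \PGL(V)
\]
of dimension $d$ over a field of characteristic $p \neq 2$; since $K$ is simple and acts faithfully on $V$, we have $\ker \pi = \ker \phi$.

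The argument then splits according to whether $\pi$ factors through the natural map $\SAut(F_n) \to \SL_n(\Z)$. If it does, then $\phi$ itself factors through $\SL_n(\Z)$, and $K$ is a simple quotient thereof; Mennicke's congruence subgroup property identifies $K$ with $\L_n(p')$ for some prime $p'$, and matching characteristics gives $K = \L_n(p)$ with $p \geqslant 3$, whence $|K| \geqslant |\L_n(3)| > |\L_n(2)|$ by an appendix computation. If $\pi$ does not factor through $\SL_n(\Z)$, I apply \cref{reps odd char}: when $d < 2n - 4$ it yields a trivial $\SAut(F_n)$-submodule of $V$ of dimension $d - n - 1$, and the irreducibility of $V$ together with the non-triviality of $K$ force $d - n - 1 = 0$, i.e., $d = n + 1$; when $d \geqslant 2n - 4$ no further structural information is needed.

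Inspection of \cref{dynkinclassical} identifies the simple classical groups in odd characteristic whose natural module has dimension exactly $n+1$: these are $\A_n(q)$ and $\Atwo_n(q)$ for any odd $q$, together with $\B_{n/2}(q)$ for even $n$ and $\typeC_{(n+1)/2}(q), \D_{(n+1)/2}(q), \Dtwo_{(n+1)/2}(q)$ for odd $n$. In the $d \geqslant 2n-4$ regime the type of $K$ has rank bounded below by $2n-5$ (types $\A, \Atwo$) or by $n-2$ (the remaining four types). In every such case the inequality $|K| > |\L_n(2)|$ for $n \geqslant 10$ should fall out of a direct order comparison deferred to the appendix, with the single subtlety that the low-rank $\F_3$-groups $\B_{n/2}(3)$ (for even $n$) and $\typeC_{(n+1)/2}(3), \D_{(n+1)/2}(3), \Dtwo_{(n+1)/2}(3)$ (for odd $n$) have raw orders strictly \emph{less} than $|\L_n(2)|$, so they must instead be ruled out as possible quotients via \cref{char 3 small}.

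Thus the substantive input is \cref{reps odd char} together with the characteristic-$3$ exclusion \cref{char 3 small}; the remainder is numerical bookkeeping. The main obstacle I anticipate is precisely to confirm that every surviving classical family--field pair either admits the order estimate (which is easy whenever the defining field has order at least $5$, and also for $\A_n(q), \Atwo_n(q)$ with $q = 3$) or falls within the reach of \cref{char 3 small} (which exactly covers the borderline small-rank targets over $\F_3$).
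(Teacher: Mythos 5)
Your proposal is correct and follows essentially the same route as the paper: the paper assumes $\vert K\vert \leqslant \vert \L_n(2)\vert$, uses \cref{computation orders type A} and \cref{computation orders} to bound the rank and hence force the natural module dimension below $2n-4$, and then applies \cref{main thm: representations}, the congruence subgroup property, \cref{computation small cases} and \cref{char 3 small} exactly as you do; your split on $d<2n-4$ versus $d\geqslant 2n-4$ is just the contrapositive arrangement of the same estimates. (Your side remark that the borderline $\F_3$-groups have order strictly below $\vert\L_n(2)\vert$ is not needed and not verified in the paper, but nothing rests on it since those groups are excluded by \cref{char 3 small} in any case.)
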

\begin{proof}
Let $K$ be such a quotient, and suppose that $\vert K \vert \leqslant \vert \L_n(2) \vert$. Let $k$ denote the rank of $K$.

If $K$ is of type $\A$ or $\Atwo$, then \cref{computation orders type A} tells us that $k \leqslant 2n-8$.
If $K$ is of any other classical type, then
\cref{computation orders} tells us that $k \leqslant n-4$.

Let $V$ be the natural projective module of $K$, and let $m$ denote its dimension. Note that $V$ is an irreducible projective $K$-module,
and $m < 2n-6$. Thus \cref{main thm: representations} implies that either the representation
\[
 \SAut(F_n) \to K \to \PGL(V)
\]
factors through the natural map $\SAut(F_n) \to \SL_n(\Z)$, or $m=n+1$. In the former case we must have $K \cong \L_n(p)$ for some prime $p$, as $K$ is simple. But $\L_n(p)$ is larger than $\L_n(2)$ for $p \geqslant 3$.

We may thus assume that $m=n+1$. If $K$ is of type $\A$ or $\Atwo$ then it is immediate that it is too big.

When $n$ is even this means that $K$ is the simple group $\B_{\frac n 2}(q)$. This is larger (in cardinality) than $\L_n(2)$ for every $q > 3$ by \cref{computation small cases}, and so we may assume that $q = 3$. But this is impossible by \cref{char 3 small}.

When $n$ is odd, $K$ is one of the simple groups $\typeC_{\frac {n+1} 2}(q)$, $\D_{\frac {n+1} 2}(q)$ or $\Dtwo_{\frac {n+1} 2}(q)$. \cref{computation small cases} immediately rules out all values of $q$ except for $q=3$, and again we are done by applying \cref{char 3 small}.
\end{proof}

\section{The exceptional groups of Lie type}
In this section we focus on the exceptional groups of Lie type. These are \begin{enumerate}
\item the Suzuki-Ree groups $\Btwo_2(2^{2m+1})$, $\Gtwo_2(3^{2m+1})$, $\typeFtwo_4(2^{2m+1})$ and $\typeFtwo_4(2)'$,
\item the Steinberg groups $\Dthree_4(q)$, $\Etwo_6(q)$ and
\item the exceptional Chevalley groups $\G_2(q)$, $\typeF_4(q)$, $\E_6(q)$, $\E_7(q)$ and $\E_8(q)$.
\end{enumerate}

They are defined for all $q \geqslant 2$, $m \geqslant 0$ and are all simple with the following exceptions: the group $\mathrm{Sz}(2) \cong 5 \rtimes 4$ which is visibly solvable; the group $\Gtwo_2(3)$ whose index $3$ derived subgroup is isomorphic to $\A_1(8)$; the group $\G_2(2)$ whose index $2$ derived subgroup is isomorphic to $\Atwo_2(3)$; and the group $\typeFtwo_4(2)$ whose index $2$ derived subgroup $\typeFtwo_4(2)'$ is simple.

For simplicity, in this section we always use the type symbols to denote the adjoint versions.

We now introduce the following notation: for a group $K$ the \emph{$A$-rank} of $K$ is the largest $n$ such that $K$ contains a copy of the alternating group $A_n$. In particular, we will make use of the bounds on the \arank{} of the exceptional groups given in \cite[Table 10.1]{lieseitz}.

Generally, to show that a group $K$ is not the smallest quotient of some $\SAut(F_n)$ we argue as follows: let $n(K)$ be the smallest integer such that
\[
 \vert \L_{n(K)-1}(2) \vert <  \vert K \vert \leqslant \vert \L_{n(K)}(2) \vert
\]
and assume that we have an epimorphism $\phi \colon \SAut(F_{n}) \to K$ with $n\geqslant n(K)$.
Now we compare $n$ to the $2$-rank, the \drank{}, and \arank{} of $K$. If the $2$-rank is smaller than ${n-1}$ the we use \cref{killing Dn} applied to the subgroup $2^{n-1}$ and conclude that $K$ is in fact a quotient of $\SL_n(\Z)$. But the smallest such quotient is $\L_n(2)$. If the \arank{} of $K$ is smaller than $n+1$, then we use \cref{killing Dn} applied to the subgroup $A_n$ (we observe that if $A_{n+1}$ is not mapped injectively, then neither is $A_n$ for any $n\geqslant 3$). Similarly for the \drank{}.

If the $2$-rank and \arank{} arguments fail, we look at centralisers. If $n\geqslant 5$ and the simple non-abelian factors of every involution in $K$ have already been shown not to be quotients of $\SAut(F_{n-2})$, then we look at $\phi(\epsilon_1 \epsilon_2)$. If this is trivial then we are done by \cref{killing Dn}; otherwise we obtain a map from $\SAut(F_{n-2})$ (which centralises $\epsilon_1 \epsilon_2$) to a group whose simple composition factors are not quotients of $\SAut(F_{n-2})$ (note that the abelian factors are ruled out by the fact that $\SAut(F_{n-2})$ is perfect). Thus $\SAut(F_{n-2})$ lies in the kernel of $\phi$, and so in particular $\phi$ trivialises some transvection. But then it trivialises every transvection since they are all conjugate, and thus $\phi$ is trivial.

If $n\geqslant 5$ we may argue analogously using the element $\gamma$ of order $3$ from \cref{killing t}; if $n \geqslant 2+k$ for $k\geqslant 5$ odd we may argue in an analogous manner using \cref{centralisers}.

\begin{lem} Let $K$ be a finite simple group belonging to one of the following families:\begin{enumerate}
\item the Suzuki groups $\Btwo_2(2^{2m+1})$,
\item the small Ree groups $\Gtwo_2(3^{2m+1})$,
\item the large Ree groups $\typeFtwo_4(2^{2m+1})$, or,
\item the Tits group $\typeFtwo_4(2)'$,
\end{enumerate}
where $m \geqslant 1$ is an integer.
Then $K$ is not the smallest finite non-trivial quotient of $\SAut(F_{n})$.\end{lem}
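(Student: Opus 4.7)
The plan is to apply the uniform recipe outlined in the paragraph preceding the lemma: for each group $K$ we determine the integer $n(K)$ characterised by $|\L_{n(K)-1}(2)| < |K| \leqslant |\L_{n(K)}(2)|$ and rule out $K$ as a quotient of $\SAut(F_n)$ for every $n \geqslant n(K)$, using whichever of \cref{killing Dn} or \cref{killing t} applies.

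For the Suzuki groups $\Btwo_2(q)$ with $q = 2^{2m+1}$ and $m \geqslant 1$, the order $|K| = q^2(q-1)(q^2+1)$ is coprime to $3$: from $q \equiv -1 \pmod 3$ one gets $q^2 \equiv 1$, $q-1 \equiv 1$, and $q^2+1 \equiv 2$ modulo $3$. Hence $K$ has no element of order $3$, so the order-$3$ element $\gamma \in \SAut(F_n)$ introduced in \cref{sec: char 3} must lie in the kernel of every homomorphism $\phi \colon \SAut(F_n) \to K$. A direct order comparison gives $n(\Btwo_2(8)) \geqslant 5$, and $n(\Btwo_2(q))$ only grows with $q$, so $n \geqslant 4$; \cref{killing t} then forces $\phi$ to be trivial.

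For the three remaining families---small Ree groups $\Gtwo_2(3^{2m+1})$ with $m \geqslant 1$, large Ree groups $\typeFtwo_4(2^{2m+1})$ with $m \geqslant 1$, and the Tits group $\typeFtwo_4(2)'$---the obstruction is the \arank{}. The bounds tabulated in \cite[Table 10.1]{lieseitz}, combined with the order estimates collected in the appendix, imply that the \arank{} of each such $K$ is strictly smaller than $n(K)+1$. Consequently $A_{n+1}$ does not embed in $K$, so the restriction of $\phi$ to the simple subgroup $A_{n+1} < \SAut(F_n)$ has non-trivial and hence full kernel. In particular the standard $A_n$, which lies in $D'_n \setminus 2^{n-1}$, is contained in $\ker \phi$, and \cref{killing Dn}(3) concludes that $\phi$ is trivial.

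The main technical burden is the simultaneous verification of the order inequality defining $n(K)$ and of either non-divisibility by $3$ (Suzuki case) or the \arank{} bound (other three cases), performed family by family. The tightest case is the Tits group, where $n(K) = 6$ (since $|\L_5(2)| < 17{,}971{,}200 < |\L_6(2)|$) and the \arank{} equals exactly $6$, realised inside the maximal subgroup $A_6.2^2$; the three remaining families afford considerably more room, as $|K|$ grows polynomially in $q$ while the \arank{} remains uniformly bounded by \cite{lieseitz}.
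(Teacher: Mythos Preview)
Your argument is correct and follows the same overall recipe as the paper---compare $|K|$ to $|\L_n(2)|$ to force $n \geqslant n(K)$, then exhibit a subgroup of $\SAut(F_n)$ that cannot survive in $K$---but the tactical choices differ in two places. For the Suzuki groups you invoke the order-$3$ element $\gamma$ and \cref{killing t}, whereas the paper observes that $A_5$ (which has elements of order $3$) cannot embed and applies \cref{killing Dn}(3); both routes exploit the same coprimality to $3$. For the small Ree groups you fold them into the \arank{} argument, while the paper instead uses that their order is coprime to $5$ to kill $A_5$ directly; your route is slightly less elementary but perfectly valid since no element of order $5$ certainly forces the \arank{} below $5$. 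For the large Ree groups and the Tits group the two proofs coincide, though the paper cites \cite[Proposition 2.2]{2f4} rather than \cite{lieseitz} for the \arank{} bound of $6$.
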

\begin{proof}
The smallest $K$ among the families considered is isomorphic to $\mathrm{Sz}(8) = \Btwo_2(8)$, and has order greater than $\vert \L_4(2) \vert$; thus $n \geqslant 5$. The order of $\Btwo_2(2^{2m+1})$ is coprime to $3$, and the order of $\Gtwo_2(3^{2m+1})$ is coprime to $5$. Hence it is clear that the simple Suzuki and small Ree groups cannot be quotients of $\SAut(F_n)$ for $n \geqslant 4$, since the alternating group $A_5$ cannot be mapped injectively, and so we may use \cref{killing Dn}.

Now assume that $K$ is $\typeFtwo_4(2^{2m+1})$ or
the Tits group; observe that the smallest member of this family, the Tits group $\typeFtwo_4(2)'$, has order greater than $\L_5(2)$ and so $n\geqslant 6$. But, by \cite[Proposition 2.2]{2f4} we see that the \arank{} of $K$ is 6.
\end{proof}

\begin{lem} \label{g2and3d4} Let $K$ be a finite simple group belonging to one of the following families:\begin{enumerate}
\item the exceptional groups of type $\G_2(q)$, where $q \geqslant 3$, or
\item the exceptional groups of type $\Dthree_4(q)$, where $q \geqslant 2$.
\end{enumerate}
Then, $K$ is not the smallest finite non-trivial quotient of $\SAut(F_{n})$.\end{lem}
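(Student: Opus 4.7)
The plan is to apply the general strategy outlined at the start of \cref{sectionsporadic}: for each candidate $K$, let $n(K)$ denote the smallest integer with $\vert K \vert \leqslant \vert \L_{n(K)}(2) \vert$; since $\SAut(F_n)$ always admits $\L_n(2)$ as a quotient, $K$ being the smallest finite non-trivial quotient of some $\SAut(F_n)$ forces $n \geqslant n(K)$. Assuming such an epimorphism $\phi \colon \SAut(F_n) \twoheadrightarrow K$, the aim is to derive a contradiction via \cref{killing Dn} and \cref{smallest linear quotient}.

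First I would dispatch the large-$q$ regime uniformly. Since $\vert \G_2(q) \vert$ grows like $q^{14}$ and $\vert \Dthree_4(q) \vert$ like $q^{28}$, whereas $\vert \L_n(2) \vert$ grows like $2^{n^2-1}$, one obtains $n(K) = O(\sqrt{\log q})$ in both families. By \cite[Table 10.1]{lieseitz} the \arank{} of $\G_2(q)$ is bounded by a small absolute constant (at most $7$), and that of $\Dthree_4(q)$ by $9$, independently of $q$. Hence for every $q$ exceeding a small explicit threshold one has $n(K)$ larger than the \arank{} of $K$, so $A_n$ cannot embed in $K$; the restriction $\phi\vert_{A_n}$ then has non-trivial kernel, which contains an element of $D'_n \smallsetminus 2^{n-1}$, and \cref{killing Dn}(3) forces $\phi$ to be trivial, contradicting surjectivity onto a non-trivial simple group.

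This reduces the claim to a finite list of exceptional pairs $(K,n)$, corresponding to the small values of $q$. For these I would combine the same comparison with the slightly stronger bound on the \drank{}: the maximal elementary abelian $2$-subgroups of $\G_2(q)$ and $\Dthree_4(q)$ have rank bounded by a small constant (read off from the lists of maximal $2$-local subgroups in \cite{atlas,bhrd}), so as soon as $n-1$ exceeds this bound, $D'_n$ fails to embed in $K$, whence $\phi\vert_{D'_n}$ has non-trivial kernel, and \cref{smallest linear quotient} together with the inequality $\vert K \vert \leqslant \vert \L_n(2) \vert$ and the observation that $K \not\cong \L_n(2)$ yields that $\phi$ is trivial.

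The main obstacle will be the very smallest cases, namely $\G_2(3)$, $\G_2(4)$, $\Dthree_4(2)$, and $\Dthree_4(3)$, where $n(K)$ is small enough that the generic rank bounds are only just sufficient. For these I would fall back on the centralizer reduction used throughout the paper: since $n \geqslant 5$, if $\phi(\epsilon_1 \epsilon_2) \neq 1$ then $\phi$ sends $\SAut(F_{n-2}) \leqslant C_{\SAut(F_n)}(\epsilon_1 \epsilon_2)$ into the centralizer of an involution of $K$, whose simple composition factors are classical groups of Lie type of strictly smaller rank (computable from the ATLAS for $q \in \{2,3\}$); these have already been ruled out as quotients of $\SAut(F_{n-2})$ by \cref{main thm char 2 lie type}, \cref{char 3 small}, \cref{oddcharacteristicn10}, and \cref{no alternating quotients}, so $\SAut(F_{n-2}) \leqslant \ker\phi$, which kills a transvection and hence, by \cref{conjugate transvections}, the whole of $\phi$. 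The alternative $\phi(\epsilon_1 \epsilon_2) = 1$ is handled by \cref{killing Dn}(2), which forces $\phi$ to factor through $\L_n(2)$ -- impossible since $K \not\cong \L_n(2)$.
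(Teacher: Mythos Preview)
Your outline is in the right spirit but takes a substantially more circuitous route than the paper, and the detour introduces some genuine gaps.

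The paper's argument is uniform and requires no case distinction in $q$ beyond odd versus even for $\G_2$. The key is that the relevant rank bounds are much sharper than the ones you quote: for $\G_2(q)$ with $q$ odd the $2$-rank is exactly $3$ (so $2^{n-1}$ fails to embed once $n\geqslant 5$, and $\vert \G_2(3)\vert > \vert \L_4(2)\vert$ already forces $n\geqslant 5$); for $\G_2(q)$ with $q\geqslant 4$ even the maximal subgroup lists give \arank{} at most $5$, while $\vert K\vert > \vert \L_5(2)\vert$ forces $n\geqslant 6$; and for $\Dthree_4(q)$ the maximal subgroup lists again give \arank{} exactly $5$, with $\vert \Dthree_4(2)\vert > \vert \L_5(2)\vert$ forcing $n\geqslant 6$. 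Each case is then a single application of \cref{killing Dn}, and no centraliser reduction is needed anywhere.

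By contrast, your bounds of $7$ and $9$ on the \arank{} (the generic Liebeck--Seitz table values) are too weak to close the argument directly: they leave a window of small $q$ for which you then propose the involution--centraliser reduction. That reduction is not fully justified as written: for $\G_2(3)$ one has $n(K)=5$, so you would need to know that the non-abelian composition factors of involution centralisers in $\G_2(3)$ are not quotients of $\SAut(F_3)$, but of the results you cite, \cref{char 3 small} requires $n\geqslant 4$ and \cref{oddcharacteristicn10} requires $n\geqslant 10$, so neither applies. The fix is simply to use the sharper bounds, which eliminate the need for this fallback entirely.
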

\begin{proof}
First, let $K \cong \G_2(q)$. We divide the proof into the case that $q$ is either odd or even. When $q$ is odd the $2$-rank of $K$ is $3$ by \cite[Lemma 2.4]{2g2}, but
\[\vert K \vert \geqslant \vert G_2(3) \vert > \vert \L_4(2) \vert\]  and so $n\geqslant 5$.

When $q \geqslant 4$ is even, $\vert K \vert > \vert \L_5(2)\vert$ but from inspection of the list of maximal subgroups of $K$ (see \cite{coop}) we see that the \arank{} of $K$ is at most $5$.

For $K \cong \Dthree_4(q)$ note that the smallest member of this family is $\Dthree_4(2)$ and has order greater than $\vert \L_5(2) \vert$. The maximal subgroups of $K$ are known (see \cite{3d4}) and we see that the \arank{} of $K$ is $5$.
\end{proof}

For the remaining groups we again split into the odd and even characteristic cases.

\begin{lem} \label{excodd}
Let $K$ be a finite simple exceptional group of type $\typeF_4$, $\E_6$, $\Etwo_6$, $\E_7$ or $\E_8$ in odd characteristic. Then $K$ is not the smallest non-trivial finite quotient of $\SAut(F_n)$.
\end{lem}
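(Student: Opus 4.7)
The plan is to adapt directly the strategy already used in \cref{g2and3d4} to the remaining exceptional groups, exploiting the \arank{} of $K$---the largest $m$ for which $A_m$ embeds into $K$. The key input is that, in odd characteristic, the \arank{}s of the groups of types $\typeF_4$, $\E_6$, $\Etwo_6$, $\E_7$, and $\E_8$ are bounded from above uniformly in $q$; explicit bounds follow from the Liebeck--Seitz classification~\cite[Table 10.1]{lieseitz}.

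Suppose, for contradiction, that $\phi\colon \SAut(F_n)\to K$ is a non-trivial epimorphism with $|K|\leqslant |\L_n(2)|$. I would proceed in three steps. First, using the order formulas for finite groups of Lie type together with the table of orders of $\L_m(2)$ in the appendix, I would determine, for each fixed type, a lower bound $n(K)$ on $n$ forced by the order inequality $|K|\leqslant |\L_n(2)|$; carrying this out with $q=3$ (the smallest allowable value) in each family already gives $n(K)\geqslant 10, 12, 12, 15, 20$ respectively, and $n(K)$ grows with $q$. Second, I would extract from \cite[Table 10.1]{lieseitz} a uniform upper bound $a(K)$ on the \arank{}, valid in odd characteristic, and verify in each of the five cases that $n(K)\geqslant a(K)$. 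Third, with $n\geqslant a(K)$, the simple subgroup $A_{n+1}\leqslant \SAut(F_n)$ cannot embed into $K$ (as $n+1>a(K)$), so $\phi|_{A_{n+1}}$ is trivial. Consequently $\phi|_{A_n}$ is trivial, and \cref{killing Dn}(3) applied to any non-identity element of $A_n\subseteq D'_n\smallsetminus 2^{n-1}$ forces $\phi$ to be trivial, the desired contradiction.

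The hard part will be verifying, for the smallest members of each family and in particular $K=\typeF_4(3)$ and $K=\E_6(3)$, that $n(K)\geqslant a(K)$ is indeed satisfied, as there the two quantities are almost equal and the inequality has very little slack. The upper bounds on $a(K)$ from \cite[Table 10.1]{lieseitz} are sharper in odd characteristic than in characteristic $2$ (certain alternating subgroups that appear over fields of characteristic $2$ are absent when $2$ is coprime to $q$), so the numerical comparison depends on this subtlety of the Liebeck--Seitz tables; the corresponding lower bounds on $n(K)$ reduce to routine order comparisons that I would relegate to the appendix.
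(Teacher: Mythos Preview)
Your proposal is correct and follows essentially the same argument as the paper: compare the lower bound $n(K)$ forced by $|K|\leqslant|\L_n(2)|$ (evaluated at $q=3$) against the \arank{} bounds from \cite[Table 10.1]{lieseitz}, then use \cref{killing Dn}(3) via $A_{n+1}$. Your anticipated ``hard part'' for $\typeF_4(3)$ and $\E_6(3)$ turns out to have comfortable slack---the odd-characteristic \arank{} bounds are $7,7,7,10,11$ for the five types, so the comparison $n(K)\geqslant a(K)$ is immediate in every case (the paper in fact groups $\typeF_4$, $\E_6$, $\Etwo_6$ together and only checks $|\typeF_4(3)|>|\L_9(2)|$).
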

\begin{proof}
It is easy to see that if $K$ belongs to any of these families, then the order of $\vert K \vert$ is bounded below when $q=3$. If $K \cong \typeF_4(q)$, $\E_6(q)$ or $\Etwo_6(q)$, then the \arank{} of $K$ is at most $7$ \cite[Table 10.1]{lieseitz} but the order of $K$ is bounded below by the order of $\typeF_4(3)$ which has order greater than $\L_9(2)$.

If $K \cong \E_7(q)$, then then the \arank{} of $K$ is at most $10$, but the smallest member of this family $\E_7(3)$ has order greater than $\vert \L_{14}(2) \vert$.

Finally, if $K \cong \E_8(q)$, then the \arank{} of $K$ is at most $11$, but the smallest member of this family $\E_8(3)$ has order greater than $\vert \L_{19}(2) \vert$.\end{proof}

\begin{lem} \label{exceven}
Let $K$ be a finite simple exceptional group of type $\typeF_4$, $\E_6$, $\Etwo_6$, $\E_7$ or $\E_8$ defined over a finite field of order $q=2^m \geqslant 4$. Then $K$ is not the smallest non-trivial finite quotient of $\SAut(F_n)$.
\end{lem}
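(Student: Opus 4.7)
The strategy mirrors that of Lemma~\ref{excodd} for odd characteristic: show that the inequality $|K|\leqslant |\L_n(2)|$ already forces $n+1$ to exceed the $A$-rank bound of $K$ coming from \cite[Table 10.1]{lieseitz}, and then invoke \cref{killing Dn}(3) through the subgroup $A_{n+1}\leqslant \SAut(F_n)$ arising from the $(n+1)$-cage graph.

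First I would handle each of the five Lie types in turn. In every case $|K|$ is an increasing function of $q$ whose degree (as a polynomial in $q$) vastly exceeds the degree of $|\L_n(2)|$ in $n$, so it suffices to treat the extremal case $q=4$. A routine order computation analogous to those collected in the appendix yields $|\typeF_4(4)|>|\L_{10}(2)|$, hence $n\geqslant 11$; similarly $|\E_6(4)|,|\Etwo_6(4)|>|\L_{12}(2)|$ forces $n\geqslant 13$; $|\E_7(4)|>|\L_{16}(2)|$ forces $n\geqslant 17$; and $|\E_8(4)|>|\L_{22}(2)|$ forces $n\geqslant 23$. In each family the corresponding $A$-rank bound from \cite[Table 10.1]{lieseitz} (uniformly at most $17$, and strictly smaller for $\typeF_4,\E_6,\Etwo_6,\E_7$) is strictly less than~$n+1$.

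Second, suppose for contradiction that $\phi\colon \SAut(F_n)\twoheadrightarrow K$ is a surjection with $|K|\leqslant|\L_n(2)|$. Restrict $\phi$ to the subgroup $A_{n+1}\leqslant\SAut(F_n)$ acting on the $(n+1)$-cage. Since $n+1\geqslant 5$, the group $A_{n+1}$ is simple, so $\phi|_{A_{n+1}}$ is either injective or trivial; the $A$-rank bound excludes the former, so $\phi(A_{n+1})=1$. In particular every nontrivial element of $A_n\leqslant A_{n+1}$ lies in $D'_n\setminus 2^{n-1}\cap\ker\phi$, and \cref{killing Dn}(3) forces $\phi$ to be trivial, contradicting surjectivity onto the nontrivial simple group $K$.

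The main obstacle is pinning down the numerical order comparisons, but these are elementary: each $|K|$ is a product of a power of $q$ with cyclotomic factors, and at $q=4$ the total exponent (as a power of $2$) already far exceeds $n^2\approx\log_2|\L_n(2)|$ for the small values of $n$ coming from the corresponding $A$-rank bound. Should any family's $A$-rank bound prove too large to be beaten by the $q=4$ order alone, a fallback is available: use \cref{killing t} together with the Borel--Tits theorem (\cref{borel--tits}) and the Levi decomposition (\cref{levi factors}) applied to the image of $\gamma$, reducing inductively to simple factors of smaller twisted rank, exactly as in \cref{char 3 small}. I expect this fallback to be unnecessary because $|K|$ at $q=4$ grows far faster with the Lie rank than the crude $A$-rank bound does.
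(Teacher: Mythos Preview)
Your proposal is correct and follows essentially the same approach as the paper: reduce to $q=4$, compare $|K|$ against $|\L_n(2)|$ to force $n$ large, then use the $A$-rank bounds from \cite[Table~10.1]{lieseitz} together with the $A_{n+1}$ subgroup and \cref{killing Dn}(3). The paper's only difference is a slightly coarser grouping (it treats $\E_7$ and $\E_8$ together, using just $|\E_7(4)|>|\L_{16}(2)|$ and the uniform bound $17$ on the $A$-rank), whereas you separate them; your fallback via Borel--Tits is indeed unnecessary.
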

\begin{proof}
It is easy to see that if $K$ belongs to any of these families, then the order of $\vert K \vert$ is bounded below when $q=4$. The degree of the largest alternating group in each of these groups can be found in \cite[Table 10.1]{lieseitz}. If $K \cong \typeF_4(q)$, then the \arank{} of $K$ is $10$, but $K$ has order greater than $\L_{10}(2)$. If $K \cong \E_6(q)$ or $\Etwo_6(q)$, then the \arank{} is bounded above by $12$, but the smallest such group $\E_6(4)$ has order greater than $\L_{12}(2)$. Finally, if $K \cong \E_7(q)$ or $\E_8(q)$, then the \arank{} of $K$ is at most $17$, but the smallest member of this family $\E_7(4)$ has order greater than $\L_{16}(2)$.\end{proof}

In order to dispose of the remaining five cases, we state the following result whose proof can be found in \cite[Sections 15-17]{AschSeitz}.

\begin{lem} \begin{enumerate}
\item Any non-abelian composition factor of an involution centraliser in $\E_6(2)$ is isomorphic to one of $\A_2(2)$, $\A_5(2)$ or $\B_3(2)$.
\item Any non-abelian composition factor of an involution centraliser in $\E_7(2)$ is isomorphic to one of $\B_3(2)$, $\B_4(2)$, $\D_6(2)$ or $\typeF_4(2)$.
\item Any non-abelian composition factor of an involution centraliser in $\E_8(2)$ is isomorphic to one of $\B_4(2)$, $\B_6(2)$, $\typeF_4(2)$ or $\E_7(2)$.
\end{enumerate} \end{lem}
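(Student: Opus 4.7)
The plan is to appeal to the Aschbacher--Seitz classification of involutions in exceptional Chevalley groups over fields of even characteristic, and organise the computation around the standard $C_K(t) = U \rtimes L$ decomposition. The strategy proceeds in three steps for each of $K \in \{\E_6(2), \E_7(2), \E_8(2)\}$.

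First, I would enumerate the conjugacy classes of involutions in $K$. Since $K$ is defined in characteristic $2$, every involution is unipotent, and the conjugacy classes admit a concrete root-theoretic classification: each class is represented by a product of commuting root-element involutions supported on a set of pairwise orthogonal (long and/or short) roots. For $\E_6(2)$, $\E_7(2)$, $\E_8(2)$ the number of classes is small in each case (respectively four, five and seven), and Aschbacher--Seitz exhibit explicit representatives.

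Second, for each class representative $t$ I would analyse $C_K(t)$ via its Levi-type decomposition $C_K(t) = U \rtimes L$, where $U$ is a normal unipotent subgroup (hence a $2$-group, contributing only composition factors of order $2$) and $L$ is a reductive complement. All non-abelian composition factors of $C_K(t)$ therefore live in the derived subgroup $L'$, which is a central product of Chevalley groups over $\F_2$ whose root systems form the orthogonal subsystem to the root-support of $t$ inside the ambient root system.

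Third, I would read off $L'$ from the Dynkin combinatorics of $\E_6$, $\E_7$ and $\E_8$. For instance, a long-root involution in $\E_6(2)$ has centraliser with $L'$ of type $\A_5$, giving the factor $\A_5(2)$; other classes yield Levis of types $\B_3$, $\A_2 \times \A_2$ and $\A_2$, producing the remaining factors $\B_3(2)$ and $\A_2(2)$. The analogous bookkeeping for $\E_7(2)$ produces the subsystem subgroups of types $\D_6$, $\typeF_4$, $\B_4$, $\B_3$; and for $\E_8(2)$ the types $\E_7$, $\typeF_4$, $\B_6$, $\B_4$ (with the largest two arising from involutions whose root-supports live inside an $\E_7 \times \A_1$ or $\D_8$-type subsystem of $\E_8$, respectively). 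One must also take into account exceptional isomorphisms in characteristic $2$, in particular $\B_n(2) \cong \typeC_n(2)$, which ensures no further composition factor types arise under relabelling.

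The main obstacle is combinatorial rather than conceptual: one must verify that the list of involution classes is exhaustive and that each orthogonal subsystem has been correctly identified; this is precisely the content of \cite[Sections 15-17]{AschSeitz}, which carries out the case analysis in full. Once that classification is in hand, the statement of the lemma is a direct readout of the derived subgroups of the reductive complements $L$.
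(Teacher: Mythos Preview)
Your proposal is correct and aligns with the paper's treatment: the paper does not supply its own proof of this lemma but simply states the result and defers to \cite[Sections 15--17]{AschSeitz}, exactly the reference you invoke. Your sketch of the Aschbacher--Seitz argument (enumerating unipotent involution classes, decomposing each centraliser as a unipotent normal part semidirect a reductive complement, and reading off the simple factors from orthogonal root subsystems) is an accurate summary of what that citation contains, so there is nothing to add.
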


We are now in a position to prove the following.

\begin{lem} \label{excfield2}
Let $K$ be a finite simple exceptional group of type $\typeF_4(2)$, $\E_6(2)$, $\Etwo_6(2)$, $\E_7(2)$ or $\E_8(2)$. Then $K$ is not the smallest non-trivial finite quotient of $\SAut(F_n)$.\end{lem}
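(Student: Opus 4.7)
The plan is to treat the five remaining groups in order of increasing rank ($\typeF_4(2), \Etwo_6(2), \E_6(2), \E_7(2), \E_8(2)$), so that by the time we reach the larger groups, the smaller exceptional ones have already been excluded and can be invoked inductively. For each such $K$, suppose $\phi \colon \SAut(F_n) \twoheadrightarrow K$ is a surjection with $|K| \leqslant |\L_n(2)|$, and let $n(K)$ be the least such $n$. The appendix computations give $n(\typeF_4(2)) = 8$, while $n(K) \geqslant 10$ for the other four.

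The argument splits on whether $\phi(\epsilon_1 \epsilon_2)$ is trivial. In the first subcase, \cref{killing Dn}(2) tells us that $\phi$ factors through the natural map $\SAut(F_n) \to \L_n(2)$, inducing a map $\L_n(2) \to K$. Since $\L_n(2)$ is simple and $K$ is an exceptional group (so not isomorphic to $\L_n(2)$), this induced map must be trivial, contradicting surjectivity of $\phi$.

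In the second subcase $\phi(\epsilon_1 \epsilon_2) \neq 1$, the centraliser of $\epsilon_1 \epsilon_2$ in $\SAut(F_n)$ contains $\SAut(F_{n-2})$, so $\phi(\SAut(F_{n-2}))$ is contained in $C_K(\phi(\epsilon_1 \epsilon_2))$. The Aschbacher--Seitz classification quoted in the text identifies the non-abelian composition factors of every involution centraliser in each of $\E_6(2), \E_7(2), \E_8(2)$; parallel information for $\typeF_4(2)$ and $\Etwo_6(2)$ can be extracted from the same source. Every such factor is either a classical group of Lie type in characteristic $2$ of twisted rank strictly less than the rank of $K$, or $\typeF_4(2)$ (in $\E_7(2)$ and $\E_8(2)$). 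Provided $n - 2 \geqslant 8$, \cref{thm char 2} rules out each of the classical factors as a non-trivial quotient of $\SAut(F_{n-2})$, and the inductive hypothesis rules out $\typeF_4(2)$. Since $\SAut(F_{n-2})$ is perfect (\cref{perfect}) and the solvable radical is the only remaining target, $\phi(\SAut(F_{n-2})) = 1$. This kills a transvection, hence (by \cref{conjugate transvections}) every transvection, hence $\phi$ itself, contradiction.

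The main obstacle is the base case $K = \typeF_4(2)$, where $n(K) = 8$ means $n - 2 = 6$ in the smallest value of $n$ and \cref{thm char 2} cannot be applied directly to the involution centralisers. I would deal with this separately as follows: the $A$-rank bound of \cite[Table 10.1]{lieseitz} gives $A_{n+1} \leqslant \typeF_4(2) \Rightarrow n + 1 \leqslant 10$, restricting us to $n \in \{8, 9\}$; for these values I would replace the involution $\epsilon_1 \epsilon_2$ by the element of order $5$ produced by \cref{centralisers} (whose centraliser contains $\SAut(F_{n-4})$ and whose normal closure is all of $\SAut(F_n)$), apply \cref{borel--tits} and the Levi decomposition \cref{levi factors} to its image, and identify its centraliser's non-abelian composition factors, ruling them out using \cref{A_5 not quotient of F_3}, \cref{actions n=4}, or direct inspection in GAP. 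Once $\typeF_4(2)$ is handled, the inductive step for the other four groups proceeds uniformly as above, since $n(K) - 2 \geqslant 8$ allows \cref{thm char 2} to take over.
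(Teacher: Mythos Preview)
Your overall architecture---split on whether $\phi(\epsilon_1\epsilon_2)$ is trivial, then push $\SAut(F_{n-2})$ into an involution centraliser and kill its image factor by factor---is exactly what the paper does for $\E_7(2)$ and $\E_8(2)$. However there are two genuine gaps.

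First, the numerical claim that $n(K)\geqslant 10$ for all four groups other than $\typeF_4(2)$ is wrong: one has $|\L_8(2)|<|\Etwo_6(2)|<|\E_6(2)|<|\L_9(2)|$, so $n(\E_6(2))=n(\Etwo_6(2))=9$. At $n=9$ you must handle maps from $\SAut(F_7)$ into the centraliser factors, and \cref{thm char 2} (which needs $n\geqslant 8$) is unavailable. Moreover the factor $\A_5(2)$ arising in $\E_6(2)$ has twisted rank $5$, which is not strictly less than $7-2$, so even \cref{main thm char 2 lie type} does not apply directly (one could rescue this via \cref{uniqueness}, but you have not said so). The paper sidesteps all of this: for $\typeF_4(2)$ and $\Etwo_6(2)$ it simply verifies via a character-table comparison in GAP that $D_8'$ (respectively $D_9'$) does not embed, and then invokes \cref{killing Dn}; for $\E_6(2)$ it checks in GAP that none of $\A_2(2),\A_5(2),\B_3(2)$ contains $D_7'$.

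Second, your proposed treatment of the base case $\typeF_4(2)$ misapplies Borel--Tits. The theorem (\cref{borel--tits}) concerns $r$-subgroups where $r$ is the defining characteristic; in characteristic $2$ it says nothing about the image of an element of order $5$, which is semisimple. To run the argument you sketch you would have to compute centralisers of semisimple order-$5$ elements in $\typeF_4(2)$ directly and analyse their composition factors---doable, but not via Borel--Tits, and considerably more work than the paper's one-line $D_8'$ check.
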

\begin{proof} 
If $K \cong \typeF_4(2)$, then $n\geqslant 8$, but from the comparison of the character tables of $K$ \cite{atlas} and of $D_8'$ which can be performed in GAP, we see that $D_8'$ is not a subgroup of $K$, and we use \cref{killing Dn}. We eliminate the case $K \cong \Etwo_6(2)$ in the same way, except that here $n\geqslant 9$.

If $K \cong \E_6(2)$, then $\vert K \vert > \vert \L_8(2) \vert$. By the preceding lemma, it remains to show that any homomorphism from $\SAut(F_n)$ with $n\geqslant 7$ to $\A_2(2)$, $\A_5(2)$ or $\B_3(2)$ is trivial. It can easily be checked in GAP that none of these groups contains a subgroup isomorphic to $D_7'$, hence the result follows from \cref{killing Dn}. (Also, we will revisit $\SAut(F_7)$ in the next section.)

If $K \cong \E_7(2)$, then $\vert K \vert > \vert \L_{11}(2) \vert$. By the preceding lemma, it remains to show that any homomorphism from $\SAut(F_{n})$ with $n\geqslant 10$ to $\B_3(2)$, $\B_4(2)$, $\D_6(2)$ or $\typeF_4(2)$ is trivial. The groups $\B_3(2)$, $\B_4(2)$ and $\D_6(2)$ are of classical type in even characteristic and smaller in cardinality than $\L_{10}(2)$, hence we can apply \cref{thm char 2}.
The maximal subgroups of $\typeF_4(2)$ are known and can be found in \cite{atlas}; it is clear by inspection that the \arank{} of $\typeF_4(2)$ is $10$.

Finally, if $K \cong \E_8(2)$, then $\vert K \vert > \vert \L_{15}(2) \vert$. By the preceding lemma, it remains to show that any homomorphism from $\SAut(F_{n})$ with $n\geqslant 14$ to $\B_4(2)$, $\B_6(2)$, $\typeF_4(2)$ or $\E_7(2)$ is trivial. As before, \cref{thm char 2} takes care of $\B_4(2)$ and $\B_6(2)$ since they are smaller in cardinality than $\L_{14}(2)$, whereas the \arank{} of $\typeF_4(2)$ and $\E_7(2)$ is at most $13$. This completes the proof.\end{proof}

We can now summarise the preceding lemmata.

\begin{thm} \label{noexceptionals} Let $K$ be a finite simple group of exceptional type. If $K$ is a quotient of $\SAut(F_n)$, then $\vert K \vert > \vert \L_n(2) \vert$. \end{thm}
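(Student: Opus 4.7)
The plan is simply to assemble the case analysis already carried out in the preceding lemmata of this section. First, I would invoke the classification of finite simple exceptional groups of Lie type as listed at the start of the section, which partitions them into the Suzuki--Ree family, the Steinberg types $\Dthree_4$ and $\Etwo_6$, and the exceptional Chevalley types $\G_2$, $\typeF_4$, $\E_6$, $\E_7$, $\E_8$. Some care is needed at the boundary cases where the named group is not simple: $\mathrm{Sz}(2)$ is solvable (and hence not a candidate), the simple derived subgroups $\G_2(2)' \cong \Atwo_2(3)$ and $\Gtwo_2(3)' \cong \A_1(8)$ are classical and so are handled by \cref{oddcharacteristicn10} and \cref{thm char 2}, and $\typeFtwo_4(2)'$ is the Tits group, which is covered by the first (unnamed) lemma of the section.

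Then I would dispatch each remaining family by citing the appropriate result: the first lemma handles the simple Suzuki groups $\Btwo_2(2^{2m+1})$, the small Ree groups $\Gtwo_2(3^{2m+1})$ and the large Ree groups $\typeFtwo_4(2^{2m+1})$ together with the Tits group; \cref{g2and3d4} takes care of $\G_2(q)$ for $q\geqslant 3$ and of $\Dthree_4(q)$ for all $q\geqslant 2$; and for the remaining five families of types $\typeF_4, \E_6, \Etwo_6, \E_7, \E_8$ I would split on the characteristic, applying \cref{excodd} when $q$ is odd, \cref{exceven} when $q = 2^m \geqslant 4$, and \cref{excfield2} when $q = 2$.

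The only verification required is that this partition is exhaustive, which is a routine bookkeeping exercise given the classification of finite simple groups. Since the genuine content (the $2$-rank, \drank{}, \arank{}, and centraliser arguments sketched at the start of the section) has already been absorbed into the preceding lemmata, there is no real obstacle here: the theorem is a convenient summary statement, and its proof reduces to a short assembly of citations.
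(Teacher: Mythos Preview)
Your proposal is correct and matches the paper's approach exactly: the theorem is stated in the paper simply as a summary of the preceding lemmata, and your partition of the exceptional simple groups among those lemmata is precisely the intended one. The extra care you take with the non-simple boundary cases $\G_2(2)$, $\Gtwo_2(3)$, $\mathrm{Sz}(2)$ is harmless but unnecessary here, since their simple derived subgroups are classical and therefore fall outside the hypothesis ``$K$ a finite simple group of exceptional type'' from the outset; you need not invoke \cref{oddcharacteristicn10} or \cref{thm char 2} at all.
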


In fact, using the \arank{} we can say more: when $n>16$ then the exceptional groups of Lie type are never quotients of $\SAut(F_n)$, see \cite{lieseitz}.

\section{Small values of $n$ and the conclusion}

We can now conclude the paper.

\begin{thm}
 \label{very main thm}
 Let $n\geqslant 3$. Every non-trivial finite quotient of $\SAut(F_n)$ is either greater in cardinality than $\L_n(2)$, or isomorphic to $\L_n(2)$. Moreover,  if the quotient is $\L_n(2)$, then the quotient map is the natural map postcomposed with an automorphism of $\L_n(2)$.
\end{thm}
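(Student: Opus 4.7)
The plan is to reduce to simple quotients via the Classification of Finite Simple Groups (CFSG) and then invoke the family-by-family results assembled in the earlier sections. Let $\phi\colon \SAut(F_n)\to K$ be a non-trivial finite quotient. Since $\SAut(F_n)$ is perfect by \cref{perfect}, so is $K$; hence any maximal proper normal subgroup $M\lhd K$ yields a non-abelian simple quotient $K_0 := K/M$ of $\SAut(F_n)$ with $|K_0|\leqslant |K|$. It therefore suffices to prove $|K_0|\geqslant |\L_n(2)|$, with equality forcing both $K_0 \cong \L_n(2)$ and the composite $\SAut(F_n)\to K_0$ being the natural map up to an automorphism of $\L_n(2)$. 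Granting this, the hypothesis $|K|\leqslant |\L_n(2)|$ forces $M=1$ and $K=K_0$, giving the theorem.

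For $n\geqslant 10$ the argument is uniform. By CFSG, $K_0$ lies in one of the four families listed in \cref{sec: intro}, and $K_0$ is non-cyclic since $\SAut(F_n)$ is perfect. Alternating quotients are excluded by \cref{no alternating quotients} (the exceptional coincidence $n=4$ with $\L_4(2)\cong A_8$ does not arise in this range). Sporadic quotients are excluded by \cref{main: spo}, and exceptional Lie-type quotients by \cref{noexceptionals}. For classical groups in characteristic $2$, \cref{thm char 2} forces $K_0\cong \L_n(2)$ or $|K_0|>|\L_n(2)|$, and for classical groups in odd characteristic \cref{oddcharacteristicn10} forces $|K_0| > |\L_n(2)|$. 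The only surviving possibility is $K_0\cong \L_n(2)$, in which case \cref{uniqueness} identifies the map as the natural one up to postcomposition with an automorphism of $\L_n(2)$.

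The small values $n\in\{3,4,5,6,7,8,9\}$ will be treated by the same reduction, but with the Lie-type cases handled individually. The alternating, sporadic, and exceptional families are already covered for all $n\geqslant 3$ by the results quoted above, so the only remaining candidates are classical Lie-type simple groups of order at most $|\L_n(2)|$. In characteristic $2$, \cref{main thm char 2 lie type} takes care of all classical groups of twisted rank at most $n-2$ for every $n\geqslant 3$, leaving a short finite list per $n$ to compare by hand (for $n\geqslant 8$ this is already absorbed by \cref{thm char 2}). In odd characteristic, \cref{char 3 small} disposes of a large portion of the low-rank groups over $\F_3$; the remaining candidates per $n$ can be ruled out using the toolbox built earlier, namely the $D'$-rank, $2$-rank, and centraliser arguments attached to \cref{killing Dn} and \cref{centralisers}, the action bounds of \cref{main thm actions}, and the element $\gamma$ of order $3$ from \cref{killing t}.

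The principal obstacle is the case $n=7$: here $|\L_7(2)|$ is large enough to admit many candidate simple groups across several characteristics, none of them covered by the uniform odd-characteristic theorem, so a somewhat involved enumeration against the order bound and a mixture of centraliser, rank, and action arguments will be needed. Once the small-$n$ case analysis shows that any non-trivial simple quotient of order at most $|\L_n(2)|$ must be isomorphic to $\L_n(2)$, the final step of identifying $\phi$ with the natural map is immediate from \cref{uniqueness}, and $K=K_0$ then follows from cardinality.
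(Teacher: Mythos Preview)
Your proposal is correct and follows essentially the same approach as the paper: reduce to a non-abelian simple quotient using perfectness, then run through the CFSG families using the results already established, with small values of $n$ handled by a finite case analysis. The paper's own proof is slightly terser (it takes $K$ to be a \emph{smallest} non-trivial quotient and observes directly that such a $K$ is simple), and it places the cutoff for the uniform argument at $n\geqslant 8$ rather than $n\geqslant 10$; the small cases are then dispatched by five explicit lemmas for $n\in\{3,4,5,6,7\}$, using exactly the toolbox you anticipate ($D'$-rank, $2$-rank, \arank{}, involution and order-$3$ centralisers, and direct inspection of maximal subgroups). Your more conservative cutoff at $n\geqslant 10$ simply matches the stated hypothesis of \cref{oddcharacteristicn10} and costs you two extra cases to enumerate, which is harmless.
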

\begin{proof}
 Suppose that $n \geqslant 8$, and let $K$ be a smallest non-abelian quotient of $\SAut(F_n)$. Since $\SAut(F_n)$ is perfect, $K$ is simple. By
\cref{no alternating quotients}, $K$ is not an alternating group; by \cref{main: spo}, $K$ is not a sporadic group; by \cref{oddcharacteristicn10}, $K$ is not a classical group of Lie type in odd characteristic; by \cref{noexceptionals}, $K$ is not an exceptional group of Lie type. Finally, by \cref{thm char 2}, $K$ is isomorphic to $\L_n(2)$, and the quotient map is obtained by postcomposing the natural map $\SAut(F_n) \to \L_n(2)$ by an automorphism of $\L_n(2)$.

For $3 \leqslant n < 8$, the result follows from \cref{smalln3,smalln4,smalln5,smalln6,smalln7} below.
\end{proof}

As indicated above, we now verify \cref{very main thm} for $n \in \{3,\dots,7\}$.
Note that in view of \cref{uniqueness}, it is enough to show that a smallest quotient of $\SAut(F_n)$ is isomorphic to $\L_n(2)$.

By \cref{no alternating quotients,main: spo,noexceptionals} we can assume that $K$ is of classical type. We make use of the list of simple groups in order of size appearing in \cite[pgs. 239--242]{atlas}. Note that this list does not contain all members of the families of types $\A_1(q)$, $\A_2(q)$, $\Atwo_2(q)$, $\A_3(q)$, $\typeC_2(q)$ or $\G_2(q)$; we can exclude $\G_2(q)$ by \cref{g2and3d4} and the following lemma allows us to take care of the rest.

\begin{lem}[{\cite[Theorem 4.10.5]{Gorensteinetal1998}}] \label{gls2rank} Let $K \leqslant \A_n(q)$ where $q$ is odd. If $n \leqslant 3$, then the $2$-rank of $K$ is bounded above by $n+1$.\end{lem}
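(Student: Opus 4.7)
The plan is to reduce to bounding the $2$-rank of $\L_{n+1}(q)$ itself (since any subgroup $K$ has $2$-rank no greater), by lifting an elementary abelian $2$-subgroup $E \leqslant \L_{n+1}(q)$ of rank $r$ to a class-$2$ nilpotent subgroup $\tilde E \leqslant \GL_{n+1}(q)$ and analysing the action of $\tilde E$ on the natural module $V = \F_q^{n+1}$.

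First, take $\tilde E$ to be the subgroup of $\GL_{n+1}(q)$ generated by a choice of lifts of the elements of $E$, and set $A = \tilde E \cap \F_q^\times \cdot I$, so that $\tilde E / A \cong E$. Every $\tilde g \in \tilde E$ satisfies $\tilde g^2 \in A$ and $[\tilde E, \tilde E] \leqslant A$, so $\tilde E$ has nilpotency class at most $2$ and $\tilde E / A$ is elementary abelian. The identity $[\tilde g, \tilde h]^2 = [\tilde g, \tilde h^2]$, valid in any class-$2$ group, together with the centrality of $\tilde h^2$, forces $[\tilde E, \tilde E] \leqslant \{\pm I\}$; consequently the commutator pairing descends to a non-degenerate alternating $\Z/2$-valued form on $\tilde E / Z(\tilde E)$, which is therefore isomorphic to $(\Z/2)^{2k}$ for some $k \geqslant 0$.

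Next, decompose $V \otimes \Fbar = \bigoplus_{i=1}^{\ell} V_i$ semisimply into irreducible $\tilde E$-modules. In the non-abelian case $k \geqslant 1$: any irreducible on which $-I$ acts trivially would factor through the abelianisation and be $1$-dimensional, but since $-I \in \tilde E$ acts as $-\mathrm{Id}$ on $V$, no such component appears; hence each $V_i$ is the unique $2^k$-dimensional projective irreducible of the symplectic quotient, and $\ell = (n+1)/2^k$. In the abelian case $k = 0$, all $V_i$ are $1$-dimensional and $\ell = n+1$. Schur's lemma then makes $Z(\tilde E)$ act by scalars on each $V_i$, giving an injection $Z(\tilde E)/A \hookrightarrow (\Fbar^\times)^\ell / \Fbar^\times$; its image lies in the $2$-torsion, of rank $\ell - 1$. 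Combining these yields
\[ r = 2k + \operatorname{rank}\bigl(Z(\tilde E)/A\bigr) \leqslant 2k + \ell - 1. \]

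The conclusion will then follow by inspection: the admissible pairs are $(k, \ell) = (0, n+1)$, and, for $k \geqslant 1$, $(k, (n+1)/2^k)$ whenever $2^k \mid n+1$. For each $n+1 \in \{2, 3, 4\}$ a direct check gives $r \leqslant n+1$; the bound is sharp for $n+1 = 2$ (via $Q_8$) and $n+1 = 4$ (via the extraspecial group $2^{1+4}$ acting irreducibly), but strict for $n+1 = 3$, where the parity condition $3 \nmid 2^k$ forces $\tilde E$ to be abelian. The main obstacle is the dimension dichotomy for irreducible $\tilde E$-modules in the third paragraph, which hinges on $\{\pm I\}$ being the unique $2$-torsion in the scalar subgroup of $\GL_{n+1}(q)$ for $q$ odd.
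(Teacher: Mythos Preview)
The paper does not give its own proof of this lemma: it is stated as a direct citation of \cite[Theorem 4.10.5]{Gorensteinetal1998} and used as a black box. Your proposal is therefore not a comparison target but a genuine self-contained argument, and it is correct.

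A few small points worth tightening. First, the phrase ``the unique $2^k$-dimensional projective irreducible of the symplectic quotient'' is slightly off: for each central character $\chi$ of $Z(\tilde E)$ with $\chi(-I)=-1$ there is exactly one irreducible $\tilde E$-module with that central character, and it has dimension $2^k$; distinct $V_i$ may carry distinct such $\chi$. What you actually use, and what is true, is that every irreducible on which $-I$ acts as $-1$ has dimension $2^k$, so $\ell=(n+1)/2^k$. Second, the semisimplicity of $V\otimes\Fbar$ as a $\tilde E$-module deserves a word: since $\tilde E/A$ is an elementary abelian $2$-group and $A\leqslant\F_q^\times$ has order dividing $q-1$, the order $|\tilde E|$ is coprime to the characteristic $p$, and Maschke applies. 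Third, the injectivity of $Z(\tilde E)/A\hookrightarrow(\Fbar^\times)^\ell/\Fbar^\times$ uses that a matrix in $\GL_{n+1}(\F_q)$ which becomes scalar over $\Fbar$ is already scalar over $\F_q$; this is immediate from looking at diagonal entries, but is worth saying. With these cosmetic fixes the argument is complete; the case check $2k+(n+1)/2^k-1\leqslant n+1$ for $n+1\in\{2,3,4\}$ is exactly as you describe.
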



The general strategy is exactly as described in the previous section. As before, we use types to denote the adjoint versions.

We now look at each value of $n$ separately.

\begin{lem}[$n=3$]
\label{smalln3}
Let $K$ be a non-abelian finite simple group with $\vert K \vert \leqslant \vert \L_3(2) \vert$. If $K$ is a quotient of $\SAut(F_3)$, then $K \cong \L_3(2)$.\end{lem}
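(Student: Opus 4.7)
The plan is to combine the Classification of Finite Simple Groups (or rather a direct inspection of the small end of the list) with the results already at our disposal. First I would recall that the non-abelian finite simple groups of order at most $|\L_3(2)| = 168$ are, by inspection of the ATLAS ordering \cite[pp.\ 239--242]{atlas}, exactly $A_5$ (of order $60$) and $\L_3(2) \cong \L_2(7)$ (of order $168$). Indeed, no other non-abelian simple group has order in this range; the exceptional small cases $\A_1(q), \A_2(q), \Atwo_2(q), \typeC_2(q)$ that are not tabulated in \cite{atlas} either give a group of order $> 168$ or are not simple (e.g.\ $\A_1(2), \A_1(3)$), so they contribute nothing new here.

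Next I would eliminate $A_5$. This is immediate from \cref{A_5 not quotient of F_3}: if $\phi \colon \SAut(F_3) \twoheadrightarrow A_5$ were a surjection, then composing with the natural action of $A_5$ on $5$ points would give a non-trivial action of $\SAut(F_3)$ on a set of cardinality $5 < 7$, contradicting \cref{A_5 not quotient of F_3}. (Alternatively, one may invoke \cref{no alternating quotients} directly.) Consequently the only remaining candidate is $K \cong \L_3(2)$.

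Finally, to see that the natural map $\SAut(F_3) \to \L_3(2)$ really does realise this quotient, observe that this map is a surjection onto $\L_3(2)$ (via abelianisation $F_3 \to \Z^3$ and reduction mod $2$, as discussed in the paragraph on linear quotients), and that by \cref{uniqueness} together with \cref{uniqueness lemma rmk} any non-trivial homomorphism $\SAut(F_3) \to \L_3(2)$ is obtained from the natural map by postcomposition with an automorphism of $\L_3(2)$. There is no real obstacle here: the genuine work was done in \cref{A_5 not quotient of F_3} and in the uniqueness statement, and the present lemma is simply the combination of these two inputs with the trivial enumeration of small simple groups.
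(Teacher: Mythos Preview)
Your proof is correct and follows essentially the same approach as the paper: enumerate the non-abelian simple groups of order at most $168$ (namely $A_5$ and $\L_3(2)$) and eliminate $A_5$ via \cref{A_5 not quotient of F_3}. Your final paragraph on realisability and uniqueness is not required by the statement itself (the paper handles uniqueness separately via \cref{uniqueness}), but it is not wrong.
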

\begin{proof}
If $K$ is a non-abelian simple group not isomorphic to $\L_3(2)$ and order at most $\vert \L_3(2) \vert$, then $K \cong A_5$. But $A_5$ is not a quotient of $\SAut(F_3)$ by \cref{A_5 not quotient of F_3}.
\end{proof}

\begin{lem}[$n=4$]
\label{smalln4}
Let $K$ be a non-abelian finite simple group with $\vert K \vert \leqslant \vert \L_4(2) \vert$. If $K$ is a quotient of $\SAut(F_4)$, then $K \cong \L_4(2)$.
\end{lem}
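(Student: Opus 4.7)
The plan is to invoke the Classification of Finite Simple Groups and eliminate, one family at a time, every non-abelian finite simple group $K$ of order at most $|\L_4(2)| = 20160$ other than $\L_4(2)$ itself, using the tools already developed. Consulting \cite[pp.~239--242]{atlas} for the full list, the candidates to dispose of are the alternating groups $A_5, A_6, A_7$, the Mathieu group $\mathrm{M}_{11}$, the groups $\L_2(q)$ with $q \in \{7,8,11,13,16,17,19,23,25,27,29,31\}$, the groups $\L_3(3)$ and $\L_3(4)$, and finally $U_3(3) = \Atwo_2(3)$.

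I will first dispatch the straightforward cases: the alternating groups by \cref{no alternating quotients}, the sporadic $\mathrm{M}_{11}$ by \cref{main: spo}, and the characteristic-$2$ Lie type groups in the list, namely $\L_2(8)$, $\L_2(16)$, $\L_2(7) = \L_3(2) = \A_2(2)$, and $\L_3(4) = \A_2(4)$, by \cref{main thm char 2 lie type} (their twisted ranks are $1$, $1$, $2$, $2$ respectively, all strictly less than $n-1 = 3$). The group $\L_3(3) = \A_2(3)$ is covered directly by \cref{char 3 even base case}. For each $\L_2(q)$ with $q \in \{11,13,17,19,23,25,27,29,31\}$ I will invoke the fact that the Sylow $2$-subgroup of $\L_2(q)$ with $q$ odd is dihedral, and hence has $2$-rank equal to $2$ (an instance of \cref{gls2rank} with $n=1$); since $D_4' = 2^3 \rtimes A_4$ contains an elementary abelian subgroup of rank $3$, it cannot embed into such an $\L_2(q)$, so \cref{smallest linear quotient} forces any homomorphism $\SAut(F_4) \to \L_2(q)$ to be trivial.

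The only case not covered by any of these uniform arguments, and the one I expect to be the main subtlety, is $K = U_3(3) = \Atwo_2(3)$, of order $6048 = 2^5 \cdot 3^3 \cdot 7$: its twisted rank is only $1$, but its characteristic is $3$, so neither \cref{main thm char 2 lie type} nor \cref{char 3 even base case} applies. The key observation will be that $5 \nmid |U_3(3)|$, so $U_3(3)$ contains no element of order $5$ and in particular no copy of $A_5$. Since $A_{n+1} = A_5$ embeds in $\SAut(F_4)$ and is simple, any homomorphism $\phi \colon \SAut(F_4) \to U_3(3)$ must kill $A_5$, and hence also kill the subgroup $A_n = A_4 \leqslant A_5$. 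Every non-trivial element of $A_4$ lies in $D_4' \smallsetminus 2^{n-1}$, so \cref{killing Dn}(3) forces $\phi$ to be trivial, contradicting surjectivity and completing the proof.
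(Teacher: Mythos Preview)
Your overall strategy is sound and most of the case analysis is correct, but there is an off-by-one error in your application of \cref{main thm char 2 lie type}. That theorem, read for $\SAut$, says: if $K$ has twisted rank less than $n-1$, then every homomorphism $\SAut(F_{n+1}) \to K$ is trivial. For $\SAut(F_4)$ this means taking $n=3$, so the hypothesis is twisted rank less than $2$, not less than $3$. Thus $\A_1(8)$ and $\A_1(16)$ are covered, but $\A_2(2) = \L_3(2)$ and $\A_2(4) = \L_3(4)$, both of twisted rank $2$, are not.

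The group $\L_3(2)$ is easily salvaged: it is isomorphic to $\L_2(7)$, so your $2$-rank argument for $\L_2(q)$ with $q$ odd already handles it (its Sylow $2$-subgroup is dihedral of $2$-rank $2$, too small for $2^3 \leqslant D_4'$); alternatively one can invoke \cref{uniqueness}. The genuine gap is $\L_3(4)$. None of the tools you cite apply: its order $20160 = 2^6 \cdot 3^2 \cdot 5 \cdot 7$ is divisible by $5$, so the $A_5$ trick you used for $U_3(3)$ fails, and it is not of the form $\L_m(2)$. The paper disposes of this case by verifying (in GAP) that $D_4'$ does not embed in $\L_3(4)$---the maximal subgroup $2^4 \rtimes A_5$ of $\L_3(4)$ has the right order but the $A_5$-action on $2^4$ is the natural $\SL_2(4)$-module rather than the permutation module, and no copy of $D_4'$ sits inside---and then appeals to \cref{smallest linear quotient}. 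Your argument for $U_3(3)$ via $5 \nmid |U_3(3)|$ is neat and more explicit than the paper's treatment (which folds $U_3(3)$ into the odd-characteristic $2$-rank sweep), but you still need to close the $\L_3(4)$ gap.
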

\begin{proof}
Let $K$ be a simple group of order at most $\vert \L_4(2) \vert$ and a quotient of $\SAut(F_4)$. Assume that $K$ is not isomorphic to $\L_4(2)$. By \cref{gls2rank}, $K$ is not a subgroup of $\A_2(q)$ where $q$ is odd. Hence, $K$ is isomorphic to one of the following.
\[\A_1(8), \; \A_1(16), \; \A_2(4)\]
With the exception of $\A_2(4)$ (which has the same order as $\L_4(2)$), it is clear from the inspection of their maximal subgroups \cite{atlas} that they do not contain subgroups isomorphic to $D_4'$. In the case of $\A_2(4)$, there is a subgroup isomorphic to $2^4 \rtimes A_5$, however this is not isomorphic to the group $D_5'$. It can be computed in GAP that $\A_2(4)$ does not contain subgroups isomorphic to $D_4'$. This completes the proof.
\end{proof}

\begin{lem}[$n=5$]
\label{smalln5}
Let $K$ be a non-abelian finite simple group with $\vert K \vert \leqslant \vert \L_5(2) \vert$. If $K$ is a quotient of $\SAut(F_5)$, then $K \cong \L_5(2)$.
\end{lem}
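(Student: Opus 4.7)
The plan is to eliminate every finite simple non-abelian group $K$ of order at most $\vert \L_5(2)\vert = 9999360$ other than $\L_5(2)$ itself. By \cref{no alternating quotients}, \cref{main: spo}, \cref{noexceptionals} and \cref{g2and3d4}, one may assume $K$ is a classical group of Lie type not of type $\G_2$ or $\Dthree_4$. By \cref{smallest linear quotient} one may also assume that $\phi\vert_{D_5'}$ is injective, so in particular $\vert D_5'\vert = 960$ divides $\vert K\vert$. Combining this with \cref{gls2rank} (which bounds the $2$-rank of any subgroup of $\A_n(q)$, for $q$ odd and $n\leq 3$, by $n+1$) and the fact that $D_5'$ contains $2^4$, the ATLAS list reduces to a short list of candidates.

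The characteristic-$3$ candidates $\L_4(3)$, $U_4(2)$, $U_4(3)$ and $S_4(3)$ are all eliminated by \cref{char 3 small}. The characteristic-$2$ candidates $\L_3(4)$, $U_3(4)$ and $S_4(4)$, all of twisted rank at most $2$, are eliminated by \cref{main thm char 2 lie type} applied with $n = 4$. The odd-characteristic candidates $U_3(7)$ and $S_4(5)$ are ruled out by a $2$-rank argument: for $U_3(7) \leq \L_3(49)$, \cref{gls2rank} gives $2$-rank at most $3$; for $S_4(5) \cong \Omega_5(5)$ the $2$-rank is also less than $4$ (for instance since $5 \equiv -3\pmod 8$, the standard $2$-rank of $\PSp_{2n}(q)$ is $n$). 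In both cases $D_5'$ fails to embed into $K$, so \cref{smallest linear quotient} renders $\phi$ trivial.

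The single remaining candidate is $S_6(2) = \typeC_3(2)$, which does contain $D_5'$ (for instance via the chain $D_5' = 2^4{:}A_5 \leq U_4(2) \leq S_6(2)$), so the subgroup obstruction is no longer available. The plan here is to use the element $\gamma$ of order $3$ from \cref{sec: char 3}, whose normal closure in $\SAut(F_5)$ is the whole group by \cref{killing t}. If $\phi(\gamma) = 1$ then $\phi$ is trivial; otherwise $\phi(\SAut(F_3))$ lies in the centraliser in $S_6(2)$ of $\phi(\gamma)$. According to the ATLAS, $S_6(2)$ has exactly three conjugacy classes of elements of order $3$; two of the corresponding centralisers are solvable and so can only receive the trivial image of the perfect group $\SAut(F_3)$, while the third centraliser is isomorphic to $3\times S_6$, with commutator subgroup $A_6$. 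Hence any perfect image of $\SAut(F_3)$ is contained in $A_6$ and so lies in $\{1, A_5, A_6\}$; but neither $A_5$ nor $A_6$ is a quotient of $\SAut(F_3)$, by \cref{A_5 not quotient of F_3} and by \cref{main thm actions} applied to the natural $6$-point action of $A_6$. The image is therefore trivial, which trivialises the transvection $\rho_{12}\in \SAut(F_3)$ and hence $\phi$ itself.

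The main obstacle is the case $S_6(2)$: every other candidate yields to a direct appeal to a previously proved result, whereas for $S_6(2)$ one has to identify the correct auxiliary element $\gamma$ and perform an ATLAS-based centraliser calculation to control the perfect images of $\SAut(F_3)$.
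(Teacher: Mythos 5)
Your overall architecture is sound and most cases are handled correctly; in particular your treatment of $\typeC_3(2) = S_6(2)$ via the order-$3$ element $\gamma$ and the centraliser $3\times S_6$ is a valid variant of the paper's argument (the paper uses involution centralisers there instead, with the same conclusion that the non-abelian composition factors $A_5$, $A_6$ cannot receive $\SAut(F_3)$). However, there is a genuine gap in your elimination of $S_4(5) = \typeC_2(5)$. The claim that the $2$-rank of $\PSL{}$... rather, of $\operatorname{PSp}_4(5)$ is $2$ is false: $\operatorname{PSp}_4(5)\cong \Omega_5(5)$ contains the full group of even sign changes $\mathrm{diag}(\pm 1,\dots,\pm 1)$ (these are products of reflections in vectors of square norm, hence have trivial spinor norm), so its $2$-rank is at least $4$; equivalently, the image in $\operatorname{PSp}_4(5)$ of an extraspecial subgroup $2^{1+4}_-\leqslant \operatorname{Sp}_4(5)$ is an elementary abelian $2^4$. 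In fact $D_5' = 2^4\rtimes A_5$ embeds in $\Omega_5(5)$ as the determinant-one signed permutation matrices with trivial spinor norm, so the subgroup obstruction of \cref{smallest linear quotient} is simply unavailable here. This is exactly why the paper retains $\typeC_2(5)$, alongside $\typeC_3(2)$, as one of the two surviving candidates and disposes of it by a centraliser argument: every non-abelian composition factor of an involution centraliser in $\typeC_2(5)$ is $A_5$ or $A_6$, neither of which is a quotient of $\SAut(F_3)$ by \cref{A_5 not quotient of F_3}. Your proof needs such an argument (or your $\gamma$-based one, suitably adapted) for this group; as written, the case is not closed.

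Two smaller points on the enumeration: your candidate list omits $\Atwo_2(8) = U_3(8)$, of order $5515776 < \vert \L_5(2)\vert$, as well as the groups $\A_1(2^k)$ for $3\leqslant k\leqslant 7$; all of these have twisted rank at most $2$ in characteristic $2$ and so fall to the same appeal to \cref{main thm char 2 lie type} that you already make, so this is only a bookkeeping slip. Conversely, listing $U_3(7)$ and then excluding it by \cref{gls2rank} is correct but redundant, since that exclusion is part of the initial reduction. Your use of \cref{char 3 small} for the characteristic-$3$ candidates is a legitimate shortcut compared with the paper's direct appeal to \cref{char 3 odd base case} plus subgroup checks.
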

\begin{proof}
Assume that $K$ is not isomorphic to $\L_5(2)$. By \cref{gls2rank,kleidmanliebeck} we can exclude all but the following groups.
\[\A_2(4), \; \Atwo_2(4), \; \Atwo_2(8), \; \A_3(3), \; \Atwo_3(2) \cong \typeC_2(3), \; \typeC_2(4), \; \typeC_2(5), \; \Atwo_3(3), \; \typeC_3(2)\]

The groups $\A_3(3)$ and $\Atwo_3(3)$ are dealt with in \cref{char 3 odd base case}. Excluding those which also do not contain $D_5'$ as subgroups we are left with the possibilities $\typeC_3(2)$ and $\typeC_2(5)$. If $K \cong \typeC_2(5)$ or $\typeC_3(2)$, then any non-abelian composition factor of an involution centraliser is isomorphic to $A_5$ or $A_6$, neither of which is a quotient of $\SAut(F_3)$ by \cref{A_5 not quotient of F_3}, a contradiction.\end{proof}

\begin{lem}[$n=6$]
\label{smalln6}
Let $K$ be a non-abelian finite simple group with $\vert K \vert \leqslant \vert \L_6(2) \vert$. If $K$ is a quotient of $\SAut(F_6)$, then $K \cong \L_6(2)$.
\end{lem}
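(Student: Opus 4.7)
The plan is to follow the strategy used in the proofs of \cref{smalln4,smalln5}. First, I would use the global results proved earlier -- \cref{no alternating quotients,main: spo,g2and3d4,noexceptionals} -- to reduce to the case in which $K$ is a finite simple classical group of Lie type. The order bound $|K|\leqslant |\L_6(2)|$ then gives a finite list of candidates which can be read off from the tables in \cite{atlas}.

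Next, I would invoke \cref{killing Dn} via the subgroup $D'_6 = 2^5 \rtimes A_6 \leqslant \SAut(F_6)$, whose elementary abelian $2$-subgroup $2^5$ forces the $2$-rank of $K$ to be at least $5$ (unless we are already done). By \cref{gls2rank}, this immediately removes every subgroup of $\A_n(q)$ with $q$ odd and $n\leqslant 3$. Candidates defined over $\F_3$ that survive the rank check -- those of type $\A_k(3), \typeC_k(3), \D_k(3)$ or $\Dtwo_k(3)$ of suitable rank -- are eliminated by \cref{char 3 small}. For the other surviving candidates one checks in GAP, using character-table comparisons of the sort performed in \cref{smalln4,smalln5}, that $D'_6$ does not embed; an appeal to \cref{killing Dn} then finishes them.

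For the few candidates that do contain $D'_6$ -- expected to include groups such as $\Atwo_5(2)=\mathrm{U}_6(2)$, $\typeC_4(2)$, $\D_4(2)$, $\Dtwo_4(2)$, and a handful of related ones -- I would use the centraliser machinery. Specifically, $\SAut(F_4)$ sits inside the centraliser of $\epsilon_5 \epsilon_6$ in $\SAut(F_6)$; if $\phi(\epsilon_5 \epsilon_6)=1$, then $\phi$ factors through the natural map to $\L_6(2)$ by \cref{killing epsilon}, and otherwise $C_K(\phi(\epsilon_5 \epsilon_6))$ receives $\phi(\SAut(F_4))$. Since $\SAut(F_4)$ is perfect (\cref{perfect}) and by \cref{smalln4} its only non-abelian simple quotient of order at most $|\L_4(2)|$ is $\L_4(2)\cong A_8$, some non-abelian composition factor of $C_K(\phi(\epsilon_5 \epsilon_6))$ must be of order at least $|A_8|$. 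The same analysis can be carried out with the order-$3$ element $\gamma$ of \cref{killing t}, whose centraliser also contains $\SAut(F_4)$ by \cref{centralisers}, and with the order-$5$ element of \cref{centralisers} whose centraliser contains $\SAut(F_2)$ and whose normal closure is all of $\SAut(F_6)$.

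The main obstacle will be the explicit bookkeeping: for each remaining candidate one must inspect the ATLAS (or run a GAP computation) to show that neither its involution centralisers nor its order-$3$ element centralisers admit a non-abelian composition factor that can receive $\SAut(F_4)$ faithfully enough to survive \cref{smalln4}, and then invoke \cref{killing Dn}(3) to trivialise $\phi$. This is a finite, straightforward but tedious case check, and is precisely the analogue of the argument used to rule out $\typeC_2(5)$ and $\typeC_3(2)$ in the proof of \cref{smalln5}.
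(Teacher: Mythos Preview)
Your plan is essentially the paper's own argument: reduce to classical groups, cut the list using the $2$-rank bound and the minimal alternating-representation dimensions, discard by hand those candidates not containing $D'_6$, and finish the remaining few via centraliser arguments feeding back into \cref{smalln4}. A couple of bookkeeping points to tidy up: $\typeC_4(2)$ is already larger than $\L_6(2)$ and so does not appear; and for $n=6$ even, \cref{char 3 small} covers $\B_k(3)$ (so in particular $\B_3(3)$) and $\typeC_2(3)$, but not $\typeC_3(3)$, which instead falls to the $D'_6$-embedding check. The paper's list of survivors containing $D'_6$ is $\Atwo_5(2)$, $\B_3(3)$, $\D_4(2)$, $\Dtwo_4(2)$; for the first two it uses the order-$3$ element $\gamma$ (finding only $\A_1(9)$ and $\typeC_2(3)$ as non-abelian centraliser factors), and for the last two it uses involution centralisers (finding $\A_1(4)$ and $\A_1(9)$) --- exactly the mix of tools you anticipate.
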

\begin{proof}
Assume that $K$ is not isomorphic to $\L_6(2)$. By \cref{gls2rank,kleidmanliebeck} we can assume that $K$ has dimension at least $4$ in even characteristic, in order to contain a subgroup isomorphic to $A_7$, and dimension at least $5$ in odd characteristic in order for the $2$-rank to be at least $5$. Hence $K$ is isomorphic to one of the following:
\[\typeC_2(8), \; \A_3(4), \; \Atwo_3(4), \; \typeC_3(3), \; \typeC_3(3), \; \Atwo_4(2), \; \D_4(2), \; \Dtwo_4(2), \; \Atwo_5(2)\]
Those groups which contain subgroups isomorphic to $D_6'$ are isomorphic to $\Atwo_5(2)$, $\B_3(3)$, $\D_4(2)$ and $\Dtwo_4(2)$. The simple factors of the centralisers of elements of order $3$ in $\Atwo_5(2)$ and $\B_3(3)$ can be computed in GAP and are isomorphic to $\A_1(9)$ or $\typeC_2(3)$, neither of which is a quotient of $\SAut(F_4)$ -- this follows from \cref{char 3 even base case} for $\typeC_2(3)$ and from \cref{smalln4} for $\A_1(9)$, since they are smaller in cardinality than $\L_4(2)$.

The simple factors of the involution centralisers of $\D_4(2)$ and $\Dtwo_4(2)$ are isomorphic to $\A_1(4)$ or $\A_1(9)$, neither of which is a quotient of $\SAut(F_4)$, by \cref{smalln4}, since they are both smaller in cardinality than $\L_4(2)$.
\end{proof}

\begin{lem}[$n=7$]
\label{smalln7}
Let $K$ be a non-abelian finite simple group with $\vert K \vert \leqslant \vert \L_7(2) \vert$. If $K$ is a quotient of $\SAut(F_7)$, then $K \cong \L_7(2)$.
\end{lem}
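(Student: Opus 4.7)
The plan is to mirror the argument used in \cref{smalln6}, now for $n = 7$, by enumerating the candidate finite simple groups $K$ with $\vert K \vert \leqslant \vert \L_7(2) \vert$, $K \not\cong \L_7(2)$, and ruling out each in turn. Suppose for contradiction that $\phi \colon \SAut(F_7) \twoheadrightarrow K$ is such a non-trivial epimorphism. By \cref{no alternating quotients}, \cref{main: spo}, and \cref{noexceptionals}, $K$ is neither an alternating group, nor a sporadic group, nor an exceptional group of Lie type, so $K$ is classical. Moreover, \cref{smallest linear quotient} forces $\phi\vert_{D_7'}$ to be injective, since otherwise either $\phi$ is trivial, or $\vert K \vert > \vert \L_7(2) \vert$, or $K \cong \L_7(2)$. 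In particular, $K$ contains a copy of $D_7' = 2^6 \rtimes A_7$, hence has $2$-rank at least $6$ and contains $A_8$.

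The next step is to narrow down the classical candidates. In characteristic $2$, \cref{main thm char 2 lie type} (applied with $n = 6$) forces $K$ to have twisted rank at least $6$. In odd characteristic, \cref{gls2rank} eliminates subgroups of $\A_n(q)$ with $n \leqslant 3$ (for which the $2$-rank is at most $4$), while \cref{kleidmanliebeck} gives $R_p(A_8) = 7$ for $p \in \{3,5,7\}$, so the natural dimension of $K$ must be at least $7$ in those characteristics (and at least $6$ in the remaining odd ones). Combining these constraints with the order bound $\vert K \vert \leqslant \vert \L_7(2) \vert$ and the inequalities collected in the appendix (\cref{computation orders}, \cref{computation orders type A} and \cref{computation small cases}) leaves a manageable list of candidates, which one reads off from the ATLAS table of finite simple groups in this order window.

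For each candidate I would apply one of the following three arguments to reach a contradiction. First, any $K$ defined over $\mathbb{F}_3$ is immediately ruled out by \cref{char 3 small}. Second, in characteristic $2$ and for several odd characteristic candidates, one checks (from the published lists of maximal subgroups, or via GAP as in the previous lemmata) that $D_7'$ does not embed in $K$, contradicting the injectivity of $\phi\vert_{D_7'}$. Third, for the remaining candidates one invokes the involution centralizer argument used in \cref{smalln4}, \cref{smalln5}, and \cref{smalln6}: if $\phi(\epsilon_1 \epsilon_2) \neq 1$, then $\phi(\SAut(F_5)) \leqslant C_K(\phi(\epsilon_1 \epsilon_2))$, and since $\SAut(F_5)$ is perfect (\cref{perfect}) its image meets the solvable radical of the centralizer trivially and projects to a subgroup of the semisimple quotient. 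Each projected non-abelian composition factor is therefore a non-trivial quotient of $\SAut(F_5)$; by \cref{smalln5} (together with \cref{char 3 small} when the factor is defined over $\mathbb{F}_3$) the factors appearing in these centralizers are either isomorphic to $\L_5(2)$ or not a quotient of $\SAut(F_5)$, and a direct inspection rules out both possibilities. This forces $\phi(\SAut(F_5)) = 1$; but $\SAut(F_5)$ contains transvections, and by \cref{conjugate transvections} all transvections are conjugate in $\SAut(F_7)$, so $\phi$ must be trivial, a contradiction. Where the involution centralizer argument is inconclusive, I would substitute the order-$3$ element $\gamma$ of \cref{killing t} (whose normal closure is the whole of $\SAut(F_7)$ and whose centralizer contains $\SAut(F_5)$) and repeat the analysis with $C_K(\phi(\gamma))$.

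The main obstacle is the enumeration itself. The band $\vert \L_6(2) \vert < \vert K \vert \leqslant \vert \L_7(2) \vert$ contains a number of classical candidates --- such as $\Atwo_6(2)$, $\B_4(2) \cong \typeC_4(2)$, $\D_5(2)$, $\Dtwo_5(2)$, $\typeC_4(3)$, $\Dtwo_4(3)$, and various $\B_3(q), \typeC_3(q)$ for small odd $q$ --- several of which do contain $D_7'$ and require explicit analysis of their involution (or order-$3$-element) centralizers. The expected bottleneck is verifying, case-by-case and occasionally via direct GAP computation as in the earlier lemmata, that in each such centralizer no non-abelian composition factor is isomorphic to $\L_5(2)$, nor to a larger classical group that is a quotient of $\SAut(F_5)$.
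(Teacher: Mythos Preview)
Your plan is correct and matches the paper's proof. Two small corrections to the enumeration: \cref{main thm char 2 lie type} applied to $\SAut(F_7)=\SAut(F_{6+1})$ only excludes twisted rank $\leqslant 4$ (not $\leqslant 5$), so the surviving characteristic-$2$ candidate is $\D_5(2)$, which the paper handles by the involution-centraliser argument (non-abelian factors $A_6$, $A_8$, $\typeC_3(2)$, all smaller than $\L_5(2)$ and hence excluded by \cref{smalln5}); in odd characteristic the order bound together with $R_p(A_8)=7$ leaves precisely $\D_4(3)$ and $\Dtwo_4(3)$ (your $\typeC_4(3)$ already exceeds $\vert\L_7(2)\vert$), and the paper dispatches these via the order-$3$ element $\gamma$ exactly as you propose, the centraliser factors being $\A_1(9)$ and $\typeC_2(3)$.
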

\begin{proof}
Assume that $K$ is not isomorphic to $\L_7(2)$. Again we make use of the values listed in \cref{kleidmanliebeck} for $R_p(A_8)$, hence we need to consider groups of dimension at least $7$ in odd characteristic and dimension at least $4$ in even characteristic.

In even characteristic we are left with the following
\[\A_3(8), \; \Atwo_3(8), \; \Atwo_4(4), \; \B_2(16), \; \typeC_3(4), \; \typeC_4(2),\; \D_5(2), \; \Dtwo_5(2)\]
none of which is a quotient of $\SAut(F_7)$ by \cref{main thm char 2 lie type}, with the exception of $\Dtwo_5(2)$.

Now let $K \cong \D_5(2)$. The non-abelian simple quotients of the involution centralisers in $K$ are isomorphic to $A_6$, $A_8$ or $\typeC_3(2)$. Since all of these are smaller than $\L_5(2)$ we apply Lemma \ref{smalln5} to rule them out.

In odd characteristic we have the groups
\[\D_4(3), \; \Dtwo_4(3)\]
Any non-abelian simple  factor of a centraliser of an element of order $3$ in either of these groups is isomorphic to $\A_1(9)$ or to $\typeC_2(3)$. By \cref{smalln5}, neither of these groups is a quotient of $\SAut(F_5)$ since they are smaller in cardinality than $\L_5(2)$, which completes the proof.
\end{proof}

\appendix
\section{Computations}

This appendix contains all the necessary computations. Note that we use type symbols to denote the adjoint versions of the groups of Lie type.

\begin{lem}
\label{computation actions 2}
 For $n\geqslant 8$ we have
 \[
  2^{n-3} > \binom n 2
 \]
\end{lem}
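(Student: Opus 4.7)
The plan is a straightforward induction on $n$, starting from the base case $n = 8$.

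For the base case, I would simply compute both sides: $2^{8-3} = 32$ while $\binom{8}{2} = 28$, so the inequality $32 > 28$ holds.

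For the inductive step, assuming $2^{n-3} > \binom{n}{2}$ for some $n \geq 8$, I would compare the growth ratios of the two sides upon replacing $n$ by $n+1$. The left-hand side doubles, since $2^{(n+1)-3} / 2^{n-3} = 2$. For the right-hand side,
\[
\frac{\binom{n+1}{2}}{\binom{n}{2}} = \frac{n+1}{n-1} \leq 2
\]
whenever $n \geq 3$, which is certainly satisfied here. Multiplying the inductive hypothesis by $2$ and using this ratio comparison yields $2^{(n+1)-3} > \binom{n+1}{2}$.

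There is no real obstacle in this argument; the only minor point is verifying the ratio inequality $\tfrac{n+1}{n-1} \leq 2$, which is elementary. The proof could equivalently be phrased by writing $\binom{n+1}{2} = \binom{n}{2} + n$ and $2^{n-2} = 2^{n-3} + 2^{n-3}$, then invoking both the inductive hypothesis $2^{n-3} > \binom{n}{2}$ and the trivial bound $2^{n-3} \geq n$ (valid for $n \geq 8$, as $2^5 = 32 \geq 8$). Either formulation completes the induction in a couple of lines.
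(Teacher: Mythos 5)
Your proof is correct and is essentially identical to the paper's: both check the base case $n=8$ directly and then observe that the left-hand side doubles while $\binom{n+1}{2}/\binom{n}{2} = \tfrac{n+1}{n-1}\leqslant 2$. Nothing further is needed.
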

\begin{proof}
 It is enough to observe that the result is true for $n=8$, and
 \[
  \dfrac {\binom {n+1} 2}{\binom n 2} = \dfrac {n+1} {n-1} \leqslant 2
 \]
for all $n\geqslant 3$.
\end{proof}

\begin{lem}
\label{computation actions}
For an even $n\geqslant 12$ we have
\[
\frac 1 2 \binom n {\frac n 2} \geqslant \min \Big\{ \binom {n} {\lfloor \frac {n} 4 \rfloor}, 2^{n-\lfloor \frac {n} 4 \rfloor -1}\Big\}
\]
\end{lem}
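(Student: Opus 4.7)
The plan is to set $r = \lfloor n/4 \rfloor$ and split the range of $n$ into a small base region handled by direct computation and an asymptotic tail handled by a one-line estimate.

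First, for $n \in \{12, 14, 16, 18\}$ I would verify the inequality directly by computing both sides. For instance, when $n = 12$ one has $r=3$, so $\tfrac{1}{2}\binom{12}{6} = 462$, whereas $\min\{\binom{12}{3}, 2^{8}\} = \min\{220,256\} = 220$; the other three cases are just as immediate, and in each of them the dominated quantity happens to be $\binom{n}{r}$.

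For $n \geqslant 20$ even, I would prove the stronger statement $\tfrac{1}{2}\binom{n}{n/2} \geqslant 2^{n-r-1}$, which implies the result. Using the standard bound $\binom{2m}{m} \geqslant \tfrac{4^{m}}{2m+1}$ with $m=n/2$, we get $\binom{n}{n/2} \geqslant \tfrac{2^{n}}{n+1}$, so the required inequality reduces to showing $2^{r} \geqslant n+1$, that is
\[
2^{\lfloor n/4 \rfloor} \geqslant n+1.
\]
I would verify this at $n = 20$ (where $2^{5} = 32 \geqslant 21$) and then argue by induction on even $n$: passing from $n$ to $n+2$, the right-hand side gains $2$, while the left-hand side either doubles (when $n \equiv 2 \pmod 4$) or is unchanged (when $n \equiv 0 \pmod 4$). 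In the latter case the slack $2^{n/4} - (n+1)$ must be checked to remain non-negative; this follows from the fact that $2^{n/4}$ grows exponentially while $n+1$ grows linearly, so the slack is strictly increasing on multiples of $4$ past $n = 20$.

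There is no serious obstacle here; the only minor point is that the quantity minimized on the right is not always the same one across the range of $n$, so it is cleanest to use the tighter bound $\binom{n}{r}$ for the four small values of $n$ and the looser bound $2^{n-r-1}$ for all larger $n$, rather than trying to prove a single inequality uniformly.
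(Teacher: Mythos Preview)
Your proof is correct, but it takes a different route from the paper's. The paper proves \emph{both} inequalities
\[
\tfrac{1}{2}\binom{n}{n/2} \geqslant \binom{n}{\lfloor n/4\rfloor} \quad\text{and}\quad \tfrac{1}{2}\binom{n}{n/2} \geqslant 2^{\,n-\lfloor n/4\rfloor-1}
\]
separately, so no case analysis on which term realises the minimum is needed. The first is handled by a direct factorial manipulation: writing $n=2m$, the ratio $\frac{1}{2}\binom{2m}{m}\big/\binom{2m}{\lfloor m/2\rfloor}$ telescopes to a product of terms each at least $1$, times a leading factor $\frac{(m+1)(m+2)}{2(\lfloor m/2\rfloor+1)(\lfloor m/2\rfloor+2)}\geqslant 1$. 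The second is done much as your asymptotic step, but with the cruder growth comparison $\binom{2(m+1)}{m+1}/\binom{2m}{m}\geqslant 3 > 2^{3/2}$ from a single base case $n=12$, avoiding the central-binomial lower bound entirely.

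Your argument trades the factorial computation for four numerical checks plus the standard estimate $\binom{2m}{m}\geqslant 4^m/(2m+1)$, reducing everything to $2^{\lfloor n/4\rfloor}\geqslant n+1$. This is perfectly valid and arguably more transparent. The one place to tighten is the induction step when $n\equiv 0\pmod 4$: the inductive hypothesis alone does not give $2^{n/4}\geqslant n+3$, so you fall back on the slack being increasing. That claim is correct (the slack jumps by $2^{n/4}-4>0$ between consecutive multiples of $4$ once $n\geqslant 12$), but it would be cleaner to bypass the two-case induction entirely and just prove $2^k\geqslant 4k+4$ for $k\geqslant 5$ by a one-step induction, then observe $n+1\leqslant 4\lfloor n/4\rfloor+4$.
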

\begin{proof}
Let $n=2m$. We have
\begin{align*}
\dfrac{\frac 1 2 \binom n {\frac n 2}}{ \binom {n} {\lfloor \frac {n} 4 \rfloor}} &= \dfrac {(\lfloor \frac m 2 \rfloor)!(2m-\lfloor \frac m 2 \rfloor)! }{2\cdot m! m!} \\
 &= \dfrac 1 2 \cdot\prod_{i=1}^{\lceil \frac m 2 \rceil} \dfrac{m+i}{\lfloor \frac m 2 \rfloor+i}\\
  &= \dfrac {(m+1)(m+2)}{2(\lfloor \frac m 2 \rfloor+1)(\lfloor \frac m 2 \rfloor+2)}  \cdot\prod_{i=3}^{\lceil \frac m 2 \rceil} \dfrac{m+i}{\lfloor \frac m 2 \rfloor+i} \\
  &\geqslant \dfrac {(m+1)(m+2)}{(m +2)( \frac m 2 +2)} \\
   &\geqslant \dfrac {2m+2}{m+4} \\
   &\geqslant 1
\end{align*}
for any $m\geqslant 2$.

We also have
\[
 2^{n-\lfloor \frac {n} 4 \rfloor-1} \leqslant 2^{n- \frac {n} 4 -\frac 1 2} = 2^{\frac {3m-1} 2 }
\]
and
\[
 \dfrac {2^{\frac {3(m+1)-1} 2 }}{2^{\frac {3m-1} 2 }} = 2^{\frac 3 2} < 3
\]
Now
\[
\dfrac{\frac 1 2 \binom {n+2} {\frac {n+2} 2}}{\frac 1 2 \binom n {\frac n 2}} = \dfrac {(2m+1)(2m+2) }{(m+1)^2} \geqslant 3
\]
We conclude by remarking that $\frac 1 2 \binom n {\frac n 2} \geqslant 2^{\frac {3n-2} 4}$ for $n=12$.
\end{proof}

\begin{lem}
\label{computation alternaing}
For $n \geqslant 7$ we have
$\binom n 2 ! \cdot \frac 1 2 > \vert \L_n(2) \vert$.
\end{lem}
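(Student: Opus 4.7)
The plan is induction on $n$, with $n=7$ as the base case. Setting $a_n := \binom{n}{2}!/2$ and $b_n := |\L_n(2)|$, the goal is to prove $a_n > b_n$.

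For the base case, I would simply perform a numerical check: from \cref{spo} one reads $b_7 = 163\,849\,992\,929\,280 \approx 1.64 \cdot 10^{14}$, while $a_7 = 21!/2 \approx 2.55 \cdot 10^{19}$, so the desired inequality holds with five orders of magnitude to spare.

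For the inductive step the idea is to compare growth rates on both sides. Using the standard product formula $|\L_n(2)| = |\GL_n(2)| = 2^{\binom{n}{2}}\prod_{j=1}^{n}(2^{j}-1)$, a telescoping calculation will give
\[
\frac{b_{n+1}}{b_n} \;=\; 2^{n}\bigl(2^{n+1}-1\bigr) \;<\; 2^{2n+1}.
\]
On the factorial side,
\[
\frac{a_{n+1}}{a_n} \;=\; \prod_{k=1}^{n}\!\left(\binom{n}{2}+k\right) \;\geqslant\; \left(\binom{n}{2}+1\right)^{\!n} \;\geqslant\; 22^{n}
\]
for every $n \geqslant 7$, since $\binom{n}{2}+1 \geqslant 22$ in that range. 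As $2^{2+1/n} \leqslant 8 < 22$ for every $n \geqslant 1$, one obtains $22^{n} > 2^{2n+1}$, so $a_{n+1}/a_n > b_{n+1}/b_n$. Combined with the base case this closes the induction.

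There is no real obstacle here: the whole argument is routine, and the bounds used are extremely lossy (one could, in principle, obtain much stronger inequalities). The only care needed is in verifying the base case numerically and in checking the two elementary inequalities comparing the ratios.
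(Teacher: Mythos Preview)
Your proof is correct. The base case computation is fine, the telescoping ratio $b_{n+1}/b_n = 2^n(2^{n+1}-1)$ is correct, and your lower bound $a_{n+1}/a_n \geqslant (\binom{n}{2}+1)^n \geqslant 22^n$ for $n \geqslant 7$ holds; the comparison $22 > 2^{2+1/n}$ then closes the induction.

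Your route, however, differs from the paper's. The paper does not induct at all: it bounds $|\L_n(2)| < 2^{n^2}$ by the crude count of all $n \times n$ matrices over $\F_2$, and then bounds $\binom{n}{2}!$ from below via Stirling's approximation $m! \geqslant (2\pi m)^{1/2}(m/e)^m$ with $m = \binom{n}{2}$, reducing the claim (after using $2^{5/2} > 2e$ and $n(n-1) \geqslant 2^5$) to the elementary polynomial inequality $5n^2 - 5n + 6 > 4n^2$. Your argument is more elementary in that it avoids Stirling entirely and needs only a direct numerical check at $n=7$ plus a ratio comparison; the paper's argument, by contrast, yields an explicit closed-form bound $\binom{n}{2}!/2 \geqslant 2^{(5n^2-5n+6)/4}$ valid uniformly without an inductive anchor. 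Both proofs are very loose, as you note.
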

\begin{proof}
We have
\[
2^{n^2}> \vert \L_n(2) \vert
\]
since the left-hand side is the number of $n \times n$ matrices over the field of 2 elements. We also have
\[
m ! \geqslant (2 \pi m)^{\frac 1 2} (\frac m e)^m
\]
by Stirling's approximation. Putting $m = \binom n 2 = \frac {n(n-1)} 2$ we obtain
\begin{align*}
 \binom n 2 ! \cdot \frac 1 2 &\geqslant \frac 1 2 ( \pi n(n-1))^{\frac 1 2}   (\frac {n(n-1)} {2e})^{\frac {n(n-1)} 2} \\
&= \frac {\pi^{\frac 1 2}} 2 \cdot \frac{(n(n-1))^{\frac {n^2-n+1} 2}}{(2e)^{\frac {n(n-1)} 2}} \\
&\geqslant \frac 1 2 \cdot \frac{2^{\frac {5(n^2-n+1)} 2}}{2^{\frac {5n(n-1)} 4}} \\
&= 2^{\frac {10(n^2-n+1) -5n(n-1)  -4} 4} \\
&= 2^{\frac {5 n^2-5n+6} 4}
\end{align*}
where we have used the fact that $2^{\frac 5 2} > 2e$ and that $n(n-1)\geqslant 2^5$, as $n \geqslant 7$.

Now
\[ 5 n^2-5n+6 > 4 n^2 \]
holds for every $n \geqslant 4$ and we are done.
\end{proof}

We will now proceed to compute certain inequalities between orders of adjoint versions of finite groups of Lie type -- these orders can be found in \cite[pg. xvi]{atlas}. Let us start by some general remarks.

Firstly, if we fix the type, rank and characteristic, then enlarging the field always results in enlarging the group: this is obvious for the universal versions, and for adjoint versions requires comparing the sizes of centres of the universal versions; such a comparison can easily be performed. Since we will be looking at the smallest groups of a given type, rank and characteristic, we may therefore assume that the field is of prime cardinality.

In fact, arguing as above, we see that for odd characteristics we may assume that the field is of size $3$, and for odd characteristics greater than $3$ we may assume the field to be of size $5$.

Secondly, if we fix the type and field, then increasing the rank always results in  enlarging the group. The argument is precisely as above. The same holds for twisted rank, since to increase the twisted rank we have to increase the rank.

\begin{lem}
\label{computation orders char 2}
Let $n\geqslant 8$. Then every finite group of Lie type in characteristic $2$ of twisted rank at least $n-2$ is larger than $\L_n(2)$, with the exception of $\A_{n-2}(2)$ and $\A_{n-1}(2)$.
\end{lem}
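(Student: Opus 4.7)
My plan uses the two reductions remarked on in the preceding discussion: for fixed type, rank, and characteristic, the adjoint version is smallest; and for fixed type and field, the order is strictly increasing in the rank. Working in characteristic $2$, it therefore suffices to consider each type at its minimal rank yielding twisted rank $n-2$, over the prime field $\F_2$, in its adjoint version. In type $\A$ I also admit ranks $r = n-2$ and $r = n-1$, which give the two advertised exceptions $\L_{n-1}(2)$ and $\L_n(2)$; the remaining case $r \geq n$ immediately yields $|\L_{r+1}(2)| > |\L_n(2)|$.

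Enumerating the remaining candidates, and recalling that $\B_r(2) = \typeC_r(2)$ for $r \geq 3$, in the classical case I must treat $\Atwo_{2n-5}(2)$, $\typeC_{n-2}(2)$, $\D_{n-2}(2)$, and $\Dtwo_{n-1}(2)$. Among exceptional groups, only $\E_6(2)$ (twisted rank $6$, relevant solely for $n = 8$), $\E_7(2)$ (twisted rank $7$, for $n \in \{8,9\}$), and $\E_8(2)$ (twisted rank $8$, for $n \in \{8,9,10\}$) arise, since $\typeF_4$, $\Dthree_4$, $\G_2$, and the Suzuki--Ree groups of types $\Btwo_2$ and $\typeFtwo_4$ all have twisted rank at most $4$. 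The exceptional cases are dispensed with by a direct order comparison with $|\L_n(2)|$ for the finitely many relevant $n$, using the orders tabulated in the ATLAS.

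For the four classical candidates I plan to write each order in the standard form $|K| = 2^a \prod_j (2^{e_j} \pm 1)$ and compare with $|\L_n(2)| = 2^{\binom{n}{2}} \prod_{i=2}^n (2^i - 1)$. The $2$-part comparisons reduce to the elementary inequalities $(2n-5)(n-2) > \binom{n}{2}$, $(n-2)^2 > \binom{n}{2}$, $(n-2)(n-3) > \binom{n}{2}$, and $(n-1)(n-2) > \binom{n}{2}$, each valid for $n \geq 8$. For the non-$2$-parts I would use the identity $2^{2i} - 1 = (2^i - 1)(2^i + 1)$ to pair up factors and count how many factors each product contains. For $\Atwo_{2n-5}(2)$ the slack in the $2$-part alone is $(3n^2 - 17n + 20)/2 \geq 38$ for $n \geq 8$, which swamps any conceivable deficit in the non-$2$-part.

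The main obstacle is the comparison for $\D_{n-2}(2)$, where the $2$-part exceeds $2^{\binom{n}{2}}$ only by a factor of $2^{(n^2 - 9n + 12)/2}$, equal to just $4$ at the critical value $n = 8$. Here one cannot appeal to asymptotic slack and must verify the non-$2$-part inequality carefully: specifically, one checks that $(2^{n-2} - 1) \prod_{i=1}^{n-3}(2^{2i} - 1)$ dominates $\tfrac{1}{4}\prod_{i=2}^n (2^i - 1)$ at $n = 8$, a finite but genuine arithmetic computation that one then extends to all $n \geq 8$ by monotonicity.
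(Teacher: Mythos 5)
Your overall strategy is the same as the paper's: reduce to the adjoint version at minimal rank over the smallest field, then compare orders type by type, with $\D_{n-2}(2)$ correctly identified as the tight case. (The paper shortens the classical comparisons by noting $|\B_{n-2}(2)|=|\typeC_{n-2}(2)|$, that $\D_{n-2}(2)$ has smaller order than both $\B_{n-2}(2)$ and $\Dtwo_{n-1}(2)$ via explicit order identities, and that $\D_{n-2}(2)$ embeds in $\A_{2n-5}(2)$, so that a single uniform estimate $|\D_{n-2}(2)|/|\L_n(2)|>2^{n^2-9n+10}$ suffices; your plan of four separate comparisons plus a hand check at $n=8$ extended by monotonicity also works, though the monotonicity of the ratio in $n$ is itself something you would need to verify rather than assert.)

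There is, however, one genuine gap in your enumeration: type $\A$ over fields larger than $\F_2$ at ranks $n-2$ and $n-1$. Your blanket reduction ``to the prime field $\F_2$'' is justified by the fact that enlarging the field enlarges the group, but that only helps when the $\F_2$-group already exceeds $|\L_n(2)|$. For $\A_{n-2}$ and $\A_{n-1}$ the $\F_2$-groups are precisely the two exceptions, which are \emph{not} larger than $\L_n(2)$, so the inequality $|\A_{n-2}(4)|>|\A_{n-2}(2)|$ tells you nothing. You must separately check that $|\A_{n-2}(4)|>|\L_n(2)|$ (whence all $\A_r(2^e)$ with $r\geqslant n-2$ and $e\geqslant 2$ are also larger, by rank and field monotonicity). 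This is easy: the $2$-part of $|\A_{n-2}(4)|$ alone is $4^{\binom{n-1}{2}}=2^{(n-1)(n-2)}$, and
\[
(n-1)(n-2)-\binom n2 = \tfrac12(n-1)(n-4),
\]
with ample further slack from the factors $4^i-1$, so the case is quickly dispatched --- but as written your proof does not cover it, and it is exactly the case the paper's candidate list includes to complete the argument.
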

\begin{proof}
By the discussion above, it is enough to prove the result for
 \[
  \A_{2n-2}(4), \Atwo_{2n-3}(2), \B_{n-2}(2), \typeC_{n-2}(2), \D_{n-2}(2), \Dtwo_{n-1}(2)
 \]
and $\E_6(2), \E_7(2)$ and $\E_8(2)$ for small values of $n$.

For the groups of type $\E$ we confirm the result by a direct computation.

The orders of $\B_{n-2}(2)$ and $\typeC_{n-2}(2)$ are equal, and for all $n \geqslant 1$ we have the following identities
\[\frac{\vert \B_n(2) \vert }{2^n(2^n+1)} = \vert \D_n(2) \vert = \frac{\vert \Dtwo_{n+1}(2) \vert}{2^{2n}(2^{n+1}+1)(2^n+1)}\]
Furthermore, $\D_{n-2}(2)$ is a subgroup of $\A_{2n-5}(2)$, and  $\vert \A_{2n-5}(2) \vert < \vert \Atwo_{2n-4}(2) \vert$ when $n \geqslant 4$. We also have
$\vert \A_{2n-5}(2) \vert < \vert \Atwo_{2n-2}(4) \vert$
Therefore, $\D_{n-2}(2)$ is the smallest group we are considering, and so it remains to prove that $\vert \D_{n-2}(2) \vert > \vert \A_{n-1}(2) \vert$.

\begin{align*}
 \frac{\vert \D_{n-2}(2) \vert}{\vert \A_{n-1}(2) \vert} &= \frac{2^{(n-2)(n-3)}(2^{n-2}-1) \prod_{i=1}^{n-3}(2^{2i}-1)}{2^{n(n-1)/2} \prod_{i=1}^{n-1}(2^{i+1}-1)} \\
&=\frac{2^{(n-2)(n-3)}(2^{n-2}-1) \prod_{i=1}^{n-3}(2^i-1)\prod_{i=1}^{n-3}(2^i+1)}{2^{n(n-1)/2} \prod_{i=1}^{n-1}(2^{i+1}-1)} \\
&= 2^{\frac{n^2-9n+12}{2}} \frac{\prod_{i=1}^{n-3}(2^i+1)}{(2^n-1)(2^{n-1}-1)}\\
&> 2^{\frac{n^2-9n+12}{2}} \frac{2^{(n-2)(n-3)/2}}{2^{2n-1}}\\
&= 2^{\frac{n^2-9n+12}{2}} 2^\frac{(n^2-9n+8)}{2} \\
&= 2^{n^2-9n+10}
\end{align*}
which is at least $1$ for all $n \geqslant 8$.
\end{proof}

\begin{lem}
\label{computation orders type A}
Let $K$ be any version of a finite classical group of type $\A_k$ or $\Atwo_k$ in odd characteristic. For every $n \geqslant 6$, if $k \geqslant 2n-7$ then $\vert K \vert > \vert \L_n(2)\vert$.
\end{lem}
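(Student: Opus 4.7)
The plan is to reduce the lemma to a single smallest instance in each family and then carry out an explicit order comparison. By the two reductions stated just before the appendix, among all versions of a fixed type, rank and field the adjoint version is the smallest, while enlarging the field (within a fixed characteristic) or enlarging the rank (within a fixed type and field) only enlarges the order. Combined with the paper's earlier remark that for odd characteristics the smallest group of given type and rank is the one over $\F_3$, and with monotonicity in $k$, it suffices to prove the inequality for the two adjoint groups $\L_{2n-6}(3) = \A_{2n-7}(3)$ and $\mathrm{U}_{2n-6}(3) = \Atwo_{2n-7}(3)$.

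Set $k+1 = 2n-6$. The standard formulae give
\[
|SL_{k+1}(3)| = 3^{k(k+1)/2}\prod_{i=2}^{k+1}(3^i-1), \qquad |SU_{k+1}(3)| = 3^{k(k+1)/2}\prod_{i=2}^{k+1}(3^i-(-1)^i),
\]
and the respective adjoint centres have orders $\gcd(k+1,2)$ and $\gcd(k+1,4)$, both bounded by $4$. Using the crude estimate $3^i - 1 > 3^i/2$ yields
\[
|\L_{k+1}(3)| \;>\; \frac{3^{(k+1)^2-1}}{2^{k+2}},
\]
and since $3^i-(-1)^i \geqslant 3^i-1$ for every $i$, the very same estimate applies to $|\mathrm{U}_{k+1}(3)|$. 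On the other hand a trivial bound gives $|\L_n(2)| < 2^{n^2-1}$.

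Substituting $k+1 = 2n-6$, the desired inequality reduces to $3^{(2n-6)^2-1} > 2^{n^2+2n-6}$, equivalently (taking $\log_2$) to $(4n^2-24n+35)\log_2 3 > n^2+2n-6$. Since $\log_2 3 > 1.58$, the left-hand side is a quadratic in $n$ with leading coefficient roughly $6.3$ against $1$ on the right, and the difference is already positive at $n=6$ (LHS $\approx 55.5$, RHS $=42$) and grows thereafter. Thus the inequality holds for every $n \geqslant 6$. There is no real obstacle here: the lemma is in essence an arithmetic comparison, and the main point is to verify that the coarse lower bound on $|\L_{k+1}(3)|$ comfortably dominates $|\L_n(2)|$ even after dividing by the (bounded) centre — which it does by a wide exponential margin since $(2n-6)^2 \log_2 3$ outpaces $n^2$.
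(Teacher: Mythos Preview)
Your proof is correct and follows essentially the same approach as the paper: both reduce to the adjoint group of rank $2n-7$ over $\F_3$, bound the unitary order below by the linear one, and then carry out a crude product estimate comparing $|\A_{2n-7}(3)|$ with $|\L_n(2)|$. The only difference is cosmetic: the paper converts everything to powers of $2$ using $3 \geqslant 2^{3/2}$ and tracks the exponent as a polynomial in $n$, while you keep a single factor of $3^{(2n-6)^2-1}$ and pass through $\log_2 3$; your bookkeeping is, if anything, slightly cleaner.
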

\begin{proof}
By the previous discussion, it is clear that it is enough to consider the smallest rank, that is $k = 2n-7$, and the simple group $K$. Also, it is enough to consider $q=3$, as the orders increase with the field -- this is obvious for the universal versions, and for the simple groups follows from inspecting the sizes of the centres of the universal versions.

We have $\vert \Atwo_k(3) \vert \geqslant \frac 1 2 \vert \A_k(3) \vert$, and
\begin{align*}
\frac 1 {2} \vert \A_{2n-7}(3) \vert
&\geqslant \frac 1 {4} \cdot 3^{\binom {2n-6} 2} \cdot \prod_{i=1}^{2n-7} (3^{i+1}-1) \\
&\geqslant 2^{-2} \cdot 2^{\frac{3(2n-6)(2n-7)}{4}} \cdot \prod_{i=1}^{2n-7} 2^{\frac {3i}{2}} \\
&= 2^{\frac{-8 + 3(2n-6)(2n-7) + 3(2n-7)(2n-6)}{4}}\\
&= 2^{6n^2 -39n + 61 }\\
&= 2^{\binom n 2} \cdot 2^{\frac{11n^2 -77n + 122}{2} }\\
&= 2^{\binom n 2} \cdot \prod_{i=1}^{n} 2^{i+1} \cdot 2^{5n^2 -40n + 60 }\\
&> 2^{\binom n 2} \cdot \prod_{i=1}^{n} (2^{i+1}-1) \\
&= \vert \A_{n-1}(2) \vert
\end{align*}
where the last inequality holds for $n\geqslant 6$.
\end{proof}

\begin{lem}
\label{computation orders}
Let $n\geqslant 8$, and let $K$ be any version of a finite classical group of of type $\B_k$, $\typeC_k$, $\D_k$ or $\Dtwo_k$ in odd characteristic. If $k \geqslant n-3$ then $\vert K \vert > \vert \L_n(2)\vert$.
\end{lem}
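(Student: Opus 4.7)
The plan is to mimic the argument used for \cref{computation orders type A}: reduce to the smallest group in the list, then estimate its order from below and compare directly with $|\L_n(2)|$.

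First, by the remarks preceding \cref{computation orders char 2}, it suffices to work with the adjoint version over the prime field $\mathbb{F}_3$ at the minimal admissible rank $k = n-3$. Among the four families $\B_{n-3}(3)$, $\typeC_{n-3}(3)$, $\D_{n-3}(3)$ and $\Dtwo_{n-3}(3)$, the types $\B$ and $\typeC$ have equal order, and both are strictly larger than the remaining two: from the order formulae one reads off $|\B_k(q)|/|\D_k(q)| = q^k(q^k+1)$ (up to centre corrections by $\gcd(2,q-1)$ and $\gcd(4, q^k-1)$, each at most $4$), and similarly $|\Dtwo_k(q)|/|\D_k(q)| = (q^k+1)/(q^k-1)$ up to analogous centre factors. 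For $q=3$ these centre discrepancies are easily absorbed, so in every case $|\D_{n-3}(3)|$ is the smallest. It therefore remains to prove $|\D_{n-3}(3)| > |\L_n(2)|$ for $n \geq 8$.

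Writing the adjoint order as
\[
|\D_{n-3}(3)| = \frac{1}{\gcd(4, 3^{n-3} - 1)}\cdot 3^{(n-3)(n-4)}(3^{n-3} - 1)\prod_{i=1}^{n-4}(3^{2i} - 1)
\]
and applying the elementary bound $3^j - 1 \geq \tfrac{2}{3}\cdot 3^j$ (valid for $j \geq 1$), one obtains
\[
|\D_{n-3}(3)| \geq \tfrac{1}{4}\bigl(\tfrac{2}{3}\bigr)^{n-3}\cdot 3^{(n-3)(2n-7)},
\]
while on the other side
\[
|\L_n(2)| = 2^{n(n-1)/2}\prod_{i=2}^n(2^i - 1) < 2^{n^2-1}.
\]
Taking $\log_2$ and using $\log_2 3 > 8/5$, the required inequality reduces to a quadratic in $n$ whose leading coefficient is comfortably positive; a direct substitution shows it holds at $n=8$, and its derivative is positive for all $n \geq 6$, so it persists for every $n \geq 8$.

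The only real technical point is the first paragraph, namely verifying that the small centre corrections (bounded gcds with $4$) never reverse the ordering $\D_{n-3}(3) \leq \Dtwo_{n-3}(3) < \B_{n-3}(3) = \typeC_{n-3}(3)$; once this is in hand, the quadratic comparison in the second paragraph is uniform and leaves plenty of room, so no case-by-case small-$n$ check beyond $n=8$ is necessary.
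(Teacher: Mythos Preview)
Your approach is essentially the paper's: reduce to the adjoint version over $\mathbb{F}_3$ at rank $k=n-3$, identify the $\D$-type order as the relevant minimum, bound it below using $3>2^{3/2}$, and compare with $|\L_n(2)|$. The paper carries out the same estimate, obtaining a quadratic that works only for $n\geqslant 10$ and then checking $n\in\{8,9\}$ by hand; your sharper bookkeeping aims to avoid that case check.

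Two corrections are needed, however. First, the inequality $\log_2 3 > 8/5$ is false: $2^{8/5}>3$ (equivalently $2^8=256>243=3^5$), so $\log_2 3\approx 1.585<1.6$. What you actually need, and what suffices, is $\log_2 3>3/2$; with this the inequality $2(n-3)(n-4)\log_2 3 > n^2-n+4$ is strict at $n=8$ (since $40\log_2 3>60$), and your quadratic argument goes through for all $n\geqslant 8$.

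Second, the assertion that $|\D_{n-3}(3)|$ is the smallest of the four adjoint orders is not quite right: when $n-3$ is odd, $\gcd(4,3^{n-3}+1)=4$ while $\gcd(4,3^{n-3}-1)=2$, so $|\Dtwo_{n-3}(3)|<|\D_{n-3}(3)|$ (the centre correction reverses your claimed ordering). The paper sidesteps this by bounding all four orders below by $\tfrac{1}{2}|\D_{n-3}(3)|$. Your displayed lower bound $\tfrac{1}{4}\bigl(\tfrac{2}{3}\bigr)^{n-3}3^{(n-3)(2n-7)}$ in fact also bounds $|\Dtwo_{n-3}(3)|$ from below (replace $3^{n-3}-1$ by the larger $3^{n-3}+1$), so the computation survives; only the sentence claiming $\D$ is smallest needs rewording.
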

\begin{proof}
By the previous discussion we take $k = n-4$, $q=3$, and the adjoint version $K$.

Note that $\vert \B_k(3) \vert = \vert \typeC_k(3) \vert$; also $\vert \B_k(3) \vert > \vert \D_k(3) \vert$.
We also have
$\vert \Dtwo_k(3) \vert  \geqslant \frac 1 2 \cdot \vert \D_k(3) \vert$. We then have
\begin{align*}
\frac 1 2 \cdot \vert \D_{n-3}(3) \vert
&\geqslant \frac 1 8 \cdot  3^{(n-3)(n-4)}\cdot (3^{n-3}-1)\cdot \prod_{i=1}^{n-4}(3^{2i}-1)\\
&> 2^{-3} \cdot  2^{\frac{3(n-3)(n-4)}{2}} \cdot 2^{\frac{3(n-3)}{2}} \cdot \prod_{i=1}^{n-4}2^{3i}\\
&= 2^{\frac 3 2 (-2 + (n-3)(n-4) + n-3 + (n-3)(n-4))}\\
&= 2^{\frac 3 2 (2n^2 -13n +19)}\\
&= 2^{\binom n 2} 2^{\frac 1 2 (5n^2 -38n +19)}\\
&= 2^{\binom n 2} \cdot \prod_{i=1}^{n} 2^{i+1} \cdot 2^{\frac 1 2 (4n^2 -41 n +17) }\\
&> 2^{\binom n 2} \cdot \prod_{i=1}^{n} (2^{i+1}-1) \\
&= \vert \A_{n-1}(2) \vert
\end{align*}
where the last inequality holds for $n\geqslant 10$. In the cases $n=8$ or $9$, it can be verified directly that our claim holds.
\end{proof}

\begin{lem}
\label{computation small cases}
For $n\geqslant 4$ and $q>3$ odd, the simple groups $\B_{\frac n 2}(q)$ (when $n$ is even), $\typeC_{\frac {n+1} 2}(q)$, $\D_{\frac {n+1} 2}(q)$ and $\Dtwo_{\frac {n+1} 2}(q)$ (when $n$ is odd) are larger in cardinality than $\L_n(2)$.
\end{lem}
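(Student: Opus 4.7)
The plan is a direct order comparison. Since the simple group orders in each of the families $\B_k, \typeC_k, \D_k, \Dtwo_k$ strictly increase with the field size (as is visible by comparing the corresponding formulae from \cite[pg. xvi]{atlas} and the fact that the centres of the universal versions only grow moderately), it suffices to prove the inequality when $q = 5$. Moreover, since $|\typeC_k(5)| = |\B_k(5)|$ and these exceed $|\D_k(5)|$ and $|\Dtwo_k(5)|$ only by harmless factors, the critical case is the type $\D$ group (for odd $n$), and $\B$ (for even $n$).

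First I would recall the upper bound
\[
|\L_n(2)| \;=\; 2^{\binom{n}{2}} \prod_{i=2}^{n}(2^i - 1) \;<\; 2^{\binom{n}{2} + \binom{n+1}{2} - 1} \;=\; 2^{n^2 - 1}.
\]
For the lower bound on the type $\D$ group (say $k = (n+1)/2$ for $n$ odd), using $5^m - 1 \geqslant 5^m/2$ for $m \geqslant 1$, I would estimate
\[
|\D_k(5)| \;\geqslant\; \tfrac{1}{4} \cdot 5^{k(k-1)} \cdot (5^k-1) \cdot \prod_{i=1}^{k-1}(5^{2i}-1) \;\geqslant\; \tfrac{1}{2^{k+2}} \cdot 5^{\,2k^2 - k}.
\]
Substituting $k = (n+1)/2$ gives $2k^2 - k = n(n+1)/2$, so
\[
|\D_{(n+1)/2}(5)| \;\geqslant\; 2^{-(n+5)/2} \cdot 5^{n(n+1)/2} \;>\; 2^{-(n+5)/2} \cdot 2^{n^2 + n},
\]
using $5 > 4$. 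This last quantity exceeds $2^{n^2 - 1}$ once $n - 1 - (n+5)/2 > -1$, that is for all $n \geqslant 3$, so in particular for the odd values $n \geqslant 5$. The analogous calculation for $|\Dtwo_{(n+1)/2}(5)|$ is identical (replacing $5^k - 1$ by $5^k + 1$ only improves matters), and the corresponding calculation for $|\B_{n/2}(5)|$ for even $n$ yields the same exponent $5^{n(n+1)/2}$ and hence the same conclusion for $n \geqslant 4$.

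The only remaining work is to inspect the very smallest cases by hand: namely $\B_2(q)$ for $n=4$ and the three type-$\typeC_3, \D_3, \Dtwo_3$ groups for $n=5$, all with $q \geqslant 5$. For $n=4$ one has $|\B_2(5)| = \tfrac12 \cdot 5^4 \cdot (5^2-1)(5^4-1) = 4{,}680{,}000$, vastly exceeding $|\L_4(2)| = 20{,}160$; for $n=5$ the group $|\D_3(5)|$ (the smallest of the three) is similarly a direct computation and exceeds $|\L_5(2)| \approx 10^7$. I expect no real obstacle here; the only thing to be careful about is keeping track of the centre factors $(2,q-1)$ and $(4, q^k \mp 1)$ in the adjoint-to-simple passage, so that the asymptotic bound $5^{n(n+1)/2}$ is not eaten up by them — and indeed it is not, since these factors are at most $4$ while the $5$-power dominates $2^{n^2}$ with exponential slack.
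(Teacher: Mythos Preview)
Your proposal is correct and follows essentially the same approach as the paper: reduce to $q=5$, bound $5$ below by $4=2^2$, and compare exponents against the upper bound $|\L_n(2)| < 2^{n^2}$. Two minor remarks: first, your stated inequality ``$n-1-(n+5)/2 > -1$'' is a slip (the correct condition is $n-(n+5)/2 > -1$, i.e.\ $n>3$, which still covers all odd $n\geqslant 5$ as you claim); second, your general estimate already handles $n=4$ and $n=5$, so the separate hand-check you outline at the end is unnecessary.
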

\begin{proof}
When $n$ is odd, all of the orders are bounded below by the order of $\D_{\frac {n+1} 2}(5)$, which is
\begin{align*}
 5^{\frac {n^2-1} 4 } \cdot \prod_{i=1}^{\frac {n-1} 2} (5^{2i}-1) \cdot (5^{\frac{n+1}2} - 1) \cdot \frac 1 4  &\geqslant  2^{\frac{n^2-1} 2} \cdot \prod_{i=1}^{\frac {n-1} 2} 2^{4i} \cdot 2^{n+1} \cdot \frac 1 4 \\
 & = 2^{\frac{n^2-1} 2 + \frac{n^2-n} 2 +n -1} \\
 & = 2^{n^2 + \frac n 2 -\frac 1 2} \\
 &> 2^{n^2} \\
 &= 2^{\binom n 2} \cdot 2^{\binom{n+1} 2} \\
 &> 2^{\binom n 2} \cdot \prod_{i=1}^n (2^i-1) \\
 & = \vert \L_n(2) \vert
\end{align*}

When $n$ is even, we have
\begin{align*}
\vert \B_{\frac n 2}(5) \vert &= 5^{\frac {n^2} 4} \prod_{i=1}^{\frac n 2} (5^{2i} -1 )\\
&> 2^{\frac {n^2} 2} \prod_{i=1}^{\frac n 2} 2^{4i} \\
&= 2^{\frac 1 2 (n^2 + n(n+2))} \\
&= 2^{n^2 + 1} \\
&> \vert L_n(2) \vert \qedhere
\end{align*}
\end{proof}

\bibliographystyle{math}
\bibliography{bibliographyout}

\bigskip
\noindent Barbara Baumeister \hfill \texttt{b.baumeister@math.uni-bielefeld.de} \newline
\noindent Dawid Kielak \hfill \texttt{dkielak@math.uni-bielefeld.de} \newline
\noindent Emilio Pierro \hfill \texttt{e.pierro@mail.bbk.ac.uk} \newline
Fakult\"at f\"ur Mathematik  \newline
Universit\"at Bielefeld \newline
Postfach 100131  \newline
D-33501 Bielefeld \newline
Germany \newline

\end{document}